\documentclass[10pt,a4paper]{amsart} 
\usepackage{amsmath,amsthm,amssymb}
\usepackage{mathabx}
\usepackage[margin=1.5in]{geometry}
\usepackage[initials,msc-links]{amsrefs}
\usepackage{url}
\usepackage{amsaddr}
\usepackage[dvipsnames]{xcolor}
\usepackage[english=american]{csquotes}
\usepackage{enumerate}
\usepackage{textcomp}
\usepackage{mathrsfs}
\usepackage{mathtools}
\usepackage{color, colortbl}
\definecolor{Gray}{gray}{0.9}
\usepackage{bbm}
\usepackage[multiple]{footmisc}
\usepackage{hyperref}
\usepackage{stmaryrd}
\providecommand{\keyword}[1]{{\footnotesize\textbf{\textit{Key words---}} #1}}
\hypersetup{
    bookmarks=true,         
    unicode=false,          
    pdftoolbar=true,        
    pdfmenubar=true,        
    pdffitwindow=false,     
    pdfstartview={FitH},    
    pdftitle={My title},    
    pdfauthor={Author},     
    pdfsubject={Subject},   
    pdfcreator={Creator},   
    pdfproducer={Producer}, 
    pdfkeywords={keyword1, key2, key3}, 
    pdfnewwindow=true,      
    colorlinks=true,       
    linkcolor=blue ,          
    citecolor=blue ,        
    filecolor=magenta,      
    urlcolor=Aquamarine           
}
\usepackage{caption}
\captionsetup[figure]{font=footnotesize}
\usepackage{tikz}
\usepackage{fancyhdr}
\usepackage{siunitx}
\usepackage{xcolor}
\usepackage{booktabs,colortbl, array}
\usepackage{pgfplotstable}
\pgfplotsset{compat=1.8}

\definecolor{rulecolor}{RGB}{0,71,171}
\definecolor{tableheadcolor}{gray}{0.92}
\colorlet{tableheadcolor}{gray!25} 
\colorlet{tablerowcolor}{gray!10} 
 %






%
%

\makeatletter
\DeclareFontFamily{OMX}{MnSymbolE}{}
\DeclareSymbolFont{MnLargeSymbols}{OMX}{MnSymbolE}{m}{n}
\SetSymbolFont{MnLargeSymbols}{bold}{OMX}{MnSymbolE}{b}{n}
\DeclareFontShape{OMX}{MnSymbolE}{m}{n}{
    <-6>  MnSymbolE5
    <6-7>  MnSymbolE6
    <7-8>  MnSymbolE7
    <8-9>  MnSymbolE8
    <9-10> MnSymbolE9
    <10-12> MnSymbolE10
    <12->   MnSymbolE12
}{}
\DeclareFontShape{OMX}{MnSymbolE}{b}{n}{
    <-6>  MnSymbolE-Bold5
    <6-7>  MnSymbolE-Bold6
    <7-8>  MnSymbolE-Bold7
    <8-9>  MnSymbolE-Bold8
    <9-10> MnSymbolE-Bold9
    <10-12> MnSymbolE-Bold10
    <12->   MnSymbolE-Bold12
}{}
\let\llangle\@undefined
\let\rrangle\@undefined
\DeclareMathDelimiter{\llangle}{\mathopen}%
{MnLargeSymbols}{'164}{MnLargeSymbols}{'164}
\DeclareMathDelimiter{\rrangle}{\mathclose}%
{MnLargeSymbols}{'171}{MnLargeSymbols}{'171}
\makeatother


\newtheorem{theorem}{Theorem}[section]
\newtheorem{lemma}[theorem]{Lemma}
\newtheorem{proposition}[theorem]{Proposition}
\newtheorem{corollary}[theorem]{Corollary}

\theoremstyle{definition}
\newtheorem{definition}[theorem]{Definition}
\newtheorem{remark}[theorem]{Remark}

\newtheorem{assumption}[theorem]{Assumption}

\newcommand{\Crm}{\mathrm{C}}
\newcommand{\Drm}{\mathrm{D}}

\newcommand{\Lrm}{\mathrm{L}}

\newcommand{\Trm}{\mathrm{T}}

\newcommand{\Wrm}{\mathrm{W}}

\newcommand{\Acal}{\mathcal{A}}
\newcommand{\Bcal}{\mathcal B}

\newcommand{\Fcal}{\mathcal{F}}
\newcommand{\Gcal}{\mathcal{G}}
\newcommand{\Hcal}{\mathcal{H}}
\newcommand{\Ical}{\mathcal{I}}

\newcommand{\Kcal}{\mathcal{K}}
\newcommand{\Lcal}{\mathcal{L}}
\newcommand{\Mcal}{\mathcal{M}}
\newcommand{\Ncal}{\mathcal{N}}

\newcommand{\Rcal}{\mathcal{R}}
\newcommand{\Scal}{\mathcal{S}}

\newcommand{\Vcal}{\mathcal{V}}
\newcommand{\Wcal}{\mathcal{W}}

\newcommand{\Ascr}{\mathscr{A}}

\newcommand{\Cscr}{\mathscr{C}}

\newcommand{\Fscr}{\mathscr{F}}

\newcommand{\Lscr}{\mathscr{L}}

\newcommand{\Pscr}{\mathscr{P}}

\newcommand{\Sscr}{\mathscr{S}}

\newcommand{\Uscr}{\mathscr{U}}

\newcommand{\Wscr}{\mathscr{W}}

\newcommand{\Abf}{\mathbf{A}}
\newcommand{\Bbf}{\mathbf{B}}
\newcommand{\Cbf}{\mathbf{C}}
\newcommand{\Dbf}{\mathbf{D}}
\newcommand{\Ebf}{\mathbf{E}}
\newcommand{\Fbf}{\mathbf{F}}
\newcommand{\Gbf}{\mathbf{G}}
\newcommand{\Hbf}{\mathbf{H}}
\newcommand{\Ibf}{\mathbf{I}}

\newcommand{\Tbf}{\mathbf{T}}
\newcommand{\Ubf}{\mathbf{U}}

\newcommand{\Nbb}{\mathbb{N}}

\DeclareMathOperator{\id}{id}
\DeclareMathOperator{\Dir}{Dir}
\DeclareMathOperator{\Reg}{Reg}

\DeclareMathOperator{\diam}{diam}

\DeclareMathOperator{\dist}{dist}

\newcommand{\set}[2]{\left\{\, #1 \  \textup{\textbf{:}}\  #2 \,\right\}}

\newcommand{\dpr}[1]{\langle #1 \rangle}

\newcommand{\cl}[1]{\overline{#1}}

\newcommand{\dd}{\;\mathrm{d}}

\newcommand{\N}{\mathbb{N}}
\newcommand{\R}{\mathbb{R}}
\newcommand{\C}{\mathbb{C}}

\newcommand{\Z}{\mathbb{Z}}
\newcommand{\loc}{\mathrm{loc}}

\newcommand{\spt}{\mathrm{spt}}

\newcommand{\Sing}{\mathrm{Sing}}

\newcommand{\toweak}{\rightharpoonup}
\newcommand{\toweakstar}{\overset{*}\rightharpoonup}

\newcommand{\toup}{\uparrow}
\newcommand{\todown}{\downarrow}


\newcommand{\eps}{\epsilon}

\newcommand{\proofstep}[1]{\textit{#1}}

\renewcommand{\eps}{\varepsilon}
\newcommand{\vphi}{\varphi}






\makeatletter
\renewcommand*\env@matrix[1][*\c@MaxMatrixCols c]{%
    \hskip -\arraycolsep
    \let\@ifnextchar\new@ifnextchar
    \array{#1}}
\makeatother

\DeclareMathOperator{\Gr}{Gr}

\DeclareMathOperator{\B}{\mathcal B}

\DeclareMathOperator{\Span}{Span}

\DeclareMathOperator{\Lip}{Lip}




\newcommand{\mres}{\mathbin{\vrule height 1.6ex depth 0pt width
        0.13ex\vrule height 0.13ex depth 0pt width 1.3ex}}



%
%
%
\def\vint_#1{\mathchoice%
    {\mathop{\kern 0.2em\vrule width 0.6em height 0.69678ex depth -0.58065ex
            \kern -0.8em \intop}\nolimits_{\kern -0.4em#1}}%
    {\mathop{\kern 0.1em\vrule width 0.5em height 0.69678ex depth -0.60387ex
            \kern -0.6em \intop}\nolimits_{#1}}%
    {\mathop{\kern 0.1em\vrule width 0.5em height 0.69678ex depth -0.60387ex
            \kern -0.6em \intop}\nolimits_{#1}}%
    {\mathop{\kern 0.1em\vrule width 0.5em height 0.69678ex depth -0.60387ex
            \kern -0.6em \intop}\nolimits_{#1}}}
\makeatletter
\newcommand*{\RangeX}{%
    {%
        \mathpalette\@RangeOf{X}%
    }%
}
\newcommand*{\@RangeOf}[2]{%
    \sbox0{$\m@th#1\mathsf{#2}$}%
    \mathsf{#2}%
    \kern-\wd0 %
    \mkern2.75mu\relax
    \nonscript\mkern.25mu\relax
    \mathsf{#2}%
}
\makeatother

\newcommand{\aveint}[2]{\mathchoice%
    {\mathop{\kern 0.2em\vrule width 0.6em height 0.69678ex depth -0.58065ex
            \kern -0.8em \intop}\nolimits_{\kern -0.45em#1}^{#2}}%
    {\mathop{\kern 0.1em\vrule width 0.5em height 0.69678ex depth -0.60387ex
            \kern -0.6em \intop}\nolimits_{#1}^{#2}}%
    {\mathop{\kern 0.1em\vrule width 0.5em height 0.69678ex depth -0.60387ex
            \kern -0.6em \intop}\nolimits_{#1}^{#2}}%
    {\mathop{\kern 0.1em\vrule width 0.5em height 0.69678ex depth -0.60387ex
            \kern -0.6em \intop}\nolimits_{#1}^{#2}}}


\title[]{An upper Minkowski bound for the interior singular set of area minimizing currents}



\author[A. Skorobogatova]{Anna Skorobogatova}
\address{Department of Mathematics, Fine Hall, Princeton University, Washington Road, Princeton NJ 08540, USA}
\email{\href{mailto:As110@math.princeton.edu}{As110@math.princeton.edu}}




\makeindex

\begin{document}
	
\maketitle

	\begin{abstract}
        We show that for an area minimizing $m$-dimensional integral current $T$ of codimension at least 2 inside a sufficiently regular Riemannian manifold, the upper Minkowski dimension of the interior singular set is at most $m-2$. This provides a strengthening of the existing $(m-2)$-dimensional Hausdorff dimension bound due to Almgren and De Lellis \& Spadaro. As a by-product of the proof, we establish an improvement on the persistence of singularities along the sequence of center manifolds taken to approximate $T$ along blow-up scales.
	\end{abstract}

\keyword{minimal surfaces, area minimizing currents, regularity theory, multiple valued functions, blow-up analysis, center manifold}

\section{Introduction and Main Results}
Integral currents are a natural generalization of $m$-dimensional integer multiplicity smooth oriented manifolds with boundary, and are realized as the weak-$*$ closure of the space of such manifolds, when treated as dual to the space of differential $m$-forms. They allow one to solve the oriented Plateau problem over this class and provide a suitable setting in which to analyze the singular structure of minimizers. We will be working with the following core assumption throughout:
\begin{assumption}\label{asm:curr}
    $T$ is an $m$-dimensional integral current in $\Sigma$, where $\Sigma$ is an $(m + \bar{n})$-dimensional embedded submanifold of class $\Crm^{3,\eps_0}$ in $\R^{m+n} \equiv \R^{m+\bar{n}+l}$, where $\eps_0 \in (0,1)$ is a fixed constant. Assume that $T$ is area-minimizing in $\Sigma$ and that $\bar{n} \geq 2$.
\end{assumption}

We define the interior \emph{regular set} of $T$ to be all points around which $\spt T$ can locally be expressed as the graph of a sufficiently regular map. Namely,
\[
    \Reg T \coloneqq \set{p \in \spt T\setminus \spt(\partial T)}{\substack{\text{$\spt T \cap \Bbf_R(p)$ is a $\Crm^{1,\alpha}$ submanifold of $\R^{m+n}$ for} \\ \text{some $R, \alpha > 0$}}}.
\]

Consequently, we define the (interior) \emph{singular set} of $T$ to be
\[
    \Sing T \coloneqq \spt T \setminus (\spt(\partial T) \cup \Reg T).
\]
A sharp Hausdorff dimension estimate for the interior singular set of $T$ in this setting has been determined in all possible cases. When the codimension $\bar{n}$ is one, the Hausdorff dimension of this set was shown to be at most $m-7$ (with purely isolated singularities when $m=7$) by Federer~\cite{Federer_the_singular_sets_of}, with important contributions from Fleming, De~Giorgi, Almgren and Simons for the lower dimensional cases (see~\cite{Fleming_On_the_oriented_Plateau,De_Giorgi_frontiere,De_Giorgi_una_estensione,Almgren_some_interior_regularity,Simons_minimal_varieties}). It was further shown by Simon in~\cite{Simon_rectifiability} that the singular set is $(m-7)$-rectifiable. The sharpness of these results in codimension one were verified by Bombieri, De Giorgi \& Guisti in~\cite{Bombieri_De_Giorgi_Giusti}. There, the authors demonstrate that Simons' cone
\[
    S \coloneqq \set{x \in \R^8}{x_1^2 + x_2^2 + x_3^2 + x_4^2 = x_5^2 + x_6^2 + x_7^2 + x_8^2} \subset \R^8,
\]
which has an isolated singularity at the origin, is area minimizing.

When $\bar{n} \geq 2$, the seminal work of Almgren~\cite{Almgren_regularity} provides an upper Hausdorff dimension bound of $m-2$ for the interior singular set:
\begin{equation}\label{eq:Almgren}
    \dim_\Hcal(\Sing T) \leq m-2.
\end{equation}
The sharpness of this can be easily seen from considering any holomorphic variety of complex codimension one (real codimension two) with a singular branch point at the origin, such as
\[
    \Gamma = \set{(z,w) \in \C^2}{z^2 = w^3}.
\]
Almgren's original proof has since been simplified and made more transparent by De Lellis \& Spadaro in the series of papers~\cite{DLS14Lp, DLS16centermfld, DLS16blowup}. Furthermore, these authors were also able to relax the a priori assumptions on the ambient manifold $\Sigma$ from $\Crm^5$ to merely $\Crm^{3,\eps_0}$.

An important first step towards classifying the types of singularities of $T$ comes from Almgren's \emph{stratification} (see, for example,~\cite{WhiteStrat}), which tells us that
\begin{equation}\label{eq:Almgrenstrat}
    \dim_{\Hcal}\big(\Scal^k(T)\big) \leq k,
\end{equation}
where
\begin{equation}\label{eq:stratum}
    \Scal^{k}(T) \coloneqq \set{p \in \spt T\setminus\spt\partial T}{\substack{\text{any tangent cone of T at $p$ splits off} \\ \text{no more than a $k$-dimensional subspace}}}.
\end{equation}
If the codimension of $T$ is one, then any point that has a flat tangent cone is necessarily regular. This is due to the local characterization of codimension one integral currents in terms of boundaries of Caccioppoli sets, see~\cite[Theorem~27.6,~Corollary~27.8]{Simon_GMT}, combined with De Giorgi's $\eps$-Regularity Theorem. In addition, we always have $\Scal^{m-1}(T)\setminus\Scal^{m-2}(T) = \emptyset$ for classical area-minimizers. Combining these facts with~\eqref{eq:Almgrenstrat}, the codimension two bound on the singular set follows immediately (the sharper codimension $7$ bound due to Federer~\cite{Federer_the_singular_sets_of} comes from a more elaborate \emph{dimension reduction} argument). This is no longer true in higher codimension; flat singular points $p \in \Scal^m(T)\setminus\Scal^{m-1}(T)$ exist, as exemplified by holomorphic subvarieties of the form
\[
    \Gamma' = \set{(z,w) \in \C^2}{(z-w^2)^3 = w^{1000}}.
\]
This is a $3$-valued perturbation of the regular graph $z = w^2$ locally around the origin, and $0 \in \Scal^m(T)\setminus\Scal^{m-1}(T)$. 

Thus, more work needs to be done in the higher codimension setting, to estimate the size of those singular points at which $T$ is close to being flat. An important step is the removal of the contribution of the `regular part' of the current around singular points, before analyzing the singular behaviour. This is precisely the role of the \emph{center manifold}.

In contrast to the codimension one case, the fine structure of the interior singular set is still unknown in general. In the case $m=2$, Chang~\cite{SXChang} and De Lellis, Spadaro \& Spolaor~\cite{DLSS1, DLSS2, DLSS3} established a structure theorem for the singular set: all singularities are isolated. Chang's result relies heavily on the existence of a \emph{branched center manifold}. The rigorous proof of the existence of such an object is due to the latter named authors, who also generalize the structure result to the wider class of semicalibrated currents and spherical cross sections of area minimizing cones.

See also the recent work of Liu~\cite{liu2021finite}, where the author demonstrates that it is possible for any finite (combinatorial) graph to arise as the singular set of a 3-dimensional area minimizing current in a 7-dimensional closed, compact Riemannian manifold. The singular set produced there, however, consists only of conical and cylindrical singularities; namely, points in $\Scal^{1}(T)$.

The results of this article are an initial step towards better understanding the problem of how to determine the structure of the singular set; in particular, the `flat' singular points $p \in \Scal^{m}(T)\setminus\Scal^{m-1}(T)$.
Before we state our main result, let us introduce some common notation. We will use $\widebar{\dim}_M(E)$ to denote the upper Minkowski dimension of a set $E \subset \R^{m+n}$. Namely, 
\[
    \widebar{\dim}_M(E) \coloneqq \inf\set{s \geq 0}{\limsup_{\delta \todown 0} \delta^{s}N(\delta,E) < \infty},
\]
where $N(\delta,E)$ denotes the smallest number of open balls of radius $\delta$ required to cover $E$. 

For $Q \in \Nbb$, we let
\[
    \Sing_{\geq Q}(T) \coloneqq \Sing T \cap \Drm_{\geq Q}(T) 
\]
where
\[
    \Drm_{\geq Q}(T) \coloneqq \set{p \in \spt T\setminus\spt(\partial T)}{\Theta(T,p) \geq Q}.
\]
When we are referring to singular points of density exactly $Q$, we will use the notation $\Sing_Q T$. We are now ready to state our main result, which is the following:

\begin{theorem}\label{thm:main}
    For $T$ and $\Sigma$ as in Assumption~\ref{asm:curr} and $Q \in \Nbb \setminus  \{0\}$,
    \[
        \widebar{\dim}_M(\Sing_{\geq Q}T\cap\widebar{\Omega}) \leq m-2
    \]
    for any $\Omega \Subset \R^{m+n} \setminus (\partial T \cup \Sing_{\geq Q+1}T)$.
\end{theorem}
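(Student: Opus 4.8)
The plan is to run Almgren's blow-up scheme — in the streamlined form of De Lellis and Spadaro — but keep track of quantitative covering information at every scale, rather than merely extracting a dimension bound from a Federer-type reduction argument. The starting point is the standard stratification: away from $\Sing_{\geq Q+1}T$ and $\partial T$, every point of $\Sing_{\geq Q}T$ has density exactly $Q$, so it suffices to bound $\widebar{\dim}_M(\Sing_Q T \cap \widebar\Omega)$. The key structural dichotomy is that $\Sing_Q T$ splits into the points where some tangent cone splits off an $(m-1)$-dimensional plane (these lie in $\Scal^{m-1}(T)$, hence by the classical argument in $\Scal^{m-2}(T)$ and are handled by White's quantitative stratification, giving upper Minkowski dimension $\le m-2$), and the genuinely ``flat'' singular points in $\Scal^m(T)\setminus\Scal^{m-1}(T)$ where $T$ is close to a multiplicity-$Q$ plane. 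The heart of the matter is the latter set.

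For the flat singular points, I would localize at a point $p$ where $T$ is $Q$-times a plane at some fixed scale, construct the center manifold $\Mcal$ and the associated normal $Q$-valued approximation $N$ on it, and recall the two pillars of the blow-up analysis: the frequency function is almost monotone and bounded, and the Almgren estimates control $N$ in terms of the Dirichlet energy. From these, the contradiction argument of De Lellis–Spadaro shows the blow-up (a $\mathrm{Dir}$-minimizing multifunction) cannot have a singular point at the origin of the same density $Q$ at \emph{every} scale. To upgrade this to a Minkowski bound, the plan is to prove a \emph{quantitative} version: if $T$ is $\eta$-close to $Q\llbracket\pi\rrbracket$ in $\Bbf_1$ and there are many flat singular points of density $Q$ in a $\delta$-net that are ``$(m-1)$-spread'' at scale $1$ (not contained in any $\gamma$-neighborhood of an $(m-2)$-plane), then at a definite smaller scale the excess drops, or equivalently the frequency is pushed up. Iterating this across dyadic scales, the total excess decrement is bounded by the total frequency gain, which is bounded by the maximal frequency; a stopping-time / packing argument then caps the number of scales at which the ``bad'' $(m-1)$-spread configuration can occur, and at all other scales the singular set is essentially contained in a neighborhood of an $(m-2)$-plane, yielding the covering number bound $N(\delta,\Sing_Q T\cap\widebar\Omega)\lesssim \delta^{-(m-2)}$.

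A genuinely new ingredient, flagged in the abstract, is needed to make the iteration work: one must control the \emph{persistence} of the flat density-$Q$ singularities along the \emph{sequence} of center manifolds $\{\Mcal_k\}$ built at the successive blow-up scales $r_k\to 0$. The issue is that a singular point of $T$ need not be a singular point of the approximating function $N_k$ on $\Mcal_k$, nor even lie on $\Mcal_k$; one must quantify, via the splitting-before-tilting and the interpolation/comparison estimates between consecutive center manifolds, that a density-$Q$ singularity of $T$ forces $N_k$ to have a point of multiplicity $Q$ ``nearby'' on $\Mcal_k$ at a controlled fraction of scales, with an explicit rate. This improved persistence statement — essentially a quantitative ``no-collapse'' of the singular set across the center-manifold hierarchy — is, I expect, the main obstacle, and its proof will require carefully revisiting the construction of the center manifold (the Whitney decomposition, the stopping conditions, and the $\Crm^{3,\kappa}$ estimates) to extract the uniform constants, together with a compactness argument to rule out the degenerate scenario in which singularities are ``lost'' under the center-manifold construction at a positive density of scales.

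The remaining steps are comparatively routine: assembling the dyadic covering from the scale-by-scale excess decay (a standard Besicovitch/Vitali argument), checking that the constants produced by the compactness arguments are uniform over $\widebar\Omega$ (using that $\Omega\Subset\R^{m+n}\setminus(\partial T\cup\Sing_{\geq Q+1}T)$ gives a uniform lower bound on the distance to higher-density singularities and to the boundary), and an induction on $Q$ — treating $\Sing_{\geq Q}T$ by first covering $\Sing_{\geq Q+1}T$, which is closed and of the same form with $Q$ replaced by $Q+1$, and then handling $\Sing_Q T$ on the complement as above — noting the induction terminates because densities of an area-minimizing current in a fixed region are uniformly bounded above.
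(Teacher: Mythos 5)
Your strategy is genuinely different from the paper's, and its central step has a real gap. The engine of your argument is the asserted quantitative dichotomy --- that a collection of ``$(m-1)$-spread'' flat density-$Q$ singular points at a given scale forces a definite excess drop, ``or equivalently the frequency is pushed up,'' together with the claim that the total excess decrement over dyadic scales is controlled by the total frequency gain, hence by the maximal frequency. No such summable excess-versus-frequency relation is available in this setting, and it is not a routine consequence of the existing machinery: the frequency is only \emph{almost} monotone, it is defined relative to a center manifold that must be stopped and restarted along the intervals of flattening, and comparing frequencies (and relating them to the excess) across different center manifolds and different centers is precisely the hard part. What you are proposing is essentially the rectifiable-Reifenberg/Naber--Valtorta-style quantitative packing scheme of~\cite{DLMSV_Rectifiability_and_upper_Minkowski}, which the introduction explicitly flags as unclear how to adapt to area-minimizing currents. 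The paper deliberately avoids any scale-by-scale quantitative argument: Lemma~\ref{lem:workraccoon} (the Work Raccoon Lemma) converts a \emph{uniform} Hausdorff-dimension bound over the Hausdorff-compact, rescaling-invariant family $\Cscr(\eps,\tilde K)$ of blow-up limits of the singular set into the Minkowski bound, and the genuinely new work is Theorem~\ref{thm:Minkowskibd} via Proposition~\ref{prop:weakcpct} and the uniform upper and (new) lower frequency bounds for blow-ups with \emph{varying centers} (Theorems~\ref{thm:frequpperbd} and~\ref{thm:freqlowerbd}), which show every limit set lies either in $\Sing S$ of a minimizer or in $\Delta_Q(u)$ for a nontrivial Dir-minimizer $u$. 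Your ``persistence across the center-manifold hierarchy'' ingredient is indeed the right thing to worry about, but you would need it in a far stronger, quantified-per-scale form than the paper proves or than is currently known.

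Your preliminary reduction is also not correct as stated. You dispose of the non-flat density-$Q$ singular points (those in $\Scal^{m-2}(T)$) by appealing to quantitative stratification, but quantitative stratification yields Minkowski bounds only for the strata $\Scal^{k}_{\delta}$ at \emph{fixed} $\delta$, and upper Minkowski dimension does not pass to the countable union $\Scal^{m-2}=\bigcup_{\delta>0}\Scal^{m-2}_{\delta}$. Without a density gap there is no uniform $\delta$: an $(m-2)$-symmetric minimizing cone of density $Q$ (for instance $Q$ distinct planes meeting along an $(m-2)$-plane) can be arbitrarily close to the flat multiplicity-$Q$ plane, so density-$Q$ points with non-flat tangent cones need not lie in any single quantitative stratum. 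This is exactly why the paper applies the Naber--Valtorta result only to $\Sing_{\geq Q+\eps}T$ (Lemma~\ref{lem:nonintsing}), where the non-integer density window supplies the uniform $\delta$, and funnels \emph{all} remaining density-$[Q,Q+\eps]$ singular points --- flat or not --- through the compactness scheme above.
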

This provides a more refined dimension bound on the singular set than the existing one~\eqref{eq:Almgren} due to Almgren. However, we are not able to control the $(m-2)$-dimensional Minkowski content via the methods here. See the recent work~\cite{DLMSV_Rectifiability_and_upper_Minkowski} of De Lellis, Marchese, Spadaro \& Valtorta, where the authors successfully get an $(m-2)$-dimensional Minkowski content bound and establish $(m-2)$-rectifiability for the set of $Q$-points of (non-trivial) multi-valued harmonic maps. There, the authors use \emph{rectifiable Reifenberg} methods to establish this; it is currently unclear as to whether the same approach can be adopted in this setting. There is also the alternative approach in~\cite{KW2018fine} by Krummel \& Wickramasekera, but this seems to heavily rely on the explicit structure of Dir-minimizers on two-dimensional domains and the classificaiton of their frequency values.

Notice that in order to prove Theoreom~\ref{thm:main}, it suffices to establish the estimate
\[
    \widebar{\dim}_M(\Sing_{[Q,Q+\eps]}T\cap\widebar{\Omega}) \leq m-2, \qquad \text{for any $\eps > 0$},
\]
where
\[
    \Sing_{[Q,Q+\eps]}T \coloneqq \Sing T \cap D_{[Q,Q+\eps]} T,
\]
and
\[
     \Drm_{[Q,Q+\eps]}T \coloneqq \set{p \in \spt T \setminus \spt(\partial T)}{Q \leq \Theta(T,p) \leq Q + \eps}.
\]
This is because we have the following consequence of the quantitative stratification of Naber \& Valtorta in~\cite{naber2017singular}:
\begin{lemma}\label{lem:nonintsing}
    For any $\eps > 0$, the following holds. For $T$ and $\Sigma$ as in Assumption~\ref{asm:curr} and $Q \in \Nbb \setminus \{0\}$, the set
    \[
        \Sing_{\geq Q+\eps}T\cap\widebar{\Omega}
    \]
    is $(m-3)$-rectifiable and has finite upper $(m-3)$-Minkowski content, for any $\Omega \Subset \R^{m+n} \setminus (\partial T \cup \Sing_{\geq Q+1}(T))$. 
\end{lemma}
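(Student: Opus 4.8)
The plan is to deduce Lemma~\ref{lem:nonintsing} from the quantitative stratification of \textsc{Naber} and \textsc{Valtorta}~\cite{naber2017singular}, applicable to area minimizers in a $\Crm^{3,\eps_0}$ ambient manifold by virtue of the interior monotonicity formula, combined with the classical fact that the density of $T$ at a point admitting a tangent cone with $(m-2)$-dimensional spine is an integer. Since $\Sing_{\geq Q+\eps}T \subseteq \Sing_{\geq Q+\eps'}T$ whenever $\eps' \leq \eps$, we may assume $\eps \in (0,1)$; and when $m=2$ the set $E \coloneqq \Sing_{\geq Q+\eps}T\cap\widebar{\Omega}$ is empty (by Chang's theorem all interior singularities of $T$ are isolated and, as in the second step below, of integer density), so we may assume $m \geq 3$. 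The first step is to record that $E$ is compact — it is the intersection of the compact set $\widebar{\Omega}\cap\spt T$ with the relatively closed set $\Sing T$ (using $\widebar{\Omega}\cap\spt(\partial T) = \emptyset$) and with the set $\setn{p}{\Theta(T,p)\geq Q+\eps}$, which is closed by upper semicontinuity of the density — and that $\Theta(T,\cdot) < Q+1$ pointwise on $E$, since a singular point of $\widebar{\Omega}$ cannot lie in $\Sing_{\geq Q+1}T$. As $\Theta(T,\cdot)$ attains its maximum over the compact set $E$, there is $\eta_0 > 0$ with $\Theta(T,p) \leq Q+1-\eta_0$ on $E$, whence
\[
    \dist\big(\Theta(T,p),\Z\big) \;\geq\; \theta_0 \;\coloneqq\; \min\{\eps,\eta_0\} \;>\; 0 \qquad\text{for all } p\in E.
\]

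The second step is the arithmetic input. Let $\mathbf{C}$ be an $m$-dimensional area-minimizing cone in $\R^{m+n}$ whose spine $V$ has $\dim V = m-j$, $j\in\{0,1,2\}$, and write $\mathbf{C} = \llbracket V\rrbracket\times\mathbf{C}_0$, where $\mathbf{C}_0$ is a $j$-dimensional boundaryless area-minimizing cone (a factor of a minimizer is a minimizer) with $\Theta(\mathbf{C},0)=\Theta(\mathbf{C}_0,0)$. If $j=0$, then $\mathbf{C}$ is an $m$-plane with positive integer multiplicity. If $j=1$, then $\spt\mathbf{C}_0$ is a union of rays, and since replacing a non-collinear (outgoing, incoming) pair of rays near the origin by a chord would strictly lower the mass while preserving cyclicity, $\mathbf{C}_0$ is a single line through $0$ with positive integer multiplicity. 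If $j=2$, then the singular set of a cone is a union of rays, whereas by Chang~\cite{SXChang} the interior singularities of a $2$-dimensional area-minimizing current are isolated; hence $\mathbf{C}_0$ is smooth away from $0$, its link is a smooth $1$-dimensional stationary integral varifold in a round sphere, i.e.\ a finite union of great circles with positive integer multiplicities, and so $\mathbf{C}_0$ is a sum of $2$-planes through $0$. In every case $\Theta(\mathbf{C},0)\in\N$.

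The third step confines $E$ to an effective stratum: there exist $\eta>0$ and $r_0>0$, depending only on $m$, $n$, $\theta_0$ and $\Sigma$, such that for every $p\in E$ and every $s\in(0,r_0)$ the rescaling of $T$ about $p$ at scale $s$ is \emph{not} $\eta$-close to an $(m-2)$-symmetric cone; in the terminology of~\cite{naber2017singular} this places $E$ in the effective $(m-3)$-stratum at all scales below $r_0$. Were this false, there would be $p_j\in E$, $s_j\downarrow0$, $\eta_j\downarrow0$ with the scale-$s_j$ rescaling of $T$ about $p_j$ being $\eta_j$-close to an $(m-2)$-symmetric cone; passing to a subsequence, $p_j\to p_\infty\in E$ and these rescalings converge — with convergence of mass, as is standard for area minimizers — to an $(m-2)$-symmetric area-minimizing cone $\mathbf{C}_\infty$, so $\Theta(\mathbf{C}_\infty,0)\in\N$ by the second step. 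On the other hand, writing $\vartheta_p(r) = \|T\|(\Bbf_r(p))/(\omega_m r^m)$, the monotonicity formula (with curvature-dependent constant $\Lambda=\Lambda(\Sigma)$) gives $\vartheta_{p_j}(s_j)\geq\Theta(T,p_j)\geq Q+\eps$ and $\vartheta_{p_j}(s_j)\leq e^{\Lambda\rho}\,\vartheta_{p_j}(\rho)$ for all $\rho\geq s_j$; letting $j\to\infty$ and then $\rho\downarrow0$ forces $Q+\eps\leq\Theta(\mathbf{C}_\infty,0)\leq\Theta(T,p_\infty)\leq Q+1-\eta_0$, incompatible with $\Theta(\mathbf{C}_\infty,0)\in\N$.

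Finally, cover $\widebar{\Omega}$ by finitely many balls $\Bbf_{r_0}(x_i)$ with $\Bbf_{2r_0}(x_i)\Subset\R^{m+n}\setminus(\spt(\partial T)\cup\Sing_{\geq Q+1}T)$ and uniformly bounded mass $\|T\|(\Bbf_{2r_0}(x_i))$, and apply the main theorems of~\cite{naber2017singular} in each (after rescaling to the unit ball): the stratum $\Scal^{m-3}_{\eta,\rho}(T)\cap\Bbf_{r_0}(x_i)$ is $(m-3)$-rectifiable and is covered by at most $C\rho^{-(m-3)}$ balls of radius $\rho$, with $C$ independent of $\rho$. By the third step, $E\cap\Bbf_{r_0}(x_i)\subseteq\bigcap_{\rho>0}\Scal^{m-3}_{\eta,\rho}(T)$, so it is $(m-3)$-rectifiable and satisfies $N(\rho, E\cap\Bbf_{r_0}(x_i))\leq C\rho^{-(m-3)}$; summing over the finite cover yields the $(m-3)$-rectifiability and finite upper $(m-3)$-dimensional Minkowski content of $E$. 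I expect the main obstacle to be the third step together with the verification that the Naber--Valtorta framework applies in the present generality: one has to confirm that area-minimizing currents in a $\Crm^{3,\eps_0}$ ambient manifold fit (a suitable adaptation of) the hypotheses in~\cite{naber2017singular}, so that the second fundamental form of $\Sigma$ contributes only harmless error terms to the monotonicity and symmetry estimates, and to carry out the pseudo-tangent-cone compactness argument controlling the density ratios; the second step, though classical, depends essentially on Chang's resolution of the two-dimensional case.
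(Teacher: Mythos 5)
Your proposal is correct and takes essentially the same route as the paper's own proof: both show that on the compact set $E=\Sing_{\geq Q+\eps}T\cap\widebar{\Omega}$ the density is trapped uniformly between the integers $Q$ and $Q+1$, that any area-minimizing $(m-2)$-symmetric cone has integer density at the vertex, and hence (by a compactness/monotonicity argument) that $E$ lies in a quantitative stratum $\Scal^{m-3}_{\delta}(T,\rho)$ of Naber--Valtorta, whose rectifiability and Minkowski-content theorem then concludes. The only differences are cosmetic: you obtain the uniform scale and closeness parameters via a pseudo-tangent contradiction argument rather than the paper's two-step mass-ratio bound, and you supply proofs of the cone-density integrality and of the $m=2$ case, which the paper simply asserts or leaves implicit.
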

This allows us to ignore contribution from singularities of all non-integer densities above an arbitrarily small threshold. We defer the proof of this to Section~\ref{sct:nonintdensity}. By virtue of the Allard-Almgren $\eps$-Regularity Theorem, in the case $Q = 1$ we have
\[
    \Sing_1 T \subset \Sing_{\geq 1+ \eps}T
\]
for any $\eps > 0$. Thus, in the special case of singular points with multiplicities $Q \in [1,2)$, we immediately deduce the following stronger result:

\begin{proposition}
    For any $\Omega \Subset \R^{m+n} \setminus (\partial T \cup \Sing_{\geq 2}(T))$, the set
    \[
        \Sing_{\geq 1}(T)\cap \widebar{\Omega}           
    \]
    is $(m-3)$-rectifiable and has finite upper $(m-3)$-Minkowski content.
\end{proposition}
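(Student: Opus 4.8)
The plan is to obtain the proposition as an immediate corollary of Lemma~\ref{lem:nonintsing} with $Q=1$; the only additional ingredient needed is the observation that, at positive distance from $\spt(\partial T)$, every singular point of $T$ has density exceeding $1$ by a \emph{uniform} amount. I would begin by disposing of notation: since $T$ is area minimizing in the $\Crm^{3,\eps_0}$ manifold $\Sigma$, the (error-corrected) monotonicity formula gives $\Theta(T,p)\geq 1$ for every $p\in\spt T\setminus\spt(\partial T)\supset\Sing T$, so that $\Drm_{\geq 1}(T)=\spt T\setminus\spt(\partial T)$ and hence $\Sing_{\geq 1}(T)=\Sing T$. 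In particular $\Sing_{\geq 1}(T)\cap\widebar{\Omega}=\Sing T\cap\widebar{\Omega}$.

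Next I would invoke the quantitative form of the Allard--Almgren $\eps$-regularity theorem: there is a constant $\eps_*>0$, depending only on $m$, $n$ and the local geometry of $\Sigma$, such that if $p\in\spt T$, $\Bbf_r(p)\cap\spt(\partial T)=\emptyset$ and $\|T\|(\Bbf_r(p))\leq(1+\eps_*)\omega_m r^m$ for some $r>0$, then $\spt T\cap\Bbf_{r/2}(p)$ is a $\Crm^{1,\alpha}$ submanifold, i.e.\ $p\in\Reg T$. Taking the contrapositive and letting $r\todown 0$ shows that every $p\in\Sing T$ admitting some ball $\Bbf_r(p)$ disjoint from $\spt(\partial T)$ satisfies $\Theta(T,p)\geq 1+\eps_*$. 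Now fix $\Omega\Subset\R^{m+n}\setminus(\partial T\cup\Sing_{\geq 2}(T))$; then $\widebar{\Omega}$ is compact and disjoint from the closed set $\spt(\partial T)$, hence $\dist(\widebar{\Omega},\spt(\partial T))>0$, so the preceding remark applies to every point of $\Sing T\cap\widebar{\Omega}$. Combined with the first step this yields
\[
    \Sing_{\geq 1}(T)\cap\widebar{\Omega}\;=\;\Sing T\cap\widebar{\Omega}\;=\;\Sing T\cap\Drm_{\geq 1+\eps_*}(T)\cap\widebar{\Omega}\;=\;\Sing_{\geq 1+\eps_*}(T)\cap\widebar{\Omega}.
\]

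Finally I would apply Lemma~\ref{lem:nonintsing} with $Q=1$ and $\eps=\eps_*>0$: since $\R^{m+n}\setminus(\partial T\cup\Sing_{\geq 2}(T))=\R^{m+n}\setminus(\partial T\cup\Sing_{\geq Q+1}(T))$, the lemma gives that $\Sing_{\geq 1+\eps_*}(T)\cap\widebar{\Omega}$ is $(m-3)$-rectifiable and has finite upper $(m-3)$-Minkowski content, and by the displayed identity the same holds for $\Sing_{\geq 1}(T)\cap\widebar{\Omega}$, as claimed. I do not expect any genuine obstacle here: the analytic content is entirely carried by Lemma~\ref{lem:nonintsing} (a consequence of Naber--Valtorta quantitative stratification, proved in the appendix) together with $\eps$-regularity. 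The one point demanding a little care is that one must use the \emph{quantitative} (mass-ratio) version of Allard--Almgren, so that the density gap $\eps_*$ is a single constant independent of the singular point and of scale; the bare pointwise implication ``$\Theta(T,p)=1\Rightarrow p\in\Reg T$'' only gives $\Theta(T,p)>1$ pointwise, which, in the absence of lower semicontinuity of $\Theta$, would not by itself force a uniform $\eps$ with $\Sing T\cap\widebar{\Omega}\subset\Drm_{\geq 1+\eps}(T)$.
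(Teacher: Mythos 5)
Your proof is correct and takes essentially the same route as the paper: there the proposition is likewise deduced by combining the Allard--Almgren $\eps$-regularity theorem (which forces a uniform density gap above $1$ for interior singular points, so that $\Sing_{\geq 1}T\cap\widebar{\Omega}=\Sing_{\geq 1+\eps_*}T\cap\widebar{\Omega}$) with Lemma~\ref{lem:nonintsing} applied with $Q=1$. Your insistence on the quantitative mass-ratio form of Allard, so that a single $\eps_*$ works on all of $\widebar{\Omega}$, is precisely the point the paper leaves implicit in its one-line justification.
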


Notice that throughout this article, we restrict to the study of interior singularities for our area minimizing current $T$. Boundary singularities are much less clearly understood; Hardt \& Simon demonstrated in~\cite{Hardt_Simon_boundary} that the boundary singular set is empty when the codimension $\bar{n} = 1$, while Allard~\cite{Allard_boundary} demonstrated boundary regularity in any dimension and codimension under a multiplicity one assumption on the boundary and with a convex barrier. More recently in the works~\cite{HM} and~\cite{DLNS_unique, DLNS_Allard} the uniqueness of tangent cones and an Allard-type regularity result at the boundary is shown when $m=2$. However, such a regularity result only holds at \emph{all} boundary points under a convex barrier assumption on the boundary, and to the author's knowledge, the best known result to date in general is density of regular boundary points. See~\cite{DLDPHM_boundary} for a proof of this.

\section*{Acknowledgments} The author is incredibly grateful to Professor Camillo De Lellis, who introduced her to the problem of determining the fine structure of the singular set of high codimension area minimizing currents, and whose unpublished ideas formed the basis of this article. Professor De Lellis also read a preliminary draft of the paper, providing many helpful suggestions, and engaged in several insightful discussions with the author. Without the help and support of Prof. De Lellis and all those that have participated in the interior regularity reading seminars at Princeton, she would have found it near impossible to understand the background work of Almgren and many others in this field.

\section{Notation}
Let us now introduce the frequent notation that we will be using:

\begin{align*}
    & \Acal_Q(\R^k) && \text{the space of $Q$-tuples of vectors in $\R^k$ (see \cite{DLS_MAMS} for more details);} \\
    & \Bbf_r(p) && \text{the $(m+n)$-dimensional Euclidean ball of radius $r$ centered at $p$};\\
    & \Bcal_r(z) && \text{the geodesic ball of radius $r$ centered at $z$ on a given center} \\
    & \ && \text{manifold (see~\cite{DLS16centermfld} for more details);}\\
    &\Hcal^s &&\text{the} \ s \text{-dimensional Hausdorff measure}, \ s \geq 0; \\
    & d_H && \text{the Hausdorff metric on the space of compact subsets of $\R^{m+n}$}; \\
    &\Wrm^{k,p}(\Omega;\Acal_Q) &&\text{the space of} \ Q\text{-valued} \ p\text{-integrable Sobolev maps with} \\ 
    & \ && \text{$p$-integrable distributional derivatives up to order $k \in \Nbb$} \ \text{on} \ \Omega; \\
    &B_r(z,\pi) &&\text{the $m$-dimensional Euclidean ball of radius $r$ and center $z$ in the} \\
    & \ &&\text{$m$-dimensional plane $\pi$. If it is clear from context, we will just} \\
    & \ &&\text{write $B_r(z)$;} \\
    & E^\perp &&\text{The orthogonal complement to the set $E$ with respect to the} \\
    & \ &&\text{standard Euclidean inner product;} \\
    &\Cbf_r(z,\pi) &&\text{the infinite $(m+n)$-dimensional Euclidean cylinder $B_r(z,\pi) + \pi^\perp$}\\
    & \ &&\text{with center $z$, radius $r$ in direction $\pi^\perp$;} \\
    & \iota_{z,r} &&\text{the scaling map $w \mapsto \frac{w-z}{r}$ around the center $z$;} \\
\end{align*}
\begin{align*}
    & \tau_z &&\text{the translation map $w \mapsto w+z$;} \\
    & f_\sharp &&\text{the push-forward under the map $f$;} \\
    & E_{z,r} &&\text{the blow-up $(\iota_{z,r})_\sharp E$ of the set $E$;} \\
    &T_p \Ncal &&\text{the tangent plane to the manifold $\Ncal$ at the point $p \in \Ncal$;} \\
    &\Tbf_{F} &&\text{the current $\sum_{i\in\N}\sum_{j=1}^Q (f^j_i)_\sharp \llbracket M_i \rrbracket$ induced by the push-forward of a}\\
    & \ &&\text{$Q$-valued map $F:M \to \Acal_Q(\R^{m+n})$ on a Borel set $M \subset \R^m$ with} \\
    & \ &&\text{decomposition $F|_{M_i} = \sum_{j=1}^Q \llbracket f^j_i \rrbracket$, $M = \sqcup_i M_i$ as in~\cite[Lemma~1.1]{DLS_multiple_valued}} \\
    & \ &&\text{(see~\cite[Section~1.1]{DLS_multiple_valued} for a more detailed definition);} \\
    & E \Subset F &&\text{The set $E$ is compactly contained in the set $F$, namely, $\cl{E} \subset F$;} \\
    & A \simeq B &&\text{The quantities $A$ and $B$ are comparable, namely, $c_1A \leq B \leq c_2B$} \\
    & \ &&\text{for some $c_1, c_2 > 0$}; \\
    &\Theta(T,p) &&\text{the $m$-dimensional Hausdorff density of $T$ at a given point $p$;} \\
    &\mathbf{p}_\pi &&\text{the orthogonal projection to the $m$-plane $\pi \subset \R^{m+n}$;} \\
    &\Ical^m(X) &&\text{the space of integral $m$-dimensional varifolds on $X$;} \\
    &\Fcal(V_1,V_2) &&\text{the flat distance between the varifolds $V_1$ and $V_2$;} \\
    &\Fbf(V_1,V_2) &&\text{the varifold distance (induced by the weak-$*$ topology) between $V_1$} \\
    &\ &&\text{and $V_2$}. \\
\end{align*}
We refer the reader to a standard text such as~\cite{Federer},~\cite{Simon_GMT} or~\cite{FMorganGMT} for the relevant geometric measure theory background. For further details on $Q$-valued maps and related concepts, we recommend~\cite{DLS_MAMS} and~\cite{DLS_multiple_valued}.

We will often be extracting subsequences for compactness arguments throughout this article. None of the subsequences will be relabelled, unless otherwise stated. \emph{Geometric constants}, which only depend only on $m,n,\bar{n}$ and $Q$, will usually be denoted by $C_0$. When it is not important for the overall argument, dependencies of constants such as $c$ and $C$ will be omitted. Such constants will typically also be geometric; otherwise, the precise dependencies can often be inferred from the text.
\section{Key preliminary results}\label{sct:prelim}

Before stating the important preliminary results needed to prove Theorem~\ref{thm:Minkowskibd}, we remind ourselves that the goal is to prove that for any given $\alpha > 0$, the upper $(m-2+\alpha)$-Minkowski content of $\Sing T$ vanishes:

\[
    \limsup_{r \todown 0} N(\Sing T, r)r^{m-2+\alpha} = 0.
\]
We argue by contradiction; suppose this is not true. As in the proof of the Hausdorff dimension bound, we would like to blow up around an arbitrary $\Hcal^{m-2+\alpha}$-density point $p \in \Sing T$ and combine this with the existence of a flat tangent cone there to reach a contradiction. The problem is that the Minkowski contents do not behave desirably under Hausdorff convergence of sets, and in fact are not even measures so are not particularly easy to work with in general. The Hausdorff measure itself is also not necessarily lower-semicontinuous with respect to Hausdorff convergence, but to avoid issues with this one can relax it to the premeasure $\Hcal^s_\infty$. Null sets are preserved under this replacement, so this does not affect our contradiction argument.

Thus, we first reduce the problem of estimating the upper Minkowski content of $\Sing T$ to one of estimating the Hausdorff dimension. Of course, since the latter is a cruder dimension estimate, we pay the price of having to bound the Hausdorff dimension uniformly for every set in an appropriate compact family. A crucial tool that we will require is Almgren's Work Raccoon Lemma, which allows us to work with the Hausdorff measure instead of the Minkowski content. To the author's knowledge, this result is originally due to Almgren and a version of it is likely contained within~\cite{AlmgrenQvalued}, but first formally appeared in~\cite[Theorem~5.1]{White86}. However, in both of these references, it was used to establish Hausdorff dimension estimates for blow-up limits and as far as the author is aware, it has not yet been formally observed that this result also gives a Minkowski dimension bound.

\begin{lemma}[Work Raccoon Lemma]\label{lem:workraccoon}
    Suppose that we have a non-trivial class $\Cscr$ of compact subsets of $\widebar{\Bbf}_1 \subset \R^{m+n}$ such that
    \begin{enumerate}[(a)]
        \item\label{itm:closedrescale} $\Cscr$ is closed under rescalings, namely, for each $K \in \Cscr$, $x \in \widebar{\Bbf}_1$ and $0 < r \leq 1$,
        \[
            K_{x,r} \cap \widebar{\Bbf}_1 \coloneqq (\iota_{x,r})_\sharp K \cap \widebar{\Bbf}_1 \in \Cscr, 
        \]
        \item\label{itm:closedconv} $\Cscr$ is closed under Hausdorff convergence.
    \end{enumerate}
    Then the family of exponents
    \[
        \Ascr(\Cscr) \coloneqq \set{\alpha \geq 0}{\Hcal^\alpha(K) = 0 \ \text{for every $K \in \Cscr$}}
    \]
    is an open half-line $(\alpha_0,\infty)$.
    
    Moreover, for every $0 < r \leq 1$, the minimal number $N(K,r)$ of balls of radius $r$ required to cover $K$ satisfies
    \begin{equation}\label{eq:Minkowskicpct}
        N(K,r)r^\alpha \leq C(\alpha) \qquad \text{for every $\alpha \in \Ascr(\Cscr)$}.
    \end{equation}
\end{lemma}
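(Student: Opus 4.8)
Set $g_\alpha(r):=\sup_{K\in\Cscr}N(K,r)\,r^\alpha$ for $\alpha\geq0$ and $r\in(0,1]$; covering $\widebar{\Bbf}_1$ by small balls gives $g_\alpha(r)\leq C(m,n)\,r^{\alpha-m-n}<\infty$, and non-triviality of $\Cscr$ gives $g_\alpha(r)>0$. The plan is to analyze $g_\alpha$ through a scaling submultiplicativity and thereby pin down $\alpha_0:=\inf\Ascr(\Cscr)$. First I would observe that hypothesis (a) makes $g_\alpha$ \emph{submultiplicative up to a dimensional constant}: given $K\in\Cscr$ and $r_1,r_2\in(0,1]$, cover $K$ by $\lesssim N(K,r_2)$ balls $\widebar{\Bbf}_{r_2}(z_i)$ with $z_i\in K$; each blow-up $(\iota_{z_i,r_2})_\sharp K\cap\widebar{\Bbf}_1$ lies in $\Cscr$, and covering these at scale $r_1$ assembles a cover of $K$ at scale $r_1r_2$, so $g_\alpha(r_1r_2)\leq C_0\,g_\alpha(r_1)g_\alpha(r_2)$. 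Hence $t\mapsto\log\big(C_0\,g_\alpha(e^{-t})\big)$ is subadditive on $[0,\infty)$, so Fekete's lemma produces $\gamma(\alpha):=\lim_{r\todown0}\tfrac{\log g_\alpha(r)}{-\log r}$; since $g_\alpha(r)=r^\alpha g_0(r)$ one gets $\gamma(\alpha)=D-\alpha$, where $D:=\gamma(0)\in[0,m+n]$ is, heuristically, the supremal box dimension present in the family.

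Next, the regime $\alpha>D$ is soft. There $\gamma(\alpha)<0$, and subadditivity upgrades the limit to an honest rate $g_\alpha(r)\leq C\,r^{(\alpha-D)/2}$ for all $r\in(0,1]$. Then $\Hcal^\alpha_\delta(K)\leq 2^\alpha g_\alpha(\delta/2)\todown0$, so $\Hcal^\alpha(K)=0$ for every $K\in\Cscr$ and $\sup_{K,r}N(K,r)r^\alpha=\sup_r g_\alpha(r)<\infty$. This already gives $(D,\infty)\subseteq\Ascr(\Cscr)$ and the estimate \eqref{eq:Minkowskicpct} on that range, so what remains is to show $\Ascr(\Cscr)\cap[0,D)=\emptyset$ (whence $\alpha_0=D$) and that $D\notin\Ascr(\Cscr)$.

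Both come from the regime $\alpha<D$, where $\gamma(\alpha)>0$ and hence $g_\alpha(r)\tolong\infty$; fix $K_j\in\Cscr$ and $r_j\todown0$ with $\beta_j:=N(K_j,r_j)r_j^\alpha\tolong\infty$. Here I would renormalize at the \emph{worst scale}, following Almgren and White: among $z\in\widebar{\Bbf}_1$ and $\rho\in[r_j,1]$ choose $(z_j,\rho_j)$ nearly maximizing $N\big((\iota_{z,\rho})_\sharp K_j\cap\widebar{\Bbf}_1,\,r_j/\rho\big)(r_j/\rho)^\alpha$, and put $L_j:=(\iota_{z_j,\rho_j})_\sharp K_j\cap\widebar{\Bbf}_1\in\Cscr$, $s_j:=r_j/\rho_j$, $\lambda_j:=N(L_j,s_j)s_j^\alpha$. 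Then $\lambda_j\geq\beta_j/2\tolong\infty$, which forces $s_j\todown0$ since a fixed ball has bounded covering number; and composing rescalings and using the near-maximality gives the decisive \emph{non-concentration} bound $N\big(L_j\cap\widebar{\Bbf}_\sigma(y),\,s_j\big)(s_j/\sigma)^\alpha\leq2\lambda_j$ for all $y\in L_j$ and $\sigma\in[s_j,1]$. Letting $\mu_j$ be the uniform probability measure on a maximal $s_j$-separated subset of $L_j$, this becomes a Frostman bound $\mu_j(\widebar{\Bbf}_\rho(y))\leq C_0\rho^\alpha$ valid for $\rho\geq\rho_j^{\min}$, with $C_0$ \emph{dimensional} (independent of $\alpha$) and $\rho_j^{\min}\todown0$ because $\lambda_j\tolong\infty$. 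Extracting a Hausdorff limit $L_j\to K^{(\alpha)}\in\Cscr$ (using (b)) and a weak-$*$ limit $\mu_j\rightharpoonup\mu^{(\alpha)}$ produces a probability measure carried by $K^{(\alpha)}$ with $\mu^{(\alpha)}(\widebar{\Bbf}_\rho(y))\leq C_0\rho^\alpha$ for \emph{every} $\rho>0$; the mass distribution principle then gives $\Hcal^\alpha_\infty(K^{(\alpha)})\geq C_0^{-1}>0$, so $\Hcal^\alpha(K^{(\alpha)})>0$ and $\alpha\notin\Ascr(\Cscr)$. This yields $\alpha_0=D$. To get that $\Ascr(\Cscr)$ is \emph{open}, i.e.\ $D\notin\Ascr(\Cscr)$, I would run the same construction along a sequence $\alpha_k\uparrow D$ and take one further limit: since $C_0$ is independent of $\alpha_k$, the limiting set $K^\ast\in\Cscr$ carries a probability measure $\mu^\ast$ with $\mu^\ast(\widebar{\Bbf}_\rho(y))\leq C_0\rho^D$, whence $\Hcal^D(K^\ast)>0$. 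Therefore $\Ascr(\Cscr)=(D,\infty)=(\alpha_0,\infty)$, and \eqref{eq:Minkowskicpct} for $\alpha\in\Ascr(\Cscr)$ is exactly the boundedness of $g_\alpha$ from the soft regime. (If $D=0$ the whole statement follows from the soft regime together with non-triviality.)

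The main obstacle is the renormalization step. The delicate part is to choose the scales so that $\lambda_j\tolong\infty$ \emph{and} $s_j\todown0$ at once — if $s_j$ stayed bounded below, the limiting set could degenerate to finitely many points and support no mass — and to extract the non-concentration estimate with a constant $C_0$ that does \emph{not} depend on $\alpha$, which is precisely what lets the Frostman measures survive the additional limit $\alpha_k\uparrow D$ required for openness. The rest is routine but fiddly point-set bookkeeping (the ``$\cap\widebar{\Bbf}_1$'' truncations in (a), comparing covering with packing numbers, the equivalence $\Hcal^\alpha(E)=0\iff\Hcal^\alpha_\infty(E)=0$ and its behaviour under Hausdorff convergence, and supports and masses of weak-$*$ limits), none of which needs a new idea.
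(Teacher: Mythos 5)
Your proposal is correct in its essentials (the steps you flag as ``routine bookkeeping'' are indeed routine), but it follows a genuinely different route from the paper's. The paper never introduces measures or a critical exponent: it fixes $\beta\in\Ascr(\Cscr)$, uses compactness of $\Cscr$ in the Hausdorff topology to extract \emph{finitely many} model covers with $\beta$-sum $<\tfrac12$, lowers the exponent to some $\alpha<\beta$ (possible precisely because only finitely many radii occur), and then iterates these model covers through the rescaling hypothesis to get covers with $\alpha$-sum $\le 2^{-\ell}$; openness follows at once, and \eqref{eq:Minkowskicpct} follows after a post-processing step that makes all radii at a given stage comparable. You instead identify $\alpha_0$ intrinsically as the critical covering exponent $D$ furnished by submultiplicativity and Fekete, obtain \eqref{eq:Minkowskicpct} (with the quantitative rate $N(K,r)r^\alpha\le C r^{(\alpha-D)/2}$) softly for $\alpha>D$, and exclude every $\alpha\le D$ by the worst-scale renormalization plus a Frostman measure on a Hausdorff limit set, concluding with the mass distribution principle; here hypothesis (b) enters only to keep the limit sets in $\Cscr$, and the admissibility of the recentered competitor in your non-concentration step is legitimate because $z_j+\rho_j y\in K_j\subset\widebar{\Bbf}_1$ whenever $y\in L_j$, which is exactly where the restriction $x\in\widebar{\Bbf}_1$ in (a) could otherwise bite. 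The uniformity in $\alpha$ of the Frostman constant, which you correctly single out as the delicate point, does hold since all relevant exponents are at most $m+n$, and it is what allows the extra limit $\alpha_k\uparrow D$ excluding the endpoint. What your route buys is an explicit characterization of $\alpha_0$, a witness $K^*\in\Cscr$ with $\Hcal^{\alpha_0}(K^*)>0$, and a power-rate decay above $\alpha_0$; what the paper's route buys is a completely elementary covering iteration, free of measure theory, in which the single compactness step that yields openness simultaneously yields the uniform Minkowski bound.
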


We postpone the proof of this Lemma to Section~\ref{sct:workraccoon}, and will henceforth assume its validity.

We will henceforth fix an arbitrary set $\Omega \Subset \R^{m+n}\setminus (\partial T \cup \Sing_{\geq Q + 1} T)$. We now seek an appropriate family of sets to which the Work Raccoon Lemma applies. Recall that, as for the Hausdorff dimension estimate, we are interested in blowing up around points $x \in \Sing_{[Q,Q+\eps]} T \cap \Omega$, in order to understand the size of this singular set. This motivates choosing the compact family of sets to be the smallest one that contains blow-up sequences of the singular set and is closed under Hausdorff convergence and rescalings of the form in~\eqref{itm:closedrescale} of the lemma. 
We formulate this more precisely as follows.

Given $\eps \in (0,1)$, let
\begin{equation}\label{eq:Qpts}
    A(\eps) \coloneqq \Omega\setminus \Sing_{\geq Q + \eps}T.
\end{equation}
By the upper-semicontinuity of the density, this is an open set, and so can be written as an (increasing) countable union of compact sets. Fix any one of these compact sets $\tilde K = \tilde K(\eps)$ and let
\[
    K^0(\eps, \tilde K) \coloneqq \tilde K(\eps) \cap\Sing_{\geq Q}T.
\]
Hence, in order to prove the Minkowski dimension estimate in Theorem~\ref{thm:main}, it suffices to establish it for $K^0(\eps,\tilde K)$. First of all, observe that we may find $R>0$ large enough such that $\Omega \subset \cl{\Bbf}_R$, so $K^0(\eps,\tilde K)$ is a compact subset of $\cl{\Bbf}_R$. Clearly the result of the Work Raccoon Lemma is unchanged by replacing $\cl{\Bbf}_1$ with $\cl{\Bbf}_R$, 
so we may assume that $R = 1$. Our candidate for the family to which we wish to apply the Work Raccon Lemma is then
\[
    \Cscr(\eps,\tilde K) \coloneqq \set{\text{$K$ compact, $K \subset K^\infty$}}{\substack{\textup{\normalsize $K^0_{x_k,r_k}(\eps, \tilde K) \cap \cl{\Bbf}_1 \overset{d_H}\longrightarrow K^\infty$,} \\ \text{\normalsize $x_k \in \R^{m+n}, \ r_k \in (0,1]$ convergent}}}.
\]
Indeed, we can say the following:

\begin{proposition}\label{prop:compactfam}
    Given $\eps \in (0,1)$ and $\tilde K(\eps)$ as defined above, the collection $\Cscr(\eps,\tilde K)$ contains $K^0(\eps, \tilde K)$ and satisfies the assumptions of Lemma~\ref{lem:workraccoon}.
\end{proposition}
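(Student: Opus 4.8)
The plan is to verify the three requirements of Lemma~\ref{lem:workraccoon} for the family $\Cscr(\eps,\tilde K)$: that it contains $K^0(\eps,\tilde K)$, that it is closed under the rescalings in~\eqref{itm:closedrescale}, and that it is closed under Hausdorff convergence. The first is immediate: taking $x_k \equiv 0$ and $r_k \equiv 1$ for all $k$, the sequence $K^0_{0,1}(\eps,\tilde K)\cap\cl{\Bbf}_1 = K^0(\eps,\tilde K)$ is constant, hence converges in $d_H$ to $K^0(\eps,\tilde K)$, so $K^\infty = K^0(\eps,\tilde K)$ belongs to the family and we may take $K = K^\infty$ (nontriviality of the class also follows, provided $K^0(\eps,\tilde K)$ is nonempty — if it is empty there is nothing to prove). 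The only subtlety worth a line is that $K^0$ is indeed compact: it is the intersection of the compact set $\tilde K(\eps)$ with $\Sing_{\geq Q}T = \Sing T \cap \Drm_{\geq Q}(T)$, and $\Drm_{\geq Q}(T)$ is relatively closed in $\spt T\setminus\spt(\partial T)$ by upper semicontinuity of the density $\Theta(T,\cdot)$, while $\Sing T$ is relatively closed in $\spt T \setminus \spt(\partial T)$; since $\tilde K(\eps) \subset A(\eps) \subset \Omega \Subset \R^{m+n}\setminus(\partial T \cup \Sing_{\geq Q+1}T)$ stays away from $\spt(\partial T)$, the intersection is compact.

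For closure under Hausdorff convergence, suppose $K^{(j)} \in \Cscr(\eps,\tilde K)$ with $K^{(j)} \subset K^{\infty,(j)}$ and $K^0_{x^{(j)}_k, r^{(j)}_k}(\eps,\tilde K)\cap\cl{\Bbf}_1 \overset{d_H}\to K^{\infty,(j)}$ as $k\to\infty$, and suppose $K^{(j)} \overset{d_H}\to K$. A diagonal argument does it: for each $j$ pick $k(j)$ large so that $d_H\big(K^0_{x^{(j)}_{k(j)},r^{(j)}_{k(j)}}(\eps,\tilde K)\cap\cl{\Bbf}_1, K^{\infty,(j)}\big) < 1/j$; passing to a subsequence we may assume $x^{(j)}_{k(j)} \to x$ and $r^{(j)}_{k(j)} \to r \in [0,1]$ (compactness of $[0,1]$ and of the relevant range of centers), and then the new sequence $K^0_{x^{(j)}_{k(j)},r^{(j)}_{k(j)}}(\eps,\tilde K)\cap\cl{\Bbf}_1$ is $d_H$-Cauchy (it converges to whatever $K^{\infty,(j)}$ converges to along a further subsequence, by the triangle inequality and $d_H(K^{(j)},K)\to0$, $K^{(j)}\subset K^{\infty,(j)}$); call the limit $K^\infty$. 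Then $K^\infty \in \Cscr(\eps,\tilde K)$ by definition, and $K \subset K^\infty$ because $K = \lim K^{(j)}$ with $K^{(j)}\subset K^{\infty,(j)}$ and $K^{\infty,(j)} \to K^\infty$ — Hausdorff limits respect inclusion in the sense that the limit of subsets is a subset of the limit. Hence $K \in \Cscr(\eps,\tilde K)$.

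The remaining requirement, closure under rescalings, is where the definition has been rigged to make things work, but it still requires the key structural observation that a rescaling of the singular set is a rescaling of the singular set. Fix $K \in \Cscr(\eps,\tilde K)$, so $K \subset K^\infty$ with $K^0_{x_k,r_k}(\eps,\tilde K)\cap\cl{\Bbf}_1 \overset{d_H}\to K^\infty$, and fix $y \in \cl{\Bbf}_1$ and $\rho \in (0,1]$. We want $K_{y,\rho}\cap\cl{\Bbf}_1 \in \Cscr(\eps,\tilde K)$. Since $K \subset K^\infty$, we have $K_{y,\rho}\cap\cl{\Bbf}_1 \subset K^\infty_{y,\rho}\cap\cl{\Bbf}_1$, so it suffices to show $K^\infty_{y,\rho}\cap\cl{\Bbf}_1$ is itself a Hausdorff limit of rescalings $K^0_{\tilde x_k,\tilde r_k}(\eps,\tilde K)\cap\cl{\Bbf}_1$ with $\tilde x_k,\tilde r_k$ convergent. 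The natural choice is $\tilde x_k \coloneqq x_k + r_k y$ and $\tilde r_k \coloneqq r_k \rho$, since $\iota_{\tilde x_k,\tilde r_k} = \iota_{y,\rho}\circ\iota_{x_k,r_k}$; then $\tilde r_k \to r\rho$ and $\tilde x_k \to x + ry$ are convergent, and because $\iota_{y,\rho}$ restricted to $\cl{\Bbf}_1$ is a fixed bi-Lipschitz map onto its image, applying it to the $d_H$-convergent sequence $K^0_{x_k,r_k}(\eps,\tilde K)\cap\cl{\Bbf}_1 \to K^\infty$ and intersecting with $\cl{\Bbf}_1$ preserves $d_H$-convergence, giving $K^0_{\tilde x_k,\tilde r_k}(\eps,\tilde K)\cap\cl{\Bbf}_1 \to K^\infty_{y,\rho}\cap\cl{\Bbf}_1$ (one must be slightly careful that the intersection with $\cl{\Bbf}_1$ commutes appropriately with the limit — this uses that $\cl{\Bbf}_1$ is closed and that one may also intersect the earlier sets with $\iota_{y,\rho}^{-1}(\cl{\Bbf}_1)$ without changing the limit inside $\cl{\Bbf}_1$). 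Hence $K^\infty_{y,\rho}\cap\cl{\Bbf}_1 \in \Cscr(\eps,\tilde K)$, and therefore so is its compact subset $K_{y,\rho}\cap\cl{\Bbf}_1$.

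The main obstacle is bookkeeping rather than mathematical depth: making the interaction between the two operations — rescaling/translating by $(x_k,r_k)$, then by $(y,\rho)$ — and the truncation to $\cl{\Bbf}_1$ fully rigorous, in particular checking that intersecting a $d_H$-convergent sequence of compact sets with a fixed closed ball, and pushing it forward under a fixed bi-Lipschitz similarity, behaves continuously for the Hausdorff metric (this can fail at the boundary if sets approach $\partial\Bbf_1$ tangentially, but here $r_k,\rho \le 1$ keeps the relevant truncations benign, and in any case one only needs the limit set to be \emph{contained} in the candidate $K^\infty_{y,\rho}\cap\cl{\Bbf}_1$, which is all Lemma~\ref{lem:workraccoon}(a) requires up to the closure operation already built into $\Cscr$). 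Everything else is a routine diagonal-subsequence argument exploiting compactness of $[0,1]$ and of the set of admissible centers.
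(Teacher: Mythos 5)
Your overall route is the same as the paper's: membership of $K^0$ via the constant sequence $x_k\equiv 0$, $r_k\equiv 1$; closure under rescaling by composing the scalings, i.e.\ passing to $\tilde x_k=x_k+r_ky$, $\tilde r_k=r_k\rho$ so that $\iota_{\tilde x_k,\tilde r_k}=\iota_{y,\rho}\circ\iota_{x_k,r_k}$; and closure under Hausdorff convergence by a diagonal argument. (In the last step, "the new sequence is $d_H$-Cauchy" is not literally justified by what you write, since the supersets $K^{\infty,(j)}$ need not converge; the correct phrasing, which your parenthetical essentially contains, is to extract a further subsequence along which the truncated blow-ups converge by Blaschke selection and then observe that the limit contains $K$ because $K^{(j)}\subset K^{\infty,(j)}$ and Hausdorff limits preserve inclusions. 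This is exactly the paper's argument.)

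The one genuine slip is in your treatment of the boundary subtlety in the rescaling step, and it is a slip of direction. Membership of $K_{y,\rho}\cap\cl{\Bbf}_1$ in $\Cscr(\eps,\tilde K)$ requires that $K_{y,\rho}\cap\cl{\Bbf}_1$ be \emph{contained in} the Hausdorff limit $\tilde K^\infty$ of the sets $K^0_{\tilde x_k,\tilde r_k}\cap\cl{\Bbf}_1$; that is, the limit must be large enough. Your closing parenthetical asserts the opposite, that "one only needs the limit set to be contained in the candidate $K^\infty_{y,\rho}\cap\cl{\Bbf}_1$", and uses this to declare the truncation issue benign — but that inclusion gives nothing toward membership in $\Cscr$. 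The danger you yourself identify is precisely that $\tilde K^\infty$ may be too small: for $z\in K_{y,\rho}\cap\partial\Bbf_1$, i.e.\ $w=y+\rho z\in K\subset K^\infty$ with $|w-y|=\rho$, the approximating points $v_k\in K^0_{x_k,r_k}\cap\cl{\Bbf}_1$ with $v_k\to w$ may satisfy $|v_k-y|>\rho$, so $\iota_{y,\rho}(v_k)$ falls just outside $\cl{\Bbf}_1$ and is discarded by the truncation, and then $z$ need not lie in $\tilde K^\infty$. The fix is easy but goes in the opposite direction from your remark: for instance, run the same argument with slightly enlarged radii $\tilde r_k:=r_k(\rho+\eta_k)$, where $\eta_k:=d_H\bigl(K^0_{x_k,r_k}\cap\cl{\Bbf}_1,K^\infty\bigr)\to 0$, so that $|\iota_{y,\rho+\eta_k}(v_k)|\le 1$ while $\iota_{y,\rho+\eta_k}(v_k)\to z$; these parameters are still convergent, lie in $(0,1]$ for large $k$ when $\rho<1$, and the borderline case $\rho=1$, $r_k\to1$ needs a separate (equally elementary) word. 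The paper itself dispatches this step in one sentence, so your extra care is welcome — but as written your justification of the boundary case rests on the inverted inclusion and does not close it.
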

The proof of this Proposition is also contained in Section~\ref{sct:workraccoon}. It remains to show that $\alpha_0$ as in Lemma~\ref{lem:workraccoon} satisfies $\alpha_0 \leq m-2$ for this compact family $\Cscr(\eps,\tilde{K})$. 

First of all, by extracting a subsequence, we may assume that given any $K \in \Cscr(\eps,\tilde{K})$, we have
\[
    x_k \to x \qquad \text{and} \qquad r_k \to r,
\]
for the corresponding blow-up scales and centers.
 
Notice that if $r >0$, then
\[
    K^\infty \subset \Sing (T_{x,r})\cap \cl{\Bbf}_1,
\]
since $\cl{\Bbf}_{r_k}(x_k) \overset{d_H}{\longrightarrow} \cl{\Bbf}_r(x)$. Thus in this case the existing dimension estimate~\eqref{eq:Almgren} due to Almgren~\cite{Almgren_regularity} tells us that $\Hcal^{m-2+\alpha}(K^\infty) = 0$ for each $\alpha > 0$. So we may assume that $r_k \todown 0$.

The main result Theorem~\ref{thm:main} can now be concluded from the following:

\begin{theorem}\label{thm:Minkowskibd}
    There exists $\eps \in (0, 1)$ sufficiently small such that the following holds for any compact $\tilde K(\eps) \subset A(\eps)$.
    
    For any $K^\infty$ in the family $\Cscr(\eps,\tilde K)$ of Proposition~\ref{prop:compactfam}, we have the following dichotomy. Either
    \begin{enumerate}[(a)]
        \item\label{eq:a} there exists an $m$-dimensional area minimizing integral current $S$ in $\R^{m+n}$ such that $K^\infty \subset \Sing S$;
        \item\label{eq:b} there exists an $m$-plane $\pi_\infty \subset \R^{m+n}$ and a non-trivial $\Dir$-minimizer $u: \pi_\infty \cong \R^m\times \{0\} \supset B_{\frac{3}{2}} \to \Acal_Q$ with
    \[
        K^\infty \subset \Delta_Q(u) \coloneqq \set{z \in B_{\frac{3}{2}}}{u(z) = Q\llbracket 0 \rrbracket}.
    \]
    \end{enumerate}
    In particular, applying Almgren's dimension estimate~\eqref{eq:Almgren} in case~\eqref{eq:a} and~\cite[Prop.~3.22]{DLS_MAMS} in case~\eqref{eq:b}, we arrive at
    \[
        \Hcal^{m-2+\alpha}(K^\infty) = 0 \qquad \text{for every $\alpha > 0$}.
    \]
\end{theorem}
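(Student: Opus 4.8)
The overall strategy is a compactness-and-contradiction argument driven by the machinery of the De Lellis--Spadaro center manifold, run along the blow-up sequence $(x_k, r_k)$ with $r_k \todown 0$ that defines a given $K^\infty \in \Cscr(\eps,\tilde K)$. For each $k$, consider the rescaled current $T_k \coloneqq (\iota_{x_k,r_k})_\sharp T$. Since $x_k \in \Sing_{[Q,Q+\eps]}T$ and, by the choice of $\tilde K \subset A(\eps)$, the density stays pinched in $[Q, Q+\eps]$ on the relevant scales, we have uniform mass bounds and (after passing to a subsequence) $T_k$ converges to an area minimizing cone $T_\infty$ with $\Theta(T_\infty,0) = Q$. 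There are two scenarios, according to whether or not the "excess" of $T_k$ (suitably normalized, e.g. the planar $L^2$ or cylindrical excess at a fixed scale) stays bounded below along the sequence.

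\emph{Case (a): non-collapsing of excess.} If the normalized excess does not vanish, then $T_\infty$ is a nontrivial area minimizing cone. One shows directly that $K^\infty \subset \Sing T_\infty$: each point of $K^\infty$ is a limit of rescaled singular points $z_k \in \Sing_{\geq Q}(T_k)\cap\cl{\Bbf}_1$, and by the standard persistence of singularities under varifold convergence of area minimizers (upper semicontinuity of density together with the $\eps$-regularity theorem) the limit point must be singular for $T_\infty$. This puts us in alternative~\eqref{eq:a} with $S = T_\infty$, and Almgren's estimate~\eqref{eq:Almgren} applies to $S$.

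\emph{Case (b): collapsing of excess.} This is the substantive case and the main obstacle. Here one must construct, for each large $k$, a center manifold $\Mcal_k$ for $T_k$ on a fixed scale and the associated normal approximation $N_k \colon \Mcal_k \to \Acal_Q$, following \cite{DLS16centermfld, DLS16blowup}. After rescaling $N_k$ by its own $L^2$-height $h_k \coloneqq \|N_k\|_{L^2}$ (which is positive precisely because the current is not regular at the pinched density level), the normalized maps $\bar N_k \coloneqq N_k/h_k$ converge — via the De Lellis--Spadaro blow-up scheme (reverse Sobolev, the key frequency-function estimates, and compactness of the center manifolds to a flat plane $\pi_\infty$ since the excess collapses) — to a nontrivial $\Dir$-minimizer $u \colon B_{3/2}\subset\pi_\infty \to \Acal_Q$. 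The crucial point is the \emph{persistence of singularities along the center manifolds}: a singular point $z_k$ of $T_k$ with density $\geq Q$ gives rise, through the contact set between $T_k$ and $\Mcal_k$ and the structure of $N_k$ there, to a point where $N_k$ takes the value $Q\llbracket\eta\circ N_k\rrbracket$ — i.e. a "collapsed" point of the normal approximation — and hence in the limit to a point of $\Delta_Q(u)$. This requires an improvement over the naive persistence statement (which is flagged as a by-product in the abstract): one needs that \emph{every} point of $K^\infty$, not merely an $\Hcal^{m-2}$-a.e. one, lands in $\Delta_Q(u)$, which forces a careful quantitative argument relating the vanishing of $N_k$ near $z_k$ (at the correct rate, controlled by the frequency function being bounded) to the vanishing of the limit $u$. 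Once this inclusion $K^\infty \subset \Delta_Q(u)$ is established, \cite[Prop.~3.22]{DLS_MAMS} gives $\Hcal^{m-2+\alpha}(\Delta_Q(u)) = 0$ for every $\alpha>0$, completing the dichotomy.

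The delicate part is entirely in Case (b): one must choose $\eps$ small enough that the center manifold construction and the no-collapse/collapse dichotomy are both available uniformly in $k$ (this is why the theorem only asserts existence of \emph{some} small $\eps$), and one must track a single singular point $z_k$ through the center-manifold machinery carefully enough to conclude it survives to the limit as a genuine $Q$-point of $u$, rather than being absorbed into the error terms of the approximation. Controlling the rate at which $N_k$ degenerates near $z_k$ — tied to an upper bound on Almgren's frequency function at those points, which in turn uses the pinching $\Theta \in [Q,Q+\eps]$ — is what makes the improved persistence possible and is the technical heart of the proof.
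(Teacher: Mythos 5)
Your overall route (blow up along $(x_k,r_k)$, split according to whether the limit is flat, and in the flat case run the center-manifold/normal-approximation blow-up to a Dir-minimizer) is the same as the paper's, but two steps you rely on would fail as stated. First, in your case (a) you conclude $K^\infty\subset\Sing T_\infty$ from ``standard persistence of singularities under varifold convergence ... together with the $\eps$-regularity theorem.'' In codimension $\bar n\geq 2$ there is no such persistence: singular branch points of $T_k$ can converge to a \emph{regular} point of the limit (this is exactly the flat-singularity phenomenon, e.g. $(z-w^2)^3=w^{1000}$, that the whole paper is built to handle), and Allard's theorem only applies near density-one points. What convergence of masses and upper semicontinuity of the density actually give is $K^\infty\subset\Drm_{\geq Q}S$; the dichotomy is then obtained in Proposition~\ref{prop:weakcpct} by showing that if $\Reg_{\geq Q}S\neq\emptyset$ then $S$ must be the flat plane $Q\llbracket\pi_\infty\rrbracket$, using the global mass-ratio pinching $\|S\|(\Bbf_R)\leq(Q+4\eps)\omega_m R^m$ (this is where the choice of $\eps$ and the restriction $\tilde K\subset A(\eps)$ genuinely enter), the tangent cone at infinity, Allard applied to $S/Q$, and the monotonicity formula. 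Without this step your non-flat alternative is not established; note also that since the centers vary, the limit is only an area minimizing current, not a cone, and its density at points of $K^\infty$ can exceed $Q+\eps$.

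Second, in your case (b) the mechanism for the improved persistence is misattributed. An \emph{upper} frequency bound (Theorem~\ref{thm:frequpperbd}) yields the reverse Sobolev inequality, compactness, and non-triviality of the limit $u$, but it cannot force $u(y)=Q\llbracket 0\rrbracket$: by Proposition~\ref{lem:freqdecay}, vanishing of $u$ at $y$ is equivalent to the decay $\int_{B_\rho(y)}|u|^2\leq C\rho^{m+2c}$, i.e. to a positive \emph{lower} bound on the frequency, and the paper's key new ingredient is precisely the uniform lower frequency bound of Theorem~\ref{thm:freqlowerbd}, valid at arbitrary density-$\geq Q$ points $\mathbf{p}(w)$, not only at the centers of the construction. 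Moreover a single center manifold ``on a fixed scale'' does not suffice: one needs the intervals of flattening around $x_k$ together with the non-centered stopping scales $\tilde s_k$ around $y_k$, and a three-case analysis according to whether $\tilde s_k=0$, $\tilde s_k/r_k\to 0$, or $\tilde s_k\simeq r_k$; your contact-set argument covers only the first case, and in the comparable-scale case the frequency bounds do not conclude at all — there one compares $N^{(k)}$ with the strong Lipschitz approximation and invokes the power-rate persistence of $Q$-points estimate coming from Spolaor's improved height bound. As written, the quantitative step you gesture at (``the vanishing of $N_k$ near $z_k$ at the correct rate'') is exactly the missing lower bound, so the inclusion $K^\infty\subset\Delta_Q(u)$ is not yet proved.
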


Unless otherwise mentioned, we will henceforth fix an arbitrary choice of $\eps > 0$ and a compact set $\tilde K (\eps) \subset A(\eps)$, and will omit dependencies on $\eps$ and $\tilde K$ for all the sets defined above. The following section is dedicated to reducing ourselves to the second alternative~\eqref{eq:b} in Theorem~\ref{thm:Minkowskibd}, by characterizing the blow-up limit along a sequence with varying centers.



\section{Reduction to the second altenative in Theorem~\ref{thm:Minkowskibd}}
We hope to use Almgren's center manifold construction to prove the above theorem. Unfortunately, in our reduction to merely establishing a uniform Hausdorff dimension bound of $m-2$ for our whole family of compact sets $\Cscr$, we have been forced to include blow-ups with varying centers. Therefore it is necessary to check that singularities still persist along such diagonal blow-up sequences.

Firstly, for any $K^\infty \in \Cscr$, we would like to ensure that the blow-up centers are singular $Q$-points of $T$, so that the sheets of our current are not collapsing around the centers. This will be important in order to ensure that our blow-up limit is non-trivial.
\begin{proposition}\label{prop:qptcenters}
    Let $T$ and $\Sigma$ be as in Assumption~\ref{asm:curr}. Then for any $K^\infty \in \Cscr$ as in Theorem~\ref{thm:Minkowskibd}, we may assume that the centers of the corresponding blow-ups $K^0_{x_k,r_k}$ satisfy
    \[
        x_k \in K^0.
    \]
\end{proposition}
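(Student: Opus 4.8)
The plan is a soft translation-and-rescaling reduction. Recall that, via Lemma~\ref{lem:workraccoon} and Proposition~\ref{prop:compactfam}, proving Theorem~\ref{thm:Minkowskibd} comes down to showing $\Hcal^{m-2+\alpha}(K^\infty)=0$ for every $\alpha>0$ and every $K^\infty\in\Cscr$; and by definition any such $K^\infty$ is a compact subset of a Hausdorff limit of blow-ups $K^0_{x_k,r_k}\cap\cl{\Bbf}_1$ with $x_k\in\R^{m+n}$ and (after the reduction already carried out) $r_k\todown 0$. If $K^\infty=\emptyset$ there is nothing to prove, so assume $K^\infty\neq\emptyset$. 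The idea is to slide each centre $x_k$ onto a nearby point $w_k\in K^0$, absorb the discrepancy into one fixed similarity of $\R^{m+n}$, and check that the modified blow-up data still produces an element of $\Cscr$ that is a similar copy of $K^\infty$; since vanishing of Hausdorff measure is preserved under similarities, nothing is lost.

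Concretely, I would fix $z\in K^\infty\subset\cl{\Bbf}_1$, choose $z_k\in K^0_{x_k,r_k}\cap\cl{\Bbf}_1$ with $z_k\to z$ (possible since $K^\infty$ lies inside the $d_H$-limit of these sets), write $z_k=\iota_{x_k,r_k}(w_k)$ with $w_k\in K^0$, and set $y_k\coloneqq w_k\in K^0$ and $s_k\coloneqq 2r_k$; then $s_k\todown 0$ and $s_k\in(0,1]$ for all large $k$, which is all that is needed. With the affine maps $\Phi_k(v)\coloneqq\tfrac12(v-z_k)$ and $\Phi(v)\coloneqq\tfrac12(v-z)$, the identity $z_k=(w_k-x_k)/r_k$ yields $\iota_{y_k,s_k}=\Phi_k\circ\iota_{x_k,r_k}$, hence $K^0_{y_k,s_k}=\Phi_k\big(K^0_{x_k,r_k}\big)$ as subsets of $\R^{m+n}$; in particular $0=\Phi_k(z_k)=\iota_{y_k,s_k}(w_k)\in K^0_{y_k,s_k}$ for every $k$, so the origin persists in every rescaled set — the geometric content of the proposition. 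Since $|z_k|\le 1$ and $|z|\le 1$, the maps $\Phi_k$ and $\Phi$ send $\cl{\Bbf}_1$ into $\cl{\Bbf}_1$, and $\Phi_k(p_k)\to\Phi(p)$ whenever $p_k\to p$.

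Passing to a subsequence so that $K^0_{y_k,s_k}\cap\cl{\Bbf}_1\overset{d_H}{\longrightarrow}L^\infty$ for some compact $L^\infty\subset\cl{\Bbf}_1$ (by $d_H$-compactness of the compact subsets of $\cl{\Bbf}_1$), I would check $\Phi(K^\infty)\subset L^\infty$: for $p\in K^\infty$ choose $p_k\in K^0_{x_k,r_k}\cap\cl{\Bbf}_1$ with $p_k\to p$; then $\Phi_k(p_k)\in K^0_{y_k,s_k}$, $|\Phi_k(p_k)|\le 1$, and $\Phi_k(p_k)\to\Phi(p)$, so $\Phi(p)\in L^\infty$. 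Thus $L^\infty$ is a $d_H$-limit of blow-ups with centres $y_k\in K^0$ and $s_k\in(0,1]$ convergent, so $L^\infty\in\Cscr$, and $\Phi(K^\infty)\subset L^\infty$ gives $\Phi(K^\infty)\in\Cscr$ — now with all blow-up centres in $K^0$. Since $\Phi$ is a similarity of ratio $\tfrac12$, $\Hcal^{m-2+\alpha}(\Phi(K^\infty))=2^{-(m-2+\alpha)}\Hcal^{m-2+\alpha}(K^\infty)$, so one vanishes iff the other does. Hence, to prove the dichotomy of Theorem~\ref{thm:Minkowskibd} for $K^\infty$ it suffices to prove it for $\Phi(K^\infty)$; that is, one may assume $x_k\in K^0$, and since $K^0\subset\Sing_{\geq Q}T$ these centres are then singular points of $T$ of density at least $Q$.

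I do not expect a real obstacle: the argument is essentially bookkeeping. The one substantive choice is the dilation ratio $\tfrac12$ (rather than $1$), which is exactly what guarantees $\Phi(\cl{\Bbf}_1)\subset\cl{\Bbf}_1$ and so keeps the translated, rescaled sets inside the ball on which Hausdorff convergence is monitored, preventing any part of the limit from escaping across $\partial\Bbf_1$; the remaining care is in the trivial case $K^\infty=\emptyset$ and in organising the subsequence extractions. It is precisely this passage from arbitrary centres to centres in $K^0$ that makes the center-manifold construction of the following sections — which must be anchored at genuine $Q$-density points, so that the sheets of $T$ do not collapse around the centre — applicable along the diagonal blow-up sequences that the Work Raccoon reduction has introduced.
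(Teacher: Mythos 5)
Your argument is correct and follows essentially the same route as the paper: you replace the centers $x_k$ by nearby points of $K^0$ (the paper takes $z_k\in K^0\cap\cl{\Bbf}_{2r_k}(x_k)$ and scale $3r_k$, you take the preimage $w_k$ of a point of $K^0_{x_k,r_k}\cap\cl{\Bbf}_1$ and scale $2r_k$), and observe that the new blow-ups still capture $K^\infty$ up to a fixed similarity, which is harmless for the Hausdorff-measure conclusion. Your version merely makes explicit the affine bookkeeping that the paper leaves implicit in the phrase ``the support of which contain $\spt K^0_{x_k,r_k}\cap\cl{\Bbf}_1$'', so no substantive difference.
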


\begin{proof}
    Consider the original sequences of centers $x_k \in \R^{m+n}$ and blow-up scales $r_k \todown 0$. We may without loss of generality assume that (for $k$ sufficiently large) $K^0\cap\cl{\Bbf}_{2r_k}(x_k) \neq \emptyset$, so we can find points $z_k \in K^0\cap\cl{\Bbf}_{2r_k}(x_k)$. 
    But now we can instead consider the blow-ups $K^0_{z_k,3r_k}\cap\cl{\Bbf}_1$, the support of which contain $\spt K^0_{x_k,r_k}\cap\cl{\Bbf}_1$. For $k$ sufficiently large, 
    \[
        K^0\cap \cl{\Bbf}_{3r_k}(z_k) \subset A,
    \]
    for $A = A(\eps)$ as in~\eqref{eq:Qpts}. This completes the proof, after replacing $\tilde K$ by a slightly larger compact set contained in $A$ if necessary.
\end{proof}

Notice that since $K^0$ is compact, Proposition~\ref{prop:qptcenters} allows us to assume that for any $K \in \Cscr$, the corresponding blow-up centers $x_k$ satisfy $x_k \to x \in K^0$ up to subsequence. In particular, we have
\begin{equation}\label{eq:densityx}
    Q \leq \Theta(T,x)\leq Q+\eps.
\end{equation}
\begin{remark}
    Observe that~\eqref{eq:densityx} holds because we are blowing up a \emph{compact} subset of $\Sing_{[Q,Q+\eps]} T$. Without this, up to the author's knowledge, it remains unknown whether it is possible to rule out the situation that $\Theta(T,x_k) \leq Q+\eps$ for each $k$, but $\Theta(T,x) > Q+\eps$, even under the assumption that $x_k \in \Sing T$. We will see the importance of~\eqref{eq:densityx} when we investigate the structure of these blow-ups in the limit as $k \to \infty$.
\end{remark}

We will henceforth assume the property~\eqref{eq:densityx} for any given $K \in \Cscr$. Another key assumption that we impose throughout will be the following:

\begin{assumption}\label{asm:curr2}
    Let $T$ and $\Sigma$ be as in Assumption~\ref{asm:curr}. Given $K \in \Cscr$ and $x = \lim_{k \to \infty} x_k$, we may assume that $\Sigma \cap \Bbf_{7\sqrt{m}}(x)$ is the graph of a $\Crm^{3,\eps_0}$ function $\Psi_p : \Trm_p\Sigma \cap \Bbf_{7\sqrt{m}}(x) \to \Trm_p\Sigma^\perp$ for every $p \in \Sigma\cap\Bbf_{7\sqrt{m}}(x)$. We may further assume that
    \[
        \boldsymbol{c}(\Sigma)\coloneqq\sup_{p \in \Sigma \cap \Bbf_{7\sqrt{m}}(x)}\|D\Psi_p\|_{\Crm^{2,\eps_0}} \leq \bar{\eps},
    \]
    where $\bar\eps$ will be determined later. This in particular gives us the following uniform control on the second fundamental form of $\Sigma$:
    \[
        \Abf \coloneqq \|A_\Sigma\|_{\Crm^0(\Sigma)} \leq C_0\boldsymbol{c}(\Sigma).
    \]
\end{assumption}

Unfortunately, due to the fact that the centers of the blow-ups are varying, we cannot necessarily deduce that the tangent current obtained in the blow-up limit is a cone; the monotonicity formula is only valid for a fixed center. Nevertheless, we are still able to deduce the following about the limit:
\begin{proposition}\label{prop:weakcpct}
    Suppose that $T$ and $\Sigma$ satisfy Assumption~\ref{asm:curr}. Then we may choose $\eps > 0$ sufficiently small such that the following holds for any $K^\infty \in \Cscr(\eps, \tilde{K})$ with corresponding blow-up centers $x_k \in K^0(\eps,\tilde{K})$ and blow-up scales $r_k \todown 0$, satisfying Assumption~\ref{asm:curr2} for $\bar\eps = \bar\eps(x) > 0$ small enough.
    
    There exists an $m$-dimensional integral area minimizing current $S$ in $\R^{m+n}$, such that up to subsequence,
    \begin{equation}\label{eq:tangent}
        T_{x_k, r_k} \toweakstar S, \qquad \| T_{x_k, r_k}\|(\Bbf) \longrightarrow \| S \|(\Bbf) \qquad \text{for any open ball $\Bbf$},
    \end{equation}
    and either
    \begin{enumerate}[(a)]
        \item $K^\infty \subset \Sing_{\geq Q} S$,\label{itm:singpersist} \\
        \item $S = Q\llbracket \pi_\infty \rrbracket$ for some plane $\pi_\infty$. \label{itm:flattangent}
    \end{enumerate}
\end{proposition}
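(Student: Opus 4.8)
The plan is to establish the compactness statement \eqref{eq:tangent} first by a standard argument, and then to produce the dichotomy by ruling out the intermediate possibility that the blow-up limit $S$ is area minimizing but neither singular along $K^\infty$ nor a flat multiplicity-$Q$ plane. First I would fix $x = \lim_k x_k \in K^0$ and recall from \eqref{eq:densityx} that $Q \le \Theta(T,x) \le Q+\eps$. Since $x \in \Omega \Subset \R^{m+n}\setminus(\partial T \cup \Sing_{\ge Q+1}T)$, for all sufficiently large $k$ the rescaled currents $T_{x_k,r_k}$ are area minimizing in the rescaled manifolds $\Sigma_{x_k,r_k} = (\iota_{x_k,r_k})_\sharp \Sigma$ inside any fixed ball, with uniformly bounded mass (by the monotonicity formula applied at $x$, valid since the centers $x_k \to x$ and $\Theta(T,\cdot)$ is upper semicontinuous, together with Assumption~\ref{asm:curr2} controlling the ambient geometry). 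Because $\boldsymbol{c}(\Sigma) \le \bar\eps$, the rescaled manifolds $\Sigma_{x_k,r_k}$ converge locally in $\Crm^{2,\eps_0}$ to an $(m+\bar n)$-plane through the origin (which we may take to be a fixed $\R^{m+\bar n}$ after a rotation). By the compactness theorem for area minimizing currents (e.g.~Federer's closure theorem combined with the standard semicontinuity of mass and the fact that mass ratios are controlled), up to a subsequence $T_{x_k,r_k} \toweakstar S$ with $S$ an integral $m$-current, area minimizing in $\R^{m+\bar n} \subset \R^{m+n}$, and the mass convergence $\|T_{x_k,r_k}\|(\Bbf) \to \|S\|(\Bbf)$ holds for every open ball $\Bbf$ whose boundary is $\|S\|$-null, hence for all open balls by a further standard argument. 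Moreover $\partial S = 0$ in $\Bbf_1$ since $x \notin \spt\partial T$ and $r_k \todown 0$.

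The core of the proposition is the dichotomy. The key input is the persistence of singularities along the blow-up sequence, which I expect to be the main obstacle and which presumably uses the center manifold machinery alluded to in the paragraph following the statement. Concretely, I would argue as follows: suppose alternative \eqref{itm:flattangent} fails, so $S$ is \emph{not} equal to $Q\llbracket\pi_\infty\rrbracket$ for any plane $\pi_\infty$. I claim that then $K^\infty \subset \Sing_{\ge Q}S$. Take a point $y \in K^\infty$; by definition of $\Cscr$ there are points $y_k \in K^0_{x_k,r_k}\cap\cl\Bbf_1$ with $y_k \to y$, and these correspond to points $\tilde y_k = x_k + r_k y_k \in \Sing_{[Q,Q+\eps]}T$. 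Thus $\Theta(T,\tilde y_k) \ge Q$, which by the scaling invariance of density and upper semicontinuity passes to the limit: $\Theta(S,y) \ge Q$. The nontrivial part is showing $y \in \Sing S$, i.e.~that $y$ is \emph{not} a regular point of $S$. This is where I would invoke the $\eps$-regularity theorem (Allard--Almgren) in a quantitative form: if $y$ were a regular point of $S$ with $\Theta(S,y) = Q$, then in a small ball around $y$ the current $S$ would be a single smooth sheet of multiplicity $Q$ (or a union of graphical sheets), and by the strong ($\Lrm^2$ or varifold) convergence $T_{x_k,r_k} \to S$ together with the a priori regularity estimates, $T_{x_k,r_k}$ would also be regular near $y_k$ for large $k$ — contradicting $y_k \in \Sing T_{x_k,r_k}$. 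The subtlety is that $\eps$-regularity at multiplicity-$Q$ points requires the excess to be small at \emph{some} fixed scale; one must rule out that the singular points $y_k$ "escape to infinite frequency" or that sheets collapse. This is precisely why Assumption~\ref{asm:curr2} and the uniform mass bounds are needed, and why \eqref{eq:densityx} (the density staying $\le Q+\eps$, so that no higher-multiplicity sheet can appear in the limit) is essential — as the Remark after \eqref{eq:densityx} emphasizes. The persistence of singularities in this diagonal setting is presumably the content of the "improvement on the persistence of singularities" advertised in the abstract, and is the heart of the matter.

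To organize the argument cleanly, I would separate it into: (i) the compactness and mass convergence step, which is routine given the uniform bounds; (ii) the identification of $S$ as area minimizing in a plane, which follows from the $\Crm^{2,\eps_0}$-convergence of $\Sigma_{x_k,r_k}$ and stability of the area minimizing property under weak-$*$ convergence with mass convergence; and (iii) the dichotomy, proved by contradiction: assuming $S$ is not a flat plane, establish $K^\infty \subset \Sing_{\ge Q}S$ via the density lower bound (easy, by upper semicontinuity) plus the non-regularity (hard, via quantitative $\eps$-regularity and the persistence of singularities under strong convergence, using \eqref{eq:densityx} to prevent multiplicity gain in the limit). Finally, choose $\eps$ small enough (and $\bar\eps = \bar\eps(x)$ small enough) precisely so that the thresholds in the $\eps$-regularity theorem and in the center manifold construction are met uniformly along the sequence. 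I expect step (iii), specifically the persistence of singularities along blow-ups with \emph{moving} centers, to be the main obstacle: the usual persistence arguments are stated for a fixed center where one has the monotonicity formula, and adapting them here requires the compactness of $K^0$ and the uniform density control \eqref{eq:densityx} as substitutes for monotonicity at a single point.
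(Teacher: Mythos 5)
Your compactness step (i)–(ii) matches the paper's argument (monotonicity around the moving centers combined with the mass bound at $x$ from~\eqref{eq:densityx}, Federer--Fleming, and the standard minimality argument upgrading lower semicontinuity to mass convergence). The dichotomy step (iii), however, has a genuine gap, and it is not the route the paper takes. You propose: if $S$ is not a flat multiplicity-$Q$ plane, then every $y \in K^\infty$ must be singular for $S$, because if $S$ were regular near $y$ with $\Theta(S,y)=Q$ then $\eps$-regularity would force $T_{x_k,r_k}$ to be regular near $y_k$, contradicting $y_k \in \Sing T_{x_k,r_k}$. That implication is false in higher codimension: a sequence of area minimizing currents singular at $y_k \to y$ can converge, with mass convergence, to a smooth multiplicity-$Q$ sheet (branch points, e.g.\ rescalings of $\{z^2=w^3\}$ converging to $2\llbracket \pi \rrbracket$). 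Allard--Almgren $\eps$-regularity only yields regularity near density ratios close to $1$; at multiplicity $Q>1$ there is no such persistence of regularity backwards along the sequence --- this failure is precisely the reason the center manifold and frequency machinery exist. You partially acknowledge this and defer the work to ``the persistence of singularities machinery,'' but that machinery is developed in the paper \emph{after} this proposition, exactly to treat the case where $S$ \emph{is} flat (alternative~\eqref{itm:flattangent}); it cannot be invoked to prove the proposition itself, and the proposition does not claim that non-flatness of $S$ forces $K^\infty \subset \Sing S$ pointwise in the way you argue.

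The paper's dichotomy is organized differently and avoids this issue. After showing $K^\infty \subset D_{\geq Q}S$ (your density lower bound step, which is fine), it splits on whether $\Reg_{\geq Q}S$ is empty. If it is empty, case~\eqref{itm:singpersist} is immediate. If there exists a single regular point $z_0$ with $\Theta(S,z_0)\geq Q$, the paper uses the estimate inherited from~\eqref{eq:densityx} at \emph{all} scales, $\|S\|(\Bbf_R) \leq (Q+4\eps)\,\omega_m R^m$ for every $R>0$, to conclude that $S$ is globally the plane $Q\llbracket\pi_\infty\rrbracket$: one restricts $S$ to the set $E$ of regular density-$Q$ points, shows $\partial(S\mres E)=0$ via Almgren's dimension bound on $\Sing S$ and Federer's flat-chain theorem, takes the tangent cone at infinity of $\tfrac{1}{Q}S\mres E$, applies Allard's theorem there (this is where $\eps$ small enters), and then uses the monotonicity formula sandwiched between the scale near $z_0$ and infinity, plus a density-one argument to show $S\mres E = S$; the same comparison rules out $\Theta(S,z_0)\geq Q+1$. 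None of these steps appear in your proposal, and without them (or the false persistence claim) the conclusion that failure of~\eqref{itm:singpersist} forces~\eqref{itm:flattangent} is not established.
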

 

\begin{remark}\label{rmk:Kinfty}
    An important consequence of this proposition is that the limiting set $K^\infty$ is determined by the structure of a tangent current obtained as a weak-$*$ limit of the blow-ups. Indeed, we demonstrate in the proof that
    \[
        K^\infty \subset D_{\geq Q} S.
    \]
    The first possibility is that the tangent current $\Reg_{\geq Q} S = \emptyset$, which leads to the conclusion of case~\eqref{itm:singpersist}. This case covers the first alternative in Theorem~\ref{thm:Minkowskibd}, where the singularities persist to the tangent $S$. The conclusion of the Minkowski dimension estimate in Theorem~\ref{thm:main} then follows immediately from Almgren's Big Regularity Theorem~\eqref{eq:Almgren}.
    
    In case~\eqref{itm:flattangent}, there is a flat tangent current. This can be considered as a first order approximation for $T$ near $x$, which captures the regular behaviour of $T$ there, meanwhile the singularities vanish in the limit. We thus need to show persistence of singularities at the level of a harmonic approximation to $T$ at an infinitesimal scale around $x$. This approximation is constructed in such a way that the regular behaviour is absorbed into the domain, and so singularities do indeed persist.
\end{remark}
 
\begin{proof}
    First of all, by the definition of $K^0$, $x$ is an interior point of $T$ and~\eqref{eq:densityx} tells us that we can find $\delta >0$ sufficiently small such that
    \begin{equation}\label{eq:massratio}
        \frac{\| T\|(\Bbf_\delta (x))}{\omega_m \delta^m} \leq Q + 2\eps.
    \end{equation}
    
    We claim that for any open ball $\Bbf_R \subset \R^{m+n}$ centered at the origin, we have
    \[
        \sup_k \| T_{x_k,r_k} \|(\Bbf_R) \leq Q + 3\eps.
    \] 
    Indeed, due to the pointwise convergence of $x_k$ to $x$,
    \[
        \| T \|(\Bbf_R(x_k)) \longrightarrow \| T \|(\Bbf_R(x)).
    \]
    for all but a countable collection of radii $R$. This can be easily verified by approximating the mass of $T$ on balls via a pairing with appropriate $m$-forms, and exploiting the continuity property of $T$.
    Combining this with~\eqref{eq:massratio} and the fact that $T$ is $m$-dimensional, and by using monotonicity formula around $x_k$ when $k$ is large enough such that $Rr_k < \delta$, we have
    \begin{equation*}\label{eq:curcpct}
        \frac{\| T_{x_k,r_k} \|(\Bbf_R)}{\omega_m R^m} = \frac{T(\Bbf_{R r_k}(x_k))}{\omega_m (R r_k)^m} \leq e^{C\Abf (\delta - Rr_k)}\frac{\|T\|(\Bbf_\delta(x_k))}{\omega_m \delta^m} \leq \frac{\| T \|(\Bbf_{\delta}(x))}{\omega_m \delta^m} + 3\eps,
    \end{equation*}
    provided that we choose $\bar\eps$ sufficiently small, only dependent on $x$.
    
    Since $x$ is an interior point for $T$, we may further choose $k$ large enough such that 
    \begin{equation}\label{eq:zerobdry}
        \partial T_{x_k,r_k} \mres \Bbf_R = 0, \qquad \text{for $k$ sufficiently large.}
    \end{equation}
    Thus, the Federer-Fleming Compactness Theorem (see, for example~\cite[Theorem~32.2]{Simon_GMT}) for normal currents tells us that we can extract a subsequence and an $m$-dimensional area minimizing integral current $S$ on $\R^{m+n}$ for which 
    \[
        T_{x_k,r_k} \toweakstar S \qquad \text{as Radon measures}.
    \]
    Moreover, $\partial S = 0$, due to~\eqref{eq:zerobdry} and the continuity of the boundary operator with respect to weak-$*$ convergence.
     
    To establish the convergence of masses, firstly observe that the lower-semicontinuity of the mass tells us that for any ball $\Bbf$,
    \begin{equation*}\label{eq:masslsc}
        \|S\|(\Bbf) \leq \liminf_k \|T_{x_k,r_k}\|(\Bbf).
    \end{equation*}
    Now suppose that for some ball $\Bbf$ we have $\|S\|(\Bbf) < \liminf_k \|T_{x_k,r_k}\|(\Bbf)$. For $k$ sufficiently large $\partial S \mres \Bbf = \partial T_{x_k,r_k}\mres \Bbf = 0$, so this contradicts the area minimizing property of $T$ and all its rescalings. We can therefore extract a subsequence for which we indeed have 
    \[
        \| T_{x_k,r_k}\|(\Bbf) \longrightarrow \|S\|(\Bbf).
    \]
    In particular, we deduce that for any $R> 0$, 
    \begin{equation}\label{eq:almostflat}
        \frac{\|S \|(\Bbf_{R})}{\omega_m R^m} \leq Q+4\eps.
    \end{equation}
    Furthermore, one can show that 
    \begin{equation}\label{eq:KinftyQ}
        K^\infty \subset D_{\geq Q} S.
    \end{equation}
    Indeed, given any $y \in K^\infty$ and any $\rho > 0$, the local convergence of the masses~\eqref{eq:tangent} tells us that for any $\eta > 0$, we can find $k$ sufficiently large such that
    \begin{align*}
    \frac{\| S \|(\Bbf_\rho(y))}{\omega_m \rho^m} &\geq \frac{\| T_{x_k,r_k} \|(\Bbf_\rho(y))}{\omega_m \rho^m} - \eta \\
    &= \frac{\| T \|(x_k + \Bbf_{\rho r_k}(r_k y))}{\omega_m (\rho r_k)^m} - \eta.
    \end{align*}
    Taking $\rho \todown 0$, we deduce that
    \[
    \Theta(S,y) \geq \Theta(T, x_k + r_k y) - \eta \quad \text{for $k$ sufficiently large}
    \]
    But now, since $x_k + r_k y \to x$, we can exploit the upper-semicontinuity of the density to deduce that $\Theta(S,y) \geq Q - \eta$. Since $\eta$ is arbitrary, we conclude that $\Theta(S,y) \geq Q$. However, we cannot deduce that $\Theta(S,y) \leq Q + c \eps$ for some constant $c > 0$; the density might increase in the limit.
    
    Thus, in order to try and determine the size of $K^\infty$, it makes sense to investigate the structure of the set of high multiplicity points in $\spt S$. There are two possibilities: either there exists a regular point
    \[
        z_0 \in \Reg_{\geq Q} S,
    \]
    or all points in $D_{\geq Q}(S)$ are singular. If the latter holds then~\eqref{eq:KinftyQ} tells us that we must necessarily have $K^\infty \subset \Sing_{\geq Q} S$ and so we are in case~\eqref{itm:singpersist}.
    
    Suppose on the other hand that there exists a point $z_0 \in \Reg_{\geq Q} S$. First of all, let us assume that $\Theta(S,z_0) = Q$. We want to show that this implies $S$ is flat. We plan to use Allard's Regularity Theorem~\cite[Section~8]{Allard_72} to see that $S = Q\llbracket \Gamma \rrbracket$ for some smooth surface $\Gamma$, and that $\Gamma$ is flat at both infinity and locally around $z_0$, so is in fact an $m$-dimensional plane. 
    In order to see this, we will need to apply Allard to $\frac{S}{Q}$. However, we do not even know that this object makes sense a priori. Thus, define
    \[
        E \coloneqq \set{w \in \Reg S}{\Theta (S, w) = Q},
    \]
    and let $\tilde{S} \coloneqq S \mres E$. By definition, $E$ is a $\Crm^{1,\alpha}$-submanifold of $\Sigma$ for some $\alpha \in (0,1)$. We proceed to show that $\tilde{S} = S$.
    
    Notice that $\cl{E}\setminus E \subset \Sing S$. Indeed, given any $z \in \cl{E} \setminus E$, we know that $\Theta(S,z) \geq Q$ by upper-semicontinuity of the density. If $z \in \Reg S$, then in fact we could further conclude that $\Theta(S,z) = Q$, because there is an open neighbourhood of $z$ on which the density agrees with $\Theta(S,z)$, but also $z_k \to z$ for some sequence $\{z_k\} \subset E$.
    
    Again exploiting the dimension estimate~\eqref{eq:Almgren} on the singular set of $S$, we deduce that
    \[
        \Hcal^{m-2+\alpha}(\cl{E}\setminus E) = 0 \qquad \forall \alpha > 0.
    \]
    But recall that $\partial S = 0$ and $E$ is a smooth manifold, so 
    \[
        \spt(\partial \tilde{S}) = \spt(S\mres \partial E) \subset \cl{E}\setminus E,
    \]
    and hence
    \[
        \Hcal^{m-2+\alpha}(\spt(\partial \tilde{S})) = 0 \qquad \forall \alpha > 0.
    \]
    However, $\partial\tilde{S}$ is an $(m-1)$-dimensional integral flat chain, 
    so by a well-known result of Federer, we deduce that
    \[
        \partial\tilde S = 0 = \partial S,
    \]
    and moreover $\tilde S$ is area minimizing, since $S$ is. Now since $\Theta(\tilde{S},\cdot) \equiv Q$, we can consider
    \[
        \bar{S} \coloneqq \frac{\tilde S}{Q}.
    \]
    
    By virtue of~\eqref{eq:almostflat}, for any $R > 0$ we have
    \begin{align}\label{eq:inftytang}
        \frac{\|\bar{S}\|(\Bbf_R(z_0))}{R^m} &\leq \frac{\|\bar{S}\|(\Bbf_{R+|z_0|})}{R^m} \\
        &\leq \frac{\omega_m (R+|z_0|)^m(Q+4\eps)}{Q R^m} \notag\\
        &= \omega_m\Big(1+ \frac{|z_0|}{R} \Big)^m\Big(1+\frac{4\eps}{Q}\Big).\notag
    \end{align}
    Hence, we have a uniform upper bound over $R >0$ on the masses $\|\bar{S}_{z_0,R}\|(\Bbf_1)$. When combining with the zero boundary condition, this allows us to again use the Federer-Fleming Compactness Theorem to extract a sequence $R_k \toup \infty$ and an $m$-dimensional area minimizing cone (with vertex at the origin) $\bar{S}_\infty$ such that $\bar{S}_{z_0,R_k} \toweakstar \bar{S}_\infty$ and $\| \bar S_{z_0,R_k} \|(\Bbf_1) \longrightarrow \| \bar S_\infty \|(\Bbf_1)$.

    The fact that this \emph{tangent at infinity} $\bar S_\infty$ is a cone follows from the monotonicity formula, in the same way as for a tangent at a fixed point.
    
    By considering~\eqref{eq:inftytang} at scales $R_k \toup \infty$, we have
    \[
        \|\bar{S}_\infty\|(\Bbf_1) \leq \omega_m \Big(1+\frac{4\eps}{Q}\Big).
    \]
    Since $\Theta(\bar S_\infty,\cdot) \equiv \Theta(\bar S, \cdot) \equiv 1$, we can choose $\eps > 0$ sufficiently small in order to apply Allard's Regularity Theorem and deduce that $\bar S_\infty$ is a $C^{1,\alpha}$-graph locally near the origin. Since $\bar{S}_\infty$ is a cone, we conclude that
    \[
        \bar S_\infty = \llbracket \pi \rrbracket \quad \text{for some $m$-plane $\pi$.}
    \]
    Note that we can conclude this for $\bar S_\infty$ regardless of the center that we take for the blow-down limit. Now if $\Theta(S, z_0) = Q$, we can combine this with the regularity of $\bar S$ near $z_0$ to deduce that in fact $\bar S~=~\llbracket \pi\cap E \rrbracket$ and hence $\tilde S = Q\llbracket \pi \cap E\rrbracket$. Indeed, since $\| \bar{S}_\infty \|(\Bbf_1) = \omega_m$, we have
    \begin{equation}\label{eq:flatcone}
        \lim_{r \todown 0} \frac{\| \bar S \|(\Bbf_r(z_0))}{\omega_m r^m} = 1 = \lim_{k \to \infty} \frac{\| \bar S \|(\Bbf_{R_k}(z_0))}{\omega_m R_k^m},
    \end{equation}
    so the monotonicity formula allows us to conclude.
    
    It remains to check that $\tilde S = S$. Due to~\eqref{eq:almostflat}, we have
    \[
        \| S - \tilde S \|(\Bbf_R) \leq 4\eps \omega_m R^m \qquad \text{for any $R > 0$.}
    \]
    Thus, for any $y \in \spt(S - \tilde S)$ and any $R>0$, we have
    \[
        \| S - \tilde S \|(\Bbf_R(y)) \leq \omega_m \eps (R+ |y|)^m,
    \]
    as well as
    \[
        \|S-\tilde S\|(\Bbf_R(y)) \geq \omega_m R^m,
    \]
    simply because the density at any point in the support of an integral current is at least 1. Taking $R> |y|$ and then $\eps < \frac{1}{2}$ (if this is not already the case), we indeed have $\tilde S = S$.
    
    Finally, we explain why we cannot have $z_0 \in \Reg_{\geq Q+1} S$. Note that the density at any regular point must be integer-valued, due to the dimension estimate~\cite[Theorem~35.3]{Simon_GMT} for the set of points with non-integer densities. If $\Theta(S, z_0) \geq Q+1$, then~\eqref{eq:flatcone} instead becomes
    \[
        \lim_{r \todown 0} \frac{\| \bar S \|(\Bbf_r(z_0))}{\omega_m r^m} = \frac{Q+1}{Q} > 1 = \lim_{k \to \infty} \frac{\| \bar S \|(\Bbf_{R_k}(z_0))}{\omega_m R_k^m}.
    \]
    This, however, contradicts the monotonicity formula, so cannot occur.
\end{proof}

We have therefore successfully reduced the problem to that of deducing that the second alternative of Theorem~\ref{thm:Minkowskibd} holds true whenever we are in case~\eqref{itm:flattangent} of Proposition~\ref{prop:weakcpct}. The remainder of the article will be dedicated to proving that this is indeed true.

Before continuing, recall that the \emph{excess} $\Ebf(T,\Bbf)$ on a ball is defined as follows. Given any $m$-dimensional plane $\pi$, denote by $\vec{\pi}$ the unit $m$-vector orienting this $m$-plane. Then
\begin{align*}
\Ebf(T,\Bbf, \pi) &\coloneqq \frac{1}{2|\Bbf|}\int_{\Bbf} |\vec{T} - \vec{\pi}|^2\dd\|T\| \qquad \text{for any $m$-plane $\pi \subset \R^{m+n}$}; \\
\Ebf(T,\Bbf) &\coloneqq \inf_{\text{$m$-planes $\pi$}} \Ebf(T,\Bbf,\pi),
\end{align*}
The definition of the cylindrical excess $\Ebf(T,\Cbf)$ for a cylinder $\Cbf(z,\pi) = B(z,\pi) \times \pi^\perp$ is analogously defined.

Applying Propositions~\ref{prop:qptcenters} and~\ref{prop:weakcpct} to the sequences $x_k$ and $r_k$, we can assume that the weak-$*$ limit $S$ of our blow-up sequence is an $m$-plane with multiplicity $Q$. In summary, we henceforth make the following assumptions:
\begin{assumption}\label{asm:cpct}
    Let $T$ and $\Sigma$ be as in Assumption~\ref{asm:curr} and let $\bar\eps, \eps > 0$ be given by Proposition~\ref{prop:weakcpct}. We assume that we have an arbitrary fixed set $K^\infty \in \Cscr(\eps,\tilde{K})$ satisfying Assumption~\ref{asm:curr2}. For this set $K^\infty$, we have associated blow-up centers $x_k$, and scales $r_k \todown 0$ such that for any arbitrarily fixed $y \in K^\infty$, there is a sequence of points $y_k$ with
    \begin{enumerate}[(i)]
        \item $x_k, x \in K^0, \qquad x_k \to x$, \\
        \item $y_k \in K^0_{x_k,r_k}\cap\Bbf_1, \qquad y_k \to y$, \\
        \item $\Ebf(T, \Bbf_{r_k}(x_k)) = \Ebf(T_{x_k,r_k}, \Bbf_1) \longrightarrow 0 \qquad$ as $k \to \infty$.\label{itm:excessdecay}
    \end{enumerate}
\end{assumption} 

The assumption~\eqref{itm:excessdecay} is a consequence of the weak-$*$ convergence and the convergence of masses. Indeed, for any open $(m+n)$-dimensional ball $B$ and any $m$-plane $\pi$, we have:
\begin{align*}
    \Ebf(T_{x_k,r_k},\Bbf, \pi) &= \frac{1}{2|\Bbf|}\int_{\Bbf} |\vec{T}_{x_k,r_k} - \vec{\pi}|^2\dd\|T_{x_k,r_k}\| \\
    &= \frac{\|T_{x_k,r_k}\|(\Bbf)}{|\Bbf|} - \frac{1}{|\Bbf|} \int_{\Bbf} \dpr{\vec{T}_{x_k,r_k}, \vec{\pi}} \dd\|T_{x_k,r_k}\|,
\end{align*}
and so, taking $k \to \infty$, we arrive at
\begin{equation}\label{eq:excesslim}
    \Ebf(T_{x_k,r_k},\Bbf, \pi) \longrightarrow \Ebf(S,\Bbf, \pi) = \frac{\|S\|(\Bbf)}{|\Bbf|} - \frac{1}{|\Bbf|} \int_{\Bbf} \dpr{\vec{S}, \vec{\pi}} \dd\|S\|.
\end{equation}    

Before we continue, we must first ensure that our setup will allow us to approximate our current well by a $Q$-valued map at the scales of the blow-up procedure (see~\cite{DLS14Lp}). In view of this, we will assume the following: 
\begin{assumption}\label{asm:Qfold}
    Let $T$ and $\Sigma$ be as in Assumption~\ref{asm:curr}, and suppose that Assumptions~\ref{asm:cpct} and~\ref{asm:curr2} hold. If we replace $\eps$ from Assumption~\ref{asm:cpct} by $\min\{\eps, \eps_3\}$, then for each $x_k$ we can find an $m$-plane $\pi_k$ such that
    \[
        \mathbf{p}_{\pi_k}^\sharp T \mres \Cbf_{\frac{11\sqrt{m}r_k}{2}}(x_k,\pi_k) = Q\llbracket B_{\frac{11\sqrt{m}r_k}{2}}(x_k, \pi_k)\rrbracket, \qquad \partial T \mres \Cbf_{\frac{11\sqrt{m}r_k}{2}}(x_k,\pi_k) = 0.
    \]
\end{assumption}


From now on, we will work under Assumptions~\ref{asm:curr}-\ref{asm:Qfold}.

\section{Overview of the remaining argument}
Let us now discuss the overall approach towards proving the second alternative in Theorem~\ref{thm:Minkowskibd}. Our goal is to show that $K^\infty \subset \Delta_Q(u)$, where $u$ is some Dir-minimizing $Q$-valued map defined over (an open ball in) an $m$-dimensional plane. We \emph{would like} to argue as follows:
\begin{enumerate}[Step 1.]
    \item Approximate our blow-up sequence $T_{x_k,r_k}$ by `almost' Dir-minimizing $\Wrm^{1,2}$-maps $u_k$ parameterized over $m$-dimensional planes; \label{itm:planeapprox}
    \item renormalize and recenter the $u_k$ appropriately so that, up to subsequence, we have strong convergence to some non-trivial Dir-minimizer $u$ in $\Lrm^2_\loc$;
    \item Check if $u$ is a viable candidate for the map in Theorem~\ref{thm:Minkowskibd}.
\end{enumerate}
This proposed scheme, however, \emph{does not work}. The main problem is that the limit $u$ could potentially be trivial. Thus, the singularities at $x_k$ and $y_k$ will not have persisted in the limit. Such a phenomenon would occur whenever $T$ has a prominent regular part of higher polynomial order coming into contact with its branched singular structure, locally around the blow-up centers. This regular behaviour would dominate in the limit, thus leading to disappearance of singularities.

We overcome this problem via the center manifold construction of Almgren, originally used to estimate the Hausdorff dimension of the singular set. Center manifolds provide a good replacement for $m$-planes as the objects on which we build our approximations in \eqref{itm:planeapprox} of the above desired scheme. The reason for this is that the center manifolds capture the regular behaviour of $T$ around the points $x_k$. We may then construct graphical approximations over the normal bundles of the center manifolds; these graphical approximations should only capture the singular behaviour of $T$ locally. 

We can then instead choose $u$ to be an appropriately normalized limit of these graphs over the center manifolds. The singularities should now persist to this limit; uniform bounds on the \emph{frequency function} along our sequence of approximating graphs will allow us to conclude this. This will allow us to use the information about the size of the singular set for Dir-minimizers to achieve the claimed dimension bound. 

Section~\ref{sct:Almgrencm} is dedicated to the set up of our sequence of center manifolds. In Section~\ref{sct:freq}, we will then discuss the frequency function and its key properties, including the uniform bounds that are then proved in Section~\ref{sct:frequb}. We conclude with the final persistence of singularities argument in Section~\ref{sct:Qpts}.

\section{Almgren's Center Manifold Construction}\label{sct:Almgrencm}

\subsection{The Refining Procedure}\label{sct:refine}

In order to build the sequence of center manifolds along the scales of our blow-up sequence, we require the refining procedure and Whitney decomposition from \cite[Proposition~1.11]{DLS16centermfld} for each $T_{x_k,r_k}$, constructed via stopping time criteria based on the size of the excess and the height of $T_{x_k,r_k}$. 

In view of~\cite{DLS16blowup}, we will be taking this sequence over small intervals containing the scales $r_k$. These intervals will detect the scales at which the current $T$ stops being sufficiently flat (and thus when a given center manifold no longer approximates $T$ sufficiently well), allowing us to rescale and build a new center manifold and normal approximation which will approximate $T$ better at the new scale. We restate these necessary preliminary results here for the convenience of the reader:

\bigskip

Fix $w \in \spt T \setminus \spt(\partial T)$ and $r < \frac{1}{6\sqrt{m}}\dist(w, \spt(\partial T))$. Define
\[
    T' \coloneqq T_{w,r}\mres\Bbf_{6\sqrt{m}}, \qquad \Sigma' \coloneqq \iota_{w,r}(\Sigma). 
\]
We will later choose $w$ to be our blow-up centers $x_k$ and the corresponding $r$ to be scales close to $r_k$.

Let $\Cscr = \bigcup_{j \in \N} \Cscr^j$ be the collection of all closed dyadic $m$-dimensional subcubes of $[-4,4]^m \subset \pi_0$. Namely, $\Cscr^j$ consists of all cubes inside $[-4,4]^m$ of side length $2^{1-j}$ with vertices in $2^{1-j}\Z$. Given $L \in \Cscr^k$, let $\ell(L) \coloneqq 2^{1-j}$ denote the side length of $L$. We call $J \in \Cscr^j$ an ancestor of $L$ if $L \subset J$ and we call $J$ a parent of $L$ (or $L$ a child of $J$) if furthermore $J \in \Cscr^{j-1}$. We denote the center of $L$ by $x_L$.
    
Given a cube $L \in \Cscr$, Assumption~\ref{asm:Qfold} allows us to find $y_L \in \Span \{x_L\}^\perp$ such that 
\[
    p_L \coloneqq (x_L,y_L) \equiv x_L + y_L \in \spt T'.
\]
Notice that the choice of $y_L$ is not unique (since we assume $Q > 1$), but for each $L$ we fix a choice.

Moreover, for any $M_0 > 0$, let $r_L \coloneqq M_0\sqrt{m}\ell(L)$ and let $\Bbf_L \coloneqq \Bbf_{64 r_L}(p_L)$. 
Let $\hat{\pi}_L$ be the optimal plane for the excess of $T'$ in $\Bbf_L$, namely
\[
    \Ebf(T', \Bbf_L) = \Ebf(T', \Bbf_L, \hat{\pi}_L).
\]
Since we wish to remain to also remain within $\Sigma$ when approximating $T$ graphically, we further find a plane $\pi_L \subset T_{p_L}\Sigma$ such that
\[
    |\pi_L - \hat{\pi}_L| = \inf_{\pi \in T_{p_L}\Sigma} |\pi - \hat{\pi}_L|.
\]
or equivalently, $\Ebf(T', \Bbf_L, \pi_L) = \inf_{\pi \in T_{p_L}\Sigma} \Ebf(T', \Bbf_L, \pi)$. The planes $\pi_L$ will orient the cylinders in which we construct the local Lipschitz approximations from~\cite[Theorem~2.4]{DLS14Lp}. This will be discussed in more detail in Section~\ref{sct:centermfld}.  
    
Fix $N_0 \in \Nbb$. We now construct our refining procedure using a Whitney decomposition of $[-4,4]^m\subset \pi_0$ as follows. We first choose constants $\beta_2 = 4\delta_2$ as in~\cite[Assumption~1.8]{DLS16centermfld}, and fix constants $C_e$ and $C_h$ to be determined later with the dependencies of~\cite[Assumption~1.9]{DLS16centermfld}. 

Given $j \geq N_0$, we set up three subfamilies of cubes $\Wscr^{j}_e$, $\Wscr^{j}_h$, $\Wscr^{j}_n \subset \Wscr$ and place cubes $L \in \Cscr$ into these families inductively, starting from $j = N_0$, as follows:  
\begin{align}
    &\text{Let $L \in \Wscr^{j}_e$} \ \text{if} \ \Ebf(T',\Bbf_L) > C_e\boldsymbol{m}_0\ell(L)^{2-2\delta_2}; \tag{EX}\label{itm:excess} \\
    &\text{Let $L \in \Wscr^{j}_h$} \ \text{if} \ L \notin \Wscr^j_e \ \text{and} \ \mathbf{h}(T',\Bbf_L) > C_h \boldsymbol{m}_0^{\frac{1}{2m}}\ell(L)^{1+\beta_2}; \tag{HT}\label{itm:height} \\
    &\text{Let $L \in \Wscr^{j}_n$} \ \text{if} \ L\notin \Wscr^{j}_e\cup\Wscr^{j}_h \ \text{but $L$ intersects an element of $\Wscr^{j-1}$.}\tag{NN}\label{itm:nn}
\end{align}
Here
\[
    \boldsymbol{m}_0 \coloneqq \max\{\boldsymbol{c}(\Sigma)^2, \Ebf(T',\Bbf_{6\sqrt{m}}) \leq \eps_2^2,
\]
and $\eps_2$ is a small positive constant to be determined later, with the dependencies of~\cite[Assumption~1.9]{DLS16centermfld}. If one of these conditions holds for $L$, then STOP refining. Define
\[
    \Wscr^{j} \coloneqq \Wscr^j_e\cup\Wscr^{j}_h\cup\Wscr^{j}_n \subset \Cscr^j \qquad \text{and} \qquad \Wscr \coloneqq \bigcup_{j \geq N_0} \Wscr^{j}.
\]
Otherwise, place $L$ in the family $\Sscr^{j} \subset \Cscr^j$ and apply the above stopping conditions to the children cubes $L' \in \Cscr^{j+1}$ of $L$. Furthermore, place all cubes $L \in \Cscr^j$ with $j < N_0$ in $\Sscr \coloneqq \bigcup_{j \geq N_0} \Sscr^{j}$. In other words, $\Sscr$ is characterized by
\[
    J \in \Sscr \qquad \iff \qquad \text{$J \in \Cscr$ and either $J$ has a child $L \in \Wscr$ or $\ell(J) > 2^{1-N_0}$.}
\]
Continue this refining procedure inductively. 
    
Observe that given any $x \in [-4,4]^m \subset \pi_0$, this refinement procedure either stops on some $L \in \Wscr$ containing $x$, or we can continue refining indefinitely around $x$. We denote by $\Gamma$ the set of points where the latter occurs:
\begin{equation*}
    \boldsymbol{\Gamma} \coloneqq [-4,4]^m\setminus \bigcup_{L \in \Wscr} L = \bigcap_{j \geq N_0} \bigcup_{L \in \Sscr^{j}} L.
\end{equation*}
Notice that for any $x \in \Gamma$, there exists a sequence of cubes $L_j \in \Sscr^j$ with $x \in L_j$. Moreover, we make the obvious but important observation that
\begin{equation}\label{eq:badcubegdparent}
    L \in \Wscr^{j}, \ j > N_0 \quad \implies \quad \text{the parent cube of $L$ lies in $\Sscr^{j-1}$.} 
\end{equation}
We recall the following result from~\cite{DLS16centermfld}, which tells us that the above construction indeed gives a Whitney decomposition.
    
\begin{proposition}[Whitney decomposition, \cite{DLS16centermfld},~Proposition~1.11]\label{prop:Whitney}
    Provided that Assumptions~1.8 and 1.9 from \cite{DLS16centermfld} hold and $\eps_2$ is sufficiently small, we can conclude that $(\boldsymbol{\Gamma}, \Wscr)$ is a Whitney decomposition of $[-4,4]^m \subset \pi_0$. More precisely,
    \begin{align}
        &\boldsymbol{\Gamma}\cup \bigcup_{L \in \Wscr} L = [-4,4]^m \ \text{and $\boldsymbol{\Gamma}$ does not intersect any cube $L \in \Wscr$};\tag{w1}\label{eq:w1} \\
        &\text{any pair of cubes $L, \ L' \in \Wscr$ have disjoint interiors};\tag{w2}\label{eq:w2} \\
        &\text{if $L, \ L' \in \Wscr$ and $L \cap L' \neq \emptyset$, then $\frac{1}{2}\ell(L) \leq \ell(L') \leq 2\ell(L)$}.\tag{w3}\label{eq:w3}
    \end{align}
    These conditions further imply that
    \begin{equation}\label{eq:Whitneysep}
        \mathrm{sep}(\boldsymbol{\Gamma},L) \coloneqq \inf\set{|x-y|}{x \in L, y \in \boldsymbol{\Gamma}} \geq 2 \ell(L) \quad \text{for every $L \in \Wscr$.}
    \end{equation}
    Moreover, for any choice of $M_0$ and $N_0$, there exists $C^*=C^*(M_0,N_0)$ such that whenever $C_e \geq C^*$ and $C_h \geq C^*C_e$, we have
    \begin{equation}\label{eq:smallbadcubes}
        \Wscr^{j} = \emptyset \qquad \text{for every $j \leq N_0 + 6$},
    \end{equation}
    and
    \begin{align}
        &\Ebf(T',\Bbf_J) \leq C_e \boldsymbol{m}_0\ell(J)^{2-2\delta_2}, \quad \mathbf{h}(T',\Bbf_J) \leq C_h \boldsymbol{m}_0^{\frac{1}{2m}} \ell(J)^{1+\beta_2} \quad \text{for $J \in \Sscr$};\label{eq:goodcubesbds} \\
        &\Ebf(T',\Bbf_L) \leq C \boldsymbol{m}_0\ell(L)^{2-2\delta_2}, \quad \mathbf{h}(T',\Bbf_L) \leq C \boldsymbol{m}_0^{\frac{1}{2m}} \ell(L)^{1+\beta_2} \quad \text{for $L \in \Wscr$}\label{eq:badcubesbds},
    \end{align}
    where $C = C(\beta_2,\delta_2,M_0,N_0,C_e,C_h)$.
\end{proposition}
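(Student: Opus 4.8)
\emph{Plan of proof.} The statement is phrased \emph{conditionally} — it presupposes Assumptions~1.8 and~1.9 of \cite{DLS16centermfld} — so the proof is essentially the observation that the refining procedure and the families $\Wscr^j_e,\Wscr^j_h,\Wscr^j_n,\Sscr^j$ and the set $\boldsymbol\Gamma$ constructed above are \emph{verbatim} those of \cite[Section~1]{DLS16centermfld}, once one substitutes the current there by $T' = T_{w,r}\mres\Bbf_{6\sqrt m}$, the ambient manifold by $\Sigma' = \iota_{w,r}(\Sigma)$ (after a rotation bringing the excess‑optimal plane of $T'$ in $\Bbf_{6\sqrt m}$ to $\pi_0$), and identifies the parameters $\delta_2,\beta_2=4\delta_2,M_0,N_0,C_e,C_h,\eps_2,\boldsymbol m_0$. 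Under this dictionary, \eqref{eq:w1}--\eqref{eq:w3}, the vanishing \eqref{eq:smallbadcubes} of $\Wscr^j$ for $j\le N_0+6$, and the stopping‑condition bounds \eqref{eq:goodcubesbds}--\eqref{eq:badcubesbds} with the asserted dependencies of $C^*$ and $C$ are exactly the conclusions of \cite[Proposition~1.11]{DLS16centermfld}. So the plan is: first, spell out that $T'$ and $\Sigma'$ are objects of the type to which \cite{DLS16centermfld} applies (and that, for the choices of $w,r$ we shall make later, the quantitative hypotheses there are met); and second, derive the separation bound \eqref{eq:Whitneysep}, which is the one extra assertion, from \eqref{eq:w1}--\eqref{eq:w3}.

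For the first point: $T'$ is again an area minimizing integral current with $\partial T'\mres\Bbf_{6\sqrt m}=0$, since $r<\frac1{6\sqrt m}\dist(w,\spt(\partial T))$ and both the rescaling $\iota_{w,r}$ and restriction to a ball disjoint from $\spt(\partial T)$ preserve minimality; and $\Sigma'$ is an embedded $\Crm^{3,\eps_0}$ submanifold with $\boldsymbol c(\Sigma')\le\boldsymbol c(\Sigma)\le\bar\eps$, because writing $\Sigma$ near $p$ as the graph of $\Psi_p$ as in Assumption~\ref{asm:curr2}, $\Sigma'$ is near $0$ the graph of $x\mapsto r^{-1}\Psi_p(rx)$, whose differential has the same $\Crm^0$‑norm and only smaller higher‑order seminorms as $r\le 1$. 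The cylindrical $Q$‑fold‑cover normalisation demanded in \cite[Assumption~1.8]{DLS16centermfld} is precisely Assumption~\ref{asm:Qfold} transported by $\iota_{x_k,r}$. The genuinely quantitative input is the smallness $\boldsymbol m_0=\max\{\boldsymbol c(\Sigma')^2,\Ebf(T',\Bbf_{6\sqrt m})\}\le\eps_2^2$ of \cite[Assumption~1.9]{DLS16centermfld}: the first term is $\le\bar\eps^2$, and since the construction will only be used with $w=x_k$ and $r$ in a short interval around $r_k$, the excess term is, up to the bounded dilation $\iota_{0,r_k/r}$, the excess of $T_{x_k,r_k}$ on a ball of radius comparable to $6\sqrt m$, which tends to $0$ by the excess‑decay property in Assumption~\ref{asm:cpct} together with the flatness $S=Q\llbracket\pi_\infty\rrbracket$ of the blow‑up limit from Proposition~\ref{prop:weakcpct}. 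Finally, \eqref{eq:Whitneysep} is purely combinatorial: a point $y\in\boldsymbol\Gamma$ lies in an infinite nested chain of cubes of $\Sscr$ whose side lengths tend to $0$, so no cube of $\Sscr$ of size comparable to or larger than $\ell(L)$ can be adjacent to a given $L\in\Wscr$, and chaining this with the comparability \eqref{eq:w3} of touching Whitney cubes forces $\mathrm{sep}(\boldsymbol\Gamma,L)\ge 2\ell(L)$.

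The main obstacle is organisational rather than analytic: the parameters must be fixed in a strict hierarchy — first $M_0$ and $N_0$ (they also govern the later center‑manifold and frequency estimates), which determines $C^*=C^*(M_0,N_0)$; then $C_e\ge C^*$ and $C_h\ge C^*C_e$; and only afterwards $\eps_2$, hence $\bar\eps$, hence the admissible range of $\eps$, small — and one must check that this ordering is compatible with every other smallness constraint placed on $\eps$ and $\bar\eps$ elsewhere in the paper. Since \cite{DLS16centermfld} is built on exactly this hierarchy of dependencies no circularity arises, but verifying that consistency is the delicate bookkeeping step.
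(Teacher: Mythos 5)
Your core move---observing that the refining procedure above is verbatim the one of \cite{DLS16centermfld}, so that \eqref{eq:w1}--\eqref{eq:w3}, \eqref{eq:smallbadcubes} and \eqref{eq:goodcubesbds}--\eqref{eq:badcubesbds} are exactly the conclusions of \cite[Proposition~1.11]{DLS16centermfld} once $T'$, $\Sigma'$ and the parameters are identified---is the same route the paper takes: it states the result as a citation, notes that \eqref{eq:w1}--\eqref{eq:w2} are immediate from the construction, and defers \eqref{eq:w3} and the two estimates to the tilting control \cite[Proposition~4.1]{DLS16centermfld} and to \cite[Section~4]{DLS16centermfld}. Your verification that $T'$, $\Sigma'$ fall under the hypotheses there, and the parameter-hierarchy bookkeeping, are consistent with this (and, since the proposition is stated conditionally on Assumptions~1.8--1.9 of \cite{DLS16centermfld}, not strictly needed for it).

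The one step you argue yourself, the deduction of \eqref{eq:Whitneysep}, has a genuine gap. The intermediate claim that ``no cube of $\Sscr$ of size comparable to or larger than $\ell(L)$ can be adjacent to a given $L\in\Wscr$'' is false: a cube $H\in\Sscr^j$ of the \emph{same} generation may perfectly well touch $L\in\Wscr^j$; the mechanism \eqref{itm:nn} only forces the children of $H$ touching $L$ to stop at generation $j+1$, their neighbours at generation $j+2$, and so on. Carrying that chaining out, the stopped collar around $L$ has width at most $\sum_{i\geq 1}2^{-i}\ell(L)=\ell(L)$, so the argument yields $\mathrm{sep}(\boldsymbol{\Gamma},L)\geq \ell(L)$, not $2\ell(L)$, when $\ell(L)$ denotes the full side length as defined in this paper. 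Indeed the one-dimensional configuration $\boldsymbol{\Gamma}=\{0\}$ with $\Wscr\supset\{[2^{-i},2^{-i+1}]\}_{i}$ is compatible with \eqref{eq:w1}--\eqref{eq:w3} and with \eqref{itm:nn}, and realizes $\mathrm{sep}(\boldsymbol{\Gamma},L)=\ell(L)$ exactly; so no combinatorial chaining can produce the constant $2$ under this normalization. The factor $2$ in \cite[Proposition~1.11]{DLS16centermfld} is an artifact of their convention that $\ell(L)$ is \emph{half} the side length; to make your step correct you must either adopt that normalization throughout (adjusting $r_L$, the Whitney regions, etc.\ accordingly) or prove and use \eqref{eq:Whitneysep} with constant $1$, which is all that the later arguments require.
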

Note that we do not require an upper bound on the separation in terms of the size of the cube, despite the fact that this additional condition is usually included in the criteria for a Whitney decomposition. The choices of $M_0$ and $N_0$ will be determined by Propositions~\ref{prop:excesssplitting} and~\ref{prop:comparecentermfld}. When combined with the stopping conditions, observe that~\eqref{eq:badcubesbds} tells us that for any $L \in \Wscr$, the excess of $T'$ over the ball $\Bbf_L$ is comparable to $\boldsymbol{m}_0 \ell(L)^{2-2\delta_2}$ and the height is comparable to $\boldsymbol{m}_0^{\frac{1}{2m}} \ell(L)^{1+\beta_2}$.

The first two conditions~\eqref{eq:w1} and~\eqref{eq:w2} are an automatic consequence of the construction. The validity of the remaining conditions relies on the observation~\eqref{eq:badcubegdparent} mentioned above and a quantitative control~\cite[Proposition~4.1]{DLS16centermfld} on the tilting of the optimal planes for the excess. We omit the details, and instead refer the reader to~\cite[Section~4]{DLS16centermfld}.


\subsection{The Center Manifold $\Mcal$}\label{sct:centermfld}
Given any cube $L \in \Sscr \cup \Wscr$, we wish to consider Almgren's strong Lipschitz approximation $f_L$ from~\cite[Theorem~2.4]{DLS14Lp} locally for $T'\mres \Cbf_{32 r_L}(p_L,\pi_L)$. Let us recall this result here (with the proof omitted), for the benefit of the reader.

\begin{theorem}[Almgren's strong approximation,~\cite{DLS14Lp}, Theorem~2.4]\label{thm:strongLip}
    There exist constants $C,\gamma_1, \eps_1 > 0$ (depending on $m,n,\bar{n},Q$) with the following property. Assume that $T'$ is area minimizing, satisfies~\cite[Assumption~2.1]{DLS14Lp} in the cylinder $\Cbf_{4r}(p)$ and $E = \Ebf(T',\Cbf_{4r}(p),\pi) < \eps_1$ for some $m$-plane $\pi \subset \R^{m+n}$. Then, there is a map $f: \pi \supset \B_r(p) \to \Acal_Q(\pi^\perp)$ with $\spt(f) \subset \Sigma$ and a closed set $K \subset B_r(p)$ such that
    \begin{align*}
        &\Lip(f) \leq CE^{\gamma_1}, \\
        &\Gbf_f\mres(K \times \pi^\perp) = T'\mres(K \times \pi^\perp) \quad \text{and} \quad |B_r(p)\setminus K| \leq CE^{\gamma_1}(E + r^2\Abf^2) r^m, \\
        &\left|\|T'\|(\Cbf_{\sigma r}(p)) - Q\omega_m(\sigma r)^m - \frac{1}{2}\int_{B_{\sigma r}(p)} |Df|^2\right| \leq CE^{\gamma_1} (E + r^2\Abf^2) r^m \quad \forall \sigma \in (0,1).
    \end{align*}
    If in addition $\boldsymbol{h}(T',\Cbf_{4r}(p),\pi) \coloneqq \sup\set{|\mathbf{p}^\perp(w) - \mathbf{p}^\perp(z)|}{w,z \in \spt T' \cap \Cbf_{4r}(p)} \leq r$, then
    \begin{equation}\label{eq:oscbd}
        \mathrm{osc}(f) \leq C\mathbf{h}(T',\Cbf_{4r}(p),\pi) + C(E^{\frac{1}{2}} + r\Abf)r.
    \end{equation}
\end{theorem}
In addition, let us recall that provided the excess is sufficiently small, there is the following persistence of $Q$-points from the current to the strong Lipschitz approximation $f$ in Theorem~\ref{thm:strongLip}.
\begin{theorem}[Persistence of $Q$-points I, \cite{DLS14Lp}, Theorem~2.7]\label{thm:Qptspersistf}
    For every $\hat{\delta},C^* > 0$, there is $\bar{s} \in \left(0,\frac{1}{2}\right)$ such that, for every $s < \bar{s}$, there exists $\hat{\eps}(s,C^*,\hat{\delta}) > 0$ with the following property. If $T'$ is as in Theorem~\ref{thm:strongLip}, $E \coloneqq \Ebf(T',\Cbf_{4r}(p),\pi) < \hat{\eps}$, $r^2\Abf^2 \leq C^*E$ and $\Theta(T,(w_0,z_0)) = Q$ for some $(w_0,z_0) \in (\pi \times \pi^\perp)\cap \Cbf_{\frac{r}{2}}(p,\pi)$, then the approximation $f$ of Theorem~\ref{thm:strongLip} satisfies
    \begin{equation}\label{eq:Qpts1}
        \int_{B_{sr}(w_0)}\Gcal(f,Q\llbracket \boldsymbol{\eta}\circ f\rrbracket)^2  \leq \hat{\delta}s^mr^{m+2}E.
    \end{equation}
\end{theorem}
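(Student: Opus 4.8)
The plan is to avoid any blow-up and instead estimate the $Q$ sheets of $T'$ directly against the $Q$-point $q:=(w_0,z_0)$, which lies in $\spt T'$ since $\Theta(T',q)=Q\ge 1$. Composing with $\iota_{p,r}$ and relabelling, I may assume $r=1$ and $\pi=\pi_0=\R^m\times\{0\}$: the excess is scale invariant, the hypothesis $r^2\Abf^2\le C^*E$ becomes $\Abf^2\le C^*E$, and both sides of the claimed inequality acquire a factor $r^{m+2}$, so it suffices to prove $\int_{B_s(w_0)}\Gcal(f,Q\llbracket\boldsymbol\eta\circ f\rrbracket)^2\le\hat\delta\,s^mE$. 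Since $|\boldsymbol\eta\circ f(x)-z_0|\le Q^{-1/2}\Gcal(f(x),Q\llbracket z_0\rrbracket)$, the triangle inequality in $\Acal_Q$ (see~\cite{DLS_MAMS}) gives $\Gcal(f(x),Q\llbracket\boldsymbol\eta\circ f(x)\rrbracket)\le 2\,\Gcal(f(x),Q\llbracket z_0\rrbracket)$ pointwise, so it is enough to bound $\int_{B_s(w_0)}\Gcal(f,Q\llbracket z_0\rrbracket)^2$.

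The crucial ingredient is a sharp height bound at the multiplicity-$Q$ point:
\[
    \mathbf h\big(T',\Cbf_\rho(w_0,\pi_0)\big)^2\le C\rho^2E\qquad\text{for all }\rho\le\tfrac12 .
\]
I would deduce this from the monotonicity formula for the area-minimiser $T'$ centred at $q$: because $\Theta(T',q)=Q$, the (curvature-corrected) density ratio $\rho^{-m}\|T'\|(\Bbf_\rho(q))$ is non-decreasing in $\rho$ with limit $Q\omega_m$ as $\rho\todown 0$; combined with the mass estimate of Theorem~\ref{thm:strongLip} at scale $\tfrac12$ (which gives $\|T'\|(\Cbf_{1/2}(w_0))\le Q\omega_m2^{-m}+CE$) and $\Abf^2\le C^*E$, this pins the density ratio within $CE$ of $Q\omega_m$ for \emph{all} $\rho\le\tfrac12$, i.e.\ the excess $\Ebf(T',\Cbf_\rho(w_0,\pi_0))$ is almost monotone and stays $\le CE$ down all of these scales. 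Feeding this into Almgren's height bound at scale $\rho$ (cf.~\cite{DLS14Lp}) then yields $\mathbf h(T',\Cbf_\rho(w_0,\pi_0))^2\le C\rho^2E$; heuristically, at a genuine $Q$-point no sheet can detach, so there is no lower-order regular part of $T'$ inflating the height beyond order $\rho\sqrt E$.

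Granting this, on the good set $K$ of Theorem~\ref{thm:strongLip} one has $\Gbf_f=T'$, so for $x\in B_s(w_0)\cap K$ the values $f^1(x),\dots,f^Q(x)$ are the heights over $x$ of points of $\spt T'\cap\Cbf_{2|x-w_0|}(w_0,\pi_0)$; since $q$ lies in this set as well,
\[
    \Gcal(f(x),Q\llbracket z_0\rrbracket)^2=\sum_i|f^i(x)-z_0|^2\le Q\,\mathbf h\big(T',\Cbf_{2|x-w_0|}(w_0,\pi_0)\big)^2\le CE\,|x-w_0|^2 ,
\]
so the good-set part of the integral is $\le CE\int_{B_s(w_0)}|x-w_0|^2=C_1E\,s^{m+2}$. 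On the bad set $B_s(w_0)\setminus K$ I bound $\Gcal(f,Q\llbracket z_0\rrbracket)^2\le 2\Gcal(f,Q\llbracket\boldsymbol\eta\circ f\rrbracket)^2+2Q|\boldsymbol\eta\circ f-z_0|^2$ and use: the global height bound together with~\eqref{eq:oscbd} to get $\Gcal(f,Q\llbracket\boldsymbol\eta\circ f\rrbracket)\le CE^{1/(2m)}$ everywhere; the Lipschitz bound $\Lip(\boldsymbol\eta\circ f)\le CE^{\gamma_1}$ together with the key estimate at a nearby point of $K$ to get $|\boldsymbol\eta\circ f-z_0|\le CE^{\gamma_1}s$ on $B_s(w_0)$; and the measure bound $|B_1\setminus K|\le CE^{\gamma_1}(E+\Abf^2)\le CE^{1+\gamma_1}$. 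Hence the bad-set part is $\le C_2E^{1+\kappa}$ for some $\kappa=\kappa(\gamma_1,m)>0$, and altogether
\[
    \int_{B_s(w_0)}\Gcal(f,Q\llbracket\boldsymbol\eta\circ f\rrbracket)^2\le C_1E\,s^{m+2}+C_2E^{1+\kappa} .
\]
Choosing $\bar s$ with $C_1\bar s^{2}\le\tfrac12\hat\delta$ makes the first term $\le\tfrac12\hat\delta s^mE$ for every $s<\bar s$; then, for each fixed such $s$, choosing $\hat\eps=\hat\eps(s,C^*,\hat\delta)$ so small that $C_2\hat\eps^{\kappa}\le\tfrac12\hat\delta s^m$ makes the second term $\le\tfrac12\hat\delta s^mE$. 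This is the asserted estimate.

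The one non-routine step is the sharp height bound $\mathbf h(T',\Cbf_\rho(w_0,\pi_0))^2\lesssim\rho^2E$ at the multiplicity-$Q$ point, i.e.\ obtaining the \emph{linear} dependence on the ambient excess $E$ in place of the power $E^{1/m}$ produced by Almgren's height bound at an arbitrary point of $\spt T'$. This is exactly where $\Theta(T',q)=Q$ must be used in an essential way: it forces the excess of $T'$ to be almost monotone, and to stay comparable to $E$, along all scales around $q$ — something that fails at ordinary points of the support, where a polynomial-order regular part of $T'$ may concentrate near the point and dominate its height. The appearance of the factor $s^2$ in the choice of $\bar s$ reflects that this height bound is already sharp at frequency-one $Q$-points, so no genuine improvement over it can be expected.
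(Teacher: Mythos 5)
Your overall scheme (reduce to a height estimate at the $Q$-point, split into good and bad set, then choose $\bar s$ first and $\hat\eps$ afterwards) is coherent, but the step you yourself flag as the crux is where the argument breaks. The bound $\mathbf{h}(T',\Cbf_\rho(w_0,\pi_0))^2\leq C\rho^2E$ does \emph{not} follow from the monotonicity formula plus Almgren's height bound. First, pinching the ball mass ratios $\sigma^{-m}\|T'\|(\Bbf_\sigma(q))$ within $CE$ of $Q\omega_m$ does not give $\Ebf(T',\Cbf_\rho(w_0,\pi_0))\leq CE$ at all scales: the cylindrical excess at scale $\rho$ equals the mass discrepancy in the \emph{cylinder} $\Cbf_\rho(w_0)$, and passing from cylinders to balls costs an error governed by the height of $T'$ in that cylinder --- which is exactly the quantity you are trying to estimate, so the reasoning is circular; with only the a priori height $\mathbf{h}\lesssim E^{1/2m}$ the transfer loses powers of $E$. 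Second, and decisively, even if one granted $\Ebf(T',\Cbf_\rho(w_0))\leq CE$ at every scale, "feeding this into Almgren's height bound" yields $\mathbf{h}(T',\Cbf_\rho(w_0))\leq CE^{1/2m}\rho$, i.e.\ $\mathbf{h}^2\lesssim E^{1/m}\rho^2$, not the linear-in-$E$ bound you need. With $E^{1/m}$ in place of $E$ the good-set term becomes $CE^{1/m}s^{m+2}$, and the requirement $CE^{1/m}s^2\leq\tfrac12\hat\delta E$ cannot be met by a choice of $\bar s$ made \emph{before} $\hat\eps$, since it forces $s^2\lesssim\hat\delta E^{1-1/m}\to0$. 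The linear height bound at density-$\geq Q$ points is precisely Spolaor's improved height bound (recalled as~\eqref{eq:LucahtbdT} in this paper), whose proof goes through the Hardt--Simon inequality; it is a genuine theorem, not a consequence of monotonicity plus the standard height estimate, and the paper explicitly stresses this.

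If you replace your faulty derivation by a direct appeal to that improved height bound, the rest of your computation essentially goes through and in fact reproduces the \emph{stronger} conclusion~\eqref{eq:Qpts2}; this is exactly the alternative proof of Spolaor mentioned in the paper immediately after the statement. The original proof of the theorem in~\cite{DLS14Lp} is quite different in spirit: it is a compactness/contradiction argument, normalizing the Lipschitz approximations by $E^{1/2}$, passing to a Dir-minimizing limit via the strong (almost-harmonic) approximation, showing that the multiplicity-$Q$ point persists to the limit (a separation argument using the monotonicity formula), and concluding from the H\"older estimate for Dir-minimizers near a collapsed point --- it never needs a sharp height bound, which is why it only gives~\eqref{eq:Qpts1} rather than~\eqref{eq:Qpts2}. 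As written, however, your proposal has a genuine gap at its central step.
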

We omit the proof here, but we make the following important observation, that will be useful at a later stage. Notice that~\eqref{eq:Qpts1} can in fact be improved to the height bound
\begin{equation}\label{eq:Qpts2}
    \int_{B_{sr}(w_0)}\Gcal(f,Q\llbracket \boldsymbol{\eta}\circ f\rrbracket)^2  \leq Cs^{m+\gamma}r^{m+2}E,
\end{equation}
where $\gamma > 0$ is a geometric constant given by~\cite[Lemma~5.2]{Spolaor_15} and $C$ is independent of $s$. This improved estimate is a consequence of the alternative proof in~\cite[Section~5.3]{Spolaor_15}, which crucially uses the improved height bound~\cite[Theorem~1.5]{Spolaor_15}.

From now on, we refer to such an approximation $f_L$ as a \emph{$\pi_L$-approximation}. In order to construct the $\pi_L$-approximations, the assumptions of Theorem~\ref{thm:strongLip} must be satisfied. Namely, we require that $\Ebf(T', \Cbf_{32 r_L}(p_L,\pi_L))$ remains below a given small threshold, independently of the choice of $L$.

Let $\rho_\lambda \coloneqq \lambda^{-m}\rho(\frac{\cdot}{\lambda})$, $\lambda > 0$, denote the $\Lrm^1$-invariant rescaling of a smooth bump function $\rho \in \Crm_c^\infty(B_1(0,\pi_L))$ with $\int \rho = 1$ and $\int |z|^2 \rho(z) \dd z = 0$. Define the \emph{average of the sheets} $\boldsymbol{\eta}\circ f$ of a $Q$-valued map $f = \sum_i \llbracket f_i \rrbracket$ to be $\boldsymbol{\eta}\circ f \coloneqq \frac{1}{Q}\sum_{i} f_i$.  Then we have the following:
\begin{lemma}[\cite{DLS16centermfld},~Lemma~1.15]\label{lem:reparam}
    Let the assumptions of Proposition~\ref{prop:Whitney} hold, and assume $C_e \geq C^*$ and $C_h \geq C^* C_e$. For any choice of the other parameters, if $\eps_2$ is sufficiently small, then the current $T'\mres \Cbf_{32 r_L}(p_L,\pi_L)$ satisfies the assumptions of Theorem~\ref{thm:strongLip} for any $L \in \Wscr \cup \Sscr$. 
    
    Moreover, if $f_L:B_{8r_L}(p_L,\pi_L) \to \Acal_Q(\pi_L^\perp)$ is a $\pi_L$-approximation, denote by $\hat{h}_L: B_{7r_L}(p_L,\pi_L)\to \pi_L^\perp$ its smoothed average $\hat{h}_L \coloneqq (\boldsymbol{\eta}\circ f_L) * \rho_{\ell(L)}$ and by $\bar{h}_L$ the map $\mathbf{p}_{T_{p_L}\Sigma}(\hat{h}_L)$, which takes values in the plane $\varpi_L \coloneqq T_{p_L}\Sigma \cap \pi_L^\perp$. If we let $h_L$ be the map $z \mapsto h_L(z) \coloneqq (\bar{h}_L(z),\boldsymbol{\Psi}_{p_L}(x,\bar{h}_L(z))) \in \varpi_L \times T_{p_L} \Sigma^\perp$, then there is a smooth map $g_L: B_{4r_L}(p_L,\pi_0) \to \pi_0^\perp$ such that $\Gbf_{g_L} = \Gbf_{h_L} \mres \Cbf_{4r_L}(p_L,\pi_0)$.
\end{lemma}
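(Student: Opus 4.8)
The statement has two halves, and the plan is to handle them in turn. \emph{First half: applicability of Theorem~\ref{thm:strongLip} on every cylinder $\Cbf_{32 r_L}(p_L,\pi_L)$.} That $T'$ is area minimizing and obeys \cite[Assumption~2.1]{DLS14Lp} on this cylinder is inherited from Assumption~\ref{asm:Qfold}: the height bound $\mathbf{h}(T',\Bbf_L)\leq C_h\boldsymbol{m}_0^{\frac{1}{2m}}\ell(L)^{1+\beta_2}$ of~\eqref{eq:goodcubesbds}--\eqref{eq:badcubesbds} is $\ll r_L=M_0\sqrt{m}\,\ell(L)$ once $N_0$ is taken large (so that $\ell(L)$ is small), whence $\spt T'\cap\Cbf_{32 r_L}(p_L,\pi_L)\subset\Bbf_L=\Bbf_{64 r_L}(p_L)$, and the $Q$-fold-cover and zero-boundary properties of Assumption~\ref{asm:Qfold} transfer from the reference plane to $\pi_L$-cylinders because $|\pi_L-\pi_0|$ is small (see the next paragraph). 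For the crucial smallness of the cylindrical excess I would use this same inclusion together with the mass bound $\|T'\|(\Bbf_L)\leq C_0 Q\,|\Bbf_L|$ to write
\[
    \Ebf(T',\Cbf_{32 r_L}(p_L,\pi_L),\pi_L)\ \leq\ 2^m\,\Ebf(T',\Bbf_L,\pi_L)\ \leq\ C\,\Ebf(T',\Bbf_L)+C|\pi_L-\hat{\pi}_L|^2,
\]
and then invoke $\Ebf(T',\Bbf_L)\leq C\boldsymbol{m}_0\ell(L)^{2-2\delta_2}\leq C\eps_2^2$ from the Whitney bounds and $|\pi_L-\hat{\pi}_L|\leq C\boldsymbol{c}(\Sigma)\leq C\bar\eps$ from the choice of $\pi_L\subset T_{p_L}\Sigma$ and Assumption~\ref{asm:curr2}; choosing $\eps_2$ and $\bar\eps$ small puts this below the threshold $\eps_1$. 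The secondary hypothesis $\mathbf{h}(T',\Cbf_{32 r_L}(p_L,\pi_L),\pi_L)\leq r_L$ needed for~\eqref{eq:oscbd} follows from the same height bound, the smallness of $|\pi_L-\hat{\pi}_L|$, and $\ell(L)$ small.

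\emph{Tilting of $\pi_L$.} The geometric fact underlying both halves is $|\pi_L-\pi_0|\leq C\boldsymbol{m}_0^{1/2}+C\bar\eps\ll1$, which I would obtain from the quantitative plane-tilting estimate \cite[Proposition~4.1]{DLS16centermfld}: the one-step tilt between the optimal planes $\hat{\pi}_J$ and $\hat{\pi}_L$ of a cube $J\in\Sscr$ and a child $L\in\Sscr$ is controlled by $C\boldsymbol{m}_0^{1/2}\ell(J)^{1-\delta_2}$ (using the excess bound on $J$), so summing along the chain of ancestors of $L$ in $\Sscr$ down to the top cube — whose optimal plane is within $C\boldsymbol{m}_0^{1/2}$ of $\pi_0$ since $\Ebf(T',\Bbf_{6\sqrt{m}})=\boldsymbol{m}_0$ is small — yields a convergent geometric-type series bounded by $C\boldsymbol{m}_0^{1/2}$; for $L\in\Wscr$ one reduces to its parent cube in $\Sscr$ via~\eqref{eq:badcubegdparent} and~\eqref{eq:w3}, and finally $|\pi_L-\hat{\pi}_L|\leq C\bar\eps$ is added.

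\emph{Second half: construction of $g_L$.} Starting from a $\pi_L$-approximation $f_L\colon B_{8 r_L}(p_L,\pi_L)\to\Acal_Q(\pi_L^\perp)$, Theorem~\ref{thm:strongLip} gives $\Lip(\boldsymbol{\eta}\circ f_L)\leq\Lip(f_L)\leq C E_L^{\gamma_1}$ with $E_L\leq C\boldsymbol{m}_0\ell(L)^{2-2\delta_2}$, while~\eqref{eq:oscbd} bounds $\mathrm{osc}(\boldsymbol{\eta}\circ f_L)$ by $C\boldsymbol{m}_0^{\frac{1}{2m}}\ell(L)^{1+\beta_2}+C\boldsymbol{m}_0^{1/2}\ell(L)$. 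Mollifying at scale $\ell(L)$ produces a smooth $\hat{h}_L$ on $B_{7 r_L}(p_L,\pi_L)$ with $\|D\hat{h}_L\|_{C^0}\leq C E_L^{\gamma_1}$ and $\|D^k\hat{h}_L\|_{C^0}\leq C\ell(L)^{1-k}E_L^{\gamma_1}$ for $k\geq2$ by standard mollifier estimates; applying the linear projection onto $T_{p_L}\Sigma$ and composing with the $\Crm^{3,\eps_0}$ graph map $\boldsymbol{\Psi}_{p_L}$ of $\Sigma$ (with $\|D\boldsymbol{\Psi}_{p_L}\|_{\Crm^{2,\eps_0}}\leq\bar\eps$) yields $h_L$, of class $\Crm^{3,\eps_0}$ with $\|Dh_L\|_{C^0}\leq C(E_L^{\gamma_1}+\bar\eps)\ll1$. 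Thus $\Gbf_{h_L}$ is a graph over $\pi_L$ of slope $\ll1$, and since $|\pi_L-\pi_0|\ll1$ its tangent planes make angle $<\pi/4$ with $\pi_0$; hence $\mathbf{p}_{\pi_0}$ restricts to a $\Crm^{3,\eps_0}$ diffeomorphism of $\Gbf_{h_L}$ onto an open subset of $\pi_0$ which, once $\eps_2,\bar\eps$ are small, contains $B_{4 r_L}(p_L,\pi_0)$. Defining $g_L\colon B_{4 r_L}(p_L,\pi_0)\to\pi_0^\perp$ to be the $\pi_0^\perp$-component of the inverse of $\mathbf{p}_{\pi_0}|_{\Gbf_{h_L}}$ gives a map (of class $\Crm^{3,\eps_0}$, the regularity being dictated by that of $\Sigma$, which is what `smooth' refers to here) with $\Gbf_{g_L}=\Gbf_{h_L}\mres\Cbf_{4 r_L}(p_L,\pi_0)$, as claimed.

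\emph{Main obstacle.} I expect the delicate point to be the tilting control $|\pi_L-\pi_0|\ll1$: it is not a single estimate but the summation of one-step plane-tilts over the entire tower of ancestor cubes, relying on the stopping conditions to keep the excess under control at every scale and on~\eqref{eq:badcubegdparent} and~\eqref{eq:w3} to handle stopped cubes — this is exactly the content of \cite[Section~4]{DLS16centermfld}. The ball-to-cylinder excess conversion is more routine but must be set up so that the relevant portion of $\spt T'$ lies inside $\Bbf_L$, which is what forces $N_0$ to be large; and the final passage from $h_L$ to $g_L$ is a standard inverse-function-theorem reparametrization once all the smallness has been established.
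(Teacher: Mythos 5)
Your proposal is correct in outline and follows essentially the same route as the paper, which omits the argument and defers precisely to the tilting estimates of \cite[Proposition~4.1]{DLS16centermfld} (one-step tilts summed over the tower of ancestor cubes, together with the containment of $\spt T'\cap\Cbf_{32r_L}(p_L,\pi_L)$ in $\Bbf_L$ and the ball-to-cylinder excess comparison) and to the graph-reparameterization lemma \cite[Lemma~B.1, Proposition~4.2]{DLS16centermfld} for the passage from $h_L$ to $g_L$. The only minor imprecision is attributing the bound on $|\pi_L-\hat{\pi}_L|$ solely to $\boldsymbol{c}(\Sigma)$: it is controlled by $C\boldsymbol{m}_0^{1/2}\ell(L)^{1-\delta_2}$ via the excess and the curvature of $\Sigma$, but the smallness you need still holds, so the sketch stands.
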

We refer to the maps $h_L$ and $g_L$ as \emph{tilted $L$-interpolating functions} and \emph{$L$-interpolating functions}, respectively. Thus, the $\pi_L$-approximations are indeed well-defined for all cubes $L \in \Wscr \cup \Sscr$, provided that we choose $\eps_2$ sufficiently small (dependent on $\eps_1$).

We omit the proof of Lemma~\ref{lem:reparam}; see~\cite{DLS16centermfld} for this. Roughly speaking, the latter part of this Lemma tells us that for each cube $L \in \Wscr \cup \Sscr$, we can take the $\pi_L$-approximation $f_L$ and average the sheets, smooth out and correct the image to lie within $\Sigma$, to give a smooth map that can be reparameterized over the original reference plane $\pi_0$. This is possible due to the control~\cite[Proposition~4.1]{DLS16centermfld} over the tilting of the optimal planes; we refer the reader to~\cite[Lemma~B.1, Proposition~4.2]{DLS16centermfld} for the details.

We are now ready to patch together these $L$-interpolating functions over $\Pscr^j \coloneqq \Sscr^{j}\cup\bigcup_{i=N_0}^j\Wscr^{i}$ for each fixed level $j$ in our partition of $[-4,4]^m \subset \pi_0$. More precisely, let $\vartheta \in \Crm_c^\infty\left(\left[-\frac{17}{16},\frac{17}{16}\right]\right)$ and given any $L \in \Pscr^j$, define $\vartheta_L \coloneqq \vartheta(\frac{\cdot - x_L}{\ell(L)})$. Then take the corresponding partition of unity function
\[
    \hat{\vartheta}_{L,j} \coloneqq \frac{\vartheta_L}{\sum_{L \in \Pscr^j} \vartheta_L}.
\]
This is well-defined and indeed forms a partition of unity over $L \in \Pscr^j$, because of the nature of the cubes $L$. Now consider
\[
    \hat{\vphi}_j \coloneqq \sum_{L \in \Pscr^j} \hat{\vartheta}_{L,j} g_L \qquad \text{on $(-4,4)^m \subset \pi_0$.}
\]
We have almost finished constructing the center manifold; it remains to once again correct these interpolations $\hat{\vphi}_j$ to ensure that the images remain within $\Sigma$. To be more precise, we let $\bar{\vphi}_j$ be the function constructed from the $\bar{n}$ components of $\hat{\vphi}_j$ that live within $T_{0}\Sigma \cap \pi_0^\perp$ and we define the \emph{glued interpolation}
\[
    \vphi_j(y) \coloneqq (\bar{\vphi}_j(y), \boldsymbol{\Psi}_0(y,\bar{\vphi}_j(y))), \qquad y \in (-4,4)^m \subset \pi_0.
\]
This iterative construction over $j \in \Nbb$ enables us to get compactness in the $\Crm^3$-topology; leading to the existence of the center manifold. This is summarized as follows:
\begin{proposition}[Existence of the center manifold, \cite{DLS16centermfld},~Theorem~1.17]\label{prop:centermfld}
    Assume that the hypotheses of Lemma~\ref{lem:reparam} hold and let $\kappa \coloneqq \min\{\frac{\eps_0}{2},\frac{\beta_2}{4}\}$. Provided that $\eps_2$ is sufficiently small, for any choice of the other parameters we have
    \begin{enumerate}[(i)]
        \item\label{itm:cmcurv} $\|D\vphi_j\|_{\Crm^{2,\kappa}} \leq C\boldsymbol{m}_0^{\frac{1}{2}}$ and $\|\vphi_j\|_{\Crm^0} \leq C\boldsymbol{m}_0^{\frac{1}{2m}}$ for $C = C(\beta_2,\delta_2, M_0,N_0,C_e,C_h))$;
        \item if $L \in \Wscr^i$ and $H$ is a cube concentric to $L$ with $\ell(H) = \frac{9}{8}\ell(L)$, then $\vphi_j \equiv \vphi_k$ on $H$ for any $j, k \geq i + 2$;
        \item $\vphi_j \overset{\Crm^3}{\longrightarrow} \boldsymbol{\vphi}$, and $\Mcal \coloneqq \Gr (\boldsymbol{\vphi}|_{(-4,4)^m})$ is a $\Crm^{3,\kappa}$-submanifold of $\Sigma$.
    \end{enumerate}
\end{proposition}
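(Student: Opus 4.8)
Since Proposition~\ref{prop:centermfld} is a verbatim restatement of \cite[Theorem~1.17]{DLS16centermfld}, the plan is to follow that proof; here I only outline the main steps. First I would record, for every cube $L \in \Sscr \cup \Wscr$, uniform estimates on the $L$-interpolating function $g_L$ from Lemma~\ref{lem:reparam}: feeding the excess and height bounds~\eqref{eq:goodcubesbds}--\eqref{eq:badcubesbds} and the curvature bound $\Abf \le C_0\boldsymbol{m}_0^{1/2}$ into Theorem~\ref{thm:strongLip} controls $\Lip(f_L)$ by a small positive power of $\boldsymbol{m}_0\ell(L)^{2-2\delta_2}$ and, via~\eqref{eq:oscbd}, bounds $\mathrm{osc}(f_L)$ by $C\boldsymbol{m}_0^{1/(2m)}\ell(L)^{1+\beta_2}$. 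Mollifying $\boldsymbol{\eta}\circ f_L$ at scale $\ell(L)$ (the moment conditions on $\rho$ preventing the convolution from introducing a spurious quadratic term), correcting the image into $\Sigma$ and reparametrising over $\pi_0$ as in Lemma~\ref{lem:reparam}, standard interior estimates for the mollification then yield $\|g_L\|_{\Crm^0} \le C\boldsymbol{m}_0^{1/(2m)}$ and, after rescaling $B_{4r_L}(p_L,\pi_0)$ to the unit cube, a uniform bound $\|Dg_L\|_{\Crm^{2,\kappa}} \le C\boldsymbol{m}_0^{1/2}$ with the natural $\ell(L)$-scaling of each derivative; here $\kappa = \min\{\eps_0/2,\beta_2/4\}$ is exactly the Hölder exponent that survives both the correction by $\boldsymbol{\Psi}_0 \in \Crm^{3,\eps_0}$ and the smoothing. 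This part is routine once Theorem~\ref{thm:strongLip} is granted.

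The technical heart is a comparison estimate: whenever $L, L' \in \Pscr^j$ have overlapping bump functions --- which by~\eqref{eq:w3} forces $\tfrac12\ell(L) \le \ell(L') \le 2\ell(L)$ --- the interpolating functions agree to high order, $\|g_L - g_{L'}\|_{\Crm^3}$ being bounded on the overlap by $C\boldsymbol{m}_0^{1/2}$ times a small positive power of $\ell(L)$. This is where I would use the quantitative control~\cite[Proposition~4.1]{DLS16centermfld} on the tilting of the optimal planes $\pi_L$, together with the fact that $\Gbf_{f_L}$ and $\Gbf_{f_{L'}}$ both coincide with $T'$ off small sets, so that their smoothed averages are forced to be close. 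I expect this to be the main obstacle, as it requires upgrading the $\Lrm^2$/Lipschitz information of Theorem~\ref{thm:strongLip} to a $\Crm^3$ comparison via interior elliptic-type estimates for the convolution. Granting it, one controls the glued interpolations: since $\|D^k\hat\vartheta_{L,j}\|_{\Crm^0} \le C\ell(L)^{-k}$ and only boundedly many bump functions are nonzero at any point, writing $\hat\phi_j - g_{L_0} = \sum_{L}\hat\vartheta_{L,j}(g_L - g_{L_0})$ around any cube $L_0$ active at a given point absorbs the derivative loss from the partition of unity into the comparison estimate, giving~\eqref{itm:cmcurv}: $\|D\phi_j\|_{\Crm^{2,\kappa}} \le C\boldsymbol{m}_0^{1/2}$ and $\|\phi_j\|_{\Crm^0} \le C\boldsymbol{m}_0^{1/(2m)}$. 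The final correction $y \mapsto (\bar\phi_j(y),\boldsymbol{\Psi}_0(y,\bar\phi_j(y)))$ preserves these because $\|D\boldsymbol{\Psi}_0\|_{\Crm^{2,\eps_0}} \le \boldsymbol{c}(\Sigma) \le \bar\eps$ by Assumption~\ref{asm:curr2}.

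For part~(ii), if $L \in \Wscr^i$ then descendants of $L$ are never refined and, by~\eqref{eq:badcubegdparent}, its parent lies in $\Sscr^{i-1}$; the combinatorics of the Whitney decomposition --- that the refinement has frozen in a neighborhood of $L$ by level $i$, using~\eqref{eq:w3} together with~\cite[Proposition~4.1]{DLS16centermfld} to rule out small cubes of $\Sscr^j$ near $L$ for $j \ge i+2$ --- forces the finite collection of cubes of $\Pscr^j$ whose bump function is nonzero on the concentric cube $H$ with $\ell(H) = \tfrac98\ell(L)$, together with the associated data $g_M$, $\pi_M$, $\vartheta_M$, to be independent of $j$ once $j \ge i+2$; hence $\hat\phi_j$, and therefore $\phi_j$, is $j$-independent on $H$ for $j \ge i+2$. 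Finally, for part~(iii): part~\eqref{itm:cmcurv} furnishes a uniform $\Crm^{3,\kappa}$ bound, so $\{\phi_j\}$ is precompact in $\Crm^3_{\loc}((-4,4)^m)$, and a telescoping argument using the comparison estimate --- the difference $\phi_j - \phi_{j+1}$ being supported within the region covered by the cubes of $\Wscr^{j+1}$ and bounded there by a summable power of $2^{-j}$ --- shows that $\{\phi_j\}$ is Cauchy in $\Crm^3_{\loc}$, hence converges to some $\boldsymbol{\phi}$ with $\|D\boldsymbol{\phi}\|_{\Crm^{2,\kappa}} \le C\boldsymbol{m}_0^{1/2}$, with part~(ii) pinning down the limit on $(-4,4)^m \setminus \boldsymbol{\Gamma}$. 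Consequently $\Mcal = \Gr(\boldsymbol{\phi}|_{(-4,4)^m})$ is a $\Crm^{3,\kappa}$-submanifold, and $\Mcal \subset \Sigma$ since each $\phi_j$ lies in $\Sigma$ by the correction step and $\boldsymbol{\Psi}_0$ is continuous, so the inclusion passes to the $\Crm^0$ limit.
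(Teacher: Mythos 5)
This proposition is not proved in the paper: it is quoted from De~Lellis--Spadaro, and the text only points to \cite[Section~4.4]{DLS16centermfld}, adding the one-line heuristic that averaging and smoothing the sheets preserve the \emph{almost harmonicity} of the graphical approximations, so that uniform elliptic estimates hold for the single-valued glued maps. Your outline follows exactly that route (per-cube estimates on the $g_L$, comparison on overlaps via the tilting control of \cite[Proposition~4.1]{DLS16centermfld}, partition-of-unity gluing for (i), freezing of the decomposition for (ii), telescoping for (iii)), so there is no difference of strategy to report.

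There is, however, a genuine gap at the analytic core. You obtain the uniform bound $\|Dg_L\|_{\Crm^{2,\kappa}}\le C\boldsymbol{m}_0^{1/2}$ from ``standard interior estimates for the mollification'' of $\boldsymbol{\eta}\circ f_L$. This cannot work: the only regularity Theorem~\ref{thm:strongLip} gives is $\Lip(f_L)\le CE^{\gamma_1}$ with $E\simeq \boldsymbol{m}_0\ell(L)^{2-2\delta_2}$, and mollifying at scale $\ell(L)$ then yields only $\|D^{1+k}h_L\|_{\Crm^0}\lesssim E^{\gamma_1}\ell(L)^{-k}$, which is neither uniform in $\ell(L)$ nor of size $\boldsymbol{m}_0^{1/2}$ (it produces at best the small power $\boldsymbol{m}_0^{\gamma_1}$). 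The missing ingredient --- the one the paper's heuristic is naming --- is that $\boldsymbol{\eta}\circ f_L$ is \emph{quantitatively almost harmonic}, by the first-variation/Taylor-expansion estimates for the area-minimizing current from \cite{DLS14Lp}, so that $h_L$ inherits elliptic (Schauder-type) estimates with errors controlled by powers of $\boldsymbol{m}_0\,\ell(L)$; and even then the uniform-in-$\ell(L)$ bound in (i) is not a single-scale statement: a harmonic competitor with Dirichlet energy $\boldsymbol{m}_0\ell^{m+2-2\delta_2}$ on a ball of radius $\ell$ only gives $|D^3|\lesssim \boldsymbol{m}_0^{1/2}\ell^{-1-\delta_2}$. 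In \cite{DLS16centermfld} the $\Crm^{3,\kappa}$ control is assembled inductively across dyadic scales from parent--child and neighbor comparison estimates of the type $\|g_L-g_H\|_{\Crm^{i}}\le C\boldsymbol{m}_0^{1/2}\ell(H)^{3+\kappa-i}$, whose proof rests on the almost harmonicity together with the tilting and separation bounds. Your second paragraph does call for a same-level neighbor comparison, but it never identifies almost harmonicity as its source, and a same-scale comparison alone does not give (i). A smaller slip: $\phi_j-\phi_{j+1}$ is supported near the cubes of $\Sscr^{j}$ that get refined at step $j+1$ (i.e.\ away from the enlarged cubes of the previously stopped region), not ``within the region covered by the cubes of $\Wscr^{j+1}$''; the summable smallness you need there again comes from the cross-scale comparison estimates rather than from mollification bounds.
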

See~\cite[Section~4.4]{DLS16centermfld} for a proof of this. The general idea is that the operations of smoothing out and averaging the sheets preserve the `almost harmonicity' of the graphical approximations for $T'$, resulting in uniform elliptic estimates over $j \in \Nbb$ for \emph{single-valued} maps over $\pi_0$. Note that the optimal regularity of the center manifold is governed by the regularity of $\Sigma$. We only require $\Crm^{3,\kappa}$-regularity in order to get the necessary variational estimates for the $\Mcal$-normal approximations appearing in Section~\ref{sct:Mnormal}; this was noticed by De Lellis \&~Spadaro in their simplified proof of Almgren's dimension estimate~\eqref{eq:Almgren}. In conclusion, we may define:
\begin{definition}[center manifold]\label{def:cm}
    We call $\Mcal$ the center manifold for $T'$ relative to $\pi_0$. Letting $\boldsymbol{\Phi}(y)\coloneqq(y,\boldsymbol{\vphi}(y))$, we define the \emph{contact set} to be $\boldsymbol{\Phi}(\boldsymbol{\Gamma})$, where the pair $(\boldsymbol{\Gamma},\Wscr)$ is the Whitney decomposition associated to $\Mcal$.
\end{definition}

Intuitively, the contact set is the part of $\Mcal$ constructed above the places in $[-4,4]^m$ where we are able to refine indefinitely as a result of both excess and the height remaining sufficiently small at every scale. On the contact set, we expect that
    \[
        T'\mres \boldsymbol{\Phi}(\Gamma) \equiv Q\llbracket \Mcal \rrbracket \mres \boldsymbol{\Phi}(\Gamma).
    \]
Given $L \in \Wscr$, let $H$ be the cube concentric to $L$ with $\ell(H) = \frac{17}{16}\ell(L)$, and define
\[
    \Lcal = \Lcal(L) \coloneqq \boldsymbol{\Phi}\left(H\cap \left[-7/2,7/2\right]^m\right)
\]
to be the \emph{Whitney region} of $L$ on $\Mcal$.

\subsection{The $\Mcal$-Normal Approximations}\label{sct:Mnormal}

Now that we have constructed our center manifold $\Mcal$, we need to better approximate $T'$ in those regions where $\Mcal$ is not a sufficiently good approximation for $T'$; namely, away from the contact set. We do this by building an approximating graph for $T'$ over the normal bundle of the center manifold. This is what we will refer to as an \emph{$\Mcal$-normal approximation}. Before we state the precise definition, we need to introduce the notion of an orthogonal projection map to $\Mcal$:
\begin{assumption}\label{asm:projcm}
    Let $\Ubf$ be the 1-tubular neighbourhood of $\Mcal$, namely
    \[
        \Ubf \coloneqq \set{z \in \R^{m+n}}{\exists ! \ w \eqqcolon \mathbf{p}(z) \in \Mcal \ \text{with $|z-w| <1$ and $z-w \in (T_w \Mcal)^\perp$}}.
    \]
    Then, for any choice of the other parameters, we choose $\eps_2$ to be small enough such that $\mathbf{p}:\Ubf \to \Mcal$ extends to a $\Crm^{2,\kappa}$ map on $\cl{\Ubf}$ and $\mathbf{p}^{-1}(w) = w + \cl{(B_1(0,(T_w \Mcal)^\perp))}$ for every $w \in \Mcal$. For each $w \in \Mcal$, $\mathbf{p}^\perp(w,\cdot):\R^{m+n} \to \R^{m+n}$ will denote the orthogonal projection onto $(T_x \Mcal)^\perp$.
\end{assumption}

\begin{definition}
    An $\Mcal$-normal approximation of $T'$ is a pair $(\Kcal,F)$ such that $F:\Mcal \to \Acal_Q(\Ubf)$ is Lipschitz and has the form
    \[
        F(z) = \sum_{i=1}^Q \llbracket z + N_i(z)\rrbracket, \quad \text{$N_i(z) \in (T_z\Mcal)^\perp$ and $z+N_i(z) \in \Sigma$ for every $z$ and $i$},
    \]
    and $\Kcal \subset \Mcal$ is closed, contains $\boldsymbol{\Phi}\Big(\boldsymbol{\Gamma}\cap\Big[-\frac{7}{2},\frac{7}{2}\Big]^m\Big)$ and $\Tbf_F\mres \mathbf{p}^{-1}(\Kcal) = T\mres \mathbf{p}^{-1}(\Kcal)$.
\end{definition}

We will often abuse notation by referring to the \emph{normal part} of $F$
\[
    N = \sum_i \llbracket N_i\rrbracket : \Mcal \to \Acal_Q(\R^{m+n})
\]
as the $\Mcal$-normal approximation.

We want to see that it is possible to construct an $\Mcal$-normal approximation for $T'$ in a way that the approximation still behaves well outside of the contact set. We want to have \emph{local estimates} on our $\Mcal$-normal approximations on each Whitney region $\Lcal$ in terms of the size of the corresponding cube $L$. Indeed this can be done by again using the properties of the $\pi_L$-approximations $f_L$ from Theorem~\ref{thm:strongLip}. We recall the estimates here, for the benefit of the reader:
\begin{theorem}[$\Mcal$-normal approximation]\label{thm:Nlocalest}
    Let $\gamma_2 \coloneqq \frac{\gamma_1}{4}$, where $\gamma_1$ is as in Theorem~\ref{thm:strongLip}. Then, under the hypotheses of \cite[Theorem~1.17]{DLS16centermfld}, for $\eps_2$ sufficiently small there exists an $\Mcal$-normal approximation $(\Kcal,F)$ such that the following estimates hold on every Whitney region $\Lcal$ associated to a cube $L \in \Wscr$, with constants $C=C(\beta_2,\delta_2,M_0,N_0,C_e,C_h)$: 
    \begin{align}
        &\Lip(N|_\Lcal) \leq C\boldsymbol{m}_0^{\gamma_2}\ell(L)^{\gamma_2}, \qquad \|N|_\Lcal \|_{\Crm^0} \leq C\boldsymbol{m}_0^{\frac{1}{2m}}\ell(L)^{1+\beta_2}, \label{eq:normalLip}\\
        &|\Lcal\setminus \Kcal| + \|\Tbf_{F} - T'\|(\mathbf{p}^{-1}(\Lcal)) \leq C\boldsymbol{m}_0^{1+\gamma_2}\ell(L)^{m+2+\gamma_2},\label{eq:normalerror} \\
        & \int_\Lcal |DN|^2 \leq C\boldsymbol{m}_0 \ell(L)^{m+2-2\delta_2}\label{eq:normalDir}.
    \end{align}
    Moreover, given any $a > 0$ and any Borel subset $\Vcal \subset \Lcal$, we have
    \begin{equation}\label{eq:normalavgest}
        \int_\Vcal |\boldsymbol{\eta}\circ N| \leq C \boldsymbol{m}_0\big(\ell(L)^{m+3+\frac{\beta}{3}} + a \ell(L)^{2+\frac{\gamma_2}{2}}|\Vcal|\big) +\frac{C}{a}\int_\Vcal \Gcal\big(N,Q\llbracket \boldsymbol{\eta}\circ N\rrbracket \big)^{2+\gamma_2}.
    \end{equation}   
\end{theorem}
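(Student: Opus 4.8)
This statement is the transcription to our rescaled setting of the $\Mcal$-normal approximation construction of \textsc{De Lellis} and \textsc{Spadaro} \cite[Sections~2--3]{DLS16centermfld}, and the plan is to follow that construction, checking that every estimate closes with constants depending only on $\beta_2,\delta_2,M_0,N_0,C_e,C_h$ (hence uniformly in the blow-up index $k$, once Assumptions~\ref{asm:curr}--\ref{asm:Qfold} are in force). The two inputs are the local bounds on the excess and height of $T'$ over the Whitney balls $\Bbf_L$, i.e.\ \eqref{eq:goodcubesbds}--\eqref{eq:badcubesbds}, which make all the relevant cylindrical excesses at scale $\ell(L)$ comparable to $\boldsymbol{m}_0\ell(L)^{2-2\delta_2}$, and the curvature control $\Abf \le C_0\boldsymbol{m}_0^{1/2}$ of Assumption~\ref{asm:curr2}, which gives $r_L^2\Abf^2 \le C\boldsymbol{m}_0\ell(L)^2$.

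First I would build a \emph{local} normal approximation over each Whitney region $\Lcal = \Lcal(L)$, $L \in \Wscr$. Take the $\pi_L$-approximation $f_L$ of Theorem~\ref{thm:strongLip} for $T'\mres\Cbf_{32 r_L}(p_L,\pi_L)$, available by Lemma~\ref{lem:reparam}, and subtract from it the smoothed average $\hat h_L = (\boldsymbol{\eta}\circ f_L)*\rho_{\ell(L)}$ already used in the center manifold construction; this is what makes $\boldsymbol{\eta}\circ N_L$ "centred", and will be needed for \eqref{eq:normalavgest}. By Proposition~\ref{prop:centermfld}\eqref{itm:cmcurv} and the tilting estimate \cite[Proposition~4.1]{DLS16centermfld}, over the slab above $H\cap[-7/2,7/2]^m$ the center manifold $\Mcal$ is a $\Crm^{2,\kappa}$ graph deviating from $\pi_L$ in $\Crm^1$ by at most $C\boldsymbol{m}_0^{1/2}\ell(L)^{1-\delta_2}$; hence the nearest-point projection $\mathbf{p}$ of Assumption~\ref{asm:projcm}, restricted to $\spt\Gbf_{f_L}$, is a bi-Lipschitz homeomorphism onto a neighbourhood of $\Lcal$ on $\Mcal$, and $\Gbf_{f_L}$ rewrites as $\Tbf_{N_L}$ for a Lipschitz $Q$-valued section $N_L$ of the normal bundle of $\Mcal$. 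On overlapping Whitney regions the cubes have comparable side lengths by \eqref{eq:w3}, so one patches the $N_L$ with the same partition of unity $\hat\vartheta_{L,j}$ used for $\Mcal$ and composes with the retraction $y\mapsto(\bar y,\boldsymbol{\Psi}_{\mathbf p(z)}(\bar y))$ onto $\Sigma$, obtaining $F$ with $z+N_i(z)\in\Sigma$; taking $\Kcal$ to be $\boldsymbol{\Phi}(\boldsymbol{\Gamma}\cap[-7/2,7/2]^m)$ together with the sets on which all the relevant $f_L$ agree with $T'$ yields the pair $(\Kcal,F)$, and $\Tbf_F\mres\mathbf p^{-1}(\Kcal)=T'\mres\mathbf p^{-1}(\Kcal)$ because $T'\equiv Q\llbracket\Mcal\rrbracket$ on the contact set while \eqref{eq:Lipapproxbadset} controls the rest.

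The estimates then follow by inserting the Whitney bounds into Theorem~\ref{thm:strongLip}. Writing $E_L$ for the cylindrical excess of $T'$ at scale $\ell(L)$ above $p_L$, \eqref{eq:badcubesbds} and Lemma~\ref{lem:reparam} give $E_L\le C\boldsymbol{m}_0\ell(L)^{2-2\delta_2}$ and $\mathbf h(T',\Bbf_L)\le C\boldsymbol{m}_0^{1/2m}\ell(L)^{1+\beta_2}$. Then $\Lip(f_L)\le CE_L^{\gamma_1}\le C\boldsymbol{m}_0^{\gamma_1}\ell(L)^{\gamma_1(2-2\delta_2)}$ gives the first bound in \eqref{eq:normalLip}; the oscillation bound \eqref{eq:oscbd} together with the height bound gives the second; the bad-set estimate in \eqref{eq:Lipapproxbadset} with $r_L^2\Abf^2\le C\boldsymbol{m}_0\ell(L)^2$ gives $|\Lcal\setminus\Kcal|+\|\Tbf_{F}-T'\|(\mathbf p^{-1}(\Lcal))\le C\boldsymbol{m}_0^{1+\gamma_1}\ell(L)^{m+2+\gamma_1(2-2\delta_2)}$, i.e.\ \eqref{eq:normalerror}; and the mass/energy comparison in Theorem~\ref{thm:strongLip} with $\|T'\|(\Cbf_{r_L})-Q\omega_m r_L^m\le C(E_L+r_L^2\Abf^2)r_L^m$ gives $\int_{\Lcal}|DN|^2\le C\boldsymbol{m}_0\ell(L)^{m+2-2\delta_2}$, i.e.\ \eqref{eq:normalDir}. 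In each case the passage from $f_L$ over $\pi_L$ to $N$ over $\Mcal$ (and from $\pi_L$ to $\pi_0$) costs only a factor $1+C\boldsymbol{m}_0^{1/2}$ by the $\Crm^1$-proximity above, plus interpolation errors supported on the strips between adjacent cubes whose measure is itself $O(\boldsymbol{m}_0\ell(L)^{m+2})$; the choice $\gamma_2=\gamma_1/4$ leaves ample room to absorb these losses and the $\delta_2$'s.

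The remaining estimate \eqref{eq:normalavgest} is the delicate point, and I expect it — together with the book-keeping of the transfer of all bounds to $\Mcal$ — to be the main obstacle. As in the proof of \cite[Theorem~2.4]{DLS16centermfld}, it is obtained by testing the first-variation identity for the area of $T'$ (equivalently, the almost-harmonicity of $\boldsymbol{\eta}\circ f_L$) against a vector field pushing in the direction of $\boldsymbol{\eta}\circ N$, Taylor-expanding the area functional to second order, and using that $\Mcal$ has already absorbed the smoothed average $\hat h_L=(\boldsymbol{\eta}\circ f_L)*\rho_{\ell(L)}$; this is what converts the linear quantity $\int_{\Vcal}|\boldsymbol{\eta}\circ N|$ into the sum of the error term $C\boldsymbol{m}_0(\ell(L)^{m+3+\beta/3}+a\ell(L)^{2+\gamma_2/2}|\Vcal|)$ and the quadratic remainder $\tfrac{C}{a}\int_{\Vcal}\Gcal(N,Q\llbracket\boldsymbol{\eta}\circ N\rrbracket)^{2+\gamma_2}$. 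The work is to verify that smoothing, averaging and the partition-of-unity interpolation degrade the almost-harmonicity of $\boldsymbol{\eta}\circ f_L$ only at the subcritical orders displayed, and that the curvature of $\Sigma$ — now merely $\Crm^{3,\eps_0}$, controlled by $\boldsymbol{m}_0^{1/2}$ via Assumption~\ref{asm:curr2} — enters only through terms that can be absorbed. Since $T'$ is a rescaling $T_{x_k,r_k}$ of our fixed current and Assumptions~\ref{asm:curr}--\ref{asm:Qfold} make all hypotheses of \cite{DLS16centermfld} uniform in $k$, no new idea beyond \cite{DLS16centermfld} is needed; the content is precisely the uniformity of the constants.
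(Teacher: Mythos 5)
Your overall strategy coincides with the paper's: both treat the theorem as the transfer of the construction in \cite[Section~6.2]{DLS16centermfld} to the rescaled current $T'$, with the local $\pi_L$-approximations $f_L$ of Theorem~\ref{thm:strongLip} reparameterized over $\Mcal$ on each Whitney region and the estimates obtained by inserting the Whitney-scale bounds \eqref{eq:badcubesbds} and $\Abf\le C_0\boldsymbol{m}_0^{1/2}$ into Theorem~\ref{thm:strongLip}; your description of the source of \eqref{eq:normalavgest} (first variation/almost-harmonicity of the averages, with the center manifold absorbing the smoothed average) is also the right one in spirit.

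However, your construction of $F$ away from the coincidence set contains a step that does not work as stated: you propose to glue the local $Q$-valued sections $N_L$ ``with the same partition of unity $\hat\vartheta_{L,j}$ used for $\Mcal$.'' A combination $\sum_L\hat\vartheta_{L,j}N_L$ of multi-valued maps is not defined: $\Acal_Q(\R^{m+n})$ has no linear structure, and any sheetwise definition requires a continuous ordering of the sheets, which is precisely what fails near branch points. The partition of unity is legitimate in the construction of $\Mcal$ only because there it is applied to the single-valued averaged maps $g_L$. The paper (following \cite[Section~6.2]{DLS16centermfld}) proceeds differently: no gluing is needed on the set $\Kcal$ where $T'$ is graphical over $\Mcal$, because there all local reparameterizations coincide with the slices of $T'$; on the small remaining bad set one invokes the $Q$-valued Lipschitz extension theorem \cite[Theorem~1.7]{DLS_MAMS}, and then corrects the extension by projecting first onto the normal bundle $(T_z\Mcal)^\perp$ and then onto $\Sigma$ via $\mathbf{p}_{\varpi_z}$, absorbing the projection errors into \eqref{eq:normalLip}--\eqref{eq:normalerror}. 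Relatedly, $N$ is not ``$f_L$ minus the smoothed average $\hat h_L$'': on a Whitney region $\Mcal$ is the glued and $\Sigma$-corrected interpolation $\boldsymbol{\phi}$, not the graph of $h_L$, so $N$ must be defined as the normal separation of $\Gbf_{f_L}$ from $\Mcal$ itself, and the discrepancy between $\boldsymbol{\phi}$ and $h_L$ is exactly one of the error terms that has to be estimated in proving \eqref{eq:normalavgest}. With the gluing step replaced by the extension-plus-projection scheme, the rest of your outline matches the intended argument.
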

For this $\Mcal$-normal approximation, the set $\Kcal$ consists of those points in $\Mcal$ above which (with respect to $\mathbf{p}$) $T$ is graphical.

The main idea behind the proof of this theorem is to show that $T$ can be approximated by a Lipschitz map that comes from the graphical approximation in Theorem~\ref{thm:strongLip}, but is reparameterized over $\Mcal$. This is again first done locally, over the Whitney regions $\Lcal$ of the cubes $L$; see~\cite[Theorem~5.1]{DLS_multiple_valued}. One can easily check the compatibility of these local graphical approximations over the region $\Kcal$ where they agree with $T'$. To construct $F$ away from $\Kcal$, we follow these steps:
\begin{enumerate}[Step 1.]
    \item First extend $F$ to a Lipschitz map (with little increase in the Lipschitz constant) $\bar{F}$ taking values in $\Acal_Q(\Ubf)$, using~\cite[Theorem~1.7]{DLS_MAMS};
    \item then modify $\bar{F}$ to take the form $\hat{F}(z) = \sum_i \llbracket z + \hat{N}_i(z) \rrbracket$ with $\hat{N}_i(z) \in (T_z\Mcal)^\perp$ for each $z$, by replacing $\bar{N}(z)$ with the projection 
    \[
        \hat{N}(z) = \sum_i\llbracket \mathbf{p}^\perp(z,\bar{N}_i(z))\rrbracket
    \]
    to the normal bundle;
    \item finally, let $\varpi_z \coloneqq T_z\Sigma \cap (T_z \Mcal)^\perp$ and replace each component $\hat{N}_i(z)$ with $\mathbf{p}_{\varpi_z}(\hat{N}_i(z))$ to reach the desired extension $F(z) = \sum_i \llbracket z + N_i(z)\rrbracket$ with $N_i(z) \in (T_z\Mcal)^\perp$ and $z+N_i(z) \in \Sigma$ for each $z$.
\end{enumerate}
When combined with the verification of numerous technical details, this scheme indeed gives the result of Theorem~\ref{thm:Nlocalest}; see~\cite[Section~6.2]{DLS16centermfld} for a full proof.

\subsection{Properties of Center Manifolds}
Let us now recall some important properties of center manifolds from~\cite{DLS16centermfld} that will be useful in the succeeding sections. We omit the proofs here.

We begin with the following \emph{splitting before tilting} phenomenon, which tells us that whenever we stop our refining procedure on a cube $L \in \Wscr_e$, the lower excess bound passes through to the $\Mcal$-normal approximation $N$ and is comparable to the Dirichlet energy of $N$. 

\begin{proposition}[Splitting before tilting, \cite{DLS16centermfld}, Proposition~3.4]\label{prop:excesssplitting}
    There are functions $C_1 = C_1(\delta_2)$, $C_2 = C_2(M_0,\delta_2)$ such that if $M_0 \geq C_1$, $C_e \geq C_2$, if the hypotheses of Theorem~\ref{thm:Nlocalest} hold and if $\eps_2$ is chosen sufficiently small, then the following holds.
    
    Suppose that for some center manifold $\Mcal$ with corresponding rescaled current $T'$, we find a cube $L \in \Wscr_e$, $q \in \pi_0$ with $\dist(L,q) \leq 4\sqrt{m}\ell(L)$ and $\Omega \coloneqq \boldsymbol{\Phi}(B_{\ell(L)/4}(q,\pi_0))$. Then
    \begin{align*}
    &C_e\boldsymbol{m}_0\ell(L)^{m+2-2\delta_2} \leq \ell(L)^m\Ebf(T', \Bbf_L) \leq C \int_\Omega |DN|^2, \\
    &\int_\Lcal |DN|^2 \leq C\ell(L)^m \Ebf(T',\Bbf_L) \leq C_3 \ell(L)^{-2}\int_\Omega |N|^2.
    \end{align*}
    Here, the constants $C$ and $C_3$ are dependent on $\beta_2,\delta_2,M_0,N_0,C_e,C_h$.
\end{proposition}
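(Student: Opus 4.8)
We only sketch the proof, which is essentially that of \cite[Proposition~3.4]{DLS16centermfld}. Two of the four inequalities are immediate. The first one, $C_e\boldsymbol{m}_0\ell(L)^{m+2-2\delta_2}\le\ell(L)^m\Ebf(T',\Bbf_L)$, is the stopping criterion \eqref{itm:excess} placing $L$ in $\Wscr_e$, multiplied by $\ell(L)^m$; and $\int_\Lcal|DN|^2\le C\ell(L)^m\Ebf(T',\Bbf_L)$ follows by combining \eqref{eq:normalDir} of Theorem~\ref{thm:Nlocalest} with \eqref{itm:excess}, since $\int_\Lcal|DN|^2\le C\boldsymbol{m}_0\ell(L)^{m+2-2\delta_2}=C\ell(L)^m\boldsymbol{m}_0\ell(L)^{2-2\delta_2}\le\frac{C}{C_e}\ell(L)^m\Ebf(T',\Bbf_L)$. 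The content of the proposition therefore lies in the two \emph{lower} bounds for the excess, $\ell(L)^m\Ebf(T',\Bbf_L)\le C\int_\Omega|DN|^2$ and $\ell(L)^m\Ebf(T',\Bbf_L)\le C_3\ell(L)^{-2}\int_\Omega|N|^2$.

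For the first of these, the plan is to route through the Lipschitz approximation. One first passes from the spherical excess $\Ebf(T',\Bbf_L)$ to the cylindrical excess $\Ebf(T',\Cbf_{32r_L}(p_L,\pi_L))$: using the height bound $\mathbf{h}(T',\Bbf_L)\le C\boldsymbol{m}_0^{\frac{1}{2m}}\ell(L)^{1+\beta_2}$ from \eqref{eq:badcubesbds} together with the control on the tilting of the optimal planes (\cite[Proposition~4.1]{DLS16centermfld}), these two excesses are comparable up to an error of order $\boldsymbol{m}_0\ell(L)^{2}$, negligible against the main order $\boldsymbol{m}_0\ell(L)^{2-2\delta_2}$. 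One then applies the last estimate of Theorem~\ref{thm:strongLip} to the $\pi_L$-approximation $f_L$ --- legitimate by Lemma~\ref{lem:reparam}, which in particular supplies the hypothesis $\mathbf{p}_{\pi_L}^\sharp T'=Q\llbracket B\rrbracket$ on the relevant cylinders, so that the cylindrical excess equals $\big(\|T'\|(\Cbf_{r_L})-Q\omega_m r_L^m\big)/(\omega_m r_L^m)$ exactly. Comparing with $\tfrac12\int|Df_L|^2$ this yields
\[
    \Big|\,\ell(L)^m\Ebf(T',\Bbf_L)-\tfrac12\!\int_{B_{r_L}(p_L,\pi_L)}|Df_L|^2\Big|\le C(M_0)\,\boldsymbol{m}_0^{1+\gamma_1}\ell(L)^{m+(1+\gamma_1)(2-2\delta_2)},
\]
whose right-hand side is superlinearly smaller than $\boldsymbol{m}_0\ell(L)^{m+2-2\delta_2}$ once $\eps_2$ is small and $C_e$ is large relative to $M_0$; hence $\ell(L)^m\Ebf(T',\Bbf_L)\simeq\int_{B_{r_L}}|Df_L|^2$.

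It then remains to transfer this to $N$. By Lemma~\ref{lem:reparam} and Theorem~\ref{thm:Nlocalest}, $N$ is obtained by reparametrizing the patched $\pi_J$-approximations over the center manifold and subtracting the center-manifold graph, and the bound $\|D\boldsymbol{\phi}\|_{\Crm^{2,\kappa}}\le C\boldsymbol{m}_0^{1/2}$ of Proposition~\ref{prop:centermfld}\eqref{itm:cmcurv} shows that the Jacobian corrections caused by this change of variables perturb Dirichlet energies only by a higher-order amount, $O(\boldsymbol{m}_0\ell(L)^{m+2})$. Thus $\int_\Omega|DN|^2$ is comparable, up to such errors, to the Dirichlet energy of $f_L$ over the corresponding ball, and combined with the previous paragraph this gives $\ell(L)^m\Ebf(T',\Bbf_L)\le C\int_\Omega|DN|^2$. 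The main obstacle here is the mismatch of scales: in the previous paragraph the excess is measured on the macroscopic ball $\Bbf_L$ of radius $64M_0\sqrt m\,\ell(L)$, whereas $\Omega=\boldsymbol{\Phi}(B_{\ell(L)/4}(q,\pi_0))$ has radius $\ell(L)/4$. To bridge this one uses that the parent of $L$ lies in $\Sscr$, i.e.\ observation \eqref{eq:badcubegdparent}, so that $\Ebf(T',\Bbf_J)\le C_e\boldsymbol{m}_0\ell(J)^{2-2\delta_2}$ for every ancestor $J$ of $L$; this prevents the excess from being diluted over $\Bbf_L$ and forces a definite fraction of $\int_{B_{r_L}}|Df_L|^2$ to be realized over some $B_{\ell(L)/4}(q)$ with $\dist(L,q)\le4\sqrt m\,\ell(L)$ --- exactly the freedom left in the statement. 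Quantifying this dilution estimate, and at the same time checking that every error introduced along the chain $T'\tosquig f_L\tosquig(\text{graph over }\Mcal)\tosquig N$, including the ambient curvature $\Abf\le C_0\boldsymbol{m}_0^{1/2}$, is $o(\boldsymbol{m}_0\ell(L)^{m+2-2\delta_2})$ as $\eps_2\to0$ and $\ell(L)\to0$, is what fixes the admissible ranges of $M_0$ and $C_e$.

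Finally, for $\ell(L)^m\Ebf(T',\Bbf_L)\le C_3\ell(L)^{-2}\int_\Omega|N|^2$ the plan is to invoke a reverse-Poincar\'e (Caccioppoli) inequality: $N$ satisfies the approximate Euler--Lagrange identities on $\Mcal$ produced by the center-manifold construction (it is an ``almost Dir-minimizer''), and by \eqref{eq:normalavgest} its average $\boldsymbol{\eta}\circ N$ is of higher order, so to leading order $N$ is distributed about a near-zero mean; a Caccioppoli argument then bounds its Dirichlet energy on a region from above by $\ell(L)^{-2}$ times its $\Lrm^2$-norm on a concentric dilate, the error terms once more being of higher order. Combined with the comparability in \eqref{eq:badcubesbds} --- which makes $\Ebf(T',\Bbf_L)$ comparable to the excess on $\Omega$-sized balls --- and with the lower bound just established, this closes the chain. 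All of the above is carried out in detail in \cite[Section~3]{DLS16centermfld}.
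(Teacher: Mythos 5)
Your handling of the two easy inequalities is fine (the first is the stopping criterion \eqref{itm:excess}, the second follows from \eqref{eq:normalDir} plus \eqref{itm:excess}), and note that this paper itself offers no proof -- the proposition is recalled verbatim from \cite[Proposition~3.4]{DLS16centermfld} with proofs omitted -- so the comparison is with the argument there. For the two substantive lower bounds, however, your sketch has real gaps. First, the passage from $\int|Df_L|^2$ to $\int|DN|^2$ is \emph{not} a higher-order correction: subtracting the center manifold removes (essentially) the energy of the smoothed average sheet measured relative to $\pi_L$, and by the tilting estimates of \cite[Proposition~4.1]{DLS16centermfld} this contribution can be as large as $\bar C\,\boldsymbol{m}_0\ell(L)^{m+2-2\delta_2}$ -- the same order in $\ell(L)$ as the excess itself, not $O(\boldsymbol{m}_0\ell(L)^{m+2})$. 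What rescues the conclusion is that $\bar C$ is independent of $C_e$, so choosing $C_e$ large compared with the tilting constants forces a definite fraction of the energy into $N$; this ``same order, smaller constant'' competition is precisely why the result is called splitting \emph{before} tilting, and your sketch replaces it with an incorrect higher-order claim.

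Second, the conclusion must hold for \emph{every} $q$ with $\dist(L,q)\le 4\sqrt m\,\ell(L)$ -- so $\Omega$ may be essentially disjoint from $L$ -- whereas your ``dilution'' argument, even if made quantitative, would only produce \emph{some} such ball; moreover, excess bounds on the ancestors of $L$ control excess at larger scales but say nothing about where, inside $\Bbf_L$, the Dirichlet energy of $f_L$ sits at scale $\ell(L)$. The actual proof localizes the energy by a compactness/blow-up argument: contradiction sequences are normalized and converge to a nontrivial $Q$-valued Dir-minimizer, and the unique continuation lemma for Dir-minimizers in \cite[Section~7]{DLS16centermfld} forbids the limit from having vanishing energy on the sub-ball corresponding to $\Omega$ (first bound) or from coinciding with $Q\llbracket 0\rrbracket$ there (second bound). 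This ingredient is entirely missing from your sketch, and it is also what yields the $\Lrm^2$ estimate: a Caccioppoli inequality bounds energy on a ball by the $\Lrm^2$ norm on a \emph{larger concentric} ball, whereas here one must bound $\ell(L)^m\Ebf(T',\Bbf_L)$ -- comparable to the energy over the Whitney region $\Lcal$, which need not lie in any small dilate of $\Omega$ -- by $\ell(L)^{-2}\int_\Omega|N|^2$; no reverse Poincar\'e inequality for the non-minimizing map $N$ can deliver that on its own.
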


We will further require the following persistence of $Q$-points result from a current $T'$ to the $\Mcal$-normal approximation when we stop refining due to large excess.

\begin{proposition}[Persistence of $Q$-points II]\label{prop:persistence}
    Assume the hypotheses of Proposition~\ref{prop:excesssplitting} hold. For every $\eta_2 > 0$, there are constants $\bar{s},\bar{\ell} > 0$, depending upon $\eta_2$, $\beta_2$, $\delta_2$, $M_0$, $N_0$, $C_e$ and $C_h$, such that, if $\eps_2$ is sufficiently small, then the following holds. If $L \in \Wscr_e$, $\ell(L) \leq \bar{\ell}$, $\Theta(T',p) = Q$ and $\dist(\mathbf{p}_{\pi_0}(p),L) \leq 4\sqrt{m}\ell(L)$, then
    \[
        (\bar{s}\ell(L))^{-m}\int_{\Bcal_{\bar{s}\ell(L)}(\mathbf{p}(p))} \Gcal\left(N,Q\llbracket \boldsymbol{\eta}\circ N \rrbracket\right)^2 \leq \frac{\eta_2}{\ell(L)^{m-2}} \int_{\Bcal_{\ell(L)}(\mathbf{p}(p))}|DN|^2.
    \]
\end{proposition}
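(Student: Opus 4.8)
This proposition chains together two results already available to us. We want to transfer a density-$Q$ point $p$ on the current $T'$ to a $Q$-point statement for the normal approximation $N$ on a geodesic ball of the center manifold. The point is that on a cube $L \in \Wscr_e$, the $\Mcal$-normal approximation $N$ is built (via the reparametrization scheme of Theorem~\ref{thm:Nlocalest}) from the $\pi_L$-approximation $f_L$ of Theorem~\ref{thm:strongLip}. So the natural plan is: (i) use Theorem~\ref{thm:Qptspersistf} (in its improved form~\eqref{eq:Qpts2}) to get a quantitative $Q$-point estimate for $f_L$ near $\mathbf{p}_{\pi_L}(p)$, then (ii) push this estimate through the reparametrization $f_L \mapsto N$ over the Whitney region, controlling the distortion by the center manifold estimates of Proposition~\ref{prop:centermfld}, and finally (iii) convert the right-hand side, which naturally involves the excess $\Ebf(T',\Bbf_L)$, into the Dirichlet energy $\int_{\Bcal_{\ell(L)}(\mathbf{p}(p))}|DN|^2$ via the lower bound in the splitting-before-tilting Proposition~\ref{prop:excesssplitting}.

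Let me expand. First, fix $L \in \Wscr_e$ with $\ell(L)\le\bar\ell$, $\Theta(T',p)=Q$, and $\dist(\mathbf{p}_{\pi_0}(p),L)\le 4\sqrt m\,\ell(L)$. Since $L\in\Wscr_e$ we have the two-sided bound $\Ebf(T',\Bbf_L)\simeq \boldsymbol{m}_0\ell(L)^{2-2\delta_2}$, and (from the stopping criterion and Proposition~\ref{prop:Whitney}) the hypothesis $r_L^2\Abf^2\le C^*\Ebf(T',\Bbf_L)$ needed for Theorem~\ref{thm:Qptspersistf} holds because $\Abf\le C_0\boldsymbol{m}_0^{1/2}$ and $\delta_2$ is small. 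The point $p$ lies, after the standard reparametrization, in the cylinder $\Cbf_{r_L/2}(p_L,\pi_L)$ up to adjusting constants, so Theorem~\ref{thm:Qptspersistf} (upgraded to~\eqref{eq:Qpts2}) gives, for $s<\bar s$,
\[
    \int_{B_{s r_L}(w_0)}\Gcal\big(f_L,Q\llbracket\boldsymbol{\eta}\circ f_L\rrbracket\big)^2 \le C\, s^{m+\gamma}\, r_L^{m+2}\,\Ebf(T',\Bbf_L),
\]
where $w_0 = \mathbf{p}_{\pi_L}(p)$. Now I would invoke the comparison between $f_L$ and $N$ on the Whitney region (this is exactly the content of the local reparametrization in the proof of Theorem~\ref{thm:Nlocalest}, cf.~\cite[Section~6]{DLS16centermfld}): the graph of $N$ over $\Bcal_{c\ell(L)}(\mathbf{p}(p))$ coincides with the graph of $f_L$ up to a $\Crm^{2,\kappa}$ change of variables whose differential is $\Id + O(\boldsymbol{m}_0^{1/2m})$, since $\|D\boldsymbol{\phi}\|_{\Crm^{2,\kappa}}\le C\boldsymbol{m}_0^{1/2}$. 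Hence $\Gcal(N,Q\llbracket\boldsymbol{\eta}\circ N\rrbracket)$ and $\Gcal(f_L,Q\llbracket\boldsymbol{\eta}\circ f_L\rrbracket)$ are comparable pointwise, and Jacobians of the change of variables are comparable to $1$; this converts the $f_L$-estimate into
\[
    \int_{\Bcal_{\bar s\ell(L)}(\mathbf{p}(p))}\Gcal\big(N,Q\llbracket\boldsymbol{\eta}\circ N\rrbracket\big)^2 \le C\,\bar s^{\,m+\gamma}\,\ell(L)^{m+2}\,\Ebf(T',\Bbf_L)
\]
(using $r_L = M_0\sqrt m\,\ell(L)$, so constants absorb $M_0$). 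Dividing by the measure of the geodesic ball, $|\Bcal_{\bar s\ell(L)}(\mathbf{p}(p))|\simeq (\bar s\ell(L))^m$, yields
\[
    \aveint{\Bcal_{\bar s\ell(L)}(\mathbf{p}(p))}{}\Gcal\big(N,Q\llbracket\boldsymbol{\eta}\circ N\rrbracket\big)^2 \le C\,\bar s^{\,\gamma}\,\ell(L)^{2}\,\Ebf(T',\Bbf_L).
\]
Finally, Proposition~\ref{prop:excesssplitting} (with $q = \mathbf{p}_{\pi_0}(p)$, legitimate since $\dist(L,q)\le 4\sqrt m\,\ell(L)$) gives the lower bound $C_e\boldsymbol{m}_0\ell(L)^{m+2-2\delta_2}\le \ell(L)^m\Ebf(T',\Bbf_L)\le C\int_\Omega|DN|^2$ with $\Omega = \boldsymbol{\Phi}(B_{\ell(L)/4}(q,\pi_0))\subset \Bcal_{\ell(L)}(\mathbf{p}(p))$ (after adjusting the constant in front of $\ell(L)$), so $\ell(L)^2\,\Ebf(T',\Bbf_L)\le C\ell(L)^{-(m-2)}\int_{\Bcal_{\ell(L)}(\mathbf{p}(p))}|DN|^2$. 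Combining,
\[
    \aveint{\Bcal_{\bar s\ell(L)}(\mathbf{p}(p))}{}\Gcal\big(N,Q\llbracket\boldsymbol{\eta}\circ N\rrbracket\big)^2 \le \frac{C\,\bar s^{\,\gamma}}{\ell(L)^{m-2}}\int_{\Bcal_{\ell(L)}(\mathbf{p}(p))}|DN|^2,
\]
and choosing $\bar s$ small so that $C\bar s^{\gamma}\le\eta_2$ finishes the proof; $\bar\ell$ is then chosen so that all smallness requirements of Theorems~\ref{thm:strongLip},~\ref{thm:Qptspersistf} and Proposition~\ref{prop:excesssplitting} are met on cubes of size $\le\bar\ell$.

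The main obstacle I anticipate is Step (ii): making rigorous the passage from the estimate on $f_L$ (living in the cylinder over $\pi_L$) to the estimate on $N$ (living over the center manifold $\Mcal$). One must track that (a) the point $p$, after the reparametrization $\Gbf_{f_L}\leftrightarrow \Tbf_F$ used to build $N$, maps to a point within $\Bcal_{c\bar s\ell(L)}(\mathbf{p}(p))$ with controlled constant; (b) the bi-Lipschitz constant of the change of variables between $B_{s r_L}(w_0)\subset\pi_L$ and the corresponding region of $\Mcal$ is $1+O(\boldsymbol{m}_0^{1/2m})$, which needs the tilting control~\cite[Proposition~4.1]{DLS16centermfld} together with Proposition~\ref{prop:centermfld}(i); and (c) the separation estimate $\Gcal(N,Q\llbracket\boldsymbol{\eta}\circ N\rrbracket)$ is comparable to $\Gcal(f_L,Q\llbracket\boldsymbol{\eta}\circ f_L\rrbracket)$ under this reparametrization, which is where the geometry of $\Acal_Q$ and the smallness of the tilting enter. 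All of this is morally contained in~\cite{DLS16centermfld} but assembling it cleanly — and in particular checking that the cube on which we invoke Proposition~\ref{prop:excesssplitting} is the same (up to fixed constants) as the geodesic ball $\Bcal_{\ell(L)}(\mathbf{p}(p))$ appearing in the statement — is the delicate bookkeeping part.
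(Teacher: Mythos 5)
A preliminary remark on what you are being compared against: this paper does not prove Proposition~\ref{prop:persistence} at all — it is recalled, with proofs explicitly omitted, from \cite[Proposition~3.6]{DLS16centermfld}. Your outline follows essentially the route of the proof given there: apply the persistence of $Q$-points for the $\pi_L$-approximation (Theorem~\ref{thm:Qptspersistf}, or the sharpened form~\eqref{eq:Qpts2}) in the cylinder $\Cbf_{32r_L}(p_L,\pi_L)$, transfer the resulting $\Lrm^2$-smallness to the $\Mcal$-normal approximation on the Whitney region, and use the lower bound of Proposition~\ref{prop:excesssplitting} (legitimate since $L\in\Wscr_e$ and $\dist(\mathbf{p}_{\pi_0}(p),L)\le 4\sqrt m\,\ell(L)$) to trade $\ell(L)^m\Ebf(T',\Bbf_L)$ for $\int_{\Bcal_{\ell(L)}(\mathbf{p}(p))}|DN|^2$, fixing the smallness parameter $\bar s$ first and $\bar\ell,\eps_2$ afterwards. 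The architecture and the order of choice of constants are right.

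The genuine gap is step~(ii) as you state it. It is not true that the graph of $N$ over $\Bcal_{c\ell(L)}(\mathbf{p}(p))$ is the graph of $f_L$ up to a $\Crm^{2,\kappa}$ change of variables, nor that $\Gcal(N,Q\llbracket\boldsymbol{\eta}\circ N\rrbracket)$ and $\Gcal(f_L,Q\llbracket\boldsymbol{\eta}\circ f_L\rrbracket)$ are pointwise comparable. Two distinct corrections are needed. First, $\Tbf_F$ agrees with $T'$ only over $\Kcal$, and $\Gbf_{f_L}$ agrees with $T'$ only over $K\times\pi_L^\perp$; off these exceptional sets there is no pointwise relation, and the discrepancy must be estimated in $\Lrm^2$ using the measure bounds~\eqref{eq:normalerror} and~\eqref{eq:Lipapproxbadset} together with the $\Crm^0$ and Lipschitz bounds~\eqref{eq:normalLip} on $N$ and the corresponding bounds on $f_L$. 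The resulting error is of higher order in $\boldsymbol{m}_0$ and $\ell(L)$ (roughly $\boldsymbol{m}_0^{1+\gamma_2}\ell(L)^{m+2+\gamma_2}$ times squared height bounds), and it is \emph{not} absorbed by your $\bar s^{\gamma}$ factor: it is absorbed against the lower bound $\int_{\Bcal_{\ell(L)}(\mathbf{p}(p))}|DN|^2\ge c\,C_e\boldsymbol{m}_0\ell(L)^{m+2-2\delta_2}$ coming from $L\in\Wscr_e$ and Proposition~\ref{prop:excesssplitting}, and this is exactly where the hypothesis $\ell(L)\le\bar\ell$ (and the smallness of $\eps_2$) genuinely enters — a step missing from your accounting. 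Second, $\Gcal(f,Q\llbracket\boldsymbol{\eta}\circ f\rrbracket)$ is not intrinsic: under the passage from the cylinder over $\pi_L$ to the normal bundle of $\Mcal$, the average of the sheets does not transform into the average of the sheets (recall $\Mcal$ is itself built from a smoothing of $\boldsymbol{\eta}\circ f_L$), so one must quantify the defect between $\boldsymbol{\eta}\circ N$ and the reparametrized $\boldsymbol{\eta}\circ f_L$ using the tilting control of \cite[Proposition~4.1]{DLS16centermfld} and the $\Crm^{3,\kappa}$ bounds of Proposition~\ref{prop:centermfld}, producing further error terms absorbed in the same way. You flag this as the anticipated obstacle in your closing paragraph, but the proof as written treats it as a harmless change of variables; as stated that step would fail, and carrying out these quantitative comparisons is the actual content of the proof in \cite{DLS16centermfld}.
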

Another vital consequence of the center manifold construction is the following result. It tells us that if we are able to immediately restart a new center manifold after stopping the previous one (because the excess has remained sufficiently small), then we can compare the two center manifolds. 
\begin{proposition}[Comparison of center manifolds, \cite{DLS16centermfld},~Proposition~3.7]\label{prop:comparecentermfld}
    There is a geometric constant $C_0$ and a constant $\bar{c}_s = \bar{c}_s(\beta_2,\delta_2,M_0,N_0,C_e,C_h) > 0$ with the following property. 
    
    Assume the hypotheses of Proposition~\ref{prop:excesssplitting}, $N_0 \geq C_0$ and $\eps_2$ is sufficiently small. If for some $\bar{r} \in (0,1)$,
    \begin{enumerate}[(a)]
        \item $\ell(L) \leq c_s\rho$ for every $\rho > \bar{r}$ and every $L \in \Wscr$ with $L \cap B_\rho(0, \pi_0) \neq \emptyset$; \\
        \item $\Ebf(T',\Bbf_{6\sqrt{m}\rho}) < \eps_2$ for every $\rho > \bar{r}$; \\
        \item there is a cube $L \in \Wscr$ with $\ell(L) \geq c_s\bar{r}$ and $L \cap \cl{B}_{\bar{r}}(0,\pi_0) \neq \emptyset$,
    \end{enumerate}
    then
    \begin{enumerate}[(i)]
        \item the current $T'' \coloneqq (T')_{0,\bar{r}}\mres\Bbf_{6\sqrt{m}}$ and the submanifold $\Sigma'' \coloneqq \iota_{0,\bar{r}}(\Sigma')\cap\Bbf_{7\sqrt{m}}$ satisfy all of the necessary assumptions for Proposition~\ref{prop:Whitney} for some new plane $\tilde{\pi}_0$ in place of $\pi_0$;
        \item for the center manifold $\widetilde{\Mcal}$ of $T''$ relative to $\tilde{\pi}_0$ and the $\widetilde{\Mcal}$-normal approximation $\widetilde{N}$, we have
        \begin{equation}
        \int_{\widetilde{\Mcal}\cap\Bbf_2} |\widetilde{N}|^2 \geq \bar{c}_s \max\{\Ebf(T'',\Bbf_{6\sqrt{m}}),\boldsymbol{c}(\Sigma'')^2\}.\label{eq:splitting}
        \end{equation}
    \end{enumerate}
\end{proposition}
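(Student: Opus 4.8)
The plan is to prove the two conclusions separately. Conclusion~(i) is essentially bookkeeping: one checks that the rescaled data $(T'',\Sigma'')$, with a suitable reference plane $\tilde\pi_0$, still satisfies Assumptions~1.8 and~1.9 of~\cite{DLS16centermfld}, so that Proposition~\ref{prop:Whitney}, Proposition~\ref{prop:centermfld} and Theorem~\ref{thm:Nlocalest} apply and produce $\widetilde\Mcal$ and $\widetilde N$. Here hypothesis~(a) guarantees that the refining procedure for $T'$ has not stopped at any scale comparable to $\bar r$ near $\cl B_{\bar r}(0,\pi_0)$, so by~\eqref{eq:goodcubesbds} the excess and height of $T'$ over balls of radius $\simeq\bar r$ there stay below $C\boldsymbol{m}_0\bar r^{2-2\delta_2}$ and $C\boldsymbol{m}_0^{1/2m}\bar r^{1+\beta_2}$ respectively; by Theorem~\ref{thm:strongLip}, $T'$ is then a Lipschitz graph of small slope over $\pi_0$ in a cylinder of radius $\simeq\bar r$ about the origin. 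I would choose $\tilde\pi_0$ exactly as $\pi_0$ was chosen in the construction, namely the plane in $T_p\Sigma''$ nearest the optimal plane for $\Ebf(T'',\Bbf_{6\sqrt m})$ at a suitable $p\in\spt T''\cap\Bbf_{6\sqrt m}$. The smallness $\Ebf(T'',\Bbf_{6\sqrt m})\le\eps_2^2$ then follows from~\eqref{eq:goodcubesbds} and hypothesis~(b) applied at radii slightly larger than $\bar r$, using the scale invariance $\Ebf(T'',\Bbf_R)=\Ebf(T',\Bbf_{R\bar r})$ and the controlled cost of passing to $\tilde\pi_0$ via~\cite[Proposition~4.1]{DLS16centermfld}; the density and mass-ratio conditions and the vanishing of $\partial T''$ in the relevant cylinder are inherited by scaling, and $\boldsymbol{c}(\Sigma'')=\bar r\,\boldsymbol{c}(\Sigma')\le\bar r\,\eps_2$.

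The substance is conclusion~(ii), for which the key observation is that hypothesis~(c) forces the Whitney refinement of $T''$ to stop at a cube of \emph{definite} size near the origin. Indeed the old stopping cube $L\in\Wscr$ has $\ell(L)\simeq\bar r$ and meets $\cl B_{\bar r}(0,\pi_0)$, so $\Ebf(T',\Bbf_L)>C_e\boldsymbol{m}_0\ell(L)^{2-2\delta_2}$ or $\mathbf{h}(T',\Bbf_L)>C_h\boldsymbol{m}_0^{1/2m}\ell(L)^{1+\beta_2}$. Rescaling by $\iota_{0,\bar r}$ and comparing with $\boldsymbol{m}_0''=\max\{\boldsymbol{c}(\Sigma'')^2,\Ebf(T'',\Bbf_{6\sqrt m})\}$ — which by~\eqref{eq:goodcubesbds} obeys $\boldsymbol{m}_0''\le C\boldsymbol{m}_0\bar r^{2-2\delta_2}$ — one checks that $T''$ exceeds the corresponding \emph{new} stopping threshold at some scale bounded below by a constant $c_0=c_0(M_0,N_0,c_s)>0$; hence there is a cube $\widetilde L\in\widetilde\Wscr_e\cup\widetilde\Wscr_h$ with $\ell(\widetilde L)\ge c_0$ whose Whitney region $\widetilde\Lcal$ lies in $\Bbf_2$. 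If $\widetilde L\in\widetilde\Wscr_e$, then splitting before tilting (Proposition~\ref{prop:excesssplitting}, applied with a point $q$ within $4\sqrt m\,\ell(\widetilde L)$ of $\widetilde L$) gives
\[
    \int_{\widetilde\Mcal\cap\Bbf_2}|\widetilde N|^2\;\ge\;c\,\ell(\widetilde L)^{m+2}\,\Ebf(T'',\Bbf_{\widetilde L})\;\ge\;c\,C_e\,\boldsymbol{m}_0''\,\ell(\widetilde L)^{m+4-2\delta_2}\;\ge\;\bar c_s\,\boldsymbol{m}_0'',
\]
the excess lower bound being read off directly from $\widetilde L\in\widetilde\Wscr_e$. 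If instead $\widetilde L\in\widetilde\Wscr_h$, the separation-of-sheets estimate for height-stopping cubes from~\cite{DLS16centermfld} gives $\int_{\widetilde\Lcal}|\widetilde N|^2\ge c\,(\boldsymbol{m}_0'')^{1/m}\ell(\widetilde L)^{m+2+\beta_2}$, which again dominates $\bar c_s\boldsymbol{m}_0''$ since $\boldsymbol{m}_0''\le\eps_2^2<1$. In both cases $\bar c_s$ may be taken depending only on $c_0$, $C_e$ and the parameters $\beta_2,\delta_2,M_0,N_0,C_e,C_h$.

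The main obstacle is precisely this threshold-matching: one must follow the old stopping inequality for $L$ through the dilation $\iota_{0,\bar r}$ (which replaces $\boldsymbol{m}_0$ by $\boldsymbol{m}_0''$ and rescales $\ell(L)$) \emph{and} through the change of reference plane $\pi_0\to\tilde\pi_0$, and still deduce that $T''$ must stop at a scale bounded below by $c_0>0$. This is where smallness of $c_s$, largeness of $C_e$ (hence of $C_h$), $N_0\ge C_0$ and the parameter hierarchy $\beta_2=4\delta_2$ with $\delta_2$ small are all used, together with the quantitative tilting control of~\cite[Proposition~4.1]{DLS16centermfld}. The competing strategy — first proving the lower bound for the rescaled \emph{old} normal approximation via Proposition~\ref{prop:excesssplitting} and then transferring it to $\widetilde N$ — would be harder still, as it requires a quantitative comparison of the two distinct center manifolds $\widetilde\Mcal$ and $(\Mcal)_{0,\bar r}$ and of their normal approximations on the overlap, showing their difference is of strictly higher order than $(\boldsymbol{m}_0'')^{1/2}$.
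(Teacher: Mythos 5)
You never see the paper's proof of this statement because there is none: the proposition is quoted from~\cite{DLS16centermfld} (Proposition~3.7 there) with the proof explicitly omitted, so your argument has to stand on its own — and it has a genuine gap at exactly the point you yourself call ``the main obstacle''. Your whole proof of conclusion~(ii) rests on the claim that hypothesis~(c) forces the Whitney decomposition of $T''$ to stop at a cube $\widetilde L$ of \emph{definite} size $\ell(\widetilde L)\geq c_0>0$, to be obtained by ``threshold-matching''. This does not follow. The new stopping thresholds are measured against $\boldsymbol{m}_0''=\max\{\boldsymbol{c}(\Sigma'')^2,\Ebf(T'',\Bbf_{6\sqrt m})\}$, and the only available comparison is one-sided: $\boldsymbol{m}_0''\leq C(M_0,N_0,C_e,C_h)\,\boldsymbol{m}_0\,\bar r^{2-2\delta_2}$ with a possibly very large constant, while no lower bound of this form holds at all. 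Rescaling the old stopping inequality for $L$ (which has $\ell(L)=c_s\bar r$) gives, in the (EX) case, a lower bound $C_e\boldsymbol{m}_0(c_s\bar r)^{2-2\delta_2}$ for the excess of $T''$ on a ball of definite radius, whereas the new threshold at the comparable side length $c_s$ is $C_e\boldsymbol{m}_0''c_s^{2-2\delta_2}$, which may exceed that rescaled lower bound by the factor $C$ above; the (HT) case is worse, since heights scale linearly while the new threshold carries $(\boldsymbol{m}_0'')^{1/2m}$, and when the old stop was a height stop, $\boldsymbol{m}_0''$ (driven by the excess of $T''$) is completely unrelated to the old height bound. So nothing in your argument prevents the new refinement from continuing far below any fixed scale near the origin — conceivably onto the contact set, where $\widetilde N\equiv0$ — and since your final chain degenerates like $\boldsymbol{m}_0''\,\ell(\widetilde L)^{m+4-2\delta_2}$, the proof collapses without the definite-size claim. (Your case analysis \emph{given} such an $\widetilde L$, and your sketch of conclusion~(i), are fine.)

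The argument in the cited reference is organized differently and does not produce a definite-size stopping cube for $T''$. One first uses hypothesis~(a) to exclude $L\in\Wscr_n$: its stopped neighbour in the previous generation would have side $2c_s\bar r$ and would meet $B_\rho(0,\pi_0)$ for $\rho$ slightly above $\bar r$, violating~(a) since $c_s=\tfrac{1}{64\sqrt m}$. One then works directly with the old cube $L$, whose rescaled side is exactly $c_s$ and whose associated ball has definite size for $T''$, and bounds $\int_{\widetilde\Mcal\cap\Bbf_2}|\widetilde N|^2$ from below \emph{directly}: if $L\in\Wscr_h$, the sheet separation of $T''$ inherited from the (HT) inequality, combined with the fact that $\widetilde\Mcal$ is a single sheet over which $T''$ is graphical up to higher-order errors, forces $|\widetilde N|$ to be a fixed fraction of that separation on a set of definite measure (the exponents close because $\beta_2=4\delta_2$ is small and $\boldsymbol{m}_0''\leq C\boldsymbol{m}_0\bar r^{2-2\delta_2}$); if $L\in\Wscr_e$, a splitting-before-tilting argument shows that the excess at the old stopping scale cannot be accounted for by the tilt of any single $\Crm^1$ sheet satisfying the gradient bounds enjoyed by $\widetilde{\boldsymbol{\phi}}$, so it must be carried by the genuinely multi-sheeted part of $T''$, i.e.\ by $\widetilde N$. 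If you want to complete your write-up, replace the threshold-matching step by an argument of this type rather than trying to force the new decomposition to stop at a definite scale.
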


Let us also recall the height bound~\cite[Theorem~A.1]{DLS16centermfld}, which tells us that for $T'$ satisfying all of the previous assumptions, as long as $E \coloneqq \Ebf(T',\Cbf_r(w,\pi)) < \eps_2$ for $\eps_2$ sufficiently small and some $m$-plane $\pi$, one can decompose $T'$ into $k \geq 1$ pairwise disjoint horizontal `strips' parallel to $\pi$ over $B_r(w,\pi)$, each one of height $rC_0 E^{\frac{1}{2m}}$. 
    
Furthermore, if $\Theta(T',w) \geq Q$ then we necessarily have $k=1$, and so this realizes as the following height bound for $T'$:
\[
    \mathbf{h}(T', \Cbf_r(w,\pi)) \leq C E^{\frac{1}{2m}}r^{1+\beta_2} \qquad \text{for any $r>0$ sufficiently small.}
\]
In fact, the improved height bound~\cite[Theorem~1.5]{Spolaor_15} tells us that around any point $w \in \spt T'$ with $\Theta(T',w) \geq Q$, there is the sharp height bound
\begin{equation}\label{eq:LucahtbdT}
\mathbf{h}(T', \Cbf_r(w,\pi)) \leq C E^{\frac{1}{2}}r + C \Abf r^2 \qquad \text{for any $r>0$ sufficiently small.}
\end{equation}
Since this result uses the Hardt-Simon inequality, note that the improved height bound only holds around points of density $Q$ (or higher). This allows us to improve the height bound in Theorem~\ref{thm:Nlocalest} to the following:
    \begin{lemma}\label{lem:Lucaheightbd}
        Suppose that $T'$ satisfies all of the prior assumptions with $\eps_2$ sufficiently small. Then for any choice of center manifold $\Mcal$ with corresponding rescaled current $T'$, $\Mcal$-normal approximation $N$ and any interval of flattening $(s,t]$ around center $\mathbf{p}(w)$ with $\Theta(T',w) \geq Q$, we have 
        \begin{equation}\label{eq:Lucaht}
        \int_{\Lcal} |N|^2 \leq C \ell(L)^{m+4-2\delta_2} \boldsymbol{m}_0,
        \end{equation}
        for any $L \in \Wscr$.
    \end{lemma}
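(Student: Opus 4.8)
The plan is to upgrade the $\Crm^0$-estimate $\|N|_\Lcal\|_{\Crm^0} \leq C\boldsymbol{m}_0^{\frac{1}{2m}}\ell(L)^{1+\beta_2}$ of Theorem~\ref{thm:Nlocalest} — which only exploits the height stopping condition~\eqref{itm:height} — into the sharper pointwise bound
\[
    \|N|_\Lcal\|_{\Crm^0} \leq C\boldsymbol{m}_0^{\frac12}\ell(L)^{2-\delta_2},
\]
after which the claim follows by integrating over $\Lcal$, since $|\Lcal| \leq C\ell(L)^m$. The extra input is the improved height bound~\eqref{eq:LucahtbdT}: since $\Theta(T',w) \geq Q$ and $\mathbf{p}_{\pi_0}(w) \in \boldsymbol{\Gamma}$, the good-cube bounds~\eqref{eq:goodcubesbds} along the chain of cubes in $\Sscr^j$ containing $\mathbf{p}_{\pi_0}(w)$, together with $\Abf \leq C_0\boldsymbol{m}_0^{\frac12}$ from Assumption~\ref{asm:curr2} and the monotonicity formula around $w$, give $\Ebf(T',\Cbf_r(w,\pi)) \leq C\boldsymbol{m}_0 r^{2-2\delta_2}$ for a suitable optimal plane $\pi$ at $w$ and all small $r$; feeding this into~\eqref{eq:LucahtbdT} yields $\mathbf{h}(T',\Cbf_r(w,\pi)) \leq C\boldsymbol{m}_0^{\frac12}r^{2-\delta_2}$.

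The steps I would then carry out, for a fixed $L \in \Wscr$, are as follows. First, transfer this thin-cylinder control from the center $w$ to the cylinder in which the $\pi_L$-approximation $f_L$ lives: using the tilting estimate~\cite[Proposition~4.1]{DLS16centermfld} to compare $\pi_L$ with $\pi$ at the scale relevant to $L$, one obtains $\mathbf{h}(T',\Cbf_{32r_L}(p_L,\pi_L)) \leq C\boldsymbol{m}_0^{\frac12}\ell(L)^{2-\delta_2}$. Second, combine this with~\eqref{eq:oscbd} of Theorem~\ref{thm:strongLip} and the excess bound~\eqref{eq:badcubesbds} (so that also $\Ebf(T',\Bbf_L)^{\frac12}r_L \leq C\boldsymbol{m}_0^{\frac12}\ell(L)^{2-\delta_2}$) to get $\mathrm{osc}(f_L) \leq C\boldsymbol{m}_0^{\frac12}\ell(L)^{2-\delta_2}$; in particular the oscillation within each sheet of $f_L$ and the mutual separation of the sheets are both of this size. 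Third, recall from Section~\ref{sct:centermfld} that $\Mcal$ is patched together from the smoothed averages $(\boldsymbol{\eta}\circ f_L)*\rho_{\ell(L)}$ (corrected into $\Sigma$) and that $N$ is the reparametrization of $\Gbf_{f_L}$ over the normal bundle of $\Mcal$; since $\mathbf{p}$ has uniformly bounded $\Crm^{2,\kappa}$-norm (Assumption~\ref{asm:projcm}) and $\|D\boldsymbol{\phi}\|_{\Crm^0} \leq C\boldsymbol{m}_0^{\frac12}$ (Proposition~\ref{prop:centermfld}), the passage from $f_L$ to $N|_\Lcal$ preserves sizes up to a dimensional constant, yielding the displayed pointwise bound and hence $\int_\Lcal |N|^2 \leq |\Lcal|\,\|N|_\Lcal\|_{\Crm^0}^2 \leq C\boldsymbol{m}_0\ell(L)^{m+4-2\delta_2}$. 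An equivalent route is to split $\int_\Lcal|N|^2$ into $|\Lcal|\,|\overline{\boldsymbol{\eta}\circ N}_\Lcal|^2$, bounded via~\eqref{eq:normalavgest} with $a \simeq \boldsymbol{m}_0^{-1/2}$, plus the sheet-spread $\int_\Lcal \Gcal(N,Q\llbracket\overline{\boldsymbol{\eta}\circ N}_\Lcal\rrbracket)^2$, bounded using the height control above; either way~\eqref{eq:LucahtbdT} is the decisive ingredient.

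The main obstacle is exactly where the hypothesis $\Theta(T',w) \geq Q$ becomes indispensable. Because~\eqref{eq:LucahtbdT} rests on the Hardt–Simon inequality and thus holds only around density-$\geq Q$ points, one must carefully propagate its consequence from the single center $w$ to a generic Whitney cube, controlling both the tilting of the sequence of optimal planes between the scale/center of $w$ and of $p_L$, and the $Q$-valued nature of $f_L$ and $N$. In particular one cannot simply marry the Dirichlet estimate~\eqref{eq:normalDir} to a Poincaré inequality: for $Q$-valued maps an estimate of the form $\int \Gcal(N,Q\llbracket c\rrbracket)^2 \lesssim \ell(L)^2\int|DN|^2$ is false (constant but split sheets carry no Dirichlet energy), so it is genuinely the collapse of the sheets forced near the $Q$-point $w$ — encoded in the improved height bound — that controls the separation of the sheets of $N$ over $\Lcal$ and closes the estimate. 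One must also verify that the excess-decay $\Ebf(T',\Cbf_r(w,\pi)) \leq C\boldsymbol{m}_0 r^{2-2\delta_2}$ invoked above is uniform in $r$ down to scale zero, which is where the Whitney bounds~\eqref{eq:goodcubesbds} along the cube chain through $\mathbf{p}_{\pi_0}(w) \in \boldsymbol{\Gamma}$ are used.
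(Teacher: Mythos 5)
There is a genuine gap, and it sits exactly at the step you yourself flag as the main obstacle: the ``transfer'' of the height control from the center $w$ to an arbitrary Whitney cube $L$. The bound $\mathbf{h}(T',\Cbf_r(w,\pi)) \leq C\boldsymbol{m}_0^{\frac12}r^{2-\delta_2}$ is centered at $w$: it controls the spread of the sheets of $T'$ only in cylinders of radius $r$ \emph{around} $w$. A cube $L \in \Wscr$ may lie at distance $d \gg \ell(L)$ from $\mathbf{p}_{\pi_0}(w)$, and then the only scale at which the $w$-centered bound sees the cylinder $\Cbf_{32r_L}(p_L,\pi_L)$ is $r \simeq d + r_L$; what you get over that cylinder is therefore $\mathbf{h} \lesssim \boldsymbol{m}_0^{\frac12}(d+r_L)^{2-\delta_2}$, not $\boldsymbol{m}_0^{\frac12}\ell(L)^{2-\delta_2}$. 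The tilting estimate \cite[Proposition~4.1]{DLS16centermfld} only compares the planes $\pi$ and $\pi_L$; it cannot restore the decay in $\ell(L)$, since the sheets may be separated by an amount comparable to the height at scale $d$ even inside a tiny sub-cylinder far from $w$. A secondary (and actually unnecessary) weak point is your preliminary excess-decay $\Ebf(T',\Cbf_r(w,\pi)) \leq C\boldsymbol{m}_0 r^{2-2\delta_2}$ ``down to scale zero'' via a chain of cubes in $\Sscr^j$ through $\mathbf{p}_{\pi_0}(w)$: this presumes $\mathbf{p}_{\pi_0}(w) \in \boldsymbol{\Gamma}$, whereas the lemma only assumes an interval of flattening $(s,t]$ around $\mathbf{p}(w)$, which may stop at $s>0$.

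The paper avoids the transfer altogether: it applies a scaled version of \cite[Lemma~1.8]{Spolaor_15} (the estimate behind~\eqref{eq:LucahtbdT}) \emph{locally in each cylinder} $\Cbf_{32r_L}(p_L,\pi_L)$, $L \in \Wscr\cup\Sscr$, where the Whitney construction already supplies the needed inputs: $\Ebf(T',\Bbf_L) \leq C\boldsymbol{m}_0\ell(L)^{2-2\delta_2}$ by~\eqref{eq:goodcubesbds}--\eqref{eq:badcubesbds} and $\Abf \leq C_0\boldsymbol{m}_0^{\frac12}$. This yields $\mathbf{h}(T',\Cbf_{32r_L}(p_L,\pi_L)) \leq Cr_L\Ebf^{\frac12} + Cr_L^2\Abf \leq C\boldsymbol{m}_0^{\frac12}r_L^{2-\delta_2}$ cube by cube, hence $\mathrm{osc}(f_L) \leq C\boldsymbol{m}_0^{\frac12}r_L^{2-\delta_2}$ via~\eqref{eq:oscbd}, and the improved $\Crm^0$ and $\Lrm^2$ bounds on $N|_\Lcal$ then follow by rerunning the computations of \cite[Section~6]{DLS16centermfld} with this sharper oscillation input --- the part of your plan from $\mathrm{osc}(f_L)$ onward is essentially the paper's. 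So the decisive point is that the decay in $\ell(L)$ comes from the \emph{local} excess bound at $L$ fed into the sharp height inequality applied at $L$, not from a height bound centered at the single point $w$ propagated outward; as written, your argument does not produce~\eqref{eq:Lucaht} for cubes away from $\mathbf{p}(w)$.
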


    This Lemma will be used in the final contradiction argument within the proof of the lower frequency bound of Theorem~\ref{thm:freqlowerbd}.
    
    \begin{proof}
        The proof relies on passing the improved height bound~\eqref{eq:LucahtbdT} for the current $T'$ through to the graphical approximations. 
        
        By virtue of a scaled version of the estimate~\cite[Lemma~1.8]{Spolaor_15} that gives the improved height bound~\eqref{eq:LucahtbdT}, for $L \in \Wscr \cup \Sscr$ we have
        \[
        \mathbf{h}(T',\Cbf_{32r_L}(p_L,\pi_L)) \leq Cr_L\Ebf(T',\Cbf_{32r_L}(p_L,\pi_L))^{\frac{1}{2}} + Cr_L^2\Abf \leq C\boldsymbol{m}_0^{\frac{1}{2}}r_L^{2-\delta_2},
        \]
        as long as we choose $\eps_3$ sufficiently small. Via the oscillation bound~\eqref{eq:oscbd} in Theorem~\ref{thm:strongLip} for the graphical approximations $f_L$, this in turn gives
        \[
        \textrm{osc}(f_L) \leq C\boldsymbol{m}_0^{\frac{1}{2}}r_L^{2-\delta_2} \qquad \text{on $B_{8r_L}(x_L,\pi_L)$.}
        \]
        We may now substitute this improved bound into the computations for the estimates on the $\Mcal$-normal approximations in~\cite[Section~6]{DLS16centermfld} to conclude the desired improvement on the existing $\Crm^0$ and $\Lrm^2$ estimates on the $\Mcal$-normal approximations in Theorem~\ref{thm:Nlocalest}. We omit the details.
\end{proof}

\subsection{Intervals of Flattening}\label{sct:flattening}
We are now ready to take a sequence of center manifolds and corresponding $\Mcal$-normal approximations along our blow-up sequence of currents $T_{x_k,r_k}$. We begin by introducing \emph{intervals of flattening} around the blow-up centers $x_k$. These will contain the scales $r_k$, but will be chosen to detect the scales at which we need to replace an existing graphical approximation with a new, improved one. 

Given any fixed $k \in \N$, and $\eps_3 \in (0,\eps_2)$, set
\[
    \Rcal_k \coloneqq \set{r \in (0,1]}{\Ebf(T,\Bbf_{6\sqrt{m}r}(x_k))\leq \eps_3^2}.
\]
Notice that $\Rcal_k$ is closed under left-hand limits. Now construct a family $\Fcal_k = \{I^{(k)}_j\}_j$ of intervals $I^{(k)}_j = \big(s^{(k)}_j,t^{(k)}_j\big]$ inductively as follows. Let $t^{(k)}_0$ be the largest element of $\Rcal_k$. Given $t^{(k)}_j$, define
\[
T_{k,j} \coloneqq T_{x_k,t_j^{(k)}}\mres \Bbf_{6\sqrt{m}}, \quad \Sigma_{k,j} \coloneqq \iota_{x_k,t_j^{(k)}}(\Sigma)\cap\Bbf_{7\sqrt{m}}.
\]
Let $\pi_{k,j}$ be the $m$-plane satisfying
\[
    \Ebf(T_{k,j}, \Bbf_{6\sqrt{m}}, \pi_{k,j}) = \Ebf(T_{k,j},\Bbf_{6\sqrt{m}})
\]
for each $k, j$. Namely, $\pi_{k,j}$ is the plane $\pi_0$ for the rescaled current $T' = T_{k,j}$.

Let $\Mcal_{k,j}$ be the center manifold from Section~\ref{sct:centermfld} for $T' = T_{k,j}$ and $\Sigma = \Sigma_{k,j}$ with respect to the $m$-plane $\pi_{k,j}$. Let $N^{(k,j)}$ be the corresponding $\Mcal_{k,j}$-normal approximation. 

We denote by $\boldsymbol{\vphi}_{k,j}: \pi_{k,j} \supset [-4,4]^m  \to \pi_{k,j}^\perp$ the map from Proposition~\ref{prop:centermfld}, whose graph is $\Mcal_{k,j}$, meanwhile $\boldsymbol{\Phi}_{k,j}(z) \coloneqq (x,\boldsymbol{\vphi}_{k,j}(z))$ is the corresponding parameterization of $\Mcal_{k,j}$ over $\pi_{k,j}$. We use the usual notation $\mathbf{p}_{\pi_{k,j}}$ for the orthogonal projection to the plane $\pi_{k,j}$, meanwhile $\mathbf{p}_{k,j}$ (or simply $\mathbf{p}$ when there is no ambiguity) will be the projection map from Assumption~\ref{asm:projcm} to $\Mcal_{k,j}$. We will use the notation $\Bcal$ for the geodesic balls on our center manifolds.

Consider the Whitney decomposition $\Wscr^{(k,j)}$ of bad cubes from  Proposition~\ref{prop:Whitney} for $T_{k,j}$ and define
\[
    s^{(k)}_j \coloneqq t^{(k)}_j\max\Big(\set{c_s^{-1}\ell(L)}{L \in \Wscr^{(k,j)} \ \text{and $c_s^{-1}\ell(L) \geq \dist(0,L)$}}\cup\{0\}\Big),
\]
where $c_s \coloneqq \frac{1}{64\sqrt{m}}$.

Thus, for the center manifold $\Mcal_{k,j}$, we call $(s_j^{(k)}, t_j^{(k)}]$ the (centered) interval of flattening around $x_k$.

If $s^{(k)}_j = 0$, then stop refining. Otherwise, let $t^{(k)}_{j+1}$ be largest element of $\Rcal_k\cap\big(0,s^{(k)}_j\big]$, and continue as above. By construction, our stopping conditions are characterized as follows:
\begin{align}
    &\text{If $s^{(k)}_j > 0$ and $\bar{r}^{(k)}_j\coloneqq\frac{s^{(k)}_j}{t^{(k)}_j}$, then there is a cube $L \in \Wscr^{(k,j)}$ with} \tag{Stop}\label{eq:stop}\\
    &\notag
        \ell(L) = c_s\bar{r}^{(k)}_j \quad \text{and} \quad L \cap \cl{B}_{\bar{r}^{(k)}_j}(0,\pi_{k,j}) \neq \emptyset. \\
    &\text{If $\rho > \bar{r}^{(k)}_j = \frac{s^{(k)}_j}{t^{(k)}_j}$, then} \tag{Go}\label{eq:go}\\
    &\ell(L) < c_s\rho \quad \text{for every $L \in \Wscr^{(k,j)}$ with $L \cap B_\rho(0,\pi_{k,j}) \neq \emptyset$.}\notag
\end{align}

Moreover, letting
\[
    \boldsymbol{m}_0^{(k,j)} \coloneqq \max \{\boldsymbol{c}(\Sigma_{k,j})^2, \Ebf(T_{k,j}, \Bbf_{6\sqrt{m}})\},
\]
we recall the following additional vital properties of the intervals of flattening.
\begin{proposition}[\cite{DLS16blowup},~Proposition~2.2]\label{prop:flattening}
    Assuming $\eps_3 > 0$ is sufficiently small, the following holds for any centered interval of flattening on any center manifold $\Mcal_{k,j}$:
    \begin{enumerate}[(i)]
        \item\label{itm:flattening1} $s^{(k)}_j < \frac{t^{(k)}_j}{2^5}$ and each family $\Fcal_k$ is either countable with $t^{(k)}_j \todown 0$ as $j \to \infty$, or finite with $s^{(k)}_j = 0$ for the largest $j$;
        \item $\bigcup_j I^{(k)}_j = \Rcal_k$, and for each $k$ sufficiently large, we can find an interval $\big(s^{(k)}_{j(k)}, t^{(k)}_{j(k)}\big]$ around $x_k$ for which
        \[
            r_k \in \Big(\frac{s^{(k)}_j}{t^{(k)}_j}, 1\Big];
        \]
        \item if $r \in \big(\frac{s^{(k)}_{j}}{t^{(k)}_j}, 3\big)$ and $J \in \Wscr^{(k,j)}_n$ intersects $B \coloneqq \mathbf{p}_{\pi_{k,j}}\big(\Bcal_r(\boldsymbol{\Phi}_{k,j}(0))\big)$, then $J$ is in the domain of influence $\Wscr^{(k,j)}_n(H)$ (see~\cite[Definition~3.3]{DLS16centermfld}) of a cube $H \in \Wscr^{(k,j)}_e$, with
        \[
            \ell(H) \leq 3c_s r, \qquad \max\{\dist (H,B), \dist(H,J)\} \leq 3\sqrt{m}\ell(H) \leq \frac{3r}{16};
        \]
        \item $\Ebf(T_{k,j};\Bbf_r) \leq C_0 \eps_3^2 r^{2-2\delta_2}$ for every $r \in \big(\frac{s^{(k)}_j}{t^{(k)}_j}, 3\big)$;
        \item $\sup\set{\dist(x,\Mcal_{k,j})}{x \in\spt T_{k,j}\cap\mathbf{p}_{k,j}^{-1}\big(\Bcal_r(\boldsymbol{\Phi}_{k,j}(0))\big)} \leq C_0 [\boldsymbol{m}_0^{(k,j)}]^{\frac{1}{2m}}r^{1+\beta_2}$ for every $r \in \big(\frac{s^{(k)}_j}{t^{(k)}_j}, 3\big)$.
    \end{enumerate}
\end{proposition}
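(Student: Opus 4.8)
The plan is to observe that, around each fixed $x_k$, the centered intervals of flattening are by construction exactly the intervals of flattening of~\cite{DLS16blowup} applied to $T$ viewed near $x_k$. Hence items~(i) and~(iii)--(v) will follow essentially verbatim from~\cite[Proposition~2.2]{DLS16blowup}, which in turn rests on the Whitney decomposition of Proposition~\ref{prop:Whitney}, on the center manifold construction and estimates of Propositions~\ref{prop:centermfld}--\ref{prop:persistence}, and on the improved height bound~\eqref{eq:LucahtbdT} at density-$Q$ points; I will only indicate the skeleton of these and then spell out the one genuinely new point, which is the coverage statement in~(ii).

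For~(i), I would start from~\eqref{eq:smallbadcubes}: no cube of $\Wscr^{(k,j)}$ has side length exceeding $2^{-N_0-6}$, so the cube $L$ realizing the maximum in the definition of $s^{(k)}_j$ obeys $\ell(L)\le 2^{-N_0-6}$, whence $\bar r^{(k)}_j=c_s^{-1}\ell(L)=64\sqrt m\,\ell(L)<2^{-5}$ once $N_0$ is fixed large enough; the resulting geometric decay $t^{(k)}_{j+1}\le s^{(k)}_j<2^{-5}t^{(k)}_j$ forces the stated dichotomy for $\Fcal_k$. For~(iv) and~(v) I would run the standard covering argument: at a scale $r\in(\bar r^{(k)}_j,3)$, cover the relevant set by the contact set together with a bounded number of Whitney regions of cubes $L\in\Wscr^{(k,j)}$ meeting $B_{3r}(0,\pi_{k,j})$, use~\eqref{eq:go} to get $\ell(L)<3c_s r$ for each, insert the excess and height bounds~\eqref{eq:badcubesbds} together with the $\Crm^0$ control of $\boldsymbol\phi_{k,j}$ from Proposition~\ref{prop:centermfld}, and sum; here one needs $\boldsymbol m_0^{(k,j)}=\max\{\boldsymbol c(\Sigma_{k,j})^2,\Ebf(T_{k,j},\Bbf_{6\sqrt m})\}\le C_0\eps_3^2$, which holds because $t^{(k)}_j\in\Rcal_k$ bounds the second entry by $\eps_3^2$ and $\boldsymbol c(\Sigma_{k,j})$ is small by Assumption~\ref{asm:curr2}. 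For~(iii) I would carry out the domain-of-influence bookkeeping of~\cite{DLS16centermfld}: a cube $J\in\Wscr^{(k,j)}_n$ is linked through a chain of successively intersecting cubes of comparable side length (by~\eqref{eq:w3}) to a cube of $\Wscr^{(k,j)}_e\cup\Wscr^{(k,j)}_h$; the height bound~\eqref{eq:LucahtbdT} around the density-$Q$ center rules out a chain originating at a cube of $\Wscr^{(k,j)}_h$, so $J$ lies in a domain of influence $\Wscr^{(k,j)}_n(H)$ with $H\in\Wscr^{(k,j)}_e$, and summing~\eqref{eq:w3} along the (boundedly long) chain together with~\eqref{eq:go} gives $\ell(H)\le 3c_s r$ and the claimed bounds on $\dist(H,B)$ and $\dist(H,J)$.

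For~(ii), I would argue that by maximality of $t^{(k)}_{j+1}$ the set $\Rcal_k$ misses every gap $(t^{(k)}_{j+1},s^{(k)}_j]$ and contains no element above $t^{(k)}_0$, so $\Rcal_k\subset\bigcup_j I^{(k)}_j$; conversely~(iv) (together with the elementary comparison of excesses on nested balls for $\rho$ close to $t^{(k)}_j$) gives $\Ebf(T,\Bbf_{6\sqrt m\rho}(x_k))\le C_0\eps_3^2$ for every $\rho\in(s^{(k)}_j,t^{(k)}_j]$, hence $I^{(k)}_j\subset\Rcal_k$ after adjusting $\eps_3$, so that $\bigcup_j I^{(k)}_j=\Rcal_k$. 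The genuinely new input is that $r_k\in\Rcal_k$ for all large $k$: by the mass convergence in~\eqref{eq:tangent} and the identity~\eqref{eq:excesslim}, $\Ebf(T,\Bbf_{6\sqrt m r_k}(x_k))=\Ebf(T_{x_k,r_k},\Bbf_{6\sqrt m})\to\Ebf(S,\Bbf_{6\sqrt m})=0$ since $S$ is a multiplicity-$Q$ plane, so $r_k\in\Rcal_k$ eventually, and the interval $I^{(k)}_{j(k)}$ of the decomposition that contains $r_k$ is the one asserted. If one were proving the proposition from scratch, the main obstacle would be the bookkeeping in~(iii) — bounding the length of the Whitney chain and tracking the compounded dependence of $M_0,N_0,C_e,C_h,\eps_2,\eps_3$ — but in the present adaptation the only things that truly require checking are that $\boldsymbol m_0^{(k,j)}\le C_0\eps_3^2$ uniformly in $k$ and that $r_k$ falls into $\Rcal_k$, both of which reduce to the excess decay already established.
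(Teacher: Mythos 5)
Your proposal is in substance the same as the paper's treatment: the paper does not reprove this result but recalls it from~\cite{DLS16blowup} (Proposition~2.2), the only genuinely new ingredients being exactly the two you isolate, namely that $\boldsymbol{m}_0^{(k,j)}$ stays below the threshold uniformly in $k$ (via $t^{(k)}_j\in\Rcal_k$ and Assumption~\ref{asm:curr2}) and that $r_k\in\Rcal_k$ for $k$ large, which you correctly deduce from the excess decay $\Ebf(T_{x_k,r_k},\Bbf_{6\sqrt m})\to 0$ already recorded in~\eqref{eq:excesslim} and Assumption~\ref{asm:cpct}. One caveat: in item~(ii) your argument for the inclusion $\bigcup_j I^{(k)}_j\subseteq\Rcal_k$ does not work as written, since the bound you obtain from~(iv) at scales inside an interval of flattening is $C_0\eps_3^2$ with $C_0>1$, while $\Rcal_k$ is defined with the same $\eps_3$, so ``adjusting $\eps_3$'' cannot close the gap; only the covering direction $\Rcal_k\subseteq\bigcup_j I^{(k)}_j$ (maximality of the $t^{(k)}_j$) and the membership of $r_k$ in some interval are actually established — which is all that the paper ever uses.
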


In particular,~\eqref{itm:flattening1} tells us that $I^{(k)}_j \neq \emptyset$ for each $k, j$. 
We will need analogous stopping criteria for scales around other points $w \neq x_k$. In particular, we will be interested in considering the points $w = y_k$, when we show persistence of $Q$-points. We hence introduce the following definition:
\begin{definition}[Non-centered intervals of flattening]\label{def:nonctrrefine}
    Let $\Mcal$ be a center manifold with a given centered interval of flattening $(s,t]$ and corresponding rescaled current $T'$. Let $w \in \spt T' \cap \big(\Bbf_{6\sqrt{m}}\setminus \{0\}\big)$. Then, let $\tilde{t} = \tilde{t}(w)$ be the largest element of 
    \[
        \set{r \in\big(0,\dist(w, \partial\Bbf_{t})\big]}{\Ebf(T', \Bbf_{6\sqrt{m}r}(w)) \leq \eps_3^2},
    \]
    and let 
    \[
        \tilde{s} = \tilde{s}(w) \coloneqq \tilde{t}\max\Big(\set{c_s^{-1}\ell(L)}{L \in \Wscr \ \text{and} \ c_s^{-1}\ell(L) \geq \dist(\mathbf{p}(w), L)}\cup\{0\}\Big),
    \] 
    we call $(\tilde{s},\tilde{t}]$ the (non-centered) interval of flattening around $w$ (corresponding to $(s,t]$).
\end{definition}
Observe that conditions~\eqref{eq:stop} and~\eqref{eq:go} still hold for any such non-centered interval of flattening, only for balls centered at $\mathbf{p}_{\pi_{k,j}}(w)$. Moreover, the appropriate analogues of the conclusions of Proposition~\ref{prop:flattening} hold, but we will not use this so we do not discuss this in detail here.

In many of the following arguments, we will be taking diagonal sequences of center manifolds and intervals of flattening around varying centers. Thus, it will be beneficial to simplify notation as follows.

Whenever we have a diagonal sequence of blow-ups $T_{k,j(k)}$ for $T$,  $\Mcal_{k, j(k)}$ is a diagonal sequence of center manifolds with corresponding $\Mcal_{k,j(k)}$-normal approximations $N^{(k,j(k))}$, centered flattening intervals $(s^{(k)}_{j(k)}, t^{(k)}_{j(k)}]$ and rescaled currents $T_{k,j(k)}$, we will use the notation
\[
	T_k \coloneqq T_{k,j(k)}, \qquad \Mcal_{k} \coloneqq \Mcal_{k,j(k)}, \qquad N^{(k)} \coloneqq N^{(k,j(k))}, \qquad \Sigma_k \coloneqq \Sigma_{k,j(k)},
\]
for the blow-ups of $T$ and the associated center manifolds, normal approximations and rescaled ambient manifolds. We will use the notation
\[
	s_k \coloneqq s^{(k)}_{j(k)}, \qquad  t_k \coloneqq t^{(k)}_{j(k)},
\]
for the endpoints of the interval of flattening corresponding to $\Mcal_k$, and we let
\[
	\boldsymbol{m}_0^{(k)} \coloneqq \boldsymbol{m}_0^{(k,j(k))}, \qquad \boldsymbol{\Phi}_k \coloneqq \boldsymbol{\Phi}_{k,j(k)}, \qquad \boldsymbol{\vphi}_k \coloneqq \boldsymbol{\vphi}_{k,j(k)}, \qquad \mathbf{p}_k \coloneqq \mathbf{p}_{k,j(k)}.
\]
We will also use the notation
\[
	\pi_k \coloneqq \pi_{k,j(k)}, \qquad \Wscr^{(k)} \coloneqq \Wscr^{(k,j(k))}, \qquad \Sscr^{(k)} \coloneqq \Sscr^{(k,j(k))},
\]
when discussing the Whitney decompositions for $\Mcal_k$. 
%
We are now in a position to introduce the frequency function and investigate its properties around $Q$-points of $T$.

\section{The Frequency Function}\label{sct:freq}
As previously mentioned, this section is dedicated to uniformly bounding the frequency function for our $\Mcal_{k,j}$-normal approximations $N^{(k,j)}$. Morally, we expect the frequency function to capture the `dominant frequency of oscillation' for each map $N^{(k,j)}$ at a prescribed scale around $\mathbf{p}_{k,j}(x_k)$.

Before continuing, let us recall the ultimate goal. We would like to take a sequence of center manifolds $\Mcal_{k}$ approximating our current $T$ at scales $r_k$ around $x_k$. We wish to show that the corresponding $\Mcal_k$-normal approximations $N^{(k)}$ (after normalizing appropriately) converge to the graph of a non-trivial Dir-minimizer $u$ in a sufficiently strong sense to ensure persistence of $Q$-points. This will in turn enable us to conclude the alternative~\eqref{eq:b} in Theorem~\ref{thm:Minkowskibd}. In order to conclude that the limit $u$ is non-trivial and that singularities persist, we will need to establish both upper and lower uniform bounds on the frequency function. 

We begin by introducing the frequency for Dir-minimizers on an open subset of $\R^m$; this, along with its key properties, will play an important role in the proof of Theorem~\ref{thm:freqlowerbd} and the concluding persistence of $Q$-points argument in Section~\ref{sct:Qpts}. 

Let $\Omega \subset \R^m$ be an open subset, and let $u : \Omega \to \Acal_Q$. For $r \in \left(0, \dist(y, \partial \Omega)\right)$, we define
\begin{equation}\label{eq:Dirminfreq}
D_{y, u}(r) \coloneqq \int_{B_r(y)} |Du|^2, \quad H_{y,u}(r) \coloneqq \int_{\partial B_r(y)} |u|^2, \quad I_{y,u}(r) \coloneqq \frac{r D_{y,u}(r)}{H_{y,u}(r)}.
\end{equation}
Note that these quantities are well-defined for Dir-minimizers; see~\cite[Remark~3.14]{DLS_MAMS}. Moreover, the frequency $I_{y,u}$ is monotone non-decreasing, so the limit 
\[
    I_0 \coloneqq \lim_{r \todown 0}I_{y,u}(r)
\]
is well-defined. We refer the reader to~\cite[Section~3.4]{DLS_MAMS} for further details on the frequency function for Dir-minimizers and its properties.

A particularly important property of Dir-minimizers is the way their $\Lrm^2$-height scales on balls centered around $Q$-points:
\begin{proposition}\label{lem:freqdecay}
    Suppose that $\Omega \subset \R^m$ open and that $u \in \Wrm^{1,2}(\Omega;\Acal_Q)$ is $\Dir$-minimizing. If for some $y \in \Omega$, $u(y) = Q\llbracket 0\rrbracket$ and $I_0 \coloneqq \lim_{r \todown 0}I_{y,u}(r)$ denotes the frequency of $u$ at $y$, then $I_0 > 0$ and for every $\rho < r < \dist(y,\partial\Omega)$, we have
    \begin{equation}\label{eq:massdecayfreq}
    \int_{B_\rho(y)} |u|^2 \leq \left(\frac{\rho}{r}\right)^{m+2 I_0}\int_{B_r(y)} |u|^2.
    \end{equation}
    
    Conversely, if for some $y \in \Omega$ and some $c > 0$ one of the following alternatives holds:
    \begin{enumerate}[(a)]
        \item\label{eq:massdecay} $$\int_{B_\rho(y)} |u|^2 \leq C\rho^{m+2c} \qquad \text{for every $\rho < \dist(y,\partial\Omega)$ sufficiently small};$$ \\
        \item\label{eq:freqlb} $I_0 \geq c$, 
    \end{enumerate}
    then $u(y) = Q\llbracket 0\rrbracket$.
\end{proposition}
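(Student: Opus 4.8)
The plan is to derive everything from two standard facts about the frequency of $\Dir$-minimizers, both recorded in~\cite[Section~3.4]{DLS_MAMS}: the monotonicity of $r \mapsto I_{y,u}(r)$, already recalled above, and the first-variation identity $\frac{\di}{\di r}H_{y,u}(r) = \frac{m-1}{r}H_{y,u}(r) + 2D_{y,u}(r)$, which comes from the polar representation of $H_{y,u}$ together with the outer variation for $\Dir$-minimizers. I would combine these with the elementary coarea identity $\int_{B_r(y)}|u|^2 = \int_0^r H_{y,u}(s)\dd s$. Since $I_0$ is presupposed to exist, we have $H_{y,u}(r) > 0$ for all small $r$, hence — by unique continuation for $\Dir$-minimizers — for every $r < \dist(y,\partial\Omega)$; so the quotients below are well defined, and I will drop the subscripts $y,u$.

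\emph{The decay estimate~\eqref{eq:massdecayfreq}.} Dividing the first-variation identity by $H(r)$ gives $\frac{\di}{\di r}\log\bigl(r^{1-m}H(r)\bigr) = \frac{2I(r)}{r}$, and since $I$ is non-decreasing we have $I(s) \geq I_0$ for all $s$; integrating between $s$ and $r$ yields $H(s) \leq (s/r)^{m-1+2I_0}H(r)$ for $0 < s \leq r$. Substituting this into $\int_{B_r}|u|^2 = \int_0^r H$ and computing $\int_0^r s^{m-1+2I_0}\dd s$ gives $(m+2I_0)\int_{B_r}|u|^2 \leq rH(r)$, which says precisely that $r \mapsto r^{-(m+2I_0)}\int_{B_r}|u|^2$ is non-decreasing; comparing the values of this quantity at $\rho$ and $r$ gives~\eqref{eq:massdecayfreq}. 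This half uses only that $I_0$ is defined, not that $u(y) = Q\llbracket 0 \rrbracket$.

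\emph{Positivity of $I_0$.} Here the hypothesis $u(y) = Q\llbracket 0\rrbracket$ and the (implicit) non-triviality of $u$ enter, and I expect this to be the only non-elementary step, since it rests on the blow-up analysis for $\Dir$-minimizers rather than on the identities above. I would argue by contradiction. The rescalings $u_r(x) \coloneqq r^{(m-1)/2}H(r)^{-1/2}\,u(y+rx)$ are again $\Dir$-minimizing and satisfy $\int_{\partial B_1}|u_r|^2 = 1$ and $\int_{B_1}|Du_r|^2 = I(r)$. By the compactness and interior Hölder estimates for $\Dir$-minimizers~\cite[Section~3.5]{DLS_MAMS}, a subsequence converges strongly in $\Wrm^{1,2}_{\loc}$ and locally uniformly to a non-trivial homogeneous $\Dir$-minimizer $u_0$ of degree $I_0$; moreover $u_0(0) = \lim_r u_r(0) = Q\llbracket 0\rrbracket$, because $u_r(0) = r^{(m-1)/2}H(r)^{-1/2}u(y) = Q\llbracket 0\rrbracket$ for every $r$ and the convergence near the origin is uniform. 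If $I_0 = 0$, then $u_0$ is $0$-homogeneous, hence constant by continuity, so $u_0 \equiv Q\llbracket p\rrbracket$; non-triviality forces $p \neq 0$, contradicting $u_0(0) = Q\llbracket 0\rrbracket$. Hence $I_0 > 0$.

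\emph{The converse.} For alternative~\eqref{eq:massdecay}, divide the assumption by $|B_\rho(y)|$: it reads $\aveint{B_\rho(y)}{}\Gcal(u, Q\llbracket 0\rrbracket)^2 \leq C'\rho^{2c} \to 0$ as $\rho \todown 0$; since $z \mapsto \Gcal(u(z), Q\llbracket 0\rrbracket)^2$ is continuous (Hölder regularity of $\Dir$-minimizers), the left-hand side tends to $\Gcal(u(y), Q\llbracket 0\rrbracket)^2$, forcing $u(y) = Q\llbracket 0\rrbracket$. For alternative~\eqref{eq:freqlb}, monotonicity gives $I(s) \geq I_0 \geq c$ for every $s$, so $\frac{\di}{\di r}\log(r^{1-m}H(r)) = \frac{2I(r)}{r} \geq \frac{2c}{r}$ yields $H(\rho) \leq (\rho/r_0)^{m-1+2c}H(r_0)$ for a fixed small $r_0$ and all $\rho < r_0$; integrating in $\rho$ gives $\int_{B_\rho(y)}|u|^2 \leq C\rho^{m+2c}$, which is exactly the hypothesis of alternative~\eqref{eq:massdecay} with constant $c$ there, and we conclude as above. (In the degenerate case where $H \equiv 0$ near $y$ and $I_0$ is not literally defined, continuity of $u$ already gives $u \equiv Q\llbracket 0\rrbracket$ near $y$.)
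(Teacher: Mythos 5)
Your proof is correct, and for the main decay estimate it follows the same skeleton as the paper: the identity $\frac{\di}{\di r}\log\bigl(r^{1-m}H(r)\bigr)=2I(r)/r$ together with the monotonicity of $I$ is exactly the paper's starting point, and your positivity discussion ($H>0$ for all radii once it holds for small ones, by comparing with the constant competitor) matches what the paper leaves implicit. The differences are worth noting. First, you work with the genuine spherical $H$ and pass from the decay of $H$ to the decay of $\int_{B_\rho}|u|^2$ by the coarea identity and the inequality $(m+2I_0)\int_{B_r}|u|^2\leq rH(r)$, i.e.\ by observing directly that $r\mapsto r^{-(m+2I_0)}\int_{B_r}|u|^2$ is non-decreasing; the paper instead uses a regularized annulus version of $H$ and obtains the ball estimate by rescaling the annulus inequality dyadically and summing over $k$. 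Your route is arguably cleaner, though you should make sure the first-variation identity $H'=\frac{m-1}{r}H+2D$ (a.e.\ $r$, with $H$ absolutely continuous) is quoted for the spherical quantities, which is indeed in~\cite[Section~3.4]{DLS_MAMS}. Second, for $I_0>0$ the paper argues in one line from the Poincar\'e-type inequality $H(r)\leq CrD(r)$, which follows from the interior H\"older estimate together with $u(y)=Q\llbracket 0\rrbracket$ and immediately gives $I(r)\geq 1/C$; you instead run the blow-up argument: non-trivial $I_0$-homogeneous limit, which cannot be a non-zero constant since it vanishes at the origin. Your argument is sound — with the $H$-normalization the non-triviality of the limit comes for free from $\int_{\partial B_1}|u_r|^2=1$ and uniform convergence, so there is no hidden circularity — but it invokes the full compactness/blow-up machinery of~\cite[Section~3.5]{DLS_MAMS} where the paper's estimate is elementary and also yields the quantitative lower bound $I_0\geq 1/C$ with $C=C(m,Q)$, which is occasionally useful. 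The converse direction is handled identically in both proofs.
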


\begin{remark}
    In fact, the alternatives~\eqref{eq:massdecay} and~\eqref{eq:freqlb} are equivalent; this can easily be checked via a contradiction argument.
\end{remark}
The proof of this can be inferred from~\cite[Corollary~3.18]{DLS_MAMS}, but we repeat it at the end of this section for convenience.

\begin{definition}[Almgren's frequency function]\label{def:freq}
    Let $\Mcal$ be a center manifold with corresponding $\Mcal$-normal approximation $N$ and rescaled current $T'$, let $w \in \spt T'$, and let $(s,t]$ be an interval of flattening around $w$. 
    
    Let $r \in \Big(\frac{s}{t},R\Big]$ be an arbitrary scale. Then for a Lipschitz cutoff $\phi: [0, \infty) \to [0,1]$ that is identically $1$ on $[0,\frac{1}{2}]$, and vanishes on $[1,\infty)$.we define
    \begin{align*}
        \Dbf_{\mathbf{p}(w),N}(r) &\coloneqq \int_{\Mcal} \phi\left(\frac{d(p-\mathbf{p}(w))}{r}\right) |DN|^2(p)\dd p, \\
        \Hbf_{\mathbf{p}(w),N}(r) &\coloneqq -\int_{\Mcal} \phi ' \left(\frac{d(p-\mathbf{p}(w))}{r}\right) \frac{|N|^2(p)}{d(p-\mathbf{p}(w))} \dd p, \\
        \Ibf_{\mathbf{p}(w),N}(r) &\coloneqq \frac{r\Dbf_{\mathbf{p}(w),N}(r)}{\Hbf_{\mathbf{p}(w),N}(r)},
    \end{align*}
    where $d(q)$ denotes the geodesic distance between $q$ and $\boldsymbol{\Phi}(0)$ on $\Mcal$. Let 
    \[  
        \boldsymbol{\Omega}_{\mathbf{p}(w),N} = \max\{\log\Ibf_{\mathbf{p}(w),N},\log c_0\},
    \]
    for $c_0$ to be determined later. Observe that the above functions are regularized versions of the analogous functions defined for $Q$-valued harmonic maps (see~\eqref{eq:Dirminfreq}), and so it is easy to check that they are absolutely continuous.
    
    When it is clear from context, we will omit $N$ and/or $w$ from the notation for the above quantities. Moreover, whenever we take center manifolds $\Mcal_{k,j}$ at varying scales $[s^{(k)}_j, t^{(k)}_j]$ with varying centers $x_k$, we will let
    \[
        \Dbf_{k,j} \coloneqq \Dbf_{\mathbf{p}_{k,j}(x_k), N^{(k,j)}}, \quad \Hbf_{k,j} \coloneqq \Hbf_{\mathbf{p}_{k,j}(x_k), N^{(k,j)}}, \quad \Ibf_{k,j}(r) \coloneqq \frac{r\Dbf_{k,j}(r)}{\Hbf_{k,j}(r)}.
    \]
    For diagonal blow-up sequences as discussed in Section~\ref{sct:flattening} with varying centers $\mathbf{p}_k(w_k) \coloneqq \mathbf{p}_{k,j(k)}(w_k)$ (where $w_k$ are not necessarily $x_k$) we will let
    \begin{equation}\label{eq:diagonalfreq}
        \Dbf_k \coloneqq \Dbf_{\mathbf{p}_k(w_k), N^{(k,j(k))}}, \quad \Hbf_k \coloneqq \Hbf_{\mathbf{p}_k(w_k), N^{(k,j(k))}}, \quad \Ibf_k(r) \coloneqq \frac{r\Dbf_k(r)}{\Hbf_k(r)}.
    \end{equation}
    We use the analogous notation for $\boldsymbol{\Omega}$ also, when necessary.
\end{definition}

Before we continue, we henceforth fix the following useful notation. Given an arbitrary center manifold $\Mcal = \Mcal_{k,j}$ with a corresponding rescaled current $T' = T_{x_k, t_j^{(k)}}$ and a point $w \in \spt T'$, we define
\begin{equation}\label{eq:radius}
    R = R(w) \coloneqq \dist(\mathbf{p}(w), \partial B_3(x_k)).
\end{equation}
This regularized frequency function exhibits a convenient ~\emph{almost monotonicity} property, which can be compared with the monotonicity \cite[Theorem~3.15]{DLS_MAMS} of the frequency for Dir-minimizers:
\begin{theorem}\label{thm:monfreq}
    There exists a bounded function $f : [0,\infty) \to \R$ with $\lim_{r\todown 0} f(r) = 0$ such that the following holds.
    
    Suppose that we have a center manifold $\Mcal = \Mcal_{k,j}$ with a corresponding rescaled current $T' = T_{x_k,t^{(k)}_j}$, $\Mcal$-normal approximation $N = N^{(k,j)}$ and an interval of flattening $(s,t]$ around any center $\mathbf{p}(w)$. Then
    \[
        \boldsymbol{\Omega}_{\mathbf{p}(w),N}(a) \leq \boldsymbol{\Omega}_{\mathbf{p}(w),N}(b) + f(b) \qquad \text{for every $[a,b] \subset \Big[\frac{s}{t},R\Big]$}.
    \]
\end{theorem}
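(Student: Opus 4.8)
The plan is to follow the strategy of the monotonicity argument for Almgren's frequency function as developed in~\cite[Section~3]{DLS16blowup}, adapting it so that the center manifold, the normal approximation, and the interval of flattening are those attached to an arbitrary (possibly non-centered) point $w\in\spt T'$ rather than only to the origin. First I would record the first variation identities for $\Dbf_{\mathbf{p}(w),N}(r)$ and $\Hbf_{\mathbf{p}(w),N}(r)$: differentiating in $r$ and using the two inner and outer variations of the Dirichlet energy of $N$ (the domain variation along the vector field generated by the geodesic distance $d(\cdot-\mathbf{p}(w))$ on $\Mcal$, and the outer variation $N\mapsto \varphi(d/r)N$), one obtains
\begin{align*}
    \Dbf'_{\mathbf{p}(w),N}(r) &= -\tfrac{1}{r}\int_{\Mcal}\varphi'\Big(\tfrac{d}{r}\Big)\,\tfrac{|\partial_{\hat r}N|^2}{\,|\nabla d|\,}\;d\mathcal{H}^m + \text{(error)}\cdot E_r,\\
    \Hbf'_{\mathbf{p}(w),N}(r) &= \tfrac{m-1}{r}\Hbf_{\mathbf{p}(w),N}(r) + \tfrac{2}{r}\int_{\Mcal}\varphi'\Big(\tfrac{d}{r}\Big)\,\tfrac{\langle N,\partial_{\hat r}N\rangle}{\,|\nabla d|\,}\;d\mathcal{H}^m + \text{(error)}\cdot E_r,
\end{align*}
where $\partial_{\hat r}N$ is the radial derivative along geodesics from $\mathbf{p}(w)$, and $E_r$ collects all the terms controlled by $\boldsymbol{m}_0$, the curvature $\Abf$ of $\Mcal$, and the Lipschitz/energy estimates of Theorem~\ref{thm:Nlocalest}. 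These error integrands are precisely the ``non-harmonic'' contributions: they come from the fact that $N$ is only an almost-Dir-minimizer over $\Mcal$, that $\Mcal$ is curved, and that $\Tbf_F$ differs from $T'$ off the contact set. The key point is that none of these steps use that the center of the geodesic balls is the origin — they are local in nature and only require $B_{R(w)}(\mathbf{p}(w))$ to sit inside the Whitney region structure, which is guaranteed by the (non-centered) interval of flattening of Definition~\ref{def:nonctrrefine} together with Proposition~\ref{prop:flattening}.

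Next I would combine these to compute the logarithmic derivative of $\Ibf_{\mathbf{p}(w),N}$. Writing $\Ebf_r$ and $\Gbf_r$ for the two weighted quantities $\int \varphi'(d/r)\,|\partial_{\hat r}N|^2/|\nabla d|$ and $\int\varphi'(d/r)\,\langle N,\partial_{\hat r}N\rangle/|\nabla d|$, and using $\Dbf=\int\varphi(d/r)|DN|^2 \approx \int \varphi'(d/r)\, |DN|^2 \cdot(\text{something})$ via the outer variation to relate $\Dbf$ to the boundary term, one gets the Cauchy–Schwarz inequality $\Gbf_r^2 \le \Ebf_r\cdot(\text{weighted }\Hbf)$, and hence
\[
    \big(\log\Ibf_{\mathbf{p}(w),N}(r)\big)' \ge -\frac{\mathrm{Err}(r)}{r},
\]
where $\mathrm{Err}(r)$ is bounded by $C\boldsymbol{m}_0^{\gamma}r^{\gamma}$ (using $\boldsymbol{m}_0 \le \eps_3^2$, the excess decay $\Ebf(T';\Bbf_r)\le C\eps_3^2 r^{2-2\delta_2}$ of Proposition~\ref{prop:flattening}(iv), and the local estimates~\eqref{eq:normalLip}–\eqref{eq:normalDir}), provided $\Ibf_{\mathbf{p}(w),N}(r)$ stays above the fixed threshold $c_0$. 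The truncation by $c_0$ in $\boldsymbol{\Omega}=\log\max\{\Ibf,c_0\}$ is exactly what handles the regime where the frequency is small: there $\boldsymbol{\Omega}$ is locally constant, so the inequality is trivial, and the only subtlety is a matching/continuity argument at scales where $\Ibf$ crosses $c_0$ — here one chooses $c_0$ large enough (depending only on the geometric constants) that the error term cannot push a frequency from above $c_0$ to below it over the relevant range. Integrating $(\boldsymbol{\Omega})' \ge -\mathrm{Err}(r)/r$ from $a$ to $b$ and setting $f(b)\coloneqq C\int_0^{b}\mathrm{Err}(r)\,\tfrac{dr}{r} \le C'\boldsymbol{m}_0^{\gamma} b^{\gamma}$ (which is bounded and tends to $0$ as $b\todown 0$) yields $\boldsymbol{\Omega}_{\mathbf{p}(w),N}(a)\le \boldsymbol{\Omega}_{\mathbf{p}(w),N}(b)+f(b)$, as desired.

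The main obstacle is the bookkeeping of the error terms at the non-centered point $w$ and across the boundary of the interval of flattening. Two issues require care: first, one must verify that all the differential identities and the key estimates (the splitting-before-tilting comparison, the height and energy bounds of Theorem~\ref{thm:Nlocalest}, and Proposition~\ref{prop:flattening}) remain valid when balls are centered at $\mathbf{p}_{\pi_{k,j}}(w)$ rather than at the origin — this is where the remark after Definition~\ref{def:nonctrrefine} (that \eqref{eq:stop} and \eqref{eq:go} still hold for non-centered intervals) is used, and one should check that $R(w)=\dist(\mathbf{p}(w),\partial B_3(x_k))$ is the correct range on which the Whitney region geometry is under control; second, one must ensure the error exponent $\gamma>0$ is uniform over the whole family of center manifolds $\Mcal_{k,j}$ and does not degenerate, which follows because $\gamma$ depends only on $\gamma_1,\delta_2,\beta_2$ and the fixed geometric data. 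Everything else is a routine, if lengthy, transcription of the De~Lellis–Spadaro monotonicity computation, so I would state the identities, point to~\cite[Section~3]{DLS16blowup} for the parts that are verbatim, and spell out only the modifications forced by the moving center.
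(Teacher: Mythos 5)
Your proposal takes essentially the same route as the paper: there, too, the statement is reduced to the first-variation estimates of Proposition~\ref{prop:firstvarest} (quoted from De~Lellis--Spadaro and valid verbatim for an arbitrary center $\mathbf{p}(w)$, which is exactly the point you highlight), combined with the Cauchy--Schwarz inequality $\Ebf_w(r)^2\leq \Gbf_w(r)\Hbf_w(r)$ and the truncation of $\boldsymbol{\Omega}$ at $\log c_0$, and then integrated from $a$ to $b$ to produce $f(r)=C\bigl[r+r^{\gamma_3 m}+r^2\bigr]$. The only caveat is that the differential inequality is not pointwise of the form $-\boldsymbol{\Omega}_w'(r)\leq C\boldsymbol{m}_0^{\gamma}r^{\gamma-1}$ as you write: the bound on $-\boldsymbol{\Omega}_w'$ also contains the terms $\Dbf_w(r)^{\gamma_3-1}\Dbf_w'(r)$ and $\boldsymbol{\Sigma}_w(r)\Dbf_w'(r)/\Dbf_w(r)^2$, which are controlled only after integration (the first as an exact derivative of $\Dbf_w^{\gamma_3}$, the second by integration by parts together with \eqref{eq:firstvar4} and Proposition~\ref{prop:Dirbd}) --- but this is precisely how the paper, and the computation you defer to, closes the argument, so the plan is sound.
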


\begin{remark}
    Note that the function $f$ is \emph{independent} of the choice of center manifold, choice of point around which we take the interval, and the interval itself. Moreover, observe that this almost-monotonicity estimate tells us that
    \begin{equation}\label{eq:monfreq}
    \Ibf_{\mathbf{p}(w),N}(a) \leq e^{f(b)}\max\{c_0,\Ibf_{\mathbf{p}(w),N}(b)\}.
    \end{equation}
\end{remark}
The proof of this theorem is the same as that of \cite[Theorem~3.2]{DLS16blowup}, with merely a sharper final estimate. Nevertheless, we reproduce the argument at the end of this section. The proof is based on calculating variation estimates; the main idea is to compare the frequency with that of homogeneous maps which share boundary values as $N^{(k,j)}$, since the frequency of homogeneous maps is constant and equal to their degree of homogeneity. This is analogous to the proof of the monotonicity of the mass ratios for a current $T$ (around a fixed center $p$ with $\Theta(T,p) = Q$), which comes from comparing $T$ with $m$-dimensional cones $T\mres \partial B_r(p)\RangeX Q\llbracket p \rrbracket$, which have constant mass ratios.
 
The almost-monotonicity of the frequency will be crucial for establishing uniform bounds on the frequency function.

As explained above, we would firstly like to show uniform upper bounds on the frequencies $\Ibf_{k,j}$, independently of the center:
\begin{theorem}\label{thm:frequpperbd}
    There exists $J \in \Nbb$ such that the following holds. 
    For the center manifolds $\Mcal_{k,j}$ with corresponding normal approximations $N^{(k,j)}$, rescaled current $T_{k,j} = T_{x_k,t_j^{(k)}}$ and centered flattening intervals $\big(s^{(k)}_j, t^{(k)}_j\big]$ around $x_k$, we have
    \begin{equation}\label{eq:smallfrequb}
        \sup_{k,j}\sup_{r \in \big(\frac{s^{(k)}_j}{t^{(k)}_j},\frac{3}{2^{J-1}}\big]}\Ibf_{k,j}(r) < +\infty,
    \end{equation}
    and
    \begin{equation}\label{eq:largefrequb}
        \sup_{k,j}\sup_{r \in (\frac{3}{2^{J-1}}, 3]} \min\left\{\Ibf_{k,j}(r), \frac{r^2 \int_{\Bcal_r} |DN^{(k,j)}|^2}{\int_{\Bcal_r} |N^{(k,j)}|^2}\right\} < +\infty.
    \end{equation}
    In particular, given any point $w_{k} \in \spt T_{k,j} \cap \cl{\Bbf}_1$ for each center manifold $\Mcal_{k,j}$ and corresponding non-centered interval of flattening $\big[\tilde{s}^{(k)}_j,\tilde{t}^{(k)}_j\big)$ around $w_{k}$, we also have
    \[
        \sup_{k,j}\sup_{r \in \big(\frac{\bar{s}^{(k)}_j}{\bar{t}^{(k)}_j},\delta_{k,j}\big]} \Ibf_{k,j}(r) < \infty
    \]
    and
    \[
        \sup_{k,j}\sup_{r \in (\delta_{k,j}, R_{k,j}]} \min\left\{\Ibf_{k,j}(r), \frac{r^2 \int_{\Bcal_r} |DN^{(k,j)}|^2}{\int_{\Bcal_r} |N^{(k,j)}|^2}\right\} < +\infty,
    \]
    where 
    \[
        \delta_{k,j} \coloneqq \dist\Big(\mathbf{p}_{k,j}(w_{k}), \partial \Bcal_{\frac{3}{2^{J-1}}}(\mathbf{p}_{k,j}(x_k))\Big),
    \]
    and $R_{k,j}$ is as in~\eqref{eq:radius} for $\mathbf{p}_{k,j}(w_k)$.
\end{theorem}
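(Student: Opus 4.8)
The whole statement is proved by contradiction and compactness, after reducing each assertion to a uniform bound for the frequency at one fixed scale via the almost-monotonicity of Theorem~\ref{thm:monfreq}.

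\emph{Reduction to a single scale.} Fix $J\in\Nbb$ so large that $\bar r\coloneqq 3\cdot 2^{1-J}<2^{-5}$. Since $s^{(k)}_j/t^{(k)}_j<2^{-5}$ by the first conclusion of Proposition~\ref{prop:flattening}, each interval $\big(s^{(k)}_j/t^{(k)}_j,\bar r\big]$ is either empty (and then there is nothing to prove for those scales) or contained in $\big(s^{(k)}_j/t^{(k)}_j,3\big)$; in the latter case~\eqref{eq:monfreq} yields $\Ibf_{k,j}(r)\le e^{f(\bar r)}\max\{c_0,\Ibf_{k,j}(\bar r)\}$ for all such $r$, so~\eqref{eq:smallfrequb} is equivalent to $\sup_{k,j}\Ibf_{k,j}(\bar r)<\infty$. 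For $r\in(\bar r,3]$ one instead applies~\eqref{eq:monfreq} between $r$ and $3$, together with the elementary comparison between $\Ibf_{k,j}(r)$ and $\frac{r^2\int_{\Bcal_r}|DN^{(k,j)}|^2}{\int_{\Bcal_r}|N^{(k,j)}|^2}$ obtained by unwinding the definition of $\Hbf_{k,j}$; thus~\eqref{eq:largefrequb} follows once the displayed minimum is controlled at each of the finitely many dyadic scales in $(\bar r,3]$. The two non-centered assertions reduce verbatim, with $\delta_{k,j}$ replacing $\bar r$ and $R_{k,j}$ replacing $3$, once one checks (Definition~\ref{def:nonctrrefine} and the discussion following it) that~\eqref{eq:stop}--\eqref{eq:go}, and the analogues of Propositions~\ref{prop:flattening} and~\ref{prop:comparecentermfld}, hold around $w_k$.

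\emph{The contradiction argument.} Suppose one of the reduced bounds fails. Passing to a diagonal sequence $\Mcal_k=\Mcal_{k,j(k)}$ and using~\eqref{eq:monfreq} and the comparison above, we may assume $\Ibf_k(\rho_k)\to\infty$ at scales $\rho_k$ in the relevant range. After a suitable selection of scales $\sigma_k\le\rho_k$ — chosen so that the Dirichlet energy of $N^{(k)}$ on $\Bcal_{\sigma_k}$ and on the surrounding annulus are of the same order while the frequency there is still unbounded — one renormalizes $N^{(k)}$ by $\big[\Hbf_k(\sigma_k)\big]^{1/2}$ and dilates the domain at scale $\sigma_k$. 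Using the interior estimates~\eqref{eq:normalLip}--\eqref{eq:normalDir} of Theorem~\ref{thm:Nlocalest}, the smallness of the cylindrical excess at scale $\sigma_k$ (fourth conclusion of Proposition~\ref{prop:flattening}) and hence of the tilt of $\Mcal_k$ there, one extracts a $\Wrm^{1,2}_{\loc}$-limit $u$ on a ball in the limiting $m$-plane $\pi_\infty$, which is a \emph{nontrivial} $\Dir$-minimizer: nontriviality is guaranteed by the $\Lrm^2$ lower bound $\int_{\Mcal_k\cap\Bbf_2}|N^{(k)}|^2\ge\bar c_s\,\boldsymbol{m}_0^{(k)}$ of Proposition~\ref{prop:comparecentermfld} when the interval has a predecessor, by splitting-before-tilting (Proposition~\ref{prop:excesssplitting}) on the large cube produced by~\eqref{eq:cubestop}, and, ultimately, by the fact that the blow-up centre carries multiplicity $Q$ and lies in $K^0\subset\Sing T$, so $T_k$ cannot become graphical over $\Mcal_k$ around it. But the frequency of $u$ at the centre equals $\lim_k\Ibf_k(\sigma_k)=\infty$, contradicting the finiteness of the frequency of $\Dir$-minimizers (\cite[Section~3.4]{DLS_MAMS}, see also Proposition~\ref{lem:freqdecay}). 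This proves all four bounds.

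\emph{Main obstacle.} The technical core is the compactness step: after renormalization one must verify that every error term in the outer- and inner-variation identities for $N^{(k)}$ vanishes in the limit and that the dilated center manifolds degenerate to the flat plane $\pi_\infty$, so that $u$ is a genuine $\Dir$-minimizer and not merely an almost-minimizer — and this must be carried out uniformly over the varying blow-up centres $x_k$ (and $w_k$) and over all intervals of flattening at once. It is precisely because the non-degeneracy input of Proposition~\ref{prop:comparecentermfld} is unavailable near the top of an interval of flattening — and, for the first interval $j=0$, unavailable altogether, since there $\boldsymbol{m}_0^{(k,0)}\equiv\eps_3^2$ does not vanish — that one can only assert the dichotomy~\eqref{eq:largefrequb} for $r\in(\bar r,3]$ rather than an unconditional bound on $\Ibf_{k,j}$ alone.
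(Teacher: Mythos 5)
There is a genuine gap at the heart of your contradiction argument. You assume $\Ibf_k(\rho_k)\to\infty$, then claim to renormalize $N^{(k)}$ by $\big[\Hbf_k(\sigma_k)\big]^{1/2}$, extract a nontrivial $\Dir$-minimizing limit $u$, and contradict the finiteness of its frequency. But if the frequency is unbounded at the chosen scales, the maps normalized by $\Hbf_k^{1/2}$ have Dirichlet energies blowing up, so no $\Wrm^{1,2}$ limit can be extracted; if instead you normalize by the Dirichlet energy, then $\Hbf_k$ degenerates relative to $\Dbf_k$, the limit may vanish on the relevant annulus, and the frequency does not pass to the limit. In other words, the statement \enquote{the frequency of $u$ at the centre equals $\lim_k\Ibf_k(\sigma_k)=\infty$} can never be realized for an honest $\Wrm^{1,2}$ Dir-minimizer; the entire difficulty of the theorem is precisely to exclude the degenerate scenario in which the sheets of $N^{(k)}$ collapse onto the center manifold ($\Hbf_k\to 0$ while $\Dbf_k$ does not), and this cannot be deduced from the facts you invoke: the centre being a singular $Q$-point does not prevent collapse of $N^{(k)}$ at a given scale; Proposition~\ref{prop:comparecentermfld} gives the $\Lrm^2$ lower bound only when the interval of flattening starts immediately at the stopping scale of a previous one around the same centre; and splitting-before-tilting (Proposition~\ref{prop:excesssplitting}) only bites at the stopping cubes, not at the scale $3/2^{J-1}$ where the contradiction must be produced.

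The paper's proof does not blow up to a Dir-minimizer at all. It uses Lemma~\ref{lem:currentcollapse} (collapse of $\int|N^{(k)}|^2$ on an annulus forces decay of the excess at that scale) together with a three-case analysis on the starting scales $t_k$: if $\limsup_k t_k>0$, the excess decay at a fixed scale around $x=\lim x_k$ contradicts $x\in\Sing_{\geq Q}T$; if $t_k\to 0$ and $t_k=\bar s_k$ (restart immediately after the previous interval), Proposition~\ref{prop:comparecentermfld} plus Proposition~\ref{prop:Dirbd} give $\int_{\Bcal_R}|N^{(k)}|^2\gtrsim\int_{\Bcal_R}|DN^{(k)}|^2$, and then a Sobolev-embedding and dyadic-annulus pigeonhole argument converts this ball estimate into a bound on the annulus-based $\Hbf_{k,j}$ at some scale $3/2^{\ell}$, $\ell\le J-1$ --- this is what actually determines $J$, and it also yields~\eqref{eq:largefrequb}; if instead the excess exceeded the threshold $\eps_3^2$ at an intermediate scale $\eta_k\geq 1$ (paper's case (c)), the contrapositive of Lemma~\ref{lem:currentcollapse} keeps $\Hbf_k(\eta_k)$ bounded below, and one takes $3/2^{J-1}\le 1$. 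Your choice of $J$ (any $J$ with $3\cdot 2^{1-J}<2^{-5}$) is disconnected from these two roles, and your \enquote{elementary comparison} between $\Ibf_{k,j}$ and the ball ratio $r^2\int_{\Bcal_r}|DN|^2/\int_{\Bcal_r}|N|^2$ only goes in the harmless direction --- passing from a ball bound to a bound on the annulus quantity $\Hbf$ requires exactly the dyadic pigeonhole step you omit. Your reduction-to-one-scale via~\eqref{eq:monfreq} and the remark that the non-centered statements follow from the centered ones are fine, but the core of the argument needs to be replaced along the lines above.
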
 
For the version of this result for a sequence of center manifolds with a fixed center, see~\cite[Theorem~5.1]{DLS16blowup}. 

Theorem~\ref{thm:frequpperbd} will allow us to disregard the possibility that as we blow up, the mass of $T$ accumulates around $\partial \Bbf_{R_{k,j}t_j^{(k)}}(w_{k})$, creating a locally trivial blow-up limit. 
Indeed, this kind of phenomenon could only occur if the frequency were unbounded at the scales $t_j^{(k)}$, due to `infinite order of collapsing' at these scales.

Note that for the larger scales $r \in \big(\frac{3}{2^{J-1}}, 3\big]$ around $x_k$ on our center manifolds, we are satisfied with a cruder estimate of the form~\eqref{eq:largefrequb}. This is because it suffices to have a uniform frequency bound up to some uniform scale independent of $k$ and $j$, which in itself gives us the desired conclusion.

We will be using this upper frequency bound in the form of the following \emph{reverse Sobolev inequality} (see~\cite[Corollary~5.3]{DLS16blowup}), which will be a key tool for our concluding argument.
\begin{corollary}\label{cor:reverseSob}
    There exists a constant $C = C(T)$ such that the following holds. Suppose that $\Mcal = \Mcal_{k,j}$ is any center manifold with corresponding rescaled current $T' = T_{k,j}$ and $\Mcal$-normal approximation $N = N^{(k,j)}$. Then for any point $w \in \spt T'$ with $\Theta(T', w) \geq Q$ and any interval of flattening $[s,t)$ around $\mathbf{p}(w)$, for every $r \in \big(\frac{s}{t},1\big]$, there is $\bar{s} \in \big(\frac{R}{2}r, Rr\big]$ such that
    \[
        \int_{\Bcal_{\bar{s}}(\mathbf{p}(w))} |DN|^2 \leq \frac{C}{r^2} \int_{\Bcal_{\bar{s}}(\mathbf{p}(w))} |N|^2,
    \]
\end{corollary}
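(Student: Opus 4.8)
The plan is to derive this estimate directly from the upper frequency bound of Theorem~\ref{thm:frequpperbd}, essentially by unwinding the definitions of the regularized quantities $\Dbf$ and $\Hbf$ in Definition~\ref{def:freq} and then repairing the resulting dyadic scale mismatch. Fix $w\in\spt T'$ with $\Theta(T',w)\ge Q$, an interval of flattening $[s,t)$ around $\mathbf{p}(w)$, and $r\in\big(\tfrac st,1\big]$; set $\rho:=R\,r$ with $R=R(w)$ as in~\eqref{eq:radius}. For $w$ in the region of interest (e.g. $w\in\cl{\Bbf}_1$, which is the case in all our applications) the distance $R$ is bounded above and below by geometric constants, so $\rho\simeq r$ and $\Bcal_\rho(\mathbf{p}(w))$ is a well-defined geodesic ball on $\Mcal$ contained in the range of scales on which the frequency is controlled.

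First I would invoke Theorem~\ref{thm:frequpperbd} with center $\mathbf{p}(w)$: since $\Theta(T',w)\ge Q$ there is $\Lambda=\Lambda(T)$ with
\[
\min\left\{\Ibf_{\mathbf{p}(w),N}(\sigma),\ \frac{\sigma^2\int_{\Bcal_\sigma(\mathbf{p}(w))}|DN|^2}{\int_{\Bcal_\sigma(\mathbf{p}(w))}|N|^2}\right\}\le\Lambda
\]
for every admissible scale $\sigma$ up to $R$, with the stronger conclusion $\Ibf_{\mathbf{p}(w),N}(\sigma)\le\Lambda$ for $\sigma\le\delta_{k,j}$. If the crude ratio (the second entry of the minimum) is the bounded one at some $\sigma\in\big(\tfrac\rho2,\rho\big]$, then choosing $\bar s=\sigma$ already yields $\int_{\Bcal_{\bar s}}|DN|^2\le\Lambda\sigma^{-2}\int_{\Bcal_{\bar s}}|N|^2\le Cr^{-2}\int_{\Bcal_{\bar s}}|N|^2$ and we are done; so I may assume $\Ibf_{\mathbf{p}(w),N}(\sigma)\le\Lambda$ for all $\sigma$ in a dyadic window around $\rho$ lying in the admissible range. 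Next I would unwind Definition~\ref{def:freq}: choosing the Lipschitz cutoff $\phi$ so that $\phi\equiv1$ on $[0,\tfrac12]$, $\phi\equiv0$ on $[1,\infty)$, and $\phi$ affine in $t\mapsto1-t^2$ on $[\tfrac12,1]$ (so that $-\phi'(\sigma/\rho)\,\tfrac\rho\sigma$ is constant for $\sigma\in[\tfrac\rho2,\rho]$), and using that $\Mcal$ is $\Crm^{3,\kappa}$-close to an $m$-plane with small norm by Proposition~\ref{prop:centermfld}, one gets for every admissible $\sigma$ the comparisons
\[
\Dbf_{\mathbf{p}(w),N}(\sigma)\ge\int_{\Bcal_{\sigma/2}(\mathbf{p}(w))}|DN|^2,\qquad \Hbf_{\mathbf{p}(w),N}(\sigma)\le\frac C\sigma\int_{\Bcal_\sigma(\mathbf{p}(w))}|N|^2 .
\]
Combining these with $\sigma\,\Dbf/\Hbf=\Ibf_{\mathbf{p}(w),N}(\sigma)\le\Lambda$ produces
\[
\int_{\Bcal_{\sigma/2}(\mathbf{p}(w))}|DN|^2\le\frac{C\Lambda}{\sigma^2}\int_{\Bcal_\sigma(\mathbf{p}(w))}|N|^2\qquad\text{for all }\sigma\text{ near }\rho .
\]

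The remaining issue — and the one I expect to be the main obstacle — is that this last display compares the Dirichlet energy at scale $\sigma/2$ with the $\Lrm^2$-height at scale $\sigma$, whereas the claimed estimate lives at a single radius $\bar s$. To fix this I would exploit that the bound $\Ibf_{\mathbf{p}(w),N}\le\Lambda$ additionally forces polynomial, hence doubling, growth of $\sigma\mapsto\int_{\Bcal_\sigma(\mathbf{p}(w))}|N|^2$ on the relevant window — this comes from the logarithmic-derivative identity $\tfrac{d}{d\sigma}\log\Hbf_{\mathbf{p}(w),N}(\sigma)=\sigma^{-1}\big(m-1+2\Ibf_{\mathbf{p}(w),N}(\sigma)\big)+o(1)$ underlying the almost monotonicity in Theorem~\ref{thm:monfreq} — together with an elementary mean value argument over $\sigma\in[\rho,2\rho]$ (so that $\sigma/2$ sweeps $[\tfrac\rho2,\rho]$) to select $\bar s\in\big(\tfrac{Rr}2,Rr\big]$ for which simultaneously $\int_{\Bcal_{\bar s}}|DN|^2\lesssim\int_{\Bcal_{\sigma/2}}|DN|^2$ and $\int_{\Bcal_\sigma}|N|^2\lesssim\int_{\Bcal_{\bar s}}|N|^2$. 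Feeding this back into the displayed inequality and absorbing the factors $R$ and $\Lambda$ into a constant $C=C(T)$ gives $\int_{\Bcal_{\bar s}(\mathbf{p}(w))}|DN|^2\le C r^{-2}\int_{\Bcal_{\bar s}(\mathbf{p}(w))}|N|^2$, as asserted. Throughout this last step one must take care that only the \emph{upper} bound of Theorem~\ref{thm:frequpperbd} is used (the lower bound of Theorem~\ref{thm:freqlowerbd} is not yet available at this stage) and that the doubling/mean value argument stays inside the dyadic window where that theorem is in force, distinguishing the regime $\sigma\le\delta_{k,j}$, where $\Ibf$ itself is controlled, from the larger regime where only the minimum with the crude ratio is.
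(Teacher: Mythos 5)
Your strategy is the right one and is in the same spirit as what the paper intends: the paper in fact omits the proof, observing that it is \emph{exactly} the argument of~\cite[Corollary~5.3]{DLS16blowup} (a short manipulation of the regularized quantities of Definition~\ref{def:freq} using the upper bound of Theorem~\ref{thm:frequpperbd}, the coarea formula and elementary calculus), and that nothing in that argument depends on the center. Your reduction via the min in~\eqref{eq:largefrequb} (if the crude ratio is the bounded quantity at some scale in the target window, the conclusion is immediate), and the comparisons $\Dbf_{\mathbf{p}(w),N}(\sigma)\ge\int_{\Bcal_{\sigma/2}}|DN|^2$, $\Hbf_{\mathbf{p}(w),N}(\sigma)\le C\sigma^{-1}\int_{\Bcal_\sigma}|N|^2$, leading to $\int_{\Bcal_{\sigma/2}}|DN|^2\le C\Lambda\,\sigma^{-2}\int_{\Bcal_\sigma}|N|^2$, are all correct; just carry them out with the cutoff $\phi$ fixed in Definition~\ref{def:freq}, since Theorem~\ref{thm:frequpperbd} is proved for that $\phi$ (a different cutoff would need a separate comparability remark). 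Also, the error in your logarithmic-derivative identity is $O(1)$, not $o(1)$ (see~\eqref{eq:firstvar1}--\eqref{eq:firstvar4}), which is still sufficient on a window of unit length, and the hypothesis $\Ibf\ge c_0$ in Proposition~\ref{prop:firstvarest} is harmless because at any scale where $\Ibf<c_0$ the desired inequality follows at once from the same unwinding.

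The genuine soft spot is the step you flag but do not close: to land $\bar s$ in $\big(\tfrac R2 r,Rr\big]$ you let the auxiliary scale $\sigma$ range over $[\rho,2\rho]$ with $\rho=Rr$. For $r$ of order one (and $r$ can be as large as $1$ in the applications of Sections~\ref{sct:lb} and~\ref{sct:Qpts}) these scales leave the range on which Theorem~\ref{thm:frequpperbd} is stated (it stops at $R_{k,j}\le 3$), and the balls $\Bcal_\sigma(\mathbf{p}(w))$ with $\sigma$ up to $2R\le 6$ need not even fit on the center manifold, which is a graph over $(-4,4)^m$; so as written the doubling/mean-value step is not justified in the regime that matters. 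The repair is to run the doubling \emph{downward} instead of upward: from~\eqref{eq:firstvar1},~\eqref{eq:firstvar2} and~\eqref{eq:firstvar4} one has $\tfrac{d}{dt}\log\Hbf_{\mathbf{p}(w),N}(t)\le\tfrac{m-1+2\Ibf(t)}{t}+C$, and integrating over $[\rho/2,\rho]$ (admissible scales) with $\Ibf\le\Lambda$ gives $\Hbf(\rho)\le C(\Lambda)\Hbf(\rho/2)$; combining with $\Hbf(t)\ge\tfrac{c}{t}\int_{\Bcal_t\setminus\Bcal_{t/2}}|N|^2$ and $\Hbf(t)\le\tfrac{C}{t}\int_{\Bcal_t}|N|^2$ yields the height doubling $\int_{\Bcal_\rho}|N|^2\le C\int_{\Bcal_{\rho/2}}|N|^2$ using only scales $\le\rho$, whence $\int_{\Bcal_{\rho/2}}|DN|^2\le\tfrac{C}{r^2}\int_{\Bcal_{\rho/2}}|N|^2$. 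This places $\bar s$ at (or, repeating with base scales $\tau\le\rho$, just below) $\tfrac R2 r$ rather than strictly inside $\big(\tfrac R2 r,Rr\big]$; that discrepancy is purely a normalization of the window, since the later arguments only use $\bar s\simeq r$ with a fixed ratio (the definition $\bar r_k=\tfrac{2\bar s_k}{Rt_k}$ is adjusted accordingly), but you should say this explicitly rather than leave the scale bookkeeping as a caveat, because matching the two sides at a single radius within the allowed range is the entire content of the corollary.
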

In the case where $w=x_k$ for the center manifold $\Mcal_{k,j}$, the proof is in fact \emph{exactly} the same as that for a fixed center in~\cite[Corollary~5.3]{DLS16blowup}. This is because we recenter our planes $\pi_{k,j}$ in such a way that $\mathbf{p}_{k,j}(x_k) = 0$. For non-centered intervals of flattening, the argument is still analogous, since we are once again making use of the upper frequency bound of Theorem~\ref{thm:frequpperbd}, which is independent of the center. We omit the proof, since it is merely a simple application of the coarea formula, Fubini's Theorem and some basic calculus.

We also have the following uniform lower frequency bound for all scales sufficiently close to where we stop the center manifolds. This is to be expected, since at the stopping scales, we inherit the behaviour of $T$ around the $Q$-point. Namely, there is a height bound on $N$ coming from concentration of mass of $T$ around the $Q$-point, which should be inconsistent with the splitting that arises if we did not have a uniform lower frequency bound.
\begin{theorem}\label{thm:freqlowerbd}
    It is possible to choose $c_0$ appropriately to ensure that the following holds.
    
    One has the following lower bound on the frequency for any center manifold $\Mcal$ with corresponding $\Mcal$-normal approximation $N$, rescaled current $T'$, and any point $w \in \spt T'$ with $\Theta(T', w) \geq Q$ that is the center of the interval of flattening $(s,t]$:
    \[
        \boldsymbol{\Omega}_{\mathbf{p}(w),N}\Big(\frac{r}{t}\Big) > \log (c_0),
    \]
    for every $\frac{r}{t} > 0$ sufficiently small in $\left(\frac{s}{t},R\right]$.
\end{theorem}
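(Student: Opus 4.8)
The plan is to argue by contradiction, following the structure of the lower frequency bound in~\cite{DLS16blowup} but taking care that all estimates are uniform in the choice of center manifold and of the point $w$ (with $\Theta(T',w)\geq Q$). Suppose the statement fails. Then, after choosing $c_0$, there is a sequence of center manifolds $\Mcal_k$ (with rescaled currents $T_k$, normal approximations $N^{(k)}$, intervals of flattening $(s_k,t_k]$ around points $w_k$ with $\Theta(T_k,w_k)\geq Q$) and scales $\rho_k\in\big(\frac{s_k}{t_k},R_k\big]$ with $\rho_k\to 0$ such that $\boldsymbol{\Omega}_{\mathbf{p}_k(w_k),N^{(k)}}(\rho_k)\leq\log c_0$, i.e.\ $\Ibf_k(\rho_k)\leq c_0$. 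By the almost-monotonicity of Theorem~\ref{thm:monfreq}, this forces $\Ibf_k(r)\leq e^{f(\rho_k)}c_0$ for all $r\leq\rho_k$ (more precisely for $r$ in the appropriate sub-interval), so the frequency stays \emph{small} on a whole range of scales shrinking to zero. The strategy is to show that such a small frequency is incompatible with the concentration-of-mass / height information that $T'$ carries around a point of density $\geq Q$ — which is exactly where Lemma~\ref{lem:Lucaheightbd} enters.

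The key steps, in order: \textbf{(1)} Translate the smallness of $\Ibf_k$ into an \emph{upper} bound on the growth of $\Hbf_k$: differentiating $\Hbf$ and using $\Ibf_k\leq C$ gives $\Hbf_k(r)\leq C r^{m-1+2\Ibf_k}\Hbf_k(\tfrac12)\lesssim C r^{m-1+2c_0}$ on the relevant range, hence a bound of the form $\int_{\Bcal_r}|N^{(k)}|^2\lesssim r^{m+2c_0}\int_{\Bcal_{1}}|N^{(k)}|^2$ after integrating in $r$ (this is the analogue, at the level of $N$, of the converse direction~\eqref{eq:massdecay}$\Rightarrow$\eqref{eq:freqlb} in Proposition~\ref{lem:freqdecay}, transplanted from Dir-minimizers to the almost-Dir-minimizing $\Mcal$-normal approximation via the variational error estimates). \textbf{(2)} Using the reverse Sobolev inequality (Corollary~\ref{cor:reverseSob}) together with a stopping scale near $s_k/t_k$, convert the Dirichlet energy at the stopping scale into an $\Lrm^2$-norm of $N^{(k)}$ at a comparable scale. \textbf{(3)} On the other hand, at the stopping scale there is, by construction of the intervals of flattening, a bad cube $L\in\Wscr^{(k)}_e$ with $\ell(L)=c_s\bar r_k$ satisfying the conditions of Proposition~\ref{prop:excesssplitting} (splitting before tilting), which forces $\int_{\Lcal}|DN^{(k)}|^2\gtrsim C_e\boldsymbol{m}_0^{(k)}\ell(L)^{m+2-2\delta_2}$ — a genuine lower bound on the energy near the stopping scale (the "splitting"). \textbf{(4)} Simultaneously, since $\Theta(T_k,w_k)\geq Q$, Lemma~\ref{lem:Lucaheightbd} gives the improved height bound $\int_{\Lcal}|N^{(k)}|^2\lesssim \ell(L)^{m+4-2\delta_2}\boldsymbol{m}_0^{(k)}$, i.e.\ $|N^{(k)}|^2$ is \emph{smaller} than $|DN^{(k)}|^2$ by an extra factor $\ell(L)^2$ near the stopping scale. \textbf{(5)} Feed (3) and (4) against the smallness of the frequency: the small-frequency decay estimate from (1)–(2) says $\Hbf$ (and hence $\int|N^{(k)}|^2$) cannot decay faster than rate $m+2c_0$, while the ratio of energy to height forced by (3)+(4) is $\gtrsim \ell(L)^{-2}$, i.e.\ $\Ibf_k$ at the stopping scale is $\gtrsim$ a constant that can be made $>c_0$ by choosing $c_0$ small. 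More carefully, comparing $\Ibf_k(\bar s)=\frac{\bar s\Dbf_k(\bar s)}{\Hbf_k(\bar s)}$ at the reverse-Sobolev scale $\bar s\simeq s_k/t_k$ with the estimates above yields $\Ibf_k(\bar s)\geq c>0$ for a \emph{fixed} geometric $c$, contradicting $\Ibf_k(\rho_k)\leq c_0$ once $c_0<c$ (via almost-monotonicity, the frequency at the smaller scale $\bar s$ is at most $e^{f}c_0$). This pins down the admissible choice of $c_0$.

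The main obstacle I anticipate is \textbf{uniformity in the center $w$ and in the center manifold}: all the quoted tools (splitting before tilting, the reverse Sobolev inequality, Lemma~\ref{lem:Lucaheightbd}, almost-monotonicity) have been stated in the excerpt in a form that is uniform in the choice of $\Mcal$ and of the point $w$ with $\Theta\geq Q$, precisely so that this argument goes through — but one must check that the non-centered intervals of flattening of Definition~\ref{def:nonctrrefine} satisfy the stopping/going dichotomy~\eqref{eq:stop}–\eqref{eq:go} with balls centered at $\mathbf{p}_{k,j}(w)$, so that a stopping cube $L\in\Wscr^{(k)}_e$ with the geometry needed for Proposition~\ref{prop:excesssplitting} genuinely exists at the stopping scale relative to $w$. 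The second delicate point is the transfer in step~(1) of the "small frequency $\Rightarrow$ controlled height decay" implication from the genuinely Dir-minimizing setting to the $\Mcal$-normal approximation: here one needs the error terms in the first variation identities for $N^{(k)}$ (as in~\cite{DLS16blowup}) to be subordinate to the Dirichlet energy along the interval of flattening, which is where property~(iv) of Proposition~\ref{prop:flattening} and the estimates~\eqref{eq:normalLip}–\eqref{eq:normalDir} of Theorem~\ref{thm:Nlocalest} are used. Once these two uniformity issues are handled, the contradiction in step~(5) is a direct computation, and the value of $c_0$ is fixed as any constant strictly below the geometric constant $c$ produced there.
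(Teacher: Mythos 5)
Your plan is anchored entirely on the existence of a stopping cube $L\in\Wscr_e$ at scale $\simeq s/t$, and this is where it breaks: the theorem (and its use in Section~\ref{sct:Qpts}) must cover the case $s=0$, i.e.\ when the refinement continues indefinitely around $\mathbf{p}_{\pi}(w)$ and $\mathbf{p}(w)$ sits over the contact set. In that case the interval $(\tfrac{s}{t},R]=(0,R]$ contains arbitrarily small scales, there is no cube on which Proposition~\ref{prop:excesssplitting} produces the lower bound $\int_\Omega|DN|^2\gtrsim C_e\boldsymbol{m}_0\ell(L)^{m+2-2\delta_2}$, and your steps (3)--(5) have nothing to anchor to: near the contact set both $\Dbf$ and $\Hbf$ can be arbitrarily small and no direct quantitative ratio bound is available. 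This is exactly the situation the paper's proof treats by a compactness argument: assuming $\Ibf_k(\rho_k/t_k)\to 0$, Lemma~\ref{lem:freqexcessdecay} forces excess decay, the reverse Sobolev inequality (Corollary~\ref{cor:reverseSob}) gives uniform $\Wrm^{1,2}$ bounds for the normalized maps $v_k$, which converge strongly to a \emph{nontrivial} Dir-minimizer $v$ with $\boldsymbol{\eta}\circ v\equiv 0$; persistence of the $Q$-point ($v(0)=Q\llbracket 0\rrbracket$, obtained by summing the improved height bound of Lemma~\ref{lem:Lucaheightbd} over the Whitney cubes covering all small balls, or by Proposition~\ref{prop:persistence} when $s_k\simeq\rho_k$) then gives $I_v(0)>0$ by Proposition~\ref{lem:freqdecay}, contradicting the assumed frequency decay. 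Without this (or an equivalent mechanism) your proposal does not prove the stated theorem, since the statement is needed precisely at contact and quasi-contact points.

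For the cases $s_k>0$ your direct comparison is plausible and is close in spirit to how the paper disposes of its case of comparable scales via Proposition~\ref{prop:persistence}: the splitting-before-tilting lower bound on $\Dbf$ at a scale comparable to the stopping scale, together with $\Hbf(r)\lesssim\boldsymbol{m}_0 r^{m+3-2\delta_2}$ from Lemma~\ref{lem:Lucaheightbd} summed over the Whitney cubes, does give $\Ibf\geq c$ there with $c=c(\beta_2,\delta_2,M_0,N_0,C_e,C_h)$, and almost-monotonicity (Theorem~\ref{thm:monfreq}) propagates the contradiction once $c_0<ce^{-\sup f}$; you correctly flag the need to ensure the stopping cube lies in $\Wscr_e$ (which uses $\Theta(T',w)\geq Q$ and the choice of $C_e,C_h$, as in the proof of Lemma~\ref{lem:freqexcessdecay}). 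Two smaller remarks: your step~(1) is stated with the inequality in the wrong direction (small frequency bounds the decay of $\Hbf$ from \emph{below}, not above), and in any event that step plays no role in the contradiction you actually run, so it should be dropped rather than repaired; and note that the paper's route, unlike yours, must also establish nontriviality of the blow-up limit, which is exactly what the reverse Sobolev normalization is for.
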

\begin{remark}
    Note that $w$ needn't be the center of the center manifold construction. This will be important, since we will later choose $w$ to be $\mathbf{p}_{k,j(k)}(y_k)$ in order to show that the $Q$-points persist in the blow-up limit.
\end{remark}


\begin{remark}
        Observe that the claim of Theorem~\ref{thm:Minkowskibd} is stronger than the blow-up argument used in~\cite{DLS16blowup} and~\cite{Almgren_regularity} to show the inheritence of singularities of the limiting map. There, for a fixed blow-up center $x$, the contradiction relies on a delicate capacitary argument. More precisely, after obtaining a non-trivial limiting Dir-minimizer $u$ as above, the authors there conclude that Hausdorff limit $K^\infty$ of the sets $\Sing_Q(T_k)$ for $T_k \coloneqq T_{x,r_k}\mres \Bbf_{6\sqrt{m}}$ satisfies
        \[
        \Hcal^{m-2+\alpha}\left(K^\infty\setminus\{\text{points sufficiently close to $\Sing_Q u$}\}\right) \geq \eta.
        \]
        This may then be passed through to $T_k$ via upper semi-continuity:
        \begin{equation}\label{eq:usc}
        \Hcal_\infty^{m-2+\alpha}\left(\Sing_Q(T_k)\setminus\{\text{points close to $\Sing_Q u$}\}\right) \geq \eta.
        \end{equation}
        However, for any point $p \in \Sing_Q(T_k)$, there exists $\rho = \rho(p)$ arbitrarily small and $x_p \in \Mcal_k$ near $p$ such that:
        \[
        c(p,\alpha)\|N^{(k)}\|_{\Lrm^2(\Bcal_1(x_p))}\rho^{m-2+\alpha} \leq \int_{\Bcal_{\rho}(x_p)}|DN^{(k)}|^2.
        \]
        In other words, there is a quantitative control on the frequency from below, coming from the persistence of singularities from $T_k$ to $N^{(k)}$. One can now use this estimate to apply a covering argument to $\Sing_Q(T_k)$, contradicting~\eqref{eq:usc}.

        Unfortunately, such an argument is insufficient in our case, since we require a dimension estimate for all of $K^\infty$, rather than just those points that are far from $\Sing_Q u$. The easiest way to demonstrate the desired dimension estimate for $K^\infty$ is hence by showing the set containment claimed in Theorem~\ref{thm:Minkowskibd}, for which the lower frequency bound of Theorem~\ref{thm:freqlowerbd} is crucial. It will allow us to get an improvement on previous persistence of singularities arguments.
\end{remark}

Before we proceed to prove the above results, let us show the following local estimate for the Dirichlet energy of the $\Mcal$-normal approxiamtions at scales within the flattening intervals, which will come in useful later.
\begin{proposition}\label{prop:Dirbd}
    Suppose that $\eps_3$ is sufficiently small. Then for any choice of center manifold $\Mcal$ with corresponding rescaled current $T'$, $\Mcal$-normal approximation $N$ and any interval of flattening $(s,t]$ around $w \in \spt T'$, the following estimate holds for every $r \in \Big(\frac{s}{t}, R\Big]$:
    \[
        \Dbf_{\mathbf{p}(w)}(r) \leq \int_{\Bcal_r(\mathbf{p}(w))} |DN|^2 \leq C\boldsymbol{m}_0 r^{m+2-2\delta_2},
    \]
    where $C=C(\beta_2,\delta_2,M_0,N_0,C_e,C_h)$ is independent of the choice of center manifold, flattening interval, and center $\mathbf{p}(w)$.
\end{proposition}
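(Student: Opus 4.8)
The first inequality is immediate from the definition of $\Dbf_w$: the cutoff $\phi$ takes values in $[0,1]$ and $\phi\big(d(\,\cdot\,,\mathbf{p}(w))/r\big)$ vanishes outside $\Bcal_r(\mathbf{p}(w))$, while $|DN|^2\ge 0$. So the content is the second inequality, which I would obtain by summing the single Whitney-region Dirichlet bound~\eqref{eq:normalDir} of Theorem~\ref{thm:Nlocalest} over the cubes whose Whitney region meets $\Bcal_r(\mathbf{p}(w))$. First I would record that, up to an $\Hcal^m$-null set, the part of $\Mcal$ lying over $[-7/2,7/2]^m$ is the union of the Whitney regions $\Lcal$, $L\in\Wscr$, together with the contact set $\boldsymbol{\Phi}(\boldsymbol{\Gamma}\cap[-7/2,7/2]^m)$; these Whitney regions have bounded overlap by~\eqref{eq:w3} and the bounded distortion of $\boldsymbol{\Phi}$ (Proposition~\ref{prop:centermfld}), and $N\equiv Q\llbracket 0\rrbracket$ on the contact set (there $T'\mres\boldsymbol{\Phi}(\boldsymbol{\Gamma})=Q\llbracket\Mcal\rrbracket\mres\boldsymbol{\Phi}(\boldsymbol{\Gamma})$ forces $F=Q\llbracket\mathrm{id}\rrbracket$), so $DN=0$ $\Hcal^m$-a.e. there. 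Since $r\le R\le 3$ forces $\Bcal_r(\mathbf{p}(w))\subset\boldsymbol{\Phi}([-7/2,7/2]^m)$, I obtain
\[
  \int_{\Bcal_r(\mathbf{p}(w))}|DN|^2\ \le\ C\!\!\sum_{\substack{L\in\Wscr\\ \Lcal\cap\Bcal_r(\mathbf{p}(w))\neq\emptyset}}\!\!\int_{\Lcal}|DN|^2\ \le\ C\boldsymbol{m}_0\!\!\sum_{\substack{L\in\Wscr\\ \Lcal\cap\Bcal_r(\mathbf{p}(w))\neq\emptyset}}\!\!\ell(L)^{m+2-2\delta_2}.
\]

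Next I would estimate the combinatorial sum. The key point is that every $L\in\Wscr$ whose Whitney region meets $\Bcal_r(\mathbf{p}(w))$ satisfies $\ell(L)\le \bar C r$ and is contained in a Euclidean ball of radius $\bar C r$ in $\pi_0$ around the $\pi_0$-projection of $\mathbf{p}(w)$, for a geometric constant $\bar C$; this follows from the stopping condition~\eqref{eq:go} (for centered intervals, and its analogue for balls centered at $\mathbf{p}_{\pi_0}(w)$ for non-centered intervals, as noted after Definition~\ref{def:nonctrrefine}), using that $r>s/t$ so that $\bar C r$ is an admissible radius there. Granting this, at level $j$ (side $2^{1-j}$, which by the above and~\eqref{eq:smallbadcubes} occurs only for $j$ with $2^{1-j}\le\bar C r$) there are at most $C(\bar C r\,2^{j})^m$ such cubes, whence
\[
  \sum_{\substack{L\in\Wscr\\ \Lcal\cap\Bcal_r(\mathbf{p}(w))\neq\emptyset}}\!\!\ell(L)^{m+2-2\delta_2}\ \le\ C\,r^{m}\!\!\sum_{j\,:\,2^{1-j}\le\bar Cr}\!\!2^{-j(2-2\delta_2)}\ \le\ C\,r^{m+2-2\delta_2},
\]
since $2-2\delta_2>0$ makes the geometric series comparable to its largest term $\simeq r^{2-2\delta_2}$ (and when $r\simeq 1$, hence bounded below in terms of $N_0$ via~\eqref{eq:smallbadcubes}, one uses instead $r^m\le C(N_0)\,r^{m+2-2\delta_2}$). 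Combining the two displays proves the estimate, with $C$ depending only on $\beta_2,\delta_2,M_0,N_0,C_e,C_h$ (and $m,n,\bar n,Q$), and in particular uniform over the center manifold, the flattening interval, and the center $\mathbf{p}(w)$.

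I expect the main obstacle to be the geometric claim that ``$\Lcal$ meets the geodesic ball $\Bcal_r(\mathbf{p}(w))$ on $\Mcal$'' implies ``$L$ lies in a Euclidean ball of radius $\simeq r$ in $\pi_0$'', which is precisely what makes~\eqref{eq:go} usable; the remainder is bookkeeping with the Whitney structure and the smallness of the height of $\Mcal$. I would settle it with a short bootstrap: if $\ell(L)\ge 32\,r$, then since $\Lcal=\boldsymbol{\Phi}\big(H\cap[-7/2,7/2]^m\big)$ with $\ell(H)=\tfrac{17}{16}\ell(L)$ and $\boldsymbol{\Phi}$ has distortion close to the identity, the $\pi_0$-projection of $\mathbf{p}(w)$ lies within $\ell_\infty$-distance $\tfrac{19}{32}\ell(L)$ of the center of $L$, so $\dist\big(\mathbf{p}_{\pi_0}(w),L\big)\le\tfrac{\sqrt m}{8}\ell(L)$; applying~\eqref{eq:go} with $\rho=\tfrac{\sqrt m}{8}\ell(L)+r>s/t$ and using $c_s=\tfrac1{64\sqrt m}$ gives $\ell(L)<2c_s r<r$, a contradiction. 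Hence $\ell(L)<32\,r$, and then $L$ is contained in a ball of radius $\bar C r$ about $\mathbf{p}_{\pi_0}(w)$ for a geometric $\bar C$, as required.
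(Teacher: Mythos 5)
Your proof is correct and follows essentially the same route as the paper: trivial first inequality from the cutoff, covering by the contact set (where $N$ vanishes) and Whitney regions, the stopping condition~\eqref{eq:go} to bound $\ell(L)\lesssim r$ for the relevant cubes, and summation of the local estimate~\eqref{eq:normalDir}. The only differences are expository — the paper contains $\Bcal_r(\mathbf{p}(w))$ in $\boldsymbol{\Phi}$ of a Euclidean ball by taking $\eps_3$ small and leaves the packing sum terse, whereas you handle the region-to-cube bookkeeping with a short bootstrap and spell out the dyadic counting.
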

\begin{proof}
    We use the same notation as in Section~\ref{sct:Almgrencm}. Choose $\eps_3$ sufficiently small such that for each $r \in \big(\frac{s}{t}, R\big]$, the geodesic ball $\Bcal_r(\mathbf{p}(w)))$ is contained in $\boldsymbol{\Phi}(\cl{B}_{\frac{R}{2}r}(\mathbf{p}_{\pi_0}(w)))$. Fix a radius $r$ in this range. The first inequality is trivial due to the nature of the map $\phi$. We can cover $\cl{B}_{\frac{R}{2}r}(\mathbf{p}_{\pi_0}(w))$ by $\boldsymbol{\Gamma}$ and the concentric cubes $\frac{17}{16}L$ of some subcollection $\Fscr$ of cubes $L \in \Wscr$. Note that since $\cl{B}_{\frac{R}{2}r}(\mathbf{p}_{\pi_0}(w))\setminus \boldsymbol{\Gamma}$ is relatively compact in $[-4,4]^m$, this subcollection must be finite (and its cardinality can be controlled independently of $r$).

    Now by construction, $N \equiv 0$ on $\boldsymbol{\Gamma}\cap\cl{B}_{\frac{R}{2}r}(\mathbf{p}_{\pi_0}(w))$. Moreover, for each $L \in \Wscr$ with $L \cap \cl{B}_{\frac{R}{2}r}(\mathbf{p}_{\pi_0}(w)) \neq \emptyset$, the condition~\eqref{eq:go} tells us that $\ell(L) < \frac{R}{2}c_s r$. Hence, by the estimates on the $\Mcal$-normal approximation established in Theorem~\ref{thm:Nlocalest}, we deduce that
    \[
        \int_{\boldsymbol{\Phi}(\cl{B}_{Rr/2}(\mathbf{p}_{\pi_0}(w)))} |DN|^2 \leq \sum_{L \in \Fscr} \int_{\Lcal} |DN|^2 \leq c\sum_{L \in \Fscr}\boldsymbol{m}_0\ell(L)^{m+2-2\delta_2} \leq c\boldsymbol{m}_0 r^{m+2-2\delta_2}.
    \]
    The result follows.
\end{proof}
We now prove the characterization of $Q$-points in Proposition~\ref{lem:freqdecay}.

\begin{proof}[Proof of Proposition~\ref{lem:freqdecay}]
    We will henceforth omit the dependency of the quantities in~\eqref{eq:Dirminfreq} on $u$ and $y$, for simplicity. Moreover, we will assume without loss of generality that $y = 0$. We begin by showing that as long as $H(r) > 0$,
    \begin{equation}\label{eq:massder}
    \left.\frac{\dd}{\dd \tau} \right|_{\tau = r} \log\left(\frac{H(\tau)}{\tau^{m-1}}\right) = \frac{2I(r)}{r}.
    \end{equation}
    Firstly, note that $H \in \Wrm^{1,1}$ since $u \in \Wrm^{1,2}$. Moreover, the distributional derivative of $|u|^2$ coincides with its approximate differential, so the chain rule~\cite[Proposition~2.8]{DLS_MAMS} applies and thus one can compute $H'$. These facts justify the elementary computation (at almost-every $\tau > 0$) that
    \[
    \frac{\dd}{\dd \tau} \log\left(\frac{H(\tau)}{\tau^{m-1}}\right) =  \frac{H'(\tau)}{H(\tau)} - \frac{m-1}{\tau},
    \]
    with
    \begin{align*}
    H'(\tau) &= \frac{\dd}{\dd \tau} \int_{B_\tau\setminus B_{\tau/2}} \frac{|u(z)|^2}{|z|}\dd z \\
    &= \frac{\dd}{\dd \tau} \int_{B_1 \setminus B_{1/2}} \tau^{m-1}\frac{|u(\tau z)|^2}{|z|} \dd z \\
    &= 2\tau^{m-1}\int_{\frac{1}{2}}^1 s^{-1}\int_{\partial B_s} \sum_{i=1}^Q \dpr{u_i(\tau z), \nabla_z u_i(\tau z)} \dd\Hcal^{m-1}(z) \dd s +\frac{(m-1)}{\tau} H(\tau) \\
    &= 2\tau^{m} \int_{\frac{1}{2}}^1 \int_{\partial B_s} \sum_{i=1}^Q \dpr{u_i(\tau z), \partial_\nu u_i(\tau z)} \dd\Hcal^{m-1}(z) \dd s +\frac{(m-1)}{\tau} H(\tau) \\
    &= 2\int_{B_\tau \setminus B_{\tau/2}} \sum_{i=1}^Q \dpr{u_i, \partial_\nu u_i} +\frac{(m-1)}{\tau} H(\tau) \\
    &= 2D(\tau) +\frac{m-1}{\tau} H(\tau).
    \end{align*}
    From this differential identity, we infer that
    \[
    \frac{\dd}{\dd \tau} \log\left(\frac{H(\tau)}{\tau^{m-1}}\right) = 2 \frac{I(\tau)}{\tau}.
    \]
    By continuity, \eqref{eq:massder} follows when we evaluate $\tau$ at some $r > 0$.
    
    Integrating \eqref{eq:massder} from $\rho$ to $r$ and using the fact that $I$ is monotone increasing, we have
    \begin{equation}\label{eq:freqode}
    \log \left(\frac{H(r)}{r^{m-1}}\right) - \log \left(\frac{H(\rho)}{\rho^{m-1}}\right) = 2\int_\rho^r \frac{I(\tau)}{\tau} \dd \tau \geq 2I_0\log\left(\frac{r}{\rho}\right),
    \end{equation}
    or equivalently,
    \begin{align*}
    \frac{1}{\rho^{m-1+2I_0}} \int_{B_\rho \setminus B_{\rho/2}} \frac{|u(z)|^2}{|z|} \dd z &= \frac{H(\rho)}{\rho^{m-1 + 2I_0}} \\
    &\leq \frac{H(r)}{r^{m-1 + 2I_0}} \\
    &= \frac{1}{r^{m-1+2I_0}} \int_{B_r \setminus B_{r/2}} \frac{|u(z)|^2}{|z|} \dd z.
    \end{align*}
    More generally, for any $k \in \Nbb$, this rescales as
    \[
    \frac{2^{k(m-1+2I_0)}}{\rho^{m-1+2I_0}} \int_{B_{\rho/2^k} \setminus B_{\rho/2^{k+1}}} \frac{|u(z)|^2}{|z|} \dd z \leq \frac{2^{k(m-1+2I_0)}}{r^{m-1+2I_0}} \int_{B_{r/2^k} \setminus B_{r/2^{k+1}}} \frac{|u(z)|^2}{|z|} \dd z,
    \]
    and thus
    \[
    \frac{1}{\rho^{m+2I_0}} \int_{B_{\rho/2^k} \setminus B_{\rho/2^{k+1}}} |u(z)|^2 \dd z \leq \frac{1}{r^{m+2I_0}} \int_{B_{r/2^k} \setminus B_{r/2^{k+1}}} |u(z)|^2 \dd z.
    \]
    Summing over $k$, this yields
    \[
    \frac{1}{\rho^{m+2I_0}} \int_{B_\rho} |u(z)|^2 \dd z \leq \frac{1}{r^{m+2I_0}} \int_{B_r} |u(z)|^2 \dd z,
    \]
    as required. It remains to show that $I_0 > 0$, but this follows easily from the inequality
    \[
    H(r) \leq Cr D(r) \qquad \text{for every $r \in \Big(0,\frac{1}{2}\dist(0,\partial\Omega)\Big)$},
    \]
    which comes from the H\"{o}lder-regularity~\cite[Theorem~3.9]{DLS_MAMS} of $Q$-valued $\Wrm^{1,2}$-maps and the fact that $u(0) = Q\llbracket 0 \rrbracket$.
    
    Now we show the converse. Suppose that $I_0 \geq c$ for some $c > 0$, but that $u(0) \neq Q\llbracket 0 \rrbracket$. Then, $u(0) = T \in \Acal_Q$ for some $T$ with $d(T) > 0$, where $d(T)$ is defined as in~\cite[Definition~3.4]{DLS_MAMS}. However, the lower frequency bound allows us to conclude that the assumptions of~\cite[Proposition~3.6]{DLS_MAMS} hold for $\Omega = B_r$, for $r > 0$ sufficiently small. This would enable us to decompose $u$ locally around $0$ into two simpler functions. One of these must necessarily take a value other than zero at the origin, thus contradicting the height decay which comes as a consequence of the lower frequency bound. Indeed,~\eqref{eq:freqode} and the succeeding computations tell us that we must have
    \[
        \frac{1}{\rho^{m+2c}}\int_{B_\rho} |u|^2 \leq \frac{1}{r^{m+2c}}\int_{B_r} |u|^2 \qquad \text{for any $\rho < r < \frac{1}{2}\dist(0,\partial\Omega)$.}
    \]
    
    The argument under assumption~\eqref{eq:massdecay} follows analogously.
\end{proof}

We shall now prove the almost-monotonicity in Theorem~\ref{thm:monfreq}. We must first introduce following quantities, centered at any point $w \in \spt T'$, with corresponding $\Mcal$-normal approximation $N$:
\begin{align*}
\Ebf_{\mathbf{p}(w), N}(r) &\coloneqq -\int_{\Mcal} \phi ' \left(\frac{d(p-\mathbf{p}(w))}{r}\right) \sum_{i=1}^Q \dpr{N_i(p),\partial_r N_i(p)} \dd p, \\
\Gbf_{\mathbf{p}(w), N}(r) &\coloneqq -\int_{\Mcal} \phi ' \left(\frac{d(p-\mathbf{p}(w))}{r}\right) |\partial_r N|^2(p) \ d(p-\mathbf{p}(w)) \dd p, \\
\boldsymbol{\Sigma}_{\mathbf{p}(w), N}(r) &\coloneqq \int_{\Mcal} \phi\left(\frac{d(p-\mathbf{p}(w))}{r}\right)|N|^2(p) \dd p,
\end{align*}
Note that for a given point $w$, the quantities $\Ebf_{\mathbf{p}(w), N}$, $\Gbf_{\mathbf{p}(w), N}$ and $\boldsymbol{\Sigma}_{\mathbf{p}(w), N}$ are all classically differentiable. By considering the inner and outer variations of the current $\Tbf_F$ along convenient choices of vector fields, the following estimates arise:
\begin{proposition}\label{prop:firstvarest}
    For every $\gamma_3$ sufficiently small, there exists $C = C(\gamma_3) > 0$ such that if $\eps_3 \in (0, \eps_2)$ sufficiently small, the following holds.
    
    Let $\Mcal$ be a center manifold with corresponding rescaled current $T'$, $\Mcal$-normal approximation $N$, interval of flattening $[s,t]$ centered at $x = \mathbf{p}(w)$ and any $[a,b] \subset \big[\frac{s}{t},R\big]$ with $\Ibf \geq c_0$ on $[a,b]$. Omitting the dependency on $N$ and $x$ for $\Dbf_{x,N}, \Hbf_{x,N}$ and all related quantities for simplicity, we have the following for any $r \in [a,b]$:
    \begin{align}
        &\Big|\Hbf'(r) - \frac{m-1}{r}\Hbf_{x}(r) -\frac{2}{r}\Ebf (r)\Big| \leq C\Hbf (r),\label{eq:firstvar1} \\
        &\Big|\Dbf (r) - \frac{1}{r}\Ebf (r)\Big| \leq C\Dbf (r)^{1+\gamma_3} + C\eps_3^2\boldsymbol{\Sigma} (r),\label{eq:firstvar2} \\
        &\Big|\Dbf' (r) - \frac{m-2}{r}\Dbf (r) - \frac{2}{r^2}\Gbf (r)\Big| \label{eq:firstvar3} \\
        &\qquad\leq C\big(\Dbf (r) +\Dbf (r)^{\gamma_3}\Dbf' (r) +\frac{1}{r}\Dbf (r)^{1+\gamma_3}\big), \notag\\
        &\boldsymbol{\Sigma} (r) + r\boldsymbol{\Sigma}' (r) \leq Cr^2\Dbf (r) \leq C r^{m+2}\eps_3^2.\label{eq:firstvar4}
    \end{align}
\end{proposition}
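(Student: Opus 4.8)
The plan is to follow the scheme of \cite[Section~3]{DLS16blowup}, deriving all four estimates from the first variation identities for the area-minimizing current $T'$ (which vanish exactly, since $T'$ minimizes area in $\Sigma$) and then transporting them to the normal approximation $N$ by means of the error bounds of Theorem~\ref{thm:Nlocalest}. The only genuinely new point relative to \cite{DLS16blowup} is to verify that every constant can be taken independent of the center $\mathbf{p}(w)$ of the interval of flattening; this holds because each error term below is controlled by one of three quantities that are uniform in $w$: the curvature of $\Mcal$ (bounded by $C\boldsymbol{m}_0^{1/2}$ via Proposition~\ref{prop:centermfld}\eqref{itm:cmcurv}), the curvature of $\Sigma$ (bounded by $\boldsymbol{c}(\Sigma) \leq \bar\eps$), or the per-Whitney-region mass error $\|\Tbf_F - T'\|(\mathbf{p}^{-1}(\Lcal))$ and Dirichlet energy, all of which are governed uniformly by $\ell(L)$ over $L \in \Wscr$. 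Note also that the hypothesis $\Ibf \geq c_0$ on $[a,b]$ guarantees $\Hbf_w(r) > 0$ there (no sheet collapsing at these scales), so all the quotients in the statement are well-defined.

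For \eqref{eq:firstvar1} one differentiates $\Hbf_w$ directly. Writing $d = d(p - \mathbf{p}(w))$ for the geodesic distance on $\Mcal$ and using the coarea formula together with the chain rule for $Q$-valued Sobolev maps \cite[Proposition~2.8]{DLS_MAMS} to compute $\frac{\di}{\di r}|N|^2$ under the integral, one identifies the leading part of $\Hbf_w'(r)$ as $\frac{m-1}{r}\Hbf_w(r) + \frac{2}{r}\Ebf_w(r)$. The discrepancy comes entirely from the deviation of the geodesic distance $d$ and of the Riemannian volume form of $\Mcal$ from their Euclidean counterparts, which is $O(\boldsymbol{m}_0^{1/2} d)$ by the $\Crm^{3,\kappa}$-estimates on $\Mcal$; after integrating against $\phi'(d/r)|N|^2/d$ this yields the error $C\Hbf_w(r)$.

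The estimates \eqref{eq:firstvar2} and \eqref{eq:firstvar3} come from the outer and inner variations of $T'$. For \eqref{eq:firstvar2}, one tests the outer-variation identity against a vector field that, to leading order, equals $\phi(d/r)\,N_i(p)$ at $p + N_i(p)$, corrected to be tangent to $\Sigma$; the principal term reproduces $\Dbf_w(r) - \frac{1}{r}\Ebf_w(r)$, while the contributions of $\|\Tbf_F - T'\|(\mathbf{p}^{-1}(\Lcal)) \leq C\boldsymbol{m}_0^{1+\gamma_2}\ell(L)^{m+2+\gamma_2}$, of $\Lip(N|_\Lcal) \leq C\boldsymbol{m}_0^{\gamma_2}\ell(L)^{\gamma_2}$, and of the $\Crm^0$-bound on $N$, summed over the finitely many Whitney regions meeting $\Bcal_r(\mathbf{p}(w))$ and interpolated between $\|DN\|_{\Lrm^2}$ and $\|N\|_{\Lrm^\infty}$, are absorbed into $C\Dbf_w(r)^{1+\gamma_3} + C\eps_3^2\boldsymbol{\Sigma}_w(r)$. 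For \eqref{eq:firstvar3} one tests the inner-variation identity against the radial field $X(p) = \phi(d/r)\, d\,\partial_r$ on $\Mcal$, suitably extended to the tubular neighbourhood $\Ubf$; the leading part is $\Dbf_w'(r) - \frac{m-2}{r}\Dbf_w(r) - \frac{2}{r^2}\Gbf_w(r)$, and the errors of the stated form arise from the same Whitney-region bounds together with the second fundamental form of $\Mcal$ entering upon differentiating the volume form and the projection $\mathbf{p}$.

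Finally, \eqref{eq:firstvar4} is elementary: $\boldsymbol{\Sigma}_w(r) + r\boldsymbol{\Sigma}_w'(r) \leq Cr^2\Dbf_w(r)$ is a Poincaré-type inequality on $\Mcal$, exploiting that $N \equiv 0$ on the contact set $\boldsymbol{\Phi}(\boldsymbol{\Gamma})$ and hence on a definite portion of each $\Bcal_r(\mathbf{p}(w))$, while $r^2\Dbf_w(r) \leq Cr^{m+2}\eps_3^2$ is exactly Proposition~\ref{prop:Dirbd} combined with $\boldsymbol{m}_0 \leq \eps_3^2$. The main obstacle in all of this is bookkeeping rather than conceptual: one must produce the \emph{exponents} $1+\gamma_3$ and $\gamma_3$ in the error terms by correctly summing the per-cube estimates, e.g. $\sum_{L \in \Fscr}\boldsymbol{m}_0^{1+\gamma_2}\ell(L)^{m+2+\gamma_2} \lesssim (\boldsymbol{m}_0 r^{m+2-2\delta_2})^{1+\gamma_3}$ for a suitable $\gamma_3 = \gamma_3(\gamma_2,\delta_2)$, and one must check at each step that the implied constants do not see the center $\mathbf{p}(w)$. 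Since both of these are already carried out in \cite[Section~3]{DLS16blowup} for centered intervals and no step of that argument used $\mathbf{p}(w) = 0$, we omit the detailed computations.
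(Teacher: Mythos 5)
Your overall strategy is the same as the paper's: the paper offers no independent argument for this proposition and simply refers to \cite{DLS16blowup}, so deriving \eqref{eq:firstvar1}--\eqref{eq:firstvar3} from the inner and outer variation identities for $T'$, transporting the errors via Theorem~\ref{thm:Nlocalest}, and checking that none of the constants sees the center $\mathbf{p}(w)$ is exactly what is intended, and your account of those three estimates is faithful to that scheme.

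The one step I cannot accept as written is your treatment of the first inequality in \eqref{eq:firstvar4}. It is not ``elementary'', and the mechanism you invoke --- that $N \equiv 0$ on the contact set $\boldsymbol{\Phi}(\boldsymbol{\Gamma})$ ``and hence on a definite portion of each $\Bcal_r(\mathbf{p}(w))$'' --- is unfounded: nothing in the construction guarantees that the contact set meets $\Bcal_r(\mathbf{p}(w))$ at all (it may be empty, or lie far from $\mathbf{p}(w)$), so there is no set of definite measure on which $N$ vanishes and no Poincar\'e inequality to run off it. Nor is $\boldsymbol{\Sigma}_w(r) \leq C r^2 \Dbf_w(r)$ a formal consequence of the definitions: if $N$ were essentially constant and nonzero on $\Bcal_r$ (e.g.\ well-separated parallel sheets), $\boldsymbol{\Sigma}_w(r)$ would be positive while $\Dbf_w(r)$ is negligible. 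Some structural input must therefore be used --- in \cite{DLS16blowup} it comes from the estimates attached to the Whitney cubes meeting $\Bcal_r$, namely lower bounds on the Dirichlet energy of $N$ from the stopping conditions (splitting before tilting, Proposition~\ref{prop:excesssplitting}, after passing to domains of influence as in Proposition~\ref{prop:flattening}), together with the fact that around a center of density at least $Q$ the relevant stopped cubes can be taken in $\Wscr_e$ for suitable $C_e, C_h$ (the same point exploited in the proof of Lemma~\ref{lem:freqexcessdecay}) --- and in the varying-center setting this is precisely the step whose center-independence has to be argued rather than waved through. Repairing your sketch requires replacing the contact-set Poincar\'e argument by this cube-by-cube comparison (or by an argument genuinely using the hypothesis $\Ibf \geq c_0$); as it stands, the claimed proof of \eqref{eq:firstvar4} would fail.
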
   
We refer the reader to~\cite{DLS16blowup} for the proof. We are now ready to prove the almost-monotonicity.

\begin{proof}[Proof of Theorem~\ref{thm:monfreq}]   
    First of all, notice that since $\boldsymbol{\Sigma} ' \geq 0$ (due to the nature of $\phi$), \eqref{eq:firstvar4} tells us that also $\boldsymbol{\Sigma} (r), \ r\boldsymbol{\Sigma}' (r) \leq Cr^2\Dbf (r)$.
    
    Note that we may assume that $\Ibf  > c_0$ on $[a,b]$, since $\Ibf (a) > c_0$ (otherwise there is nothing to prove) and if $\Ibf (r) \leq c_0$ at some point $b' \in (a,b)$, we can replace $b$ by $b'$. Fix $\gamma_3$ and $\eps_3$ small enough so that the above estimates hold. By \eqref{eq:firstvar2}, \eqref{eq:firstvar4} and the fact that $r \leq 3$, we have
    \begin{align*}
    \Dbf (r) &\leq \frac{\Ebf (r)}{r} + C\Dbf (r)^{1+\gamma_3} + \eps_3^2\boldsymbol{\Sigma} (r) \\
    &\leq \frac{\Ebf (r)}{r} + C\eps_3^2 \Dbf (r) \left[r^{\gamma_3 m} + r^2\right] \\
    &\leq \frac{\Ebf (r)}{r} + C\eps_3^2 \Dbf (r).
    \end{align*}
    Rearranging and decreasing $\eps_3$ further if necessary, we conclude that
    \[
    	\frac{\Dbf (r)}{2} \leq \frac{\Ebf (r)}{r}.
    \]
    An analogous argument yields that
    \[
    	\frac{\Dbf (r)}{r} \leq 2\Dbf (r),
    \]
    so $\Ebf > 0$ on $(a,b)$ and $\frac{1}{\Ebf (r)}$ is thus a well-defined quantity. Now let
    \[
    	\Fbf (r) \coloneqq \frac{1}{\Dbf (r)} - \frac{r}{\Ebf (r)}.
    \]
    Let us now estimate the derivative of $\boldsymbol{\Omega} $, in the hope that we can then integrate to obtain the almost-monotonicity. Observe that
    \begin{align*}
    -\boldsymbol{\Omega} '(r) &= -\frac{\Ibf '(r)}{\Ibf (r)} \\
    &= -\frac{\Hbf (r)}{r\Dbf (r)}\left[\frac{\Dbf (r)}{\Hbf (r)} + \frac{r\Dbf' (r)}{\Hbf (r)} - \frac{r\Dbf (r)\Hbf' (r)}{\Hbf (r)^2}\right] \\
    &= -\frac{1}{r} -\frac{\Dbf' (r)}{\Dbf (r)} + \frac{\Hbf' (r)}{\Hbf (r)} \\
    &= -\frac{1}{r} + \frac{\Hbf' (r)}{\Hbf (r)} - \Dbf' (r)\Fbf (r) - \frac{r\Dbf' (r)}{\Ebf (r)}.
    \end{align*}
    Now let us further estimate each of the terms on the right-hand side. By \eqref{eq:firstvar1}, we have
    \[
        \frac{\Hbf' (r)}{\Hbf (r)} \leq \frac{m-1}{r} +\frac{2\Ebf (r)}{r\Hbf (r)} + C.
    \]
    Furthermore, by~\eqref{eq:firstvar1} and the comparability of $\Dbf (r)$ and $\frac{\Ebf (r)}{r}$, we can achieve the bound
    \begin{align*}
        |\Fbf (r)| &= \left|\frac{r}{\Dbf (r)\Ebf (r)}\left(\Dbf (r) - \frac{1}{r}\Ebf (r)\right)\right| \\
        &\leq C\frac{r\left(\Dbf (r)^{1+\gamma_3} + \boldsymbol{\Sigma} (r)\right)}{\Dbf (r)\Ebf (r)} \\
        &\leq C\left[\Dbf (r)^{\gamma_3 - 1} + \frac{\boldsymbol{\Sigma} (r)}{\Dbf (r)^2}\right].
    \end{align*}
    Combining this estimate with~\eqref{eq:firstvar3}, \eqref{eq:firstvar4}, and again by comparing $\frac{\Ebf (r)}{r}$ to $\Dbf (r)$, we conclude that
    \begin{align*}
    -\frac{r\Dbf' (r)}{\Ebf (r)} &\leq \frac{r\Dbf (r)}{\Ebf (r)}\left(C - \frac{m-2}{r}\right) - \frac{2\Gbf (r)}{r\Ebf (r)} + C\frac{r\Dbf (r)^{\gamma_3}\Dbf' (r) +\Dbf (r)^{1+\gamma_3}}{\Ebf (r)} \\
    &\leq C - \frac{m-2}{r} + C\frac{\Dbf (r)|\Fbf(r)|}{r} - \frac{2\Gbf (r)}{r\Ebf (r)} \\
    &\qquad+ C\left[ \Dbf (r)^{\gamma_3 - 1}\Dbf' (r) + \frac{\Dbf (r)^{\gamma_3}}{r}\right] \\
    &\leq - \frac{m-2}{r} - \frac{2\Gbf (r)}{r\Ebf (r)} \\
    &\qquad + C\left[1 + \frac{\Dbf (r)^{\gamma_3}}{r} + \frac{\boldsymbol{\Sigma} (r)}{r\Dbf (r)} + \Dbf (r)^{\gamma_3 - 1}\Dbf' (r) + r^{\gamma_3 m - 1}\right] \\
    &\leq - \frac{m-2}{r} - \frac{2\Gbf (r)}{r\Ebf (r)} + C\left[1 + \Dbf (r)^{\gamma_3 - 1}\Dbf' (r) + r^{\gamma_3 m - 1}\right].
    \end{align*}
    Now observe that Cauchy-Schwartz (applied first to the inner product, then to the integral) yields
    \[
    \Ebf (r)^2 \leq \Gbf (r)\Hbf (r),
    \]
    and thus
    \[
    \frac{2\Ebf (r)}{r\Hbf (r)} \leq \frac{2\Gbf (r)}{r\Ebf (r)}.
    \]
    Bringing everything together, we obtain
    \begin{align*}
    -\boldsymbol{\Omega} '(r) &\leq -\frac{1}{r} + \frac{m-1}{r} + \frac{2\Ebf (r)}{r\Hbf (r)} - \frac{m-2}{r} - \frac{2\Gbf (r)}{r\Ebf (r)} \\
    &\qquad + C\left[1 + \Dbf (r)^{\gamma_3 - 1}\Dbf' (r) + \frac{\boldsymbol{\Sigma} (r)\Dbf' (r)}{\Dbf (r)^2} + r^{\gamma_3 m -1}\right] \\
    &\leq C\left[1 + \Dbf (r)^{\gamma_3 - 1}\Dbf' (r) + \frac{\boldsymbol{\Sigma} (r)\Dbf' (r)}{\Dbf (r)^2} + r^{\gamma_3 m -1}\right] \\
    &\leq C\left[1 + \partial_r\left[\Dbf (r)^{\gamma_3}\right] + \frac{\boldsymbol{\Sigma} (r)\Dbf' (r)}{\Dbf (r)^2} + r^{\gamma_3 m -1}\right].
    \end{align*}
    Integrating over the interval $[a,b]$, we deduce that
    \begin{align*}
    \boldsymbol{\Omega} (a) - \boldsymbol{\Omega} (b) &\leq C\left[b + \Dbf (b)^{\gamma_3} - \Dbf (a)^{\gamma_3} - \int_a^b \boldsymbol{\Sigma} (r)\partial_r \left(\frac{1}{\Dbf (r)}\right)\dd r \right] \\
    &\leq C\left[b + \Dbf (b)^{\gamma_3} + \frac{\boldsymbol{\Sigma} (a)}{\Dbf (a)} - \frac{\boldsymbol{\Sigma} (b)}{\Dbf (b)} + \int_a^b \frac{\boldsymbol{\Sigma}' (r)}{\Dbf (r)}\dd r \right] \\
    &\leq C\left[b + \Dbf (b)^{\gamma_3} + \frac{\boldsymbol{\Sigma} (a)}{\Dbf(a)} + \int_a^b r \dd r \right] \\
    &\leq C\left[b + b^{\gamma_3 m} + b^2 \right].
    \end{align*}
    Note that the constant $C$ depends only on the dimension. Thus, setting
    \[
        f(r) = C\left[r + r^{\gamma_3 m} + r^2 \right],
    \]
    for $C$ as above, the proof is complete.
\end{proof}


\section{Uniform bounds on frequency function}\label{sct:frequb}

We now prove Theorem~\ref{thm:frequpperbd}. Before we proceed, we recall the following result from~\cite{DLS16blowup}, which infers height decay from excess decay for the $\Mcal$-normal approximations:

\begin{lemma}[\cite{DLS16blowup},~Lemma~5.2]\label{lem:currentcollapse}
    Using the notation of Section~\ref{sct:flattening}, suppose that $\{\Mcal_k\}_k$ is a sequence of center manifolds with corresponding $\Mcal_k$-normal approximations $N^{(k)}$, rescaled currents $T_k = T_{x_k,t_k}$ and centered intervals of flattening $(s_k,t_k]$ around $x_k$. Suppose that for some sequence $\eta_k \in \big(\frac{s_k}{t_k}, 3\big]$, we have
    \[
        \int_{\Bcal_{\eta_k}(\mathbf{p}_k(x_k))\setminus \Bcal_{\frac{\eta_k}{2}}(\mathbf{p}_k(x_k))} |N^{(k)}|^2 \longrightarrow 0 \qquad \text{as $k \to \infty$}..
    \] 
    Then we must necessarily have $\Ebf(T_k, \Bbf_{\eta_k}(x_k)) \longrightarrow 0$.
\end{lemma}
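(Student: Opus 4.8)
The plan is to argue by contradiction and compactness. Suppose the conclusion fails; then, after passing to a subsequence (not relabelled), there is $c_1>0$ with $\Ebf(T_k,\Bbf_{\eta_k}(x_k))\geq c_1$ for every $k$, while $\int_{\Bcal_{\eta_k}(\mathbf{p}_k(x_k))\setminus\Bcal_{\eta_k/2}(\mathbf{p}_k(x_k))}|N^{(k)}|^2\to 0$. We may also assume $\eta_k\to\eta_\infty\in[0,3]$, and recall that our recentering convention makes $\mathbf{p}_k(x_k)=0$ and $\Theta(T_k,0)\geq Q$. If $\boldsymbol{m}_0^{(k)}\to 0$ along a further subsequence, then the conclusion is immediate: by \eqref{eq:normalerror}, Proposition~\ref{prop:Dirbd}, and the curvature bound $\|D\boldsymbol{\phi}_k\|_{\Crm^{2,\kappa}}\leq C(\boldsymbol{m}_0^{(k)})^{1/2}$ of Proposition~\ref{prop:centermfld}, the normalised excess of $T_k$ over $\Bbf_{\eta_k}(x_k)$ is bounded by $C\boldsymbol{m}_0^{(k)}\to 0$. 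So we may assume $\boldsymbol{m}_0^{(k)}\geq c_2>0$, and the hypothesis on $N^{(k)}$ must genuinely be used.

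First I would convert the lower bound on the excess into a lower bound on the (normalised) Dirichlet energy of $N^{(k)}$ near the origin. On $\mathbf{p}_k^{-1}(\Kcal_k)$ one has $T_k=\Tbf_{F_k}$, the error estimate \eqref{eq:normalerror} controls the complement, and the tilting of the tangent planes of $\Mcal_k$ over a ball $B_{C\eta_k}$ in $\pi_k$ is — by the center manifold construction via the tilted $L$-interpolating functions, together with Proposition~\ref{prop:flattening} — itself bounded in terms of the Dirichlet energy of $N^{(k)}$ at comparable scales. Comparing the spherical excess with the cylindrical excess in a cylinder of radius $\sim\eta_k$ around $\pi_k$ and invoking the mass--Dirichlet comparison of Theorem~\ref{thm:strongLip}, I expect to obtain
\[
c_1\leq\Ebf(T_k,\Bbf_{\eta_k}(x_k))\leq\frac{C}{\eta_k^m}\int_{\Bcal_{C\eta_k}(0)}|DN^{(k)}|^2+o(1),
\]
where the $o(1)$ collects higher-order errors, proportional to powers of $\boldsymbol{m}_0^{(k)}\eta_k^2$ and $\eta_k^{\gamma_2}$, which are negligible after possibly enlarging $C$ (here Proposition~\ref{prop:flattening} keeps us inside the regime where the center manifold approximates $T_k$ well).

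Next I would establish a Caccioppoli-type reverse Poincaré inequality for $N^{(k)}$, exploiting the area-minimality of $T_k$ through the inner/outer variation identities underlying Proposition~\ref{prop:firstvarest} and the splitting-before-tilting mechanism of Proposition~\ref{prop:excesssplitting}, of the form $\int_{\Bcal_{\rho/2}(0)}|DN^{(k)}|^2\leq C\rho^{-2}\int_{\Bcal_{\rho}(0)\setminus\Bcal_{\rho/4}(0)}|N^{(k)}|^2+(\text{error})$ for $\rho$ in the flattening range. Applying this along a chain of dyadic radii from $C\eta_k$ down to the stopping scale $s_k/t_k$, summing the resulting geometric series, and controlling the innermost contributions by the pointwise height bounds $\|N^{(k)}|_\Lcal\|_{\Crm^0}\leq C[\boldsymbol{m}_0^{(k)}]^{\frac{1}{2m}}\ell(L)^{1+\beta_2}$ together with Proposition~\ref{prop:flattening}, I expect to deduce $\eta_k^{-m}\int_{\Bcal_{C\eta_k}(0)}|DN^{(k)}|^2\to 0$, contradicting the displayed inequality.

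The main obstacle will be precisely this last step: upgrading the \emph{non-normalised} smallness $\int_{\text{annulus}}|N^{(k)}|^2\to 0$ to the scale-invariant decay $\eta_k^{-m}\int_{\Bcal_{C\eta_k}(0)}|DN^{(k)}|^2\to 0$, since $\eta_k$ may degenerate and a naive blow-up at scale $\eta_k$ destroys the smallness of the $L^2$-norm. The point is that the reverse Poincaré inequality transfers control from the annulus $\Bcal_{\eta_k}\setminus\Bcal_{\eta_k/2}$ inward scale by scale, while the accumulated errors — being summable powers of the cube sizes — stay under control; this is the only place where area-minimality, rather than just the structural estimates of Section~\ref{sct:Almgrencm}, is genuinely invoked.
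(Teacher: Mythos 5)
There is a genuine gap, and it sits at the very first step of your scheme. The inequality $c_1\leq \Ebf(T_k,\Bbf_{\eta_k}(x_k))\leq C\eta_k^{-m}\int_{\Bcal_{C\eta_k}}|DN^{(k)}|^2+o(1)$ is not available and is in fact false in general: the excess of $T_k$ is carried both by $DN^{(k)}$ \emph{and} by the tilting of $\Mcal_k$ itself, and the latter is only of size $O(\boldsymbol{m}_0^{(k)})\leq \eps_3^2$, a fixed constant that does not tend to zero along the contradiction subsequence and is \emph{not} controlled by the Dirichlet energy of $N^{(k)}$. The splitting-before-tilting estimate (Proposition~\ref{prop:excesssplitting}) bounds the excess by $\int|DN^{(k)}|^2$ only on stopped cubes $L\in\Wscr_e$, never on $\Sscr$-cubes or on the contact set, where $N^{(k)}\equiv 0$ while the manifold may still tilt. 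A concrete symptom: your argument nowhere uses that the centers $x_k$ are \emph{singular} $Q$-points (nor the dichotomy of Proposition~\ref{prop:weakcpct}); if it closed, it would apply verbatim to a current which is a $Q$-fold copy of a smooth non-flat minimal surface, where $N^{(k)}\equiv 0$ (so the hypothesis holds trivially) yet the excess does not vanish — i.e.\ it would prove a false statement. The second step has the same quantitative defect: the Caccioppoli-type inequality you postulate is not established in the paper, its error terms in any variational estimate for $N^{(k)}$ are powers of $\boldsymbol{m}_0^{(k)}$ (bounded only by $\eps_3^2$, not $o(1)$ in $k$, and not small relative to the unknown constant $c_1$), and the dyadic iteration would require $L^2$-smallness of $N^{(k)}$ on annuli at \emph{every} scale, whereas the hypothesis provides it only at the single scale $\eta_k$.

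The paper's proof is soft rather than quantitative, and uses exactly the ingredients your plan omits. After reducing to $\limsup_k\eta_k>0$ and assuming $\Ebf(T_k,\Bbf_3(x_k))\geq c(\delta)$, one uses the $L^2$-smallness of $N^{(k)}$ on the annulus together with Proposition~\ref{prop:excesssplitting} (for cubes of $\Wscr_e$) and the height-forced splitting estimate (for cubes of $\Wscr_h$) to show that the \emph{largest Whitney cube} meeting the projected annulus has side $d_k\to 0$; the height bound then traps $\spt T_k$ over the annular region in a $C[\boldsymbol{m}_0^{(k)}]^{1/2m}d_k^{1+\beta_2}$-neighbourhood of $\Mcal_k$. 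Passing to limits ($T_k\toweakstar S$ with local mass convergence, $\Mcal_k\to\Mcal$ in $\Crm^3$ by Arzel\`a--Ascoli) one finds $\spt S$ contained in $\Mcal$ on an annulus, so Allard's constancy theorem gives $S=Q\llbracket\Mcal\rrbracket$ there; this is incompatible with the dichotomy of Proposition~\ref{prop:weakcpct}: the flat alternative is excluded by $\Ebf(S,\Bbf_3)\geq c(\delta)$ (via convergence of excesses), while the other alternative forces every $Q$-point of $S$ to be singular, contradicting the smooth $Q$-fold structure on the annulus. If you want to salvage your approach you would have to inject this global information about the limit current; the purely local excess-versus-energy bookkeeping cannot close.
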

Although we have varying centers, the proof is analogous to that in~\cite{DLS16blowup}. Nevertheless, we repeat it here for the purpose of clarity.
\begin{remark}
    Recall that we already have decay of the excess at scales $r_k$ around $x_k$. This lemma will allow us to deduce that the excess also decays at any other scales along which the height decays.
\end{remark}
\begin{proof}
 
    Firstly, note that if $\eta_k \todown 0$, then by Proposition~\ref{prop:flattening}, we automatically have $\Ebf(T_k, \Bbf_{\eta_k}(x_k)) \longrightarrow 0$. 
    
    Thus, we can assume that $\limsup_k \eta_k > 0$. We prove this result by contradiction. Suppose that the statement of the lemma is not true. Then up to subsequence, there exists $\delta > \frac{s_k}{t_k}$ such that for $\Acal_\delta \coloneqq \Bcal_\delta(\mathbf{p}_k(x_k))\setminus \Bcal_{\frac{\delta}{2}}(\mathbf{p}_k(x_k)) \subset \Mcal_k$ we have
    \begin{equation}\label{eq:excesscontrad}
        \int_{\Acal_\delta} |N^{(k)}|^2 \longrightarrow 0 \qquad \text{but} \qquad \Ebf(T_k, \Bbf_{3}(x_k)) \geq c(\delta) >0,
    \end{equation}
    Now project this annulus $\Acal_\delta$ from the center manifold to the reference plane $\pi_k$. For $\eps_3$ sufficiently small, 
    this projection contains the annulus $A_\delta \coloneqq B_{\frac{15}{16}\delta}\setminus B_{\frac{9}{16}\delta}$. Consider the family of Whitney cubes $\Wscr^{(k)}$ for the center manifold $\Mcal_{k}$. If no cube from this family intersects $A_\delta$, then by construction, $N^{(k)} \equiv 0$ there. Otherwise, for each $k \in \N$ let $L_k \in \Wscr^{(k)}$ be the \emph{largest} cube that intersects the annulus $A_\delta$ and let $d_k \coloneqq \ell(L_k) < c_s \delta$. We will now use the height decay in~\eqref{eq:excesscontrad} to show that the sizes of these bad cubes shrink to zero.
    
    By Proposition~\ref{prop:flattening}, we may replace any cube in $\Wscr^{(k)}_n$ with a cube in either $\Wscr^{(k)}_e$ or $\Wscr^{(k)}_h$ of comparable size, in its domain of influence. Thus, we may assume that $L_k \in \Wscr^{(k)}_e \cup \Wscr^{(k)}_h$.
        
    Now since $L_k \cap A_\delta \neq \emptyset$ and $\delta$ is sufficiently large in comparison to $\ell(L_k)$, 
    we can find a ball $B^k \subset A_\delta$ of radius $\frac{\sqrt{m}d_k}{2}$ with $\dist(B^k,L_k) \leq \frac{\sqrt{m}d_k}{2}$. Then $B^k \subset B_{2\sqrt{m}d_k}(x_{L_k})$, 
    so we can apply~\cite[Proposition~3.1(S3)]{DLS16centermfld} to deduce that if $L_k \in \Wscr_k^{(k)}$, the height bound forces splitting for $N^{(k)}$:
    \[
        \int_{\Acal_\delta} |N^{(k)}|^2 \geq \int_{\boldsymbol{\Phi}_k(B^k)}|N^{(k)}|^2 \geq C [\boldsymbol{m}_0^{(k)}]^\frac{1}{m} d_k^{m+2+2\beta_2}.
    \]
    Observe that our choice of $B^k$ further ensures that the center $q_k$ of $B^k$ satisfies $\dist(L_k,q_k) \leq 4\sqrt{m}d_k$, so whenever $L_k \in \Wscr_e^{(k)}$, Proposition~\ref{prop:excesssplitting} can be applied to show that the splitting of $N^{(k)}$ is instead forced by the large excess:
    \[
        \int_{\Acal_\delta} |N^{(k)}|^2 \geq \int_{\boldsymbol{\Phi}_k\big(B_{\frac{d_k}{4}}(q_k)\big)}|N^{(k)}|^2 \geq C \boldsymbol{m}_0^{(k)} d_k^{m+4-2\delta_2}.
    \]
    In both of these estimates, $C= C(\beta_2,\delta_2,M_0,N_0,C_e,C_h)$. We thus conclude that $d_k \todown 0$.
    
    Another important consequence of the height bound~\cite[Lemma~A.1]{DLS16centermfld} is that locally above every cube $L \in \Wscr^{(k)}$, on each fiber (with respect to $\mathbf{p}_k$) one has the following control on the separation between $T_k$ and the center manifold $\Mcal_k$ (see~\cite[Cor.~2.2(ii)]{DLS16centermfld}):
    \[
        \spt \langle T_k, \mathbf{p}_k, \boldsymbol{\Phi}_k(q) \rangle \subset \set{y}{|\boldsymbol{\Phi}_k(q) - y|\leq C [\boldsymbol{m}_0^{(k)}]^{\frac{1}{2m}} \ell(L)^{1+\beta_2}} \quad \text{for each $q \in L,$}
    \]
    where $C=C(\beta_2,\delta_2,M_0,N_0,C_e,C_h)$. 
    
    Now since $L_k$ is the largest Whitney cube with $L_k \cap A_\delta \neq \emptyset$, this allows us to contain $\spt T_k \cap N_{\boldsymbol{\Phi}_k(A_\delta)} \Mcal_k$ inside a  $C[\boldsymbol{m}_0^{(k)}]^{\frac{1}{2m}}d_k^{1+\beta_2}$-tubular neighbourhood of $\Mcal_k$. Again, for $\eps_3$ sufficiently small, we can find $s<t$ independently of $k$ such that 
    \[
        \Bbf_t\setminus \Bbf_s \cap \Mcal_k \subset \boldsymbol{\Phi}_k(A_\delta).
    \]

    On the other hand, we can argue analogously to that in the proof of Proposition~\ref{prop:weakcpct} to find an $m$-dimensional area minimizing current $S$ in $\R^{m+n}$ with $T_k \toweakstar S$ and $\| T_k\|(\Bbf) \longrightarrow \|S\|(\Bbf)$ for any open ball $\Bbf$ (up to subsequence), such that either $S = Q\llbracket \pi \rrbracket$ for some $m$-plane $\pi$, or all points in $D_{\geq Q} S$ are singular points. But our assumption~\eqref{eq:excesscontrad} tells us that $\Ebf(S,\Bbf_3) \geq c(\delta)$ due to convergence of the excesses as in~\eqref{eq:excesslim}, ruling out the possibility that $S$ is flat.
    
    However, due to the uniform $\Crm^{3,\kappa}$-estimates on the center manifolds, we can use the Arzela-Ascoli Compactness Theorem to deduce that, up to subsequence, $\Mcal_k \longrightarrow \Mcal$ in $\Crm^3$. Combining this with the decay $d_k \todown 0$ for the sizes of the tubular neighbourhoods, we arrive at the conclusion that $\spt S\mres(\Bbf_t\setminus\bar{\Bbf}_s) \subset\Mcal\cap (\Bbf_t\setminus\bar{\Bbf}_s)$. 
    
    Now one can apply Allard's Constancy Theorem 
    to see that 
    \[
        S\mres (\Bbf_t\setminus\bar{\Bbf}_s) = Q_0\llbracket \Mcal\cap(\Bbf_t\setminus\bar{\Bbf}_s) \rrbracket \qquad \text{for some $Q_0$.}
    \]
    Recalling that each $T_k$ is a $Q$-fold cover of $\Mcal_k \cap (\Bbf_t\setminus \bar{\Bbf}_s)$ (see~\cite[Corollary~2.2(i)]{DLS16centermfld}), we may further deduce that $Q_0 = Q$, contradicting the fact that any $Q$-point of $S$ should be singular.
    
\end{proof}

\subsection{Proof of the upper frequency bound}
We are now in a position to show the uniform upper frequency bound of Theorem~\ref{thm:frequpperbd}. Note that by \cite[Theorem~5.1]{DLS16blowup}, we know that this result holds when all of the blow-ups have the same fixed center. The argument for varying centers and non-centered flattening intervals is essentially the same, since the estimates used are independent of the center. Nevertheless, we repeat it here for clarity, with some minor modifications that simplify the argument slightly.

\begin{proof}[Proof of Theorem~\ref{thm:frequpperbd}]
    Observe that the second part of this Theorem follows immediately from the first part, so we just need to prove the uniform boundedness around the points $x_k$ that we center our center manifolds around. 
    
    Suppose, for a contradiction, that the statement of the theorem is false. First of all, suppose that~\eqref{eq:smallfrequb} fails. Then, for an arbitrary choice of $J \in \Nbb$, up to subsequence (using the notation in Section~\ref{sct:flattening}) we can find scales $\frac{\rho_k}{t_k} \in \big(\frac{s_k}{t_k}, \frac{3}{2^{J-1}}\big]$ around $x_k$ such that
    \begin{equation}\label{eq:freqexplode}
        \lim_{k \to \infty}\Ibf_{k}\Big(\frac{\rho_k}{t_k}\Big) = \infty.
    \end{equation}
    By the almost-monotonicity~\eqref{eq:monfreq} of the frequency, we may assume that $\rho_k = \frac{3t_k}{2^{J-1}}$ for each $k \in \N$. 
    There are three possibilities to consider, based on the size of the scales along which the frequency blows up. Due to Proposition~\ref{prop:Dirbd}, unboundedness of the frequency along given scales corresponds to the collapsing of the sheets of $N^{(k)}$ along these scales. Either
    \begin{enumerate}[(a)]
        \item $\limsup_k t_k > 0$; \label{itm:limsuppos}
        \item $\limsup_k t_k = 0$, but there exists a subsequence (not relabelled) along which each starting scale $t_k$ comes immediately after a flattening interval $(\bar{s}_k,\bar{t}_k] \coloneqq (s^{(k)}_{j(k) - 1}, t^{(k)}_{j(k) - 1}]$, centered at the same point $x_k$; namely $t_k = \bar{s}_k$; \label{itm:smallradii1}
        \item $\limsup_k t_k = 0$ and for every $k$ (sufficiently large) we have $t_k < \bar{s}_k$. \label{itm:smallradii2}
    \end{enumerate}
    
    Let us begin with case \eqref{itm:limsuppos}. Extract a subsequence for which $\frac{3t_k}{2^{J-1}} \geq \eta$ for some $\eta > 0$. By Proposition~\ref{prop:Dirbd}, for every $r \in \big(\frac{s_k}{t_k}, 3\big]$, the Dirichlet energy estimate
    \begin{equation}\label{eq:Direps}
        \Dbf_{k}(r) \leq C\boldsymbol{m}_0^{(k)} r^{m+2-2\delta_2} \leq C \eps_3^2.
    \end{equation}
    holds. Hence, the unboundedness of $\Ibf_k\big(\frac{3}{2^{J-1}}\big)$ tells us that for 
    \[
        \Acal_J \coloneqq \Bcal_{\frac{3}{2^{J-1}}}(\mathbf{p}_k(x_k))\setminus\Bcal_{\frac{3}{2^{J-2}}}(\mathbf{p}_k(x_k)),
    \]
    we have
    \[
        \int_{\Acal_J} |N^{(k)}|^2 \longrightarrow 0 \quad \text{as $k \to \infty$.}
    \]
    We can now use Lemma~\ref{lem:currentcollapse} to deduce that
    \[
        \Ebf\big(T_k, \Bbf_{\frac{3}{2^{J-1}}}\big) = \Ebf\big(T, \Bbf_{\frac{3t_k}{2^{J-1}}}(x_k)\big)\longrightarrow 0.
    \]
    Since $\frac{3t_k}{2^{J-1}} \geq \eta$ and $x_k \to x$, we can thus conclude that
    \[
        \Ebf(T_{x,\eta}, \Bbf_1) = \Ebf(T, \Bbf_\eta(z)) \leq c\liminf_{k \to \infty} \Ebf\big(T, \Bbf_{\frac{3t_k}{2^{J-1}}}(x_k)\big) = 0.
    \]
    However, this contradicts the fact that $x \in \Sing_{\geq Q} T$.
    
    We now move on to case \eqref{itm:smallradii1}. In this case, the excess is still below the threshold $\eps_3^2$ at scale $\bar{s}_k = t_k$, but we must restart the center manifold because it no longer serves as a good approximation for the current at that scale. 
    
As a result of the condition \eqref{eq:go} in the flattening procedure, we encounter a cube $L \in \Wscr^{(k,j(k)-1)}$ of size $\ell(L) \geq c_s \frac{\bar{s}_k}{\bar{t}_k} \eqqcolon c_s \bar{r}_k$ with $L \cap \cl{B}_{\bar{r}_k} \neq \emptyset$ at which the refinement for $\Mcal_k$ stops. We then restart the center manifold $\Mcal_k$ and the $\Mcal_k$-normal approximation $N^{(k)}$ immediately at this scale, since the total excess remains below the threshold $\eps_3^2$, but locally near the center $x_k$ the excess becomes too large. 

The splitting observed at the stopping scale of the previous center manifold must then propagate to the new starting scale, so by the comparison of center manifolds in Proposition~\ref{prop:comparecentermfld} and the control on the Dirichlet energy in Proposition~\ref{prop:Dirbd} (at scale $R$), we deduce that
    \[
        \int_{\Bcal_R} |N^{(k)}|^2 \geq \bar{c}\int_{\Bcal_R} |DN^{(k)}|^2,
    \]
    where $\bar{c}$ is a geometric constant, independent of $k$ and of the center $x_k$.
    
    It remains to deal with the technicality that in our definition of the frequency, we are considering the $\Lrm^2$-mass of $N^{(k)}$ on \emph{annuli} rather than balls. This is where the choice of $J$ becomes important. We now revert back to our full collection of center manifolds. For any $J \in \Nbb$ and any $k,j \in \N$, we may apply H\"{o}lder's inequality and a Sobolev embedding for $q \in [2, 2^*]$ if $m > 2$ and any $q \in [2,\infty)$ otherwise, to deduce that
    \begin{align*}
        \int_{\Bcal_{\frac{3}{2^{J}}}} |N^{(k,j)}|^2 &\leq \left[\Hcal^m(\Bcal_{\frac{3}{2^{J}}})\right]^{1-\frac{2}{q}}\left[\int_{\Bcal_{\frac{3}{2^{J}}}} |N^{(k,j)}|^q \right]^{\frac{2}{q}} \\
        &\leq C 2^{-Jm\left(1-\frac{2}{q}\right)} \left[\int_{\Bcal_{\frac{3}{2^{J}}}} |N^{(k,j)}|^2 + \int_{\Bcal_{\frac{3}{2^{J}}}} |DN^{(k,j)}|^2 \right].
    \end{align*}
    Here, we have again used the uniform curvature bounds in Proposition~\ref{prop:centermfld}\eqref{itm:cmcurv} for $\Mcal_{k,j}$ to compare the volume of geodesic balls in $\Mcal_{k,j}$ with that of Euclidean ones. We can thus choose $J$ sufficiently large, \emph{independently of both $k$ and $j$}, such that
    \[
        \int_{\Bcal_3\setminus\Bcal_{\frac{3}{2^{J}}}} |N^{(k,j)}|^2 \geq c \int_{\Bcal_3} |DN^{(k,j)}|^2.
    \]
    By decomposing this large annulus dyadically, we can find a small annulus $\Acal_\ell \coloneqq \Bcal_{\frac{3}{2^{\ell}}}\setminus \Bcal_{\frac{3}{2^{\ell+1}}}$ for some $\ell = \ell(k,j) \leq J - 1$ with
    \begin{equation}\label{eq:annuli}
        \Hbf_{k,j}\Big(\frac{3}{2^{\ell}}\Big) \geq \int_{\Acal_\ell} |N^{(k,j)}|^2 \geq \frac{c}{2^J}\int_{\Bcal_3} |DN^{(k,j)}|^2 \geq \frac{c}{2^J} \Dbf_{k,j}\Big(\frac{3}{2^{\ell}}\Big),
    \end{equation}
    where $c$ is a geometric constant. Combining this with the almost-monotonicity of the frequency, we thus have $\Ibf_k\big(\frac{3}{2^\ell}\big) \leq C \Ibf_k\big(\frac{3}{2^{J-1}}\big) \leq C(J)$, contradicting~\eqref{eq:freqexplode}. 

    This brings us to the final case~\eqref{itm:smallradii2}, where for every $k$ sufficiently large, we consistently restart the center manifolds around $x_k$ because the excess exceeds the threshold $\eps_3^2$:
    \[
        \Ebf(T_k, \Bbf_{6\sqrt{m}\eta_k}(x_k)) > \eps_3^2 \qquad \text{for some $\eta_k \in \big(1,\min\{3,\frac{\bar{s}_k}{t_k}\}\big]$.}
    \]
    But then Lemma~\ref{lem:currentcollapse} tells us that
    \[
        \liminf_{k \to \infty} \Hbf_{k}(\eta_k) > 0.
    \]
    Combining this with the uniform boundedness of $\Dbf_{k}(\eta_k)$ from Proposition~\ref{prop:Dirbd} and the almost-monotonicity of the frequency, we achieve uniform bounds for $\Ibf_k$ on $\big(\frac{s_k}{t_k},\eta_k\big]$. Since $\eta_k \geq 1$ for every $k$, we can further choose $J$ such that $\frac{3}{2^{J-1}} \leq 1$. This once again gives the desired contradiction and so~\eqref{eq:smallfrequb} indeed holds for $J$ sufficiently large.
    
    Finally, we check that~\eqref{eq:largefrequb} for the choice of $J$ given by the validity of~\eqref{eq:smallfrequb}. Here, we simply observe that by~\eqref{eq:annuli}, for any $r \in \big(\frac{3}{2^{J-1}}, 3\big]$ we have
    \[
        \int_{\Bcal_r} |N^{(k,j)}|^2 \geq \Hbf_{k,j}\Big(\frac{3}{2^{J-1}}\Big) \geq c(J) \int_{\Bcal_3} |DN^{(k,j)}|^2 \geq c(J) r^2 \Dbf_{k,j}(r),
    \]
    where the presence of the factor $r^2$ can be verified by scaling.
\end{proof}

\subsection{Proof of the lower frequency bound}\label{sct:lb}
We now demonstrate the uniform lower frequency bound of Theorem~\ref{thm:freqlowerbd}, which will be vital for the persistence of singularities in the blow-up limit along the scales $r_k$.

Firstly, we require the following result, which tells us that frequency decay forces excess:
\begin{lemma}\label{lem:freqexcessdecay}
    There exists $\eps_3$ sufficiently small such that the following holds. Using the notation of Section~\ref{sct:flattening}, suppose $\Mcal_k$ is a sequence of center manifolds corresponding to rescaled currents $T_k = T_{x_k,t_k}$ satisfying Assumptions~\ref{asm:curr}-\ref{asm:Qfold}, with given intervals of flattening $(s_k,t_k]$ around $w_k \in D_{[Q,Q+\eps]}T_k\cap \cl{\Bbf}_1$. Suppose that there exists a sequence of scales $\frac{\rho_k}{t_k} \in \big(\frac{s_k}{t_k},R_k\big]$ for which
    \[
        \lim_{k \to \infty} \Ibf_k\Big(\frac{\rho_k}{t_k}\Big) = 0
    \]
    Then
    \[
        \lim_{k \to \infty}\Ebf\big(T_k, \Bbf_{\frac{\rho_k}{t_k}}(w_k)\big) = 0.
    \]
    Here $\Ibf_k$ and all related quantities are centered around $\mathbf{p}_k(w_k)$.
\end{lemma}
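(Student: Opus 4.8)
\medskip

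\noindent\textbf{Proof proposal.} The plan is to argue by contradiction, adapting the collapse argument from the proof of Lemma~\ref{lem:currentcollapse} by replacing its hypothesis of \emph{height} decay on an annulus with \emph{Dirichlet energy} decay, which we extract from the frequency decay. Suppose the conclusion fails: after passing to a subsequence (not relabelled), there is $\bar{c} > 0$ such that $\Ebf(T_k, \Bbf_{\sigma_k}(w_k)) \ge \bar{c}$ for all $k$, where $\sigma_k \coloneqq \tfrac{\rho_k}{t_k}$, while $\Ibf_k(\sigma_k) \to 0$. The case $\sigma_k \to 0$ would be immediately excluded by the (non-centered) analogue of the decay estimate $\Ebf(T_k, \Bbf_r(w_k)) \le C_0 \eps_3^2\, r^{2-2\delta_2}$ from Proposition~\ref{prop:flattening}, valid for $r$ in the relevant range; hence after a further subsequence $\sigma_k \ge \eta > 0$, and since also $\sigma_k \le R_k \le C$, we may assume $\sigma_k \to \sigma_\infty \in [\eta, C]$ and $w_k \to w_\infty$. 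Comparing the excess over the nested balls $\Bbf_{\sigma_k}(w_k) \subset \Bbf_{6\sqrt{m}}$ and using $\sigma_k \ge \eta$, the contradiction hypothesis forces
\[
    \boldsymbol{m}_0^{(k)} \ \ge\ \Ebf(T_k, \Bbf_{6\sqrt{m}}) \ \ge\ c(\eta)\,\Ebf(T_k, \Bbf_{\sigma_k}(w_k)) \ \ge\ c(\eta)\,\bar{c}\ >\ 0,
\]
so $\boldsymbol{m}_0^{(k)}$ is uniformly bounded below.

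I would next convert the frequency decay into Dirichlet decay. Since $\Theta(T_k, w_k) \ge Q$, summing the estimate of Lemma~\ref{lem:Lucaheightbd} over the Whitney cubes meeting $\Bcal_{\sigma_k}(\mathbf{p}_k(w_k))$ exactly as in the proof of Proposition~\ref{prop:Dirbd} gives $\int_{\Bcal_{\sigma_k}(\mathbf{p}_k(w_k))} |N^{(k)}|^2 \le C\boldsymbol{m}_0^{(k)}\sigma_k^{m+4-2\delta_2}$, whence $\Hbf_k(\sigma_k)$ is uniformly bounded. As $\sigma_k \ge \eta$, it follows that $\Dbf_k(\sigma_k) = \sigma_k^{-1}\,\Ibf_k(\sigma_k)\,\Hbf_k(\sigma_k) \le C\eta^{-1}\,\Ibf_k(\sigma_k) \to 0$, and in particular $\int_{\Bcal_{\sigma_k/2}(\mathbf{p}_k(w_k))}|DN^{(k)}|^2 \to 0$.

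I would then rerun the collapse argument of Lemma~\ref{lem:currentcollapse}, centered now at $\mathbf{p}_k(w_k)$ rather than at $x_k$ (every estimate invoked there is independent of the center). Because $w_k$ is a density-$Q$ point, the improved Hardt--Simon height bound~\eqref{eq:LucahtbdT} (cf.\ the proof of Lemma~\ref{lem:Lucaheightbd}) shows, for $\eps_3$ small, that no cube of $\Wscr^{(k)}$ of size below a uniform constant meeting a small ball around $\mathbf{p}_k(w_k)$ can belong to $\Wscr^{(k)}_h$, while cubes of $\Wscr^{(k)}_n$ there are slaved, via their domains of influence (Proposition~\ref{prop:flattening}), to cubes of $\Wscr^{(k)}_e$. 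Let $L_k$ be the largest cube of $\Wscr^{(k)}$ meeting the annulus $\Bcal_{\sigma_k/2}(\mathbf{p}_k(w_k)) \setminus \Bcal_{\sigma_k/4}(\mathbf{p}_k(w_k))$ (if there is none, $N^{(k)} \equiv 0$ there and we are done); then, up to passing to its domain of influence, $L_k \in \Wscr^{(k)}_e$, and splitting before tilting (Proposition~\ref{prop:excesssplitting}) yields
\[
    \int_{\Bcal_{\sigma_k/2}(\mathbf{p}_k(w_k))}|DN^{(k)}|^2\ \ge\ c\,\boldsymbol{m}_0^{(k)}\,\ell(L_k)^{m+2-2\delta_2}.
\]
Since the left-hand side tends to $0$ while $\boldsymbol{m}_0^{(k)} \ge c(\eta)\bar{c}$, we get $\ell(L_k) \to 0$. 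The fibrewise separation bound from \cite[Cor.~2.2(ii)]{DLS16centermfld} then confines $\spt T_k$ over $\boldsymbol{\Phi}_k\big(\Bcal_{\sigma_k/2}(\mathbf{p}_k(w_k)) \setminus \Bcal_{\sigma_k/4}(\mathbf{p}_k(w_k))\big)$ to a tubular neighbourhood of $\Mcal_k$ of radius $C[\boldsymbol{m}_0^{(k)}]^{1/2m}\ell(L_k)^{1+\beta_2} \to 0$. Combining this with $\Mcal_k \to \Mcal$ in $\Crm^3$ (uniform curvature bounds and Arzel\`a--Ascoli) and $T_k \toweakstar S$ with convergence of masses, $S$ an $m$-dimensional area minimizing current (Federer--Fleming, as in the proof of Proposition~\ref{prop:weakcpct}), we conclude that $\spt S$ coincides with an annular portion of $\Mcal$ around $w_\infty$. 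Allard's constancy theorem gives $S = Q_0\llbracket\Mcal\rrbracket$ there, and $Q_0 = Q$ since each $T_k$ is a $Q$-fold cover of $\Mcal_k$ on that region \cite[Cor.~2.2(i)]{DLS16centermfld}; in particular $\Reg_{\ge Q}S \neq \emptyset$, so, arguing exactly as in the analysis of the case $z_0 \in \Reg_{\ge Q}S$ in the proof of Proposition~\ref{prop:weakcpct}, the current $S$ must be flat. This contradicts $\Ebf(S, \Bbf_{\sigma_\infty}(w_\infty)) \ge \bar{c} > 0$, which holds by the convergence of excesses~\eqref{eq:excesslim}.

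The main obstacle is the passage from frequency decay to a genuine Dirichlet collapse of $T_k$ onto $\Mcal_k$ near $w_k$: unlike the height hypothesis used in Lemma~\ref{lem:currentcollapse}, the smallness of $\int_{\Bcal_{\sigma_k/2}(\mathbf{p}_k(w_k))}|DN^{(k)}|^2$ controls the excess-type Whitney cubes $\Wscr^{(k)}_e$ but not a priori the height-type cubes $\Wscr^{(k)}_h$, so it is essential to rule the latter out near the $Q$-point via the improved height bound~\eqref{eq:LucahtbdT} — which is precisely where the hypothesis $\Theta(T_k, w_k) \ge Q$ enters. The remaining bookkeeping (transferring Proposition~\ref{prop:flattening} and Lemma~\ref{lem:currentcollapse} to balls centered at $w_k$, and the harmless mismatch between the scales $\sigma_k$ and $\sigma_k/2$) is routine.
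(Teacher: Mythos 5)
Your proposal is correct and follows essentially the same route as the paper: argue by contradiction, note the scales stay bounded away from zero, convert frequency decay into decay of $\int|DN^{(k)}|^2$, use $\Theta(T_k,w_k)\geq Q$ to reduce to cubes in $\Wscr^{(k)}_e$ so that splitting before tilting (Proposition~\ref{prop:excesssplitting}) forces the largest Whitney cube to shrink, and then run the collapse/constancy argument of Lemma~\ref{lem:currentcollapse} and Proposition~\ref{prop:weakcpct} to contradict the assumed excess lower bound. The only (harmless) deviation is that you obtain $\Dbf_k\to 0$ by bounding $\Hbf_k$ uniformly from above via Lemma~\ref{lem:Lucaheightbd}, whereas the paper splits into the cases $\liminf_k\Hbf_k=0$ (handled directly by Lemma~\ref{lem:currentcollapse}) and $\liminf_k\Hbf_k>0$; you also make explicit the lower bound on $\boldsymbol{m}_0^{(k)}$ that the paper leaves implicit.
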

\begin{proof}
We argue in an analogous way to that in the proof of Lemma~\ref{lem:currentcollapse}. Once again, we may assume that the scales $\eta_k \coloneqq \frac{\rho_k}{t_k}$ satisfy $\limsup_k \eta_k \geq \delta > 0$. Firstly, we distinguish between two possibilities. Either
\begin{enumerate}[(a)]
    \item $\liminf_k\Hbf_k(\delta) = 0$;
    \item\label{eq:lrght} $\liminf_k \Hbf_k(\delta) > 0$.
\end{enumerate}
In the first scenario, we immediately apply Lemma~\ref{lem:currentcollapse} to conclude. So let us assume that~\eqref{eq:lrght} holds (up to subsequence). We again argue by contradiction. If the statement of the lemma is false, then the almost-monotonicity of the frequency tells us that
\begin{equation}\label{eq:contr}
\Ibf_{k} (\delta) \longrightarrow 0 \qquad \text{but} \qquad \Ebf\big(T_k, \Bbf_{\frac{\rho_k}{t_k}}(w_k)\big) \geq c(\delta) > 0.
\end{equation}
Combining this with~\eqref{eq:lrght}, we have
\begin{equation}\label{eq:Dirdecay}
    \Dbf_k(\delta) = \int_{\Bcal_\delta(\mathbf{p}_k(w_k))} |DN^{(k)}|^2 \longrightarrow 0.
\end{equation}
Projecting $\Bcal_\delta(\mathbf{p}_k(w_k)) \subset \Mcal_k$ to the reference plane $\pi_k$, we may assume that $\eps_3>0$ is small enough to ensure that $B_{\frac{15}{16}\delta}(\mathbf{p}_{\pi_k}(w_k)) \subset \mathbf{p}_{\pi_k}(\Bcal_\delta(\mathbf{p}_k(w_k)))$. Consider the family of Whitney cubes $\Wscr^{(k)}$ on $\pi_k$. If no cube in this family intersects $B_{\frac{15}{16}\delta}(\mathbf{p}_{\pi_k}(w_k))$, then by construction, $N^{(k)} \equiv 0$ there, which we have assumed is no the case.

Thus, for each $k \in \N$ we may select $L_k \in \Wscr^{(k)}$ to be the largest cube that intersects $B_{\frac{15}{16}\delta}(\mathbf{p}_{\pi_k}(w_k))$. Letting $d_k \coloneqq \ell(L_k) < c_s\delta$, we proceed to show that $d_k \todown 0$. 

We may assume that each cube $L_k$ is in $\Wscr^{(k)}_e$. Indeed, the fact that $\Theta(T_k,w_k) \geq Q$ ensures that the only stopping condition is~\eqref{itm:excess}, up to replacing any cube in $\Wscr_n^{(k)}$ by a nearby one in $\Wscr^{(k)}_e$ of a comparable size. The fact that we do not stop refining due to~\eqref{itm:height} can be ensured by choosing the constants $C_e$ and $C_h$ appropriately, due to the height bound~\cite[Theorem~A.1]{DLS16centermfld}.

Now since $L_k \cap B_{\frac{15}{16}\delta}(\mathbf{p}_{\pi_k}(w_k)) \neq \emptyset$ and $\delta> c_s^{-1}\ell(L_k)$, we may choose a ball $B^k \subset B_{\frac{15}{16}\delta}(\mathbf{p}_{\pi_k}(w_k))$ of radius $\sqrt{m}d_k$ with $\dist(B^k,L_k) \leq \sqrt{m}d_k$. 

Since $L_k \in \Wscr_e^{(k)}$, then the center $q_k$ of $B^k$ satisfies $\dist(L_k,q_k) \leq 4\sqrt{m}d_k$, and so we may use the splitting before tilting of Proposition~\ref{prop:excesssplitting} to deduce that
\begin{equation}\label{eq:splittinglb}
    \int_{\Bcal_\delta(\mathbf{p}_k(w_k))} |DN^{(k)}|^2 \geq \int_{\boldsymbol{\Phi}_k(B_{d_k/4}(q_k))} |DN^{(k)}|^2 \geq c \boldsymbol{m}_0^{(k)}d_k^{m+2-2\delta_2},
\end{equation}
where $c= c(\beta_2,\delta_2,M_0,N_0,C_e,C_h)$. Combining this with~\eqref{eq:Dirdecay}, we deduce that $d_k \longrightarrow 0$. We then proceed exactly as in the proof of Lemma~\ref{lem:currentcollapse} to extract an $m$-dimensional area minimizing integral current $S$ that is a weak-$*$ limit of the currents $T_k$, with $S = Q\llbracket \pi \rrbracket$ for some $m$-plane $\pi$, contradiction our large excess assumption.
\end{proof}

Let us now prove Theorem~\ref{thm:freqlowerbd}.
\begin{proof}[Proof of Theorem~\ref{thm:freqlowerbd}]
We will once again prove this by contradiction. We again adopt the notation in Section~\ref{sct:flattening}, and we let $\Ibf_k \coloneqq \Ibf_{\mathbf{p}_k(w_k),N^{(k)}}$ and we use analogous notation for all related quantities. If the statement of the theorem is false, we can find rescaled currents $T_k = T_{x_k,t_k}\mres \Bbf_{6\sqrt{m}}$, center manifolds $\Mcal_k$, $\Mcal_k$-normal approximations $N^{(k)}$, and flattening intervals $(s_k,t_k]$ around $\mathbf{p}_k(w_k)$ for $w_k \in D_{[Q,Q+\eps]} T_k$ such that the frequency $\Ibf_k$ satisfies
    \begin{equation}\label{eq:scales}
        \Ibf_k\left(\frac{\rho_k}{t_k}\right) \longrightarrow 0 \qquad \text{for some $\frac{\rho_k}{t_k} \in \left(\frac{s_k}{t_k}, R\right]$}.
    \end{equation}
    with
    \[
        R \coloneqq \liminf_k\dist(\mathbf{p}_k(w_k), \partial \Bcal_3(\mathbf{p}_k(x_k)).
    \]
    Observe that $R \geq 2$, since $w_k \in \Bbf_1(x_k)$. Up to selecting a subsequence, we may further assume that $w_k \to w$. We now have three possible cases to consider; either
\begin{enumerate}[(a)]
	\item\label{itm:comparableblowup} $s_k > 0$ for every $k$ and $\limsup_{k \to \infty} \frac{s_k}{\rho_k} > 0$;
    \item\label{itm:incomparableblowup} $s_k > 0$ for every $k$ and $\limsup_{k \to \infty} \frac{s_k}{\rho_k} = 0$;
    \item\label{itm:contactblowup} $s_k = 0$ eventually.
\end{enumerate}
In all three cases, we rescale the blow-ups of the current so that we are blowing up at the scales $\rho_k$ instead of $t_k$. Let $\frac{\bar{s}_k}{t_k} \in \big(\frac{R \rho_k}{2 t_k}, \frac{R\rho_k}{t_k}\big]$ be the scale at which the reverse Sobolev inequality of Corollary~\ref{cor:reverseSob} holds for $r = \frac{\rho_k}{t_k}$. Then let $\bar{r}_k \coloneqq \frac{2\bar{s}_k}{Rt_k} \in \big(\frac{\rho_k}{t_k}, \frac{2\rho_k}{t_k}\big]$. Let
    \[
        \bar{T}_k \coloneqq (\iota_{w_k,\bar{r}_k})_\sharp T_k = \big( (\iota_{w_k,\bar{r}_k t_k})_\sharp T \big)\mres \Bbf_{\frac{6\sqrt{m}}{\bar{r}_k}}, \quad \bar{\Sigma}_k \coloneqq \iota_{w_k,\bar{r}_k}(\Sigma_k), \quad \widebar{\Mcal}_k \coloneqq \iota_{w_k,\bar{r}_k}(\Mcal_k),
    \] 
    and let
    \[
        \bar{\boldsymbol{m}}_0^{(k)} \coloneqq \max\{\boldsymbol{c}(\bar{\Sigma}_k)^2, \Ebf(\bar{T}_k, \Bbf_{6\sqrt{m}})\}.
    \]
Since the ambient manifolds $\bar{\Sigma}_k$ converge in $\Crm^{3,\eps_0}_\loc$ to $T_w\Sigma \cong \R^{m+\bar{n}} \times \{0\}$, we have $\boldsymbol{c}(\bar{\Sigma}_k)^2 \xrightarrow{k \to \infty} 0$. Moreover, we we have $6\sqrt{m}\bar{r}_k \in \big(\frac{12\sqrt{m}\rho_k}{t_k}, \frac{24\sqrt{m}\rho_k}{t_k}\big]$, so Lemma~\ref{lem:freqexcessdecay} tells us that
    \begin{equation}\label{eq:excessdecay}
    \Ebf(\bar{T}_k, \Bbf_{6\sqrt{m}}) \leq C \Ebf\big(T_k, \Bbf_{\frac{24\sqrt{m}\rho_k}{t_k}}(w_k)\big) \longrightarrow 0.
    \end{equation}
    Thus, $\bar{\boldsymbol{m}}_0^{(k)} \to 0$. 

    By an analogous argument to that in the proof of Proposition~\ref{prop:weakcpct}, we may thus extract a subsequence for which $T_k \toweakstar \pi_\infty$ for some $m$-dimensional plane $\pi_\infty$. We will let $\mathbf{p}_\infty$ denote the orthogonal projection to this plane. We will use the notation $\bar{\mathbf{p}}_k$ for the projection map in Assumption~\ref{asm:projcm} for the rescaled center manifold $\widebar{\Mcal}_k$.
    
    Define
    \[
        \bar{N}^{(k)}: \widebar{\Mcal}_k \to \R^{m+n}, \qquad \bar{N}^{(k)}(p) \coloneqq \frac{1}{\bar{r}_k} N^{(k)}(\mathbf{p}_k(w_k) + \bar{r}_k p),
    \]
    and let
    \[
        v_k \coloneqq \frac{\bar{N}^{(k)} \circ \textbf{e}_k}{\mathbf{h}_k}, \qquad v_k:\pi_k \supset B_R \to \Acal_Q(\R^{m+n}),
    \]
    where $\textbf{e}_k$ is the exponential map at $p_k \coloneqq \frac{\boldsymbol{\Phi}_k(0)}{\bar{r}_k} \in \widebar{\Mcal}_k$ defined on $B_R \subset \pi_\infty \simeq T_{p_k} \widebar{\Mcal}_k$ and $\mathbf{h}_k \coloneqq \|\bar{N}_k\|_{\Lrm^2(\Bcal_{\frac{R}{2}})}$.
    
    Notice that 
     \[
        \mathbf{p}_k(w_k) \longrightarrow \mathbf{p}_\infty (w) \qquad \text{as $k \to \infty$.}
     \]
    
    This decay of the excesses at our chosen scales $\frac{\rho_k}{t_k}$ further allows us to conclude that 
    \[
        \widebar{\Mcal}_k \longrightarrow \pi_\infty \quad \text{in $\Crm^{3,\frac{\kappa}{2}}(\Bbf_{\frac{4}{3}R})$}, \qquad \mathbf{e}_k \longrightarrow \id : \pi_\infty \supset B_R \to B_R \quad \text{in $\Crm^{2,\frac{\kappa}{2}}$}.
    \]
    The former is an easy consequence of the estimates in Proposition~\ref{prop:centermfld}, whereas the latter is a technical result coming from the regularity of $\widebar{\Mcal}_k$; see~\cite[Proposition~A.4]{DLS16blowup} and the proof of~\cite[Lemma~6.1]{DLS16centermfld} for more details.
    
    Now we may pass the reverse Sobolev inequality that holds at scale $\frac{\bar{s}_k}{t_k}$ around $w_k$ to the maps $v_k$:
    \begin{align*}
        \int_{B_{\frac{R}{2}}} |v_k|^2 &\geq C\mathbf{h}_k^{-2} \int_{\Bcal_{\frac{R}{2}}} |\bar{N}^{(k)}|^2 \dd\Hcal^m \\
        &=\mathbf{h}_k^{-2}\bar{r}_k^{-2}\int_{\Bcal_{\frac{R}{2}}} |N^{(k)}(\mathbf{p}_k(w_k)+\bar{r}_k p)|^2 \dd\Hcal^m(p) \\
        &=\mathbf{h}_k^{-2}\bar{r}_k^{-m-2} \int_{\Bcal_{\frac{R\bar{r}_k}{2}}(\mathbf{p}_k(w_k))} |N^{(k)}|^2 \dd \Hcal^m \\
        &= \mathbf{h}_k^{-2}\bar{r}_k^{-m-2} \int_{\Bcal_{\frac{\bar{s}_k}{t_k}}(\mathbf{p}_k(w_k))} |N^{(k)}|^2 \dd \Hcal^m \\
        &\geq C\mathbf{h}_k^{-2} \bar{r}_k^{-m}\int_{\Bcal_{\frac{\bar{s}_k}{t_k}}(\mathbf{p}_k(w_k))} |DN^{(k)}|^2 \\
        &\geq C \int_{B_{\frac{R}{2}}} |Dv_k|^2.
    \end{align*}
    Here and in all that follows, we use the notation $\Bcal$ for both the balls on the original center manifolds $\Mcal_k$ and the rescaled and recentered ones $\widebar{\Mcal}_k$. It should always be clear from context which of these the ball lies in. Thus,
    \[
        \limsup_k \| v_k \|_{\Wrm^{1,2}(B_{\frac{R}{2}})} < \infty,
    \] 
    and so by the Banach-Alaoglu weak-$*$ compactness theorem and the reflexivity of the space $\Wrm^{1,2}$, we can extract a subsequence for which
    \[
        v_k \toweak v \quad \text{in $\Wrm^{1,2}(B_{\frac{R}{2}};\Acal_Q(\R^{m+n}))$.}
    \]
    Furthermore, Rellich-Kondrachov tells us that
    \[
        v_k \longrightarrow v \quad \text{strongly in $\Lrm^2(B_{\frac{R}{2}};\Acal_Q)$}.
    \]

    Now let us analyze the properties of this limit $v$. First of all, clearly $v$ is non-trivial, since our choice of normalization tells us that $\|v\|_{\Lrm^2(B_{\frac{R}{2}})} = 1$. 
    
    Moreover, the average of the sheets of $v$ vanishes. Indeed, arguing as in~\cite[(7.4)]{DLS16blowup}, the estimate~\eqref{eq:normalavgest} in Theorem~\ref{thm:Nlocalest} for the average of the sheets of $N^{(k)}$ allows us to deduce that
    \begin{align*}
        \int_{\Bcal_{\frac{R\bar{r}_k}{2}}(\mathbf{p}_k(w_k))} |\boldsymbol{\eta}\circ N^{(k)}| &\leq C\bar{\boldsymbol{m}}_0^{(k)}\bar{r}_k\sum_i \ell(L_i)^{m+2+\frac{\gamma_2}{2}}\\
        &\qquad+ \frac{C}{\bar{r}_k}\int_{\Bcal_{\frac{R\bar{r}_k}{2}}(\mathbf{p}_k(w_k))} \Gcal(N^{(k)}, Q\llbracket \boldsymbol{\eta}\circ N^{(k)}\rrbracket)^2 \\
        &\leq \frac{C}{\bar{r}_k}\int_{\Bcal_{R\bar{r}_k}(\mathbf{p}_k(w_k))} |N^{(k)}|^2,\notag
    \end{align*}
where $\{L_i\}_{i \in \N}$ is the family of cubes corresponding to the collection of cube-ball pairs $\Lscr = \{(L, B(L))\}$ from~\cite[Section~4.1]{DLS16blowup} that intersect $\mathbf{p}_{\pi_k}\big(\Bcal_{\frac{R\bar{r}_k}{2}}(\mathbf{p}_k(w_k))\big)$. Here, the final constant $C$ on the right-hand side depends on $R$ and the quantities stated in Theorem~\ref{thm:Nlocalest}. The final inequality follows from the arguments of~\cite[Section~4.1]{DLS16blowup} and the reverse Sobolev inequality of Corollary~\ref{cor:reverseSob}. Rescaling these estimates and using the $\Crm^0$-control from Theorem~\ref{thm:Nlocalest} for $\bar{N}^{(k)}$ combined with the excess decay~\eqref{eq:excessdecay}, we thus have
\begin{align*}
    \int_{B_{\frac{R}{2}}} |\boldsymbol{\eta}\circ v_k| &\leq C\mathbf{h}_k^{-1}\int_{\Bcal_{\frac{R}{2}}} |\boldsymbol{\eta}\circ \bar{N}^{(k)}| \\
    &= \mathbf{h}_k^{-1}\bar{r}_k^{-m-1} \int_{\Bcal_{\frac{R\bar{r}_k}{2}}(\mathbf{p}_k(w_k))} |\boldsymbol{\eta}\circ N^{(k)}| \\
    &\leq C\mathbf{h}_k^{-1}\bar{r}_k^{-m-2}\int_{\Bcal_{R\bar{r}_k}(\mathbf{p}_k(w_k))} |N^{(k)}|^2 \\
    &\leq C\mathbf{h}_k^{-1}\int_{\Bcal_{R}} |\bar{N}^{(k)}|^2 \\
    &\leq C\mathbf{h}_k \\
    &\leq [\bar{\boldsymbol{m}}_0^{(k)}]^{\frac{1}{2m}}\big[\Hcal^m(\Bcal_R(\bar{\Mcal}_k))\big]^{\frac{1}{2}} \\
    &\leq C[\bar{\boldsymbol{m}}_0^{(k)}]^{\frac{1}{2m}+\frac{1}{4}} \longrightarrow 0.
\end{align*}

    Combining this with the strong $\Lrm^2$-convergence of $v_k$ to $v$, we indeed have $\boldsymbol{\eta}\circ v \equiv 0$ almost-everywhere on $B_{\frac{R}{2}}$.
    
    Finally, one can show that $v$ is Dir-minimizing, and that in addition $v_k$ converges to $v$ strongly in $W^{1,2}_\loc$. This is proved by contradiction, using the existence of a better competitor function for the Dirichlet energy to build a suitable competitor current for $\bar{T}_k$, thus contradicting its area minimizing property. The argument may be found in~\cite[Section~7.3]{DLS16blowup}, and is omitted here.
    
    In summary, we have shown that the maps $v_k$ converge strongly in $\Wrm^{1,2}_\loc\cap \Lrm^2$ on $B_{\frac{R}{2}} \subset \pi_\infty$ to a map $v$ such that
    \begin{itemize}
        \item $v$ is non-trivial;
        \item $\boldsymbol{\eta}\circ v  \equiv 0$ almost-everywhere on $B_{\frac{R}{2}}$;
        \item $v$ is Dir-minimizing on $B_{\frac{R}{2}}$.
    \end{itemize}

    Moreover, the assumption~\eqref{eq:scales}, rescaled appropriately, tells us that
    \begin{equation}\label{eq:scaledfreqdecay}
        \bar{\Ibf}_k\left(\frac{1}{2}\right) \leq C\bar{\Ibf}_k\left(\frac{\rho_k}{\bar{r}_k t_k}\right) \leq C \Ibf_k\left(\frac{\rho_k}{t_k}\right) \xrightarrow{k \to \infty} 0,
    \end{equation}
    where $\bar{\Ibf}_k \coloneqq \Ibf_{0,\bar{N}^{(k)}}$.
    
    We will now reach a contradiction by showing that the $Q$-points $w_k$ for $T_k$ persist to the limit; namely, that $v(0) = Q\llbracket 0 \rrbracket$. We argue slightly differently for each of the cases outlined previously. We will henceforth let $\bar{\rho}_k \coloneqq \frac{\rho_k}{t_k}$. 
    
    \proofstep{Case~\eqref{itm:comparableblowup}.} In this case, the stopping condition~\eqref{eq:stop} for the intervals of flattening and Lemma~\ref{lem:Lucaheightbd} tell us that
    \[
    	\int_{\Bcal_{\frac{\sigma s_k}{t_k}}} |N^{(k)}|^2 \leq C \boldsymbol{m}_0^{(k)}\left(\frac{s_k}{t_k}\right)^{m+4-2\delta_2},
    \]
    for a (small) geometric constant $\sigma$. Moreover, the (localized) Sobolev embedding~\cite[Proposition~2.11]{DLS_MAMS} and the fact that (up to subsequence) $\rho_k \in (s_k,Cs_k]$ tells us that
    \begin{equation}\label{eq:SE}
    	\int_{\Bcal_{\bar\rho_k}} |N^{(k)}|^2 \leq C \int_{\Bcal_{\frac{\sigma s_k}{t_k}}} |N^{(k)}|^2 + C\bar\rho_k^{2}\int_{\Bcal_{\frac{\bar\rho_k}{2}}} |DN^{(k)}|^2.
    \end{equation}
    Now the excess splitting of Proposition~\ref{prop:excesssplitting} at scale $\frac{s_k}{t_k}$ and the comparability of $s_k$ and $\rho_k$ gives
    \[
    	\int_{\Bcal_{\bar\rho_k}} |DN^{(k)}|^2 \geq c \boldsymbol{m}_0^{(k)}\bar\rho_k^{m+2-2\delta_2}.
    \]
    Together with~\eqref{eq:SE} this yields
    \begin{equation}\label{eq:freqlower}
    	\int_{\Bcal_{\bar\rho_k}} |N^{(k)}|^2 \leq C \bar\rho_k^{2} \int_{\Bcal_{\frac{\bar\rho_k}{2}}} |DN^{(k)}|^2 \leq C \bar\rho_k^{2} \Dbf_k(\bar\rho_k).
    \end{equation}
    Finally, one may replace the left-hand side of~\eqref{eq:freqlower} with $\Hbf_k(\bar\rho_k)$ via the following estimate:
    \[
    	-\int_\Mcal \frac{1}{d(x)} \phi'\left(\frac{d(x)}{\bar\rho_k}\right) |N^{(k)}|^2 \dd x = 2\int_{\Bcal_{\bar\rho_k}\setminus\Bcal_{\frac{\bar\rho_k}{2}}} \frac{1}{d(x)} |N^{(k)}|^2 \dd x \leq C \bar\rho_k^{-1} \int_{\Bcal_{\bar\rho_k}} |N^{(k)}|^2.
    \]
    This, however, is in contradiction with~\eqref{eq:scaledfreqdecay}.

	\proofstep{Case~\eqref{itm:incomparableblowup}.}
	This case follows similarly to case~\eqref{itm:comparableblowup}, only now we have to propagate the lower frequency bound observed at the stopping scale $s_k$ up to the blow-up scale $\rho_k$.
	
	Proceeding as in case~\eqref{itm:comparableblowup} only at scale $\frac{10s_k}{t_k}$, we conclude that
	\[
		\Ibf_k\left(\frac{10s_k}{t_k}\right) \geq \eta,
	\]
	where $\eta$ is a small geometric constant. We may now use the almost-monotonicity of the frequency from Theorem~\ref{thm:monfreq} (cf.~\eqref{eq:monfreq}), with a choice of $c_0$ small enough so that
	\[
		\eta - e^{f(\bar\rho_k)}c_0 > 0,
	\]
	to conclude that we have a uniform lower bound for $\Ibf(\bar\rho_k)$, once again contradicting~\eqref{eq:scaledfreqdecay}.

\proofstep{Case~\eqref{itm:contactblowup}.} Finally, we need to deal with the case where $s_k = 0$ eventually along our sequence of flattening intervals. Let us fix an arbitrary center manifold $\Mcal$ and corresponding $\Mcal$-normal approximation $N$ in our sequence. We will denote the interval of flattening for this center manifold by $(0,t]$, and we will omit dependencies on $N$ for $\Ibf$ and related quantities.  In light of the estimates in Proposition~\ref{prop:firstvarest} and a more careful examination of the proof in~\cite{DLS16blowup}, we may conclude that
\[
	\partial_r \log\left(r^{-(m-1)}\Hbf(r)\right) \geq -C + \frac{2}{r}\Ibf(r) - C\Hbf(r)^{\gamma_3} - Cr\Ibf(r),
\]
for every $r \in (0,1)$, irrespectively of the existence of an a priori lower frequency bound. Thus, for any $0<\rho < 1$, letting
\[
	\Lambda(\rho) \coloneqq \int_\rho^1 \frac{\Ibf(r)}{r},
\]
we have
\[
	e^{2\Lambda(\rho)}\rho^{-(m-1)}\Hbf(\rho) \leq C \Hbf(1) \leq C.
\]
Combining this with the reverse Sobolev inequality in Corollary~\ref{cor:reverseSob} (or the uniform upper frequency bound in Theorem~\ref{thm:frequpperbd}), we deduce that
\[
	\Dbf(\rho) \leq C\rho^{m-2} e^{-2\Lambda(\rho)}.
\]
On the other hand, Proposition~\ref{prop:Dirbd} gives the decay rate
\[
	\Dbf(\rho) \leq C\rho^{m+2 -2\delta_2}.
\]
This forces the inequality
\[
	e^{-2\Lambda(\rho)} \geq c\rho^{2(2-\delta_2)},
\]
or equivalently
\[
	\Ibf(\rho) \geq c(2-\delta_2) \qquad \forall \ \rho \in (0,1),
\]
contradicting~\eqref{eq:scaledfreqdecay} for $k$ sufficiently large.

\end{proof}

\section{Persistence of $Q$-points}\label{sct:Qpts}
We now construct our limiting Dir-minimizer using our blow-up sequence with varying centers $x_k$, which we assume converge to some point $x$. Recall that we wish to blow up at the scales $r_k$ around $x_k$, along which $T$ has a flat tangent cone as in Proposition~\ref{prop:weakcpct}. Thus, we need to take a diagonal sequence of center manifolds with the intervals of flattening $(s_k, t_k] \ni r_k$ centered at $x_k$, and rescale them appropriately so that the reverse Sobolev inequality of Corollary~\ref{cor:reverseSob} holds at scale $1$, analogously to that in the preceding section. As usual, we let $\Mcal_k$ denote the center manifold at scale $t_k$ around $x_k$, with corresponding current $T_k = T_{x_k,t_k}\mres \Bbf_{6\sqrt{m}}$. 

Let $\frac{\bar{s}_k}{t_k} \in \big(\frac{3r_k}{2 t_k}, \frac{3r_k}{t_k}\big]$ be the scale at which the reverse Sobolev inequality holds for $r = \frac{r_k}{t_k}$. Then let $\bar{r}_k \coloneqq \frac{2\bar{s}_k}{3t_k} \in \big(\frac{r_k}{t_k}, \frac{2r_k}{t_k}\big]$. Let
\[
    \bar{T}_k \coloneqq (\iota_{0,\bar{r}_k})_\sharp T_k =  \big((\iota_{x_k,\bar{r}_k t_k})_\sharp T\big)\mres \Bbf_{\frac{6\sqrt{m}}{\bar{r}_k}}, \qquad \bar{\Sigma}_k \coloneqq \iota_{0,\bar{r}_k}\Sigma_k, \qquad \widebar{\Mcal}_k \coloneqq \iota_{0,\bar{r}_k}\Mcal_k,
\]
and let
\[
    \bar{\boldsymbol{m}}_0^{(k)} \leq \max\{\boldsymbol{c}(\bar{\Sigma})^2, \Ebf(\bar{T}_k, \Bbf_{6\sqrt{m}})\}.
\]
Since the ambient manifolds $\bar{\Sigma}_k$ converge in $\Crm^{3,\eps_0}_\loc$ to $T_x\Sigma \simeq \R^{m+\bar{n}} \times \{0\}$, for $k$ sufficiently large we have
\[
    \boldsymbol{c}(\bar{\Sigma}_k)^2 \longrightarrow 0.
\]
We will use the notation $\bar{\mathbf{p}}_k$ for the projection map in Assumption~\ref{asm:projcm} for the rescaled center manifold $\widebar{\Mcal}_k$. Moreover, $6\sqrt{m}\bar{r}_k \in \big(\frac{12\sqrt{m}\rho_k}{t_k}, \frac{24\sqrt{m}\rho_k}{t_k}\big]$, so Lemma~\ref{lem:freqexcessdecay} tells us that
\begin{equation}
    \Ebf(\bar{T}_k, \Bbf_{6\sqrt{m}}) \leq C \Ebf\big(T_k, \Bbf_{\frac{24\sqrt{m}r_k}{t_k}}\big) \longrightarrow 0.
\end{equation}
Thus, $\bar{\boldsymbol{m}}_0^{(k)} \to 0$.

We now argue in the same way as in the proof of Theorem~\ref{thm:freqlowerbd}. Let $\pi_\infty$ be the plane defining the flat tangent cone of $T_k$ as in Proposition~\ref{prop:weakcpct}. We will let $\mathbf{p}_\infty$ denote the orthogonal projection to this plane.

Define
\[
    \bar{N}^{(k)}: \widebar{\Mcal}_k \to \R^{m+n}, \qquad \bar{N}^{(k)}(p) \coloneqq \frac{1}{\bar{r}_k} N^{(k)}(\bar{r}_k p),
\]
and let
\[
    u_k \coloneqq \frac{\bar{N}^{(k)} \circ \textbf{e}_k}{\mathbf{h}_k}, \qquad u_k:\pi_k \supset B_3 \to \Acal_Q(\R^{m+n}),
\]
where $\textbf{e}_k$ is the exponential map at $p_k \coloneqq \frac{\boldsymbol{\Phi}_k(0)}{\bar{r}_k} \in \widebar{\Mcal}_k$ defined on $B_3 \subset \pi_k \simeq T_{p_k} \widebar{\Mcal}_k$
and $\mathbf{h}_k \coloneqq \|\bar{N}^{(k)}\|_{\Lrm^2(\Bcal_{\frac{3}{2}})}$. 
As before, this decay of the excesses at our chosen scales $\frac{r_k}{t_k}$ further allows us to conclude that
\[
    \widebar{\Mcal}_k \longrightarrow \pi_\infty \quad \text{in $\Crm^{3,\frac{\kappa}{2}}(\Bbf_{\frac{4}{3}})$}, \qquad \mathbf{e}_k \longrightarrow \id: \pi_\infty \supset B_3 \to B_3 \quad \text{in $\Crm^{2,\frac{\kappa}{2}}$}.
\]
Now we pass the reverse Sobolev inequality that holds at scale $\frac{\bar{s}_k}{t_k}$ around $x_k$ to the maps $u_k$:
\[
    \int_{B_{\frac{3}{2}}} |u_k|^2 \geq C \int_{B_{\frac{3}{2}}} |Du_k|^2.
\]  
Once again, this allows us to use both weak-$*$ compactness and the Rellich-Kondrachov Compact Embedding Theorem to deduce that there exists a map $u \in \Wrm^{1,2}(B_{\frac{3}{2}};\Acal_Q)$ such that up to subsequence,
\[
    u_k \longrightarrow u \quad \text{strongly in $\Lrm^2(B_{\frac{3}{2}};\Acal_Q)$}.
\]
Combining this with the reverse Sobolev inequality, we can in fact further improve this strong convergence to that in $\Wrm^{1,2}_\loc(B_{\frac{3}{2}};\Acal_Q)$. 

Moreover, by arguing exactly as in the proof of the lower frequency bound, $u$ satisfies the following properties:
\begin{itemize}
    \item $u$ is non-trivial;
    \item $\boldsymbol{\eta}\circ u \equiv 0$ almost-everywhere on $B_{\frac{3}{2}}$;
    \item $u$ is Dir-minimizing on $B_{\frac{3}{2}}$.
\end{itemize}
It remains to show that whenever $y_k \to y$, with $y_k \in \cl{\Bbf}_1 \cap \Sing_{[Q,Q+\eps]}T_k$, we have $u (y) = Q \llbracket 0 \rrbracket$. We already know that $y_k$ is a $Q$-point for $T_k$, but this does not ensure that the excess should decay quickly enough close to $y_k$; we might need stop our center manifolds at scales close to these points. Even in the case where we can refine indefinitely around each $y_k$, it is still necessary to check that this behaviour persists in the limit as we take $k \to \infty$. We consider cases depending on the non-centered intervals of flattening for $\Mcal_k$ around $\mathbf{p}_k(y_k)$. For $\tilde{s}_k \coloneqq s(y_k)$ as in Definition~\ref{def:nonctrrefine}, there are 3 possibilities: 
\begin{enumerate}[(a)]
    \item There is a subsequence of indices $k$ for which $\tilde{s}_k = 0$; namely, $\mathbf{p}_k(y_k)$ which lie in the contact sets $\boldsymbol{\Phi}_k(\boldsymbol{\Gamma}_k)$ for the center manifolds $\Mcal_k$, \label{itm:contactcase}
    \item There is a subsequence of indices $k$ with $\frac{\tilde{s}_k}{r_k} \todown 0$; namely, we stop refining around $\mathbf{p}_{\pi_k}(y_k)$ at scales that decay relative to the blow-up scales $r_k$, \label{itm:quasicontact}
    \item We have $\liminf_k \frac{\tilde{s}_k}{r_k} > 0$; we stop refining around $\mathbf{p}_{\pi_k}(y_k)$ at scales comparable to the blow-up scales. \label{itm:Lucacase}
\end{enumerate}
Note that these points $y_k$ are \emph{not} the centers for our center manifold constructions; they are simply points lying in $\cl{\Bbf}_1$ that are `captured' by $T_k$.

The idea is as follows. We wish to establish persistence of $Q$-points at the blow-up scales $r_k$, by exploiting the uniform frequency bound of Theorem~\ref{thm:freqlowerbd}. 

Unfortunately, the scales $r_k$ may be very far from the scales at which we stop refining around $y_k$ and inherit the behaviour from $T_k$. These stopping scales are those at which we may use the lower frequency bound to show that the singularities persist. We then propagate this persistence up to the larger scales $r_k$ that we are interested in, via the almost-monotonicity of Theorem~\ref{thm:monfreq}.

\subsection{Case~\eqref{itm:contactcase}}
This case can be thought of as case~\eqref{itm:quasicontact} with `$\tilde{s}_k = 0$'. More precisely, we can continue our refinement procedure indefinitely around $\mathbf{p}_{\pi_k}(y_k)$,  
so the limiting frequency of $N^{(k)}$ at each $y_k$ exists.
\begin{lemma}\label{prop:contactfreq}
    Suppose that we have a center manifold $\Mcal$ with corresponding $\Mcal$-normal approximation $N$, rescaled current $T'$ and a point $w \in \spt T'$ for which $\mathbf{p}(w) \in \boldsymbol{\Phi}(\boldsymbol{\Gamma})$. Then the limit
    \begin{equation}\label{eq:freqlimit}
    \Ibf_{\mathbf{p}(w),N}(0) \coloneqq \lim_{r \todown 0} \Ibf_{\mathbf{p}(w),N}(r)\quad \text{exists}. 
    \end{equation}
\end{lemma}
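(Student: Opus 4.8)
The plan is to show that the regularized frequency $\Ibf_{\mathbf{p}(w),N}(r)$ converges as $r\todown 0$ by combining the almost-monotonicity of Theorem~\ref{thm:monfreq} with a uniform lower bound on $\Ibf_{\mathbf{p}(w),N}$ at all sufficiently small scales, so that the function is trapped between two controlled quantities. Since $\mathbf{p}(w)$ lies in the contact set $\boldsymbol{\Phi}(\boldsymbol{\Gamma})$, the refinement procedure never stops around $\mathbf{p}_{\pi_0}(w)$: for every scale $r\in(0,R]$ and every Whitney cube $L\in\Wscr$ meeting $B_{r}(\mathbf{p}_{\pi_0}(w))$ one has $\ell(L)<c_s r$ (this is the analogue of condition~\eqref{eq:go} valid at \emph{every} scale, since there is no stopping cube). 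In particular the non-centered interval of flattening around $w$ has $\tilde s(w)=0$, so the frequency $\Ibf_{\mathbf{p}(w),N}(r)$ is defined for all $r\in(0,R]$.

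First I would invoke Theorem~\ref{thm:monfreq}: there is a bounded function $f$ with $f(r)\to 0$ as $r\todown0$ such that $\boldsymbol{\Omega}_{\mathbf{p}(w),N}(a)\le \boldsymbol{\Omega}_{\mathbf{p}(w),N}(b)+f(b)$ for all $[a,b]\subset(0,R]$, where $\boldsymbol{\Omega}_{\mathbf{p}(w),N}=\log\max\{\Ibf_{\mathbf{p}(w),N},c_0\}$. Setting $g(r)\coloneqq \boldsymbol{\Omega}_{\mathbf{p}(w),N}(r)+F(r)$ where $F$ is a primitive of an integrable majorant of $f'$ with $F(r)\to0$ (concretely, with $f(r)=C[r+r^{\gamma_3 m}+r^2]$ as in the proof of Theorem~\ref{thm:monfreq}, one may take $F(r)=f(r)$ itself up to a sign, since the computation there shows $-\boldsymbol{\Omega}_w'(r)\le f'(r)$ pointwise a.e.), the function $g$ is monotone non-decreasing on $(0,R]$ and bounded below (it is $\ge \log c_0$ minus a bounded quantity). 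Hence $\lim_{r\todown0} g(r)$ exists in $[-\infty,\infty)$; since $F(r)\to0$, the limit $\lim_{r\todown0}\boldsymbol{\Omega}_{\mathbf{p}(w),N}(r)$ exists in $[\log c_0,\infty)$.

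It remains to upgrade convergence of $\boldsymbol{\Omega}=\log\max\{\Ibf,c_0\}$ to convergence of $\Ibf_{\mathbf{p}(w),N}$ itself. If $\lim_{r\todown0}\boldsymbol{\Omega}_{\mathbf{p}(w),N}(r)>\log c_0$, then $\Ibf_{\mathbf{p}(w),N}(r)>c_0$ for all small $r$, so $\boldsymbol{\Omega}=\log\Ibf$ near $0$ and $\Ibf_{\mathbf{p}(w),N}(0)=\exp(\lim\boldsymbol{\Omega})$ exists. If instead $\lim_{r\todown0}\boldsymbol{\Omega}_{\mathbf{p}(w),N}(r)=\log c_0$, I would argue that $\Ibf_{\mathbf{p}(w),N}(r)\to c_0$: indeed $\limsup_{r\todown0}\Ibf_{\mathbf{p}(w),N}(r)\le c_0$ from $\boldsymbol{\Omega}\todown\log c_0$, while for the liminf one uses~\eqref{eq:monfreq}, i.e. $\Ibf_{\mathbf{p}(w),N}(a)\le e^{f(b)}\max\{c_0,\Ibf_{\mathbf{p}(w),N}(b)\}$ — letting $b\todown0$ this forces $\liminf_{a\todown0}\Ibf_{\mathbf{p}(w),N}(a)$ to control, from below, any value that is itself a limit along a subsequence, and combined with the $\limsup$ bound pins the limit at $c_0$. (Alternatively, and more cleanly, one observes that on any sequence $r_j\todown0$ along which $\Ibf_{\mathbf{p}(w),N}(r_j)$ converges, the limit must be $\le c_0$; and applying the almost-monotonicity between $r_j$ and a smaller scale shows it is also $\ge \limsup$, so all subsequential limits coincide.) The main obstacle is precisely this last bookkeeping at the threshold $c_0$: the regularization by $\max\{\cdot,c_0\}$ destroys injectivity of $\log\max\{\cdot,c_0\}$ below $c_0$, so one cannot simply exponentiate, and one must use~\eqref{eq:monfreq} to rule out oscillation of $\Ibf$ beneath the cutoff. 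Everything else is a direct application of the already-established almost-monotonicity together with the fact that, on the contact set, all the frequency quantities $\Dbf_{\mathbf{p}(w),N}(r)$, $\Hbf_{\mathbf{p}(w),N}(r)$ are defined and absolutely continuous down to scale $0$.
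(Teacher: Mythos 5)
Your overall route is the paper's: deduce the lemma from the almost-monotonicity of Theorem~\ref{thm:monfreq} together with the fact that $f(r)\to 0$ as $r\todown 0$. The paper runs the trap directly on the \emph{integrated} inequality: setting $\boldsymbol{\Omega}_0\coloneqq\liminf_{r\todown 0}\boldsymbol{\Omega}_{\mathbf{p}(w),N}(r)$, for any $\eps>0$ it chooses $r_0$ with $f<\eps/2$ below $r_0$ and $\boldsymbol{\Omega}_{\mathbf{p}(w),N}(r_0)<\boldsymbol{\Omega}_0+\eps/2$, so that almost-monotonicity gives $\boldsymbol{\Omega}_{\mathbf{p}(w),N}(r)\leq\boldsymbol{\Omega}_0+\eps$ for all $r<r_0$, whence $\limsup\leq\liminf$. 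Your version of this step, however, rests on a claim that is not established: the proof of Theorem~\ref{thm:monfreq} does \emph{not} show $-\boldsymbol{\Omega}_w'(r)\leq f'(r)$ pointwise a.e. The pointwise estimate there is $-\boldsymbol{\Omega}_w'(r)\leq C\big[1+\partial_r\big(\Dbf_w(r)^{\gamma_3}\big)+\boldsymbol{\Sigma}_w(r)\Dbf_w'(r)/\Dbf_w(r)^2+r^{\gamma_3 m-1}\big]$, and the middle terms are only controlled after integration over $[a,b]$ (using Proposition~\ref{prop:Dirbd} and an integration by parts), which is how the boundary quantity $f(b)$ arises. Consequently $\boldsymbol{\Omega}+f$ is not known to be monotone. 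This is easily repaired: the integrated statement $\boldsymbol{\Omega}(a)\leq\boldsymbol{\Omega}(b)+f(b)$ alone gives $\limsup_{a\todown 0}\boldsymbol{\Omega}(a)\leq\boldsymbol{\Omega}(b)+f(b)$ for every $b$, and letting $b\todown 0$ along a sequence realizing the liminf yields the conclusion — which is exactly the paper's argument.

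The second, more substantive issue is your handling of the threshold case $\lim\boldsymbol{\Omega}=\log c_0$. The inequality~\eqref{eq:monfreq} only bounds $\Ibf$ at smaller scales \emph{from above} by $e^{f(b)}\max\{c_0,\Ibf(b)\}$; it imposes no constraint on values of $\Ibf$ below $c_0$, which are invisible to $\boldsymbol{\Omega}$. So if $\boldsymbol{\Omega}\to\log c_0$, nothing in the almost-monotonicity prevents $\Ibf$ from oscillating beneath $c_0$, and neither your first argument nor the ``cleaner'' subsequence variant (which again only invokes the one-sided bound) pins the limit at $c_0$. What your argument, like the paper's, genuinely proves is convergence of $\boldsymbol{\Omega}$, i.e.\ of the truncated frequency $\max\{\Ibf_{\mathbf{p}(w),N},c_0\}$. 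That is also all that is needed: in the application the lemma is immediately combined with the lower bound of Theorem~\ref{thm:freqlowerbd}, which forces $\Ibf>c_0$ at all sufficiently small scales around the relevant $Q$-points, so the truncation is inactive there and the limit of $\Ibf$ itself exists. You were right to notice that the truncation obstructs simply exponentiating, but the resolution is the lower frequency bound, not~\eqref{eq:monfreq}.
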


\begin{proof}
    The proof of this is a simple consequence of the almost-monotonicity of the frequency in Theorem~\ref{thm:monfreq}, where we crucially exploit the behaviour of the error function $f$. Consider
    \[
        \boldsymbol{\Omega}_0 \coloneqq \liminf_{r \todown 0} \boldsymbol{\Omega}_{\mathbf{p}(w),N}(r) \in [c_0, +\infty).
    \]
    Fix $\eps > 0$. By the nature of $f$ from Theorem~\ref{thm:monfreq}, we can choose $r_0 > 0$ (independently of the center manifold $\Mcal$ and the interval of flattening) such that
    \[
        f(r) < \frac{\eps}{2} \quad \text{for every $r < r_0$}, \qquad \text{and} \qquad \boldsymbol{\Omega}_{\mathbf{p}(w),N}(r_0) < \boldsymbol{\Omega}_0 + \frac{\eps}{2}.
    \]
    But then by the almost-monotonicity, for every $r < r_0$ it holds that
    \[
        \boldsymbol{\Omega}_{\mathbf{p}(w),N}(r) \leq \boldsymbol{\Omega}_{\mathbf{p}(w),N}(r_0) + \frac{\eps}{2} \leq \boldsymbol{\Omega}_0 + \eps.
    \]
    Thus, the limit in \eqref{eq:freqlimit} indeed exists. 
\end{proof}

Let us now proceed with the proof that $u(y) = Q\llbracket 0 \rrbracket$ in case~\eqref{itm:contactcase}. We can combine the result of the above Proposition with Theorem~\ref{thm:freqlowerbd} to deduce that we have the uniform lower bound
    \begin{equation}\label{eq:uniffreqlb}
        \Ibf_{\bar{\mathbf{p}}_k(y_k),\bar{N}^{(k)}}(0) \geq c_0 > 0.
    \end{equation}
    Furthermore, the $\Wrm^{1,2}_\loc$-convergence of $u_k$ to $u$ allows us to establish convergence of the frequencies
    \begin{equation}\label{eq:regfreqconv}
        \Ibf_{\mathbf{p}_k(y_k), N^{(k)}}(r)\longrightarrow \Ibf_{y,u}(r) \qquad \text{for each $r \in \left[0,\dist\big(y,\partial B_{\frac{3}{2}}\big)\right)$,}
    \end{equation}
    where $\Ibf_{y,u}$ is the regularized frequency of $u$ centered at $y$, defined via the same Lipschitz cutoff as in Definition~\ref{def:freq}. Notice that we may include $r=0$ in this convergence, due to Lemma~\ref{prop:contactfreq}.
    
    Then, a simple consequence of~\cite[Theorem~9.6]{DLsurvey} is that for non-trivial Dir-minimizers $u$, the limit $\Ibf_{y,u}(0)$ exists and that its value is the homogeneity of any tangent function $g$ that is the subsequential limit as $r \todown 0$ of the blow-ups
    \[
        z \mapsto \frac{r^{\frac{m-2}{2}} u(y+ rz)}{\left[\Dir(u, B_r(y))\right]^{\frac{1}{2}}}.
    \]
    Thus, we must necessarily have
    \[
        \Ibf_{y,u}(0) = I_{y,u}(0) \geq c_0 > 0.
    \]
    This allows us to apply Lemma~\ref{lem:freqdecay} to conclude.

Finally, we deal with the `quasicontact' case. This is proven in essentially the same way as case~\eqref{itm:contactcase}, but we cannot take the limit of the frequency \emph{all the way to zero} around every fixed $y_k$. However, we can use the information of the frequency at zero from the limiting Dir-minimizer as $k \to \infty$.

\subsection{Case~\eqref{itm:quasicontact}}
In this case, we use the same reasoning as for the case~\eqref{itm:contactcase}, only now we must stop refining at scales close to $y_k$, that shrink to zero with $k$. We can still show persistence of singularities at these stopping scales, asymptotically. 

In this case, we proceed as follows. Firstly, observe that there is almost monotonicity~\eqref{eq:monfreq} for the frequency $\Ibf_{\mathbf{p}_k(y_k),N^{(k)}}$ for all scales $r \in \left(\frac{\tilde{s}_k}{t_k}, 1\right]$, where the function $f$ in Theorem~\ref{thm:monfreq} is \emph{independent of $k$}.
    
Due to the nature of $f$, an analogous argument to that in the proof of Lemma~\ref{prop:contactfreq} yields the existence of the limit
\[
    \Ibf_0 \coloneqq \lim_{k\to \infty} \Ibf_{\mathbf{p}_k(y_k),N^{(k)}}\left(\frac{2\tilde{s}_k}{t_k}\right).
\]

By Theorems~\ref{thm:frequpperbd} and~\ref{thm:freqlowerbd}, we can deduce that
\[
    0 < c_0 \leq \Ibf_0 \leq C_0 < \infty.
\]
We may now use the almost-monotonicity of the frequency to choose $r_0 > \frac{2\tilde{s}_k}{t_k}$ independently of $k$ such that
\[
    \frac{c_0}{2} \leq \Ibf_{\mathbf{p}_k(y_k),N^{(k)}}(r) = \Ibf_{\bar{\mathbf{p}}_k(y_k),\bar{N}^{(k)}}\left(\frac{r}{\bar{r}_k}\right) \leq 2c_0 \qquad \text{for every $r \in \left(\frac{2\tilde{s}_k}{t_k}, r_0\right],$}
\]
provided that we take $k$ sufficiently large. But now we can once again use the convergence~\eqref{eq:regfreqconv} and~\cite[Theorem~9.6]{DLsurvey}, combined with the fact that (up to subsequence) $\frac{\tilde{s}_k}{r_k} \todown 0$, to deduce that
\[
    \frac{c_0}{4} \leq \Ibf_{y,u}(r) \leq 4c_0 \qquad \text{for every $r \in \left(0, \frac{r_0}{\bar{r}_k}\right].$}
\]
We may now once again use Proposition~\ref{lem:freqdecay} to reach the desired conclusion.

\subsection{Case~\eqref{itm:Lucacase}}
In this case, (up to subsequence) the scales $\tilde{s}_k$ at which we stop refining around $\mathbf{p}_k(y_k)$ are comparable to the blow-up scales $r_k$. Take the cubes $L_k \in \Wscr^{(k)}$ with $\ell(L_k) \geq c_s\frac{\tilde{s}_k}{t_k}$ and $\dist(L_k,\mathbf{p}_{\pi_k}(y_k)) \leq \frac{\tilde{s}_k}{t_k}$. 

We claim that $u(y) = Q\llbracket 0 \rrbracket$. In this case, we are unable to exploit the uniform lower bound of the frequency over the flattening intervals, since this only tells us that $I_u$ has a positive lower bound at some scale comparable to $\liminf_{k \to \infty} \frac{\tilde{s}_k}{r_k} > 0$. We instead argue as follows.

Consider the strong Lipschitz approximations $f_k: B_{8r_{L_k}}(p_{L_k},\pi_{L_k}) \to \Acal_Q(\pi_{L_k}^\perp)$ from Theorem~\ref{thm:strongLip} for $T_k$. Take $g_k: \Mcal_k \to \Acal_Q(\R^{m+n})$ to be the reparameterization $N$ from ~\cite[Theorem~5.1]{DLS_multiple_valued} of $f = f_k$ to the center manifolds. We may now rescale and normalize $g_k$ in the same way as the $\Mcal_k$-normal approximations $N^{(k)}$. Namely, we define $\bar{g}_k: \widebar{\Mcal}_k \to \Acal_Q(\R^{m+n})$ and $v_k: \pi_k \supset B_3 \to \Acal_Q(\R^{m+n})$ by
\[
\bar{g}_k(p) \coloneqq \frac{1}{\bar{r}_k} g_k(\bar{r}_k p), \qquad v_k \coloneqq \frac{\bar{g}_k\circ \mathbf{e}_k}{\mathbf{h}_k}.
\]
Then, for $k$ sufficiently large, we have
\begin{align*}
\int_{B_{\frac{3}{2}}} \Gcal(v_k,u_k)^2 &\leq C\mathbf{h}_k^{-2}\int_{\Bcal_{\frac{3}{2}}} \Gcal(\bar{g}_k,\bar{N}^{(k)})^2 \\
&\leq C\mathbf{h}_k^{-2}\bar{r}_k^{-m-2}\int_{\Bcal_{\frac{3}{2}\bar{r}_k}} \Gcal(g_k,N^{(k)})^2 \\
&\leq C\mathbf{h}_k^{-2}\bar{r}_k^{-m-2}\int_{\mathbf{\Phi}_k(B_{\frac{3}{2}\bar{r}_k}\setminus K_k)} \Gcal(g_k,N^{(k)})^2, \\
\end{align*}
where $\mathbf{\Phi}_k$ is the map parameterizing the center manifold $\Mcal_k$ over the plane $\pi_k$, as given in Definition~\ref{def:cm}. Let $E_k \coloneqq \Ebf(T_k, \Cbf_{32r_{L_k}}(p_{L_k}),\pi_{L_k})$. Notice that since $r_{L_k} \simeq \ell(L_k) \simeq \bar{r}_k$, we have $E_k \longrightarrow 0$. Furthermore, since $\Theta(T_k, y_k) \geq Q$, we necessarily have $L_k \in \Wscr^{(k)}_e$. Therefore, the stopping condition~\eqref{itm:excess}, the splitting before tilting in Proposition~\ref{prop:excesssplitting}, and the improved height bound in Lemma~\eqref{lem:Lucaheightbd} tell us that
\begin{equation}\label{eq:excessheightcmp}
    E_k \simeq \boldsymbol{m}_0^{(k)} \left(\frac{r_k}{t_k}\right)^{2-2\delta_2} \simeq \left(\frac{r_k}{t_k}\right)^{-m-2}\int_{\Bcal_{\frac{\sigma r_k}{t_k}}(\mathbf{p}_k(y_k))} |N^{(k)}|^2 \simeq \mathbf{h}_k^2,
\end{equation}
for some choice of $\sigma > 0$ that is independent of $k$. We may now use the $\Lrm^\infty$-estimates~\eqref{eq:oscbd} and~\eqref{eq:normalLip}, the height bounds~\eqref{eq:badcubesbds}, the $\Crm^{3,\kappa}$-estimate~\eqref{itm:cmcurv} of Proposition~\ref{prop:centermfld}, and the bound in Theorem~\ref{thm:strongLip} for the sets $K_k$ over which $T_k\mres\Cbf_{32r_{L_k}}(p_{L_k},\pi_{L_k})$ is not graphical, to achieve the estimate
\[
    \int_{\mathbf{\Phi}_k(B_{\frac{3}{2}\bar{r}_k}\setminus K_k)} \Gcal(g_k,N^{(k)})^2 \leq C\bar{r}_k^{2+2\beta_2}E_k^{\frac{1}{m}}\big|B_{\frac{3}{2}\bar{r}_k}\setminus K_k\big| \leq  C\bar{r}_k^{m+2+2\beta_2}E_k^{\frac{1}{m} + 1 + \gamma_1}.
\]
Combining this with the above control on the $\Lrm^2$-distance between $v_k$ and $u_k$, the comparability~\eqref{eq:excessheightcmp} of the $\Lrm^2$ height and the excess, and the excess decay $E_k \longrightarrow 0$, we deduce that
\[
    \int_{B_{\frac{3}{2}}} \Gcal(v_k,u_k)^2 \leq C\bar{r}_k^{2\beta_2}E_k^{\frac{1}{m} + \gamma_1} \longrightarrow 0.
\]
Due to the convergence of $u_k$ to $u$, this further tells us that $v_k$ converges in $\Lrm^2(B_{\frac{3}{2}};\Acal_Q)$ on to the same Dir-minimizer $u$. It remains to obtain persistence of $Q$-points for the sequence of maps $v_k$. Indeed, since $\Theta(T_k, y_k) \geq Q$, Theorem~\ref{thm:Qptspersistf}\eqref{eq:Qpts2} tells us that there exists $\gamma > 0$ for which
\[
    \int_{B_{s\bar{r}_k}(\mathbf{p}_{\pi_{L_k}}(y_k))} \Gcal(f_k, Q\llbracket \boldsymbol{\eta}\circ f_k\rrbracket)^2 \leq C s^{m+\gamma}\bar{r}_k^{m+2} E_k,
\]
for every $s > 0$ sufficiently small. Applying the estimates in~\cite[Theorem~5.1]{DLS_multiple_valued}, rescaling, making use of Theorem~\ref{thm:strongLip}\eqref{eq:oscbd} and~\eqref{eq:excessheightcmp}, and using the control~\cite[Proposition~4.1(iv)]{DLS16centermfld} on the tilting between the planes $\pi_{L_k}$ and $\pi_k$, we have
\begin{align*}
    \int_{B_s(\mathbf{p}_{\pi_k}(y_k))} |v_k|^2 &\leq C\mathbf{h}_k^{-2}\bar{r}_k^{-m-2}\int_{B_{s\bar{r}_k}(\mathbf{p}_{\pi_{k}}(y_k))}|g_k\circ\mathbf{\Phi}_k|^2 \\
&\leq C\mathbf{h}_k^{-2}\bar{r}_k^{-m-2}\int_{B_{s\bar{r}_k}(\mathbf{p}_{\pi_{L_k}}(y_k))}\Gcal(f_k(p), Q\llbracket \boldsymbol{\vphi}_k(p)\rrbracket)^2 \\
&\leq C\mathbf{h}_k^{-2}\bar{r}_k^{-m-2}\int_{B_{s\bar{r}_k}(\mathbf{p}_{\pi_{L_k}}(y_k))}\Gcal(f_k(p), Q\llbracket \boldsymbol{\eta}\circ f_k\rrbracket)^2  \\
&\qquad + C\mathbf{h}_k^{-2}s^m\bar{r}_k^{-2}\Lip(f_k)^2\|\boldsymbol{\vphi}_k\|^2_{\Crm^1} \|g_k\circ \mathbf{\Phi}_k\|^2_{\Lrm^\infty(B_{\bar{r}_k}(\mathbf{p}_{\pi_k}(y_k)))} \\
&\leq Cs^{m+\gamma} + Cs^m\bar{r}_k^{2\beta_2} E_k^{2\gamma_1 + \frac{1}{m}},
\end{align*}
for every $s < s_0$, where $s_0$ is independent of $k$.
Taking $k \to \infty$, again using the excess decay, and combining with the strong $\Lrm^2$-convergence of $v_k$ to $v$, we have
\[
\int_{B_s(y)} |v|^2 \leq Cs^{m+\gamma}.
\]
Thus, applying Proposition~\ref{lem:freqdecay}, we are able to conclude that $I_v(0) \geq \frac{\gamma}{2}$ and that $v(y) = Q\llbracket 0 \rrbracket$ as expected.

\section{Proof of the results in Section~\ref{sct:prelim}}\label{sct:workraccoon}
Let us prove the preliminary results contained in Section~\ref{sct:prelim}. We begin with the proof of Lemma~\ref{lem:workraccoon}. The proof of all but~\eqref{eq:Minkowskicpct} appears in~\cite{White86}, but nevertheless we repeat the argument here.

Before we begin with the proof, let us make the following important observation. Assuming the validity of Lemma~\ref{lem:workraccoon},
    \begin{equation}\label{eq:cpctMinkowski}
    \Ascr(C) = \set{\alpha \geq 0}{\lim_{r \todown 0}\sup_{K \in \Cscr} r^\alpha N(K,r) = 0}.
    \end{equation}
    
    Indeed, if $\alpha \in \Ascr(\Cscr)$, then the Work Raccoon Lemma~\ref{lem:workraccoon} allows us to choose $\beta \in (\alpha_0,\alpha)$, so we have
    \[
    N(K,r)r^\alpha \leq r^{\alpha - \beta} N(K,r) r^\beta \leq C(\beta) r^{\alpha - \beta} \to 0 \qquad \text{as $r \todown 0$.}
    \]
    Conversely, if
    \[
    \lim_{r \todown 0}\sup_{K \in \Cscr} r^\alpha N(K,r) = 0,
    \]
    then given any $K \in \Cscr$, for any $\delta > 0$ we can find a covering $\Uscr_\delta$ of $K$ by $N(K,\delta)$ open $(m+n)$-dimensional balls of radius $\delta$. Then
    \[
    \sum_{\Bbf \in \Uscr_\delta} (\diam \Bbf)^\alpha = 2\delta^\alpha N(K,\delta).
    \]
    Taking $\delta \todown 0$, the claim follows from the definition of the $\alpha$-dimensional Hausdorff measure.
    
    In view of the above, it is therefore crucial to prove that the half-line is \emph{open} to make the identification~\eqref{eq:cpctMinkowski}. Hence, in order to estimate the upper Minkowski content uniformly over the compact family $\Cscr$, it suffices to estimate the Hausdorff dimension uniformly.

\begin{proof}[Proof of Lemma~\ref{lem:workraccoon}]
    The fact that $\Ascr(\Cscr)$ is an upper half-line is clear, since it is the intersection of upper half-lines, so to establish the first statement of the theorem, we just needs to show that it is open. Fix $\beta \in \Ascr(\Cscr)$. We may assume that $\beta > 0$, since $0 \notin \Ascr(\Cscr)$. Indeed, the closure under rescalings about all possible centers guarantees that $\Hcal^0(K) = 0$ for every $K \in \Cscr$ only if $\Cscr = \emptyset$.

Given any $K \in \Cscr$ (recall that $K$ is compact), we can then find a finite cover $\Uscr(K) \coloneqq \{\Bbf_{r_i}(x_i)\}_{i=1}^N$ of $K$ by open $(m+n)$-dimensional balls such that
\[
\sum_{i=1}^N r_i^\beta < \frac{1}{2}.
\]
Letting
\[
\Wcal(K) \coloneqq \bigcup_{i=1}^N \Bbf_{r_i}(x_i),
\]
we know that we can find a finite collection of sets $K_1,...,K_M$ such that for any $K \in \Cscr$, there exists $j \in \{1,...,M\}$ such that $K \subset \Wcal(K_j)$. This is due to the compactness of $\Cscr$ with respect to the Hausdorff topology.

Now write 
\[
\Uscr(K_j) = \{\Bbf_{r_{j,i}}(z_{j,i})\}_{i=1}^{N(K_j)}
\]
to denote the balls from $\Wcal(K_j)$. Since our collection is finite, we can certainly find $\alpha < \beta$ such that
\[
\sum_{i=1}^{N(K_j)} r_{j,i}^\alpha \leq \frac{1}{2}
\]
for each $j$.

We will show that $\alpha \in \Ascr(\Cscr)$ by inductively replacing each cover $\Uscr(K_j)$ by improved covers with reduced overlaps, which satisfy an $\alpha$-dimensional packing condition. At each stage we will replace a ball in the cover by a collection of smaller balls that cover $K$ more efficiently. We will later see how to further improve this efficient covering to one where the balls have comparable radii at each stage, in order to deduce that the stronger property~\eqref{eq:Minkowskicpct} holds for the exponent $\alpha$.

Fix any $K \in \Cscr$, and select $K_j$ such that $K \subset \Wcal(K_j)$. At the first stage of our subdivision, let $\Uscr_1 = \Uscr(K_j)$.

Assume that we have constructed $\Uscr_\ell$. Take any $\Bbf_r(x) \in \Uscr_\ell$. Choose $j = j(x,r)$ such that the blow-up $K_{x,r}$ satisfies
\[
    K_{x,r} \cap \Bbf_1 \subset \Wcal(K_{j(x,r)}),
\]
with corresponding family
\[
\Uscr(K_{j(x,r)}) = \{\Bbf_{r_{j(x,r),i}}(z_{j(x,r),i})\}
\]
of balls in $\Wcal(K_{j(x,r)})$.
Now we scale these balls back down to the correct size for $K$ itself, to get a family
\[
\Sscr(x,r) \coloneqq \{\Bbf_{r r_{j(x,r),i}}(x+rz_{j(x,r),i})\}_i.
\]
Then we have
\begin{equation}\label{eq:efficientcover1}
\sum_{\Bbf_\rho(w)\in\Sscr(x,r)} \rho^\alpha = \sum_i r^\alpha r_{j(x,r),i}^\alpha \leq \frac{1}{2}r^\alpha.
\end{equation}
So indeed, this provides a more efficient covering of the portion of $K$ inside $\Bbf_r(x)$, than $\Bbf_r(x)$ itself. Doing this for each ball within the family $\Uscr_\ell$, we may thus naturally define
\[
\Uscr_{\ell + 1} \coloneqq \bigcup_{\Bbf_r(x) \in \Uscr_\ell} \Sscr(x,r).
\]
Summing the packing estimate~\eqref{eq:efficientcover1} over the entire family $\Uscr_{\ell + 1}$ yields
\[
\sum_{\Bbf_\tau(x) \in \Uscr_{\ell+1}} \tau^\alpha = \sum_{\Bbf_r(x) \in \Uscr_\ell} \sum_{B_\rho(w) \in \Sscr(x,r)} \rho^\alpha \leq \frac{1}{2} \sum_{\Bbf_r(x) \in \Uscr_\ell} r^\alpha.
\]
Hence, by our choice of $\Uscr_1$,
\[
\sum_{\Bbf_\tau(x) \in \Uscr_{\ell+1}} \tau^\alpha \leq \frac{1}{2^{\ell + 1}}.
\]
We can continue refining our cover in this way indefinitely, taking $\ell \uparrow \infty$, which tells us that $\Hcal^\alpha(K) = 0$.

It remains to show the uniform boundedness of the upper Minkowski contents over the compact family $\Cscr$. We will use the above efficient covering for the sets in our family and will amend it appropriately using compactness to further ensure that the radii of the balls are comparable at each stage. This additional improvement will be precisely what we need in order to obtain the strengthened packing estimate~\eqref{eq:Minkowskicpct}. 

First of all, find the sizes of the smallest and largest balls among all the covers for all of the sets $K_j$, $j = 1,...,M$:
\begin{align*}
\bar{r} &\coloneqq \min\set{r_{j,i}}{\Bbf_{r_{j,i}}(z_{j,i}) \in \Uscr(K_j)}_{i,j}, \\
\bar{R} &\coloneqq \max\set{ r_{j,i}}{\Bbf_{r_{j,i}}(z_{j,i}) \in \Uscr(K_j)}_{i,j}.
\end{align*}

Notice that by construction, we necessarily have $\bar{R} \leq 2^{-\frac{1}{\alpha}} < 1$. For $K \in \Cscr$ fixed as before, let $\rho(\ell)$, $R(\ell)$ denote the radii of the smallest and largest balls in $\Uscr_\ell$ respectively, we have
\begin{equation}\label{eq:radiidecay}
\rho(\ell) \geq \bar{r}\rho(\ell - 1),
\end{equation}
with equality whenever we refine one of the balls in our covering with a rescaled cover that includes a rescaled ball of minimal size (among all the $\Uscr(K_j)$), between stage $\ell -1$ and $\ell$.

We want to ensure that $R(\ell)$ is comparable to $\rho(\ell)$. If this is not the case, namely if
\[
\frac{R(\ell)}{\rho(\ell)} > \bar{r}^{-1},
\]
then we subdivide all the large balls $\Bbf_\tau (w) \in \Uscr_\ell$ with $\tau > \bar{r}^{-1} \rho(\ell)$, several times if necessary, to obtain a new covering $\Uscr'_\ell$ such that
\begin{itemize}
    \item $\sum_{\Bbf_\tau(w) \in \Uscr'_\ell} \tau^\alpha \leq \frac{1}{2^\ell},$
    \item the radius of the largest ball is at most $\bar{R}^k R(\ell) \leq \bar{r}^{-1}\rho(\ell)$ for $k \in \N$ sufficiently large.
\end{itemize}
Note that the radius of the smallest ball in the cover remains the same in this procedure, since for any ball that we subdivide, the new balls will have radius at least $\rho(\ell)\bar{r}^{-1} > \rho(\ell)$.

Thus (not relabelling the new covers), we may assume that
\[
R(\ell) < \bar{r}^{-1}\rho(\ell) \qquad \text{for each $\ell$.}
\]
Combining this with~\eqref{eq:radiidecay}, we further have
\[
\rho(\ell) \in [\bar{r}^\ell,\bar{r}^{\ell-1}).
\]
We claim that this new even more tightly packed family of coverings gives the packing condition for our arbitrary fixed compact set $K \in \Cscr$. Choose any $r \in (0,1]$. Find the smallest $\ell \in \N$ such that $R(\ell) < r$, namely,
\[
R(\ell) < r \leq R(\ell - 1).
\]
We want to consider a covering by balls of radius exactly $r$, so we may enlarge all the balls in our cover $\Uscr_\ell$ to concentric ones with radius $r$ that still cover $K$. This in particular tells us that 
\[
N(K,r) \leq  \#\Uscr_\ell.
\]
We may now exploit the comparability of the sizes of all of these balls. Namely, we obtain
\begin{align*}
N(K,r)r^\alpha &\leq N(K,r)R(\ell-1)^\alpha \\
&\leq N(K,r) \bar{r}^{-\alpha}\rho(\ell - 1)^\alpha \\
&\leq \bar{r}^{-\alpha - 1}N(K,r)\rho(\ell)^\alpha \\
&\leq C(\alpha) \sum_{\Bbf_\tau(w) \in \Uscr_\ell} \tau^\alpha \\
&\leq C(\alpha).
\end{align*}
This concludes the proof.
\end{proof}

Let us now prove the statement of Proposition~\ref{prop:compactfam}, which allows us to apply the Work Raccoon Lemma~\ref{lem:workraccoon} to the family $\Cscr(\eps,\tilde{K})$.
\begin{proof}[Proof of Proposition~\ref{prop:compactfam}]
    For ease of notation, we will omit dependencies on $\eps$ and $\tilde K$. Clearly $\Cscr$ contains $K^0$, since one can take $x_k \equiv 0$ and $r_k \equiv 1$. The closure under rescaling property is trivial to check by simply rescaling the convergent sequence accordingly. The closure under Hausdorff convergence is proven as follows:
    
    Suppose that $\{K_j\}_j \subset \Cscr$, with $K_j^\infty \supset K_j \overset{d_H}{\longrightarrow} K$. For each $j$, we obtain a sequence $K^0_{x_{j,k},r_{j,k}}$ with
    \[
    K^0_{x_{j,k},r_{j,k}} \cap \cl{\Bbf}_1 \overset{d_H}{\longrightarrow} K^\infty_j.
    \]
    But then for each $j$, we can find $k(j)$ large enough and a subset $\tilde{K}^0_j \subset K^0_{x_{j,k(j)},r_{j,k(j)}}$ such that
    \[
    d_H(\tilde{K}^0_j \cap \cl{\Bbf}_1, K_j) \leq \frac{1}{2^j}.
    \]
    Combining this with the convergence of $K_j$ to $K$, we conclude that
    \[
    d_H(\tilde{K}^0_j \cap \cl{\Bbf}_1, K) \longrightarrow 0 \qquad \text{as $j \to \infty$.}
    \]
    In particular, the diagonal subsequence $\{K^0_{x_{j,k(j)},r_{j,k(j)}}\}_j$ of blow-ups can be taken for $K$, in order to see that it indeed lies in $\Cscr(\eps,\tilde{K})$.
\end{proof}

\section{Non-integer multiplicity points}\label{sct:nonintdensity}
We give a proof of the preliminary results  $(m-3)$-rectifiability and (local) upper Minkowski content bound for those singular points of $T$ that have non-integer multiplicities. Before we begin the proof of Lemma~\ref{lem:nonintsing}, let us recall some notation from~\cite{naber2017singular}. The (interior) $k$-th stratum $\Scal^k(T)$ (more generally defined for any integral varifold) is given by
\[
    \Scal^k(T) \coloneqq \set{x \in \spt T\setminus\spt\partial T}{\text{no tangent cone to $T$ at $x$ is $(k+1)$-symmetric}}.
\]
Observe that this definition coincides with the definition~\eqref{eq:stratum}. For fixed $\rho > 0$ and any $\delta > 0$, the quantitative \emph{$k$-th $\delta$-stratum $\Scal^k_{\delta}(T,\rho)$}  at scale $r$ is given by
\[
     \Scal^k_{\delta}(T,\rho) \coloneqq \set{x \in \spt T\setminus \spt\partial T}{\substack{\textup{\normalsize $\Fbf\big((\mathbf{e}_x)_\sharp(T_{x,s}) \mres B_1, V\mres B_1\big) \geq \delta s$  \ $\forall s \in (0,\rho)$ and} \\ \text{\normalsize any area-min. $(k+1)$-symm. cone $V \in \Ical^m(T_x\Sigma)$}}}
\]
where $\mathbf{e}_x$ is the exponential map $\mathbf{e}_x : T_x\Sigma \to \Sigma$, $B_1 \subset T_x\Sigma$. Recall that we call a varifold $V \in \Ical^m(\R^{m+\bar{n}})$ a \emph{$k$-symmetric cone} if $(\iota_{0,r})_\sharp V = V$ for every $r > 0$ and if there exists a $k$-plane $\pi \subset \R^{m+\bar{n}}$ such that $(\tau_y)_\sharp V = V$ for each $y \in \pi$.

Note that for area minimizing currents without boundary, one can replace the varifold metric in the above definition with the flat metric, since the two are then equivalent.

The result~\cite[Theorem~1.4]{naber2017singular} then states that given any $\rho, \delta > 0$ the $k$-dimensional upper Minkowski content of the stratum $\Scal^k_\delta(T,\rho)$ is locally bounded and $\Scal^k_\delta(T,\rho)$ is $k$-rectifiable.

\begin{proof}[Proof of Lemma~\ref{lem:nonintsing}]
    Fix $\eps > 0$, $Q \in \Nbb\setminus \{0\}$ and any $\Omega$ as in the statement of the lemma. Let
    \[
        E \coloneqq \Sing_{\geq Q+\eps} \cap \cl{\Omega}.
    \]
    Clearly $E$ is compact, due to the upper-semicontinuity of the density. We will proceed to show that
    \[
        E \subset \Scal^{m-3}_{\delta}(T,\rho) \qquad \text{for some scale $\rho$ and proximity threshold $\delta > 0$.}
    \]
    The idea is to show that if $T$ is sufficiently $\Fbf$-close to an area minimizing $(m-2)$-symmetric cone $V \in \Ical^m(T_x\Sigma)$ somewhere, this forces the mass ratio of $T$ to remain sufficiently close to an integer value (since the density of $V$ is always an integer). This will be inconsistent with our assumption that we are restricted to points of non-integer density. More specifically, we claim that there exists a scale $\rho > 0$ and a parameter $\eta \in (0,1)$ for which
    \begin{equation}\label{eq:massratiobd}
        \|T\|(\Bbf_s(p)) \leq (Q+1-\eta)\omega_m s^m \qquad \text{for every $p \in E$ and every $s \in (0,\rho)$.}
    \end{equation}
    Indeed, if this is not the case then we can extract a sequence of centers $p_k \in E$ and scales $s_k, \eta_k \todown 0$ for  which this fails, namely
    \[
        \|T\|(\Bbf_{s_k}(p_k)) > (Q+1-\eta_k)\omega_m s_k^m.
    \]
    Up to subsequence (not relabelled), we may assume that $p_k$ converges pointwise to some singular $Q$-point $p \in E$. The monotonicity formula and the convergence of the masses $\|T\|(\Bbf_r(p_k)) \to \|T\|(\Bbf_r(p))$ for all but a countable number of radii $r>0$ then tells us that for any such $r > 0$,
    \[
        \frac{\| T\|(\Bbf_r(p))}{\omega_mr^m} =\lim_{k \to \infty} \frac{\|T\|(\Bbf_r(p_k))}{\omega_m r^m} \geq \liminf_{k \to \infty}e^{C\Abf (s_k - r)}\frac{\| T\|(\Bbf_{s_k}(p_k))}{\omega_m s_k^m} \geq e^{-C\Abf r}(Q+1).
    \]
    In particular, $\Theta(T,p) \geq Q+1$, which contradicts the fact that $p \in E$.
    
    Now we may combine~\eqref{eq:massratiobd} with the properties of points in $\Scal^{m-3}_\delta(T,\rho)$ to reach a contradiction as follows. Observe that the following criteria holds:
    \begin{enumerate}[(a)]
        \item\label{itm:conedensity} If $V \in \Ical^m(T_x\Sigma)$ is an $(m-2)$-symmetric area minimizing cone, then necessarily $\Theta(S,0) = \frac{\|S\|(\Bbf_1)}{\omega_m}$ is a positive integer; \\
        \item\label{itm:Fclosemassclose} One can choose $\delta > 0$ sufficiently small such that if $\Fbf((\mathbf{e}_x)_\sharp(T_{x,s})\mres B_1, V\mres B_1) < \delta s$, then 
        \[
            \big|\|T\|(\Bbf_s(x)) - \|S\|(B_1)\big| < \min\Big\{\frac{\eta}{2},\frac{\eps}{4}\Big\}s^m \qquad \text{for any $x \in \R^{m+n}$;}
        \]
        \item By the monotonicity formula, for any $x \in E$ and any $s < \rho$ sufficiently small we have 
        \[
            \|T\|(\Bbf_s(x)) \geq e^{-C\Abf s} \Theta(T,x)\omega_m s^m \geq \big(Q+\frac{\eps}{2}\big)\omega_m s^m.
        \]
    \end{enumerate}
    The second property follows from the fact that
    \[
        \Fbf(V_1,V_2) = \Fcal(V_1,V_2) + \Fbf(|V_1|,|V_2|) \qquad \text{for any $V_1,V_2 \in \Ical^m$.}
    \]
    Thus, choosing $\delta > 0$ sufficiently small as in~\eqref{itm:Fclosemassclose}, if there is a point $x \in E$ for which $\Fbf((\mathbf{e}_x)_\sharp(T_{x,s})\mres B_1, V\mres B_1) < \delta s$ for some $s < \rho$ and some $(m-2)$-symmetric area minimizing cone $V \in \Ical^m(T_x\Sigma)$, then
    \[
        Q+\frac{\eps}{4} \leq \frac{\|S\|(\Bbf_s)}{\omega_ms^m}\leq Q+1-\frac{\eta}{2}.
    \]
    This, however, contradicts~\eqref{itm:conedensity}, so the claimed inclusion $E \subset \Scal^{m-2}_\delta(T,\rho)$ indeed holds true for $\rho > 0$ as in~\eqref{eq:massratiobd} and $\delta > 0$ as in~\eqref{itm:Fclosemassclose}.
\end{proof}




\begin{bibdiv}
    \begin{biblist}
        
\bib{liu2021finite}{misc}{
    title={Every finite graph arises as the singular set of a compact $3$-d calibrated area minimizing surface}, 
    author={Liu, Zhenhua},
    year={2021},
    eprint={2106.03199},
    archivePrefix={arXiv},
    primaryClass={math.DG}
}

\bib{WhiteStrat}{article}{
    AUTHOR = {White, Brian},
    TITLE = {Stratification of minimal surfaces, mean curvature flows, and
        harmonic maps},
    JOURNAL = {J. Reine Angew. Math.},
    FJOURNAL = {Journal f\"{u}r die Reine und Angewandte Mathematik. [Crelle's
        Journal]},
    VOLUME = {488},
    YEAR = {1997},
    PAGES = {1--35},
    ISSN = {0075-4102},
    MRCLASS = {49Q05 (49Q20 53C42 58E20)},
    MRNUMBER = {1465365},
    MRREVIEWER = {Harold Parks},
    DOI = {10.1515/crll.1997.488.1},
    URL = {https://doi.org/10.1515/crll.1997.488.1},
}
\bib{KW2018fine}{misc}{
    title={Fine properties of branch point singularities: Dirichlet energy minimizing multi-valued functions}, 
    author={Krummel, Brian},
    author={Wickramasekera, Neshan},
    year={2018},
    eprint={1711.06222},
    archivePrefix={arXiv},
    primaryClass={math.AP}
}

\bib{DLsurvey}{article}{
    AUTHOR = {De Lellis, Camillo},
    TITLE = {The size of the singular set of area-minimizing currents},
    BOOKTITLE = {Surveys in differential geometry 2016. {A}dvances in geometry
        and mathematical physics},
    SERIES = {Surv. Differ. Geom.},
    VOLUME = {21},
    PAGES = {1--83},
    PUBLISHER = {Int. Press, Somerville, MA},
    YEAR = {2016},
    MRCLASS = {49Q15 (53C42 58A20)},
    MRNUMBER = {3525093},
    MRREVIEWER = {Matteo Focardi},
}

\bib{AlmgrenQvalued}{article}{
    AUTHOR = {Almgren, Jr., F. J.},
    TITLE = {{$Q$} valued functions minimizing {D}irichlet's integral and
        the regularity of area minimizing rectifiable currents up to
        codimension two},
    JOURNAL = {unpublished manuscript},
}

\bib{White86}{book}{
    AUTHOR = {White, Brian},
    TITLE = {A regularity theorem for minimizing hypersurfaces modulo
        {$p$}},
    BOOKTITLE = {Geometric measure theory and the calculus of variations
        ({A}rcata, {C}alif., 1984)},
    SERIES = {Proc. Sympos. Pure Math.},
    VOLUME = {44},
    PAGES = {413--427},
    PUBLISHER = {Amer. Math. Soc., Providence, RI},
    YEAR = {1986},
    MRCLASS = {49F22},
    MRNUMBER = {840290},
    MRREVIEWER = {A. B. N\'{e}meth},
    DOI = {10.1090/pspum/044/840290},
    URL = {https://doi.org/10.1090/pspum/044/840290},
}

\bib{FMorganGMT}{book}{
    AUTHOR = {Morgan, Frank},
    TITLE = {Geometric measure theory},
    EDITION = {Fifth},
    NOTE = {A beginner's guide,
        Illustrated by James F. Bredt},
    PUBLISHER = {Elsevier/Academic Press, Amsterdam},
    YEAR = {2016},
    PAGES = {viii+263},
    ISBN = {978-0-12-804489-6},
    MRCLASS = {49-01 (26-01 28-01 28A75 49Q20 53C23 58E12)},
    MRNUMBER = {3497381},
}

\bib{DLSS1}{article}{
    AUTHOR = {De Lellis, Camillo},
    author={Spadaro, Emanuele},
    author={Spolaor, Luca},
    TITLE = {Regularity theory for {$2$}-dimensional almost minimal
        currents {I}: {L}ipschitz approximation},
    JOURNAL = {Trans. Amer. Math. Soc.},
    FJOURNAL = {Transactions of the American Mathematical Society},
    VOLUME = {370},
    YEAR = {2018},
    NUMBER = {3},
    PAGES = {1783--1801},
    ISSN = {0002-9947},
    MRCLASS = {49N60 (49Q05 49Q15)},
    MRNUMBER = {3739191},
    MRREVIEWER = {Martin Fuchs},
    DOI = {10.1090/tran/6995},
    URL = {https://doi.org/10.1090/tran/6995},
}

\bib{DLSS2}{article}{
    AUTHOR = {De Lellis, Camillo},
    author={Spadaro, Emanuele},
    author={Spolaor, Luca},
    TITLE = {Regularity theory for 2-dimensional almost minimal currents
        {II}: {B}ranched center manifold},
    JOURNAL = {Ann. PDE},
    FJOURNAL = {Annals of PDE. Journal Dedicated to the Analysis of Problems
        from Physical Sciences},
    VOLUME = {3},
    YEAR = {2017},
    NUMBER = {2},
    PAGES = {Paper No. 18, 85},
    ISSN = {2524-5317},
    MRCLASS = {49Q15 (49N15 58A25)},
    MRNUMBER = {3712561},
    MRREVIEWER = {Laurent Moonens},
    DOI = {10.1007/s40818-017-0035-7},
    URL = {https://doi.org/10.1007/s40818-017-0035-7},
}

\bib{DLSS3}{article}{
    AUTHOR = {De Lellis, Camillo},
    author={Spadaro, Emanuele},
    author={Spolaor, Luca},
    TITLE = {Regularity theory for 2-dimensional almost minimal currents
        {III}: {B}lowup},
    JOURNAL = {J. Differential Geom.},
    FJOURNAL = {Journal of Differential Geometry},
    VOLUME = {116},
    YEAR = {2020},
    NUMBER = {1},
    PAGES = {125--185},
    ISSN = {0022-040X},
    MRCLASS = {49N60 (49Q15 49Q20 58A25)},
    MRNUMBER = {4146358},
    DOI = {10.4310/jdg/1599271254},
    URL = {https://doi.org/10.4310/jdg/1599271254},
}

\bib{SXChang}{article}{
    AUTHOR = {Chang, Sheldon Xu-Dong},
    TITLE = {Two-dimensional area minimizing integral currents are
        classical minimal surfaces},
    JOURNAL = {J. Amer. Math. Soc.},
    FJOURNAL = {Journal of the American Mathematical Society},
    VOLUME = {1},
    YEAR = {1988},
    NUMBER = {4},
    PAGES = {699--778},
    ISSN = {0894-0347},
    MRCLASS = {49F20 (49F10 49F22 58E12 58E15)},
    MRNUMBER = {946554},
    MRREVIEWER = {Harold Parks},
    DOI = {10.2307/1990991},
    URL = {https://doi.org/10.2307/1990991},
}

\bib{Federer_the_singular_sets_of}{article}{
    AUTHOR = {Federer, Herbert},
    TITLE = {The singular sets of area minimizing rectifiable currents with
        codimension one and of area minimizing flat chains modulo two
        with arbitrary codimension},
    JOURNAL = {Bull. Amer. Math. Soc.},
    FJOURNAL = {Bulletin of the American Mathematical Society},
    VOLUME = {76},
    YEAR = {1970},
    PAGES = {767--771},
    ISSN = {0002-9904},
    MRCLASS = {28.80 (26.00)},
    MRNUMBER = {260981},
    MRREVIEWER = {J. E. Brothers},
    DOI = {10.1090/S0002-9904-1970-12542-3},
    URL = {https://doi.org/10.1090/S0002-9904-1970-12542-3},
}

\bib{Federer}{book}{
    AUTHOR = {Federer, Herbert},
    TITLE = {Geometric measure theory},
    SERIES = {Die Grundlehren der mathematischen Wissenschaften, Band 153},
    PUBLISHER = {Springer-Verlag New York Inc., New York},
    YEAR = {1969},
    PAGES = {xiv+676},
    MRCLASS = {28.80 (26.00)},
    MRNUMBER = {0257325},
    MRREVIEWER = {J. E. Brothers},
}

\bib{Almgren_regularity}{book}{
    AUTHOR = {Almgren, Jr., Frederick J.},
    TITLE = {Almgren's big regularity paper},
    SERIES = {World Scientific Monograph Series in Mathematics},
    VOLUME = {1},
    NOTE = {$Q$-valued functions minimizing Dirichlet's integral and the
        regularity of area-minimizing rectifiable currents up to
        codimension 2,
        With a preface by Jean E. Taylor and Vladimir Scheffer},
    PUBLISHER = {World Scientific Publishing Co., Inc., River Edge, NJ},
    YEAR = {2000},
    PAGES = {xvi+955},
    ISBN = {981-02-4108-9},
    MRCLASS = {49-02 (35J20 49N60 49Q20 58E12)},
    MRNUMBER = {1777737},
    MRREVIEWER = {Brian Cabell White},
}

\bib{Spolaor_15}{article}{
    AUTHOR = {Spolaor, Luca},
    TITLE = {Almgren's type regularity for semicalibrated currents},
    JOURNAL = {Adv. Math.},
    FJOURNAL = {Advances in Mathematics},
    VOLUME = {350},
    YEAR = {2019},
    PAGES = {747--815},
    ISSN = {0001-8708},
    MRCLASS = {49Q15 (49N60)},
    MRNUMBER = {3948685},
    MRREVIEWER = {Alp\'{a}r R. M\'{e}sz\'{a}ros},
    DOI = {10.1016/j.aim.2019.04.057},
    URL = {https://doi.org/10.1016/j.aim.2019.04.057},
}

\bib{DLS_multiple_valued}{article}{
    AUTHOR = {De Lellis, Camillo},
    author={Spadaro, Emanuele},
    TITLE = {Multiple valued functions and integral currents},
    JOURNAL = {Ann. Sc. Norm. Super. Pisa Cl. Sci. (5)},
    FJOURNAL = {Annali della Scuola Normale Superiore di Pisa. Classe di
        Scienze. Serie V},
    VOLUME = {14},
    YEAR = {2015},
    NUMBER = {4},
    PAGES = {1239--1269},
    ISSN = {0391-173X},
    MRCLASS = {49Q15 (49Q20)},
    MRNUMBER = {3467655},
    MRREVIEWER = {Luca Granieri},
}

\bib{Allard_72}{article}{
    AUTHOR = {Allard, William K.},
    TITLE = {On the first variation of a varifold},
    JOURNAL = {Ann. of Math. (2)},
    FJOURNAL = {Annals of Mathematics. Second Series},
    VOLUME = {95},
    YEAR = {1972},
    PAGES = {417--491},
    ISSN = {0003-486X},
    MRCLASS = {49F20},
    MRNUMBER = {307015},
    MRREVIEWER = {M. Klingmann},
    DOI = {10.2307/1970868},
    URL = {https://doi.org/10.2307/1970868},
}

\bib{DLS_MAMS}{article}{
    AUTHOR = {De Lellis, Camillo},
    author={Spadaro, Emanuele Nunzio},
    TITLE = {{$Q$}-valued functions revisited},
    JOURNAL = {Mem. Amer. Math. Soc.},
    FJOURNAL = {Memoirs of the American Mathematical Society},
    VOLUME = {211},
    YEAR = {2011},
    NUMBER = {991},
    PAGES = {vi+79},
    ISSN = {0065-9266},
    ISBN = {978-0-8218-4914-9},
    MRCLASS = {49Q20 (35J50)},
    MRNUMBER = {2663735},
    MRREVIEWER = {Michele Miranda},
    DOI = {10.1090/S0065-9266-10-00607-1},
    URL = {https://doi.org/10.1090/S0065-9266-10-00607-1},
}

\bib{naber2017singular}{misc}{
    title={The Singular Structure and Regularity of Stationary and Minimizing Varifolds}, 
    author={Naber, Aaron},
    author={Valtorta, Daniele},
    year={2017},
    eprint={1505.03428},
    archivePrefix={arXiv},
    primaryClass={math.DG}
}

\bib{DLS14Lp}{article}{
    AUTHOR = {De Lellis, Camillo},
    author = {Spadaro, Emanuele},
    TITLE = {Regularity of area minimizing currents {I}: gradient {$L^p$}
        estimates},
    JOURNAL = {Geom. Funct. Anal.},
    FJOURNAL = {Geometric and Functional Analysis},
    VOLUME = {24},
    YEAR = {2014},
    NUMBER = {6},
    PAGES = {1831--1884},
    ISSN = {1016-443X},
    MRCLASS = {49Q15 (49N60 49Q05)},
    MRNUMBER = {3283929},
    MRREVIEWER = {S\l awomir Kolasi\'{n}ski},
    DOI = {10.1007/s00039-014-0306-3},
    URL = {https://0-doi-org.pugwash.lib.warwick.ac.uk/10.1007/s00039-014-0306-3},
}

\bib{DLS16blowup}{article}{
    AUTHOR = {De Lellis, Camillo},
    author = {Spadaro, Emanuele},
    TITLE = {Regularity of area minimizing currents {III}: blow-up},
    JOURNAL = {Ann. of Math. (2)},
    FJOURNAL = {Annals of Mathematics. Second Series},
    VOLUME = {183},
    YEAR = {2016},
    NUMBER = {2},
    PAGES = {577--617},
    ISSN = {0003-486X},
    MRCLASS = {49Q15 (53C42 58A20)},
    MRNUMBER = {3450483},
    MRREVIEWER = {S\l awomir Kolasi\'{n}ski},
    DOI = {10.4007/annals.2016.183.2.3},
    URL = {https://0-doi-org.pugwash.lib.warwick.ac.uk/10.4007/annals.2016.183.2.3},
}
\bib{DLS16centermfld}{article}{
    AUTHOR = {De Lellis, Camillo},
    author={Spadaro, Emanuele},
    TITLE = {Regularity of area minimizing currents {II}: center manifold},
    JOURNAL = {Ann. of Math. (2)},
    FJOURNAL = {Annals of Mathematics. Second Series},
    VOLUME = {183},
    YEAR = {2016},
    NUMBER = {2},
    PAGES = {499--575},
    ISSN = {0003-486X},
    MRCLASS = {49Q15 (49Q05 53C42 58A20)},
    MRNUMBER = {3450482},
    MRREVIEWER = {S\l awomir Kolasi\'{n}ski},
    DOI = {10.4007/annals.2016.183.2.2},
    URL = {https://0-doi-org.pugwash.lib.warwick.ac.uk/10.4007/annals.2016.183.2.2},
}

\bib{Simon_GMT}{book}{
AUTHOR = {Simon, Leon},
TITLE = {Lectures on geometric measure theory},
SERIES = {Proceedings of the Centre for Mathematical Analysis,
    Australian National University},
VOLUME = {3},
PUBLISHER = {Australian National University, Centre for Mathematical
    Analysis, Canberra},
YEAR = {1983},
PAGES = {vii+272},
ISBN = {0-86784-429-9},
MRCLASS = {49-01 (28A75 49F20)},
MRNUMBER = {756417},
MRREVIEWER = {J. S. Joel},
}

\bib{Allard_boundary}{article}{
	AUTHOR = {Allard, William K.},
	TITLE = {On the first variation of a varifold: boundary behavior},
	JOURNAL = {Ann. of Math. (2)},
	FJOURNAL = {Annals of Mathematics. Second Series},
	VOLUME = {101},
	YEAR = {1975},
	PAGES = {418--446},
	ISSN = {0003-486X},
	MRCLASS = {49F22 (58E15)},
	MRNUMBER = {397520},
	MRREVIEWER = {M. Klingmann},
	DOI = {10.2307/1970934},
	URL = {https://doi.org/10.2307/1970934},
}

\bib{Caccioppoli_52_I}{article}{
    AUTHOR = {Caccioppoli, Renato},
    TITLE = {Misura e integrazione sulle variet\`a parametriche. {I}},
    JOURNAL = {Atti Accad. Naz. Lincei Rend. Cl. Sci. Fis. Mat. Nat. (8)},
    FJOURNAL = {Atti della Accademia Nazionale dei Lincei. Rendiconti. Classe
        di Scienze Fisiche, Matematiche e Naturali. Serie VIII},
    VOLUME = {12},
    YEAR = {1952},
    PAGES = {219--227},
    ISSN = {0392-7881},
    MRCLASS = {27.2X},
    MRNUMBER = {47757},
    MRREVIEWER = {L. C. Young},
}

\bib{Caccioppoli_52_II}{article}{
    AUTHOR = {Caccioppoli, Renato},
    TITLE = {Misura e integrazione sulle variet\`a parametriche. {II}},
    JOURNAL = {Atti Accad. Naz. Lincei Rend. Cl. Sci. Fis. Mat. Nat. (8)},
    FJOURNAL = {Atti della Accademia Nazionale dei Lincei. Rendiconti. Classe
        di Scienze Fisiche, Matematiche e Naturali. Serie VIII},
    VOLUME = {12},
    YEAR = {1952},
    PAGES = {365--373},
    ISSN = {0392-7881},
    MRCLASS = {27.2X},
    MRNUMBER = {49990},
    MRREVIEWER = {L. C. Young},
}

\bib{de-giorgi-su-una-teoria-1954}{article}{
    AUTHOR = {De Giorgi, Ennio},
    TITLE = {Su una teoria generale della misura {$(r-1)$}-dimensionale in
        uno spazio ad {$r$} dimensioni},
    JOURNAL = {Ann. Mat. Pura Appl. (4)},
    FJOURNAL = {Annali di Matematica Pura ed Applicata. Serie Quarta},
    VOLUME = {36},
    YEAR = {1954},
    PAGES = {191--213},
    ISSN = {0003-4622},
    MRCLASS = {27.2X},
    MRNUMBER = {62214},
    MRREVIEWER = {L. C. Young},
    DOI = {10.1007/BF02412838},
    URL = {https://0-doi-org.pugwash.lib.warwick.ac.uk/10.1007/BF02412838},
}

\bib{de-giorgi-nuovi-teoremi-1955}{article}{
    AUTHOR = {De Giorgi, Ennio},
    TITLE = {Nuovi teoremi relativi alle misure {$(r-1)$}-dimensionali in
        uno spazio ad {$r$} dimensioni},
    JOURNAL = {Ricerche Mat.},
    FJOURNAL = {Ricerche di Matematica},
    VOLUME = {4},
    YEAR = {1955},
    PAGES = {95--113},
    ISSN = {0035-5038},
    MRCLASS = {28.0X},
    MRNUMBER = {74499},
    MRREVIEWER = {L. C. Young},
}

\bib{de-giorgi-sulla-proprieta-1958}{article}{
    AUTHOR = {De Giorgi, Ennio},
    TITLE = {Sulla propriet\`a isoperimetrica dell'ipersfera, nella classe
        degli insiemi aventi frontiera orientata di misura finita},
    JOURNAL = {Atti Accad. Naz. Lincei. Mem. Cl. Sci. Fis. Mat. Nat. Sez. I
        (8)},
    FJOURNAL = {Atti della Accademia Nazionale dei Lincei. Memorie. Classe di
        Scienze Fisiche, Matematiche e Naturali. Sezione Ia.
        Matematica, Meccanica, Astronomia, Geodesia e},
    VOLUME = {5},
    YEAR = {1958},
    PAGES = {33--44},
    MRCLASS = {49.00 (28.00)},
    MRNUMBER = {0098331},
    MRREVIEWER = {L. C. Young},
}

\bib{De_Giorgi_frontiere}{book}{
    AUTHOR = {De Giorgi, Ennio},
    TITLE = {Frontiere orientate di misura minima},
    SERIES = {Seminario di Matematica della Scuola Normale Superiore di
        Pisa, 1960-61},
    PUBLISHER = {Editrice Tecnico Scientifica, Pisa},
    YEAR = {1961},
    PAGES = {57},
    MRCLASS = {49.00 (53.04)},
    MRNUMBER = {0179651},
    MRREVIEWER = {W. H. Fleming},
}

\bib{Fleming_On_the_oriented_Plateau}{article}{
    AUTHOR = {Fleming, Wendell H.},
    TITLE = {On the oriented {P}lateau problem},
    JOURNAL = {Rend. Circ. Mat. Palermo (2)},
    FJOURNAL = {Rendiconti del Circolo Matematico di Palermo. Serie II},
    VOLUME = {11},
    YEAR = {1962},
    PAGES = {69--90},
    ISSN = {0009-725X},
    MRCLASS = {53.04 (49.00)},
    MRNUMBER = {157263},
    MRREVIEWER = {R. Osserman},
    DOI = {10.1007/BF02849427},
    URL = {https://doi.org/10.1007/BF02849427},
}

\bib{Almgren_some_interior_regularity}{article}{
    AUTHOR = {Almgren, Jr., F. J.},
    TITLE = {Some interior regularity theorems for minimal surfaces and an
        extension of {B}ernstein's theorem},
    JOURNAL = {Ann. of Math. (2)},
    FJOURNAL = {Annals of Mathematics. Second Series},
    VOLUME = {84},
    YEAR = {1966},
    PAGES = {277--292},
    ISSN = {0003-486X},
    MRCLASS = {53.04},
    MRNUMBER = {200816},
    MRREVIEWER = {W. P. Ziemer},
    DOI = {10.2307/1970520},
    URL = {https://doi.org/10.2307/1970520},
}

\bib{Bombieri_De_Giorgi_Giusti}{article}{
    AUTHOR = {Bombieri, Enrico},
    author={De Giorgi, Ennio}
    author={Giusti, Enrico},
    TITLE = {Minimal cones and the {B}ernstein problem},
    JOURNAL = {Invent. Math.},
    FJOURNAL = {Inventiones Mathematicae},
    VOLUME = {7},
    YEAR = {1969},
    PAGES = {243--268},
    ISSN = {0020-9910},
    MRCLASS = {53.04},
    MRNUMBER = {250205},
    MRREVIEWER = {E. F. Beckenbach},
    DOI = {10.1007/BF01404309},
    URL = {https://doi.org/10.1007/BF01404309},
}

\bib{DLDPHM_boundary}{article}{
    AUTHOR = {De Lellis, Camillo},
    author={De Philippis, Guido},
    author={Hirsch, Jonas},
    author={Massaccesi, Annalisa},
    TITLE = {Boundary regularity of mass-minimizing integral currents and a
        question of {A}lmgren},
    BOOKTITLE = {2017 {MATRIX} annals},
    SERIES = {MATRIX Book Ser.},
    VOLUME = {2},
    PAGES = {193--205},
    PUBLISHER = {Springer, Cham},
    YEAR = {2019},
    MRCLASS = {53C42 (35B65 35R01 49Q15 58A25)},
    MRNUMBER = {3931066},
    MRREVIEWER = {Fei-Tsen Liang},
}

\bib{DLMSV_Rectifiability_and_upper_Minkowski}{article}{
    AUTHOR = {De Lellis, Camillo},
    author={Marchese, Andrea},
    author={Spadaro, Emanuele},
    author={Valtorta, Daniele},
    TITLE = {Rectifiability and upper {M}inkowski bounds for singularities
        of harmonic {$Q$}-valued maps},
    JOURNAL = {Comment. Math. Helv.},
    FJOURNAL = {Commentarii Mathematici Helvetici. A Journal of the Swiss
        Mathematical Society},
    VOLUME = {93},
    YEAR = {2018},
    NUMBER = {4},
    PAGES = {737--779},
    ISSN = {0010-2571},
    MRCLASS = {49Q20 (53A10 58A25)},
    MRNUMBER = {3880226},
    MRREVIEWER = {S\l awomir Kolasi\'{n}ski},
    DOI = {10.4171/CMH/449},
    URL = {https://doi.org/10.4171/CMH/449},
}

\bib{Hardt_Simon_boundary}{article}{
    AUTHOR = {Hardt, Robert},
    author={Simon, Leon},
    TITLE = {Boundary regularity and embedded solutions for the oriented
        {P}lateau problem},
    JOURNAL = {Ann. of Math. (2)},
    FJOURNAL = {Annals of Mathematics. Second Series},
    VOLUME = {110},
    YEAR = {1979},
    NUMBER = {3},
    PAGES = {439--486},
    ISSN = {0003-486X},
    MRCLASS = {49F10 (49F20 53A10)},
    MRNUMBER = {554379},
    MRREVIEWER = {Jo\~{a}o Lucas Marques Barbosa},
    DOI = {10.2307/1971233},
    URL = {https://doi.org/10.2307/1971233},
}

\bib{De_Philippis_Paolini_A_short_proof}{article}{
    AUTHOR = {De Philippis, Guido}
    author={Paolini, Emanuele},
    TITLE = {A short proof of the minimality of {S}imons cone},
    JOURNAL = {Rend. Semin. Mat. Univ. Padova},
    FJOURNAL = {Rendiconti del Seminario Matematico della Universit\`a di
        Padova. Mathematical Journal of the University of Padua},
    VOLUME = {121},
    YEAR = {2009},
    PAGES = {233--241},
    ISSN = {0041-8994},
    ISBN = {978-88-7784-303-6},
    MRCLASS = {53A10 (49Q10)},
    MRNUMBER = {2542144},
    MRREVIEWER = {Doan The Hieu},
    DOI = {10.4171/RSMUP/121-14},
    URL = {https://doi.org/10.4171/RSMUP/121-14},
}
\bib{Simon_rectifiability}{article}{
    AUTHOR = {Simon, Leon},
    TITLE = {Rectifiability of the singular sets of multiplicity {$1$}
        minimal surfaces and energy minimizing maps},
    BOOKTITLE = {Surveys in differential geometry, {V}ol. {II} ({C}ambridge,
        {MA}, 1993)},
    PAGES = {246--305},
    PUBLISHER = {Int. Press, Cambridge, MA},
    YEAR = {1995},
    MRCLASS = {49Q05 (58E12)},
    MRNUMBER = {1375258},
    MRREVIEWER = {Martin Fuchs},
}

\bib{Simons_minimal_varieties}{article}{
    AUTHOR = {Simons, James},
    TITLE = {Minimal varieties in riemannian manifolds},
    JOURNAL = {Ann. of Math. (2)},
    FJOURNAL = {Annals of Mathematics. Second Series},
    VOLUME = {88},
    YEAR = {1968},
    PAGES = {62--105},
    ISSN = {0003-486X},
    MRCLASS = {53.04 (35.00)},
    MRNUMBER = {233295},
    MRREVIEWER = {W. F. Pohl},
    DOI = {10.2307/1970556},
    URL = {https://doi.org/10.2307/1970556},
}

\bib{De_Giorgi_una_estensione}{article}{
    AUTHOR = {De Giorgi, Ennio},
    TITLE = {Una estensione del teorema di {B}ernstein},
    JOURNAL = {Ann. Scuola Norm. Sup. Pisa Cl. Sci. (3)},
    FJOURNAL = {Annali della Scuola Normale Superiore di Pisa. Classe di
        Scienze. Serie III},
    VOLUME = {19},
    YEAR = {1965},
    PAGES = {79--85},
    ISSN = {0391-173X},
    MRCLASS = {49.00 (53.04)},
    MRNUMBER = {178385},
    MRREVIEWER = {R. Osserman},
}

\bib{de-giorgi1961frontiere-orien}{book}{
    Author = {De Giorgi, Ennio},
    Date-Added = {2017-03-12 20:51:16 +0000},
    Date-Modified = {2018-12-18 11:59:12 -0600},
    Mrnumber = {0179651},
    Publisher = {Editrice Tecnico Scientifica, Pisa},
    Series = {Seminario di Matematica della Scuola Normale Superiore di Pisa, 1960-61},
    Title = {Frontiere orientate di misura minima},
    Year = {1961}}

\bib{DLNS_unique}{article}{
	title={Uniqueness of boundary tangent cones for $2$-dimensional area-minimizing currents}, 
	author={Camillo De Lellis and Stefano Nardulli and Simone Steinbrüchel},
	year={2021},
	eprint={2111.02981},
	archivePrefix={arXiv},
	primaryClass={math.AP}
}

\bib{DLNS_Allard}{article}{
	title={An Allard-type boundary regularity theorem for $2d$ minimizing currents at smooth curves with arbitrary multiplicity}, 
	author={Camillo De Lellis and Stefano Nardulli and Simone Steinbrüchel},
	year={2021},
	eprint={2111.02991},
	archivePrefix={arXiv},
	primaryClass={math.AP}
}

\bib{HM}{article}{
	title={Uniqueness of tangent cones to boundary points of two-dimensional almost-minimizing currents}, 
	author={Jonas Hirsch and Michele Marini},
	year={2019},
	eprint={1909.13383},
	archivePrefix={arXiv},
	primaryClass={math.AP}
}

\end{biblist}
\end{bibdiv}

\end{document}